\documentclass[10pt, reqno]{amsart}
\usepackage{graphicx, amssymb, amsmath, amsthm}
\numberwithin{equation}{section}

\newcommand{\R}{\mathbb{R}}
\newcommand{\C}{\mathbb{C}}
\newcommand{\wh}[1]{\widehat{#1}}
\newcommand{\wt}[1]{\widetilde{#1}}
\newcommand{\wwt}[1]{\tilde{#1}}

\renewcommand{\Re}{\text{Re}}
\renewcommand{\Im}{\text{Im}}
\newcommand{\norm}[1]{\big\| #1\big\|}

\newcommand{\eps}{\varepsilon}
\newcommand{\p}{\mathcal{P}}
\newcommand{\pb}[2]{\{#1,#2\}_{\mathcal{P}}}
\newcommand{\mb}[2]{\{#1,#2\}_m}
\newcommand{\psih}{\check{\psi}}
\newcommand{\psith}{\check{\psi_0}}
\def\smallint{\begingroup\textstyle \int\endgroup}

\newcommand{\norml}[2]{\big\|#1\big\|_{L_x^#2}}
\newcommand{\xonorm}[3]{\big\|#1\big\|_{L_t^{#2}L_x^{#3}}}
\newcommand{\xnorm}[4]{\big\|#1\big\|_{L_t^{#2}L_x^{#3}(#4\times\R^d)}}
\newcommand{\xonorms}[2]{\big\|#1\big\|_{L_{t,x}^{#2}}}
\newcommand{\bxonorms}[2]{\bigg\|#1\bigg\|_{L_{t,x}^{#2}}}

\newcommand{\xnorms}[3]{\big\|#1\big\|_{L_{t,x}^{#2}(#3\times\R^d)}}

\newcommand{\jap}[1]{\langle#1\rangle}
\newcommand{\N}{\mathcal{N}}

\newcommand{\M}{M}

\newcommand{\nsc}{\vert\nabla\vert^{s_c}}
\newcommand{\ntw}[1]{\vert\nabla\vert^{#1}}

\newtheorem{theorem}{Theorem}[section]

\newtheorem{lemma}[theorem]{Lemma}

\newtheorem{corollary}[theorem]{Corollary}
\newtheorem{conjecture}[theorem]{Conjecture}
\newtheorem{proposition}[theorem]{Proposition}

\theoremstyle{definition}
\newtheorem{definition}[theorem]{Definition}
\newtheorem{remark}[theorem]{Remark}

\theoremstyle{remark}

\begin{document}
\title[Inter-critical NLS: critical $\dot{H}^s$-bounds imply scattering]{Inter-critical NLS: \\ critical $\dot{H}^{s}$-bounds imply scattering}
\author{Jason Murphy}
\begin{abstract} We consider a class of power-type nonlinear Schr\"odinger equations for which the power of the nonlinearity lies between the mass- and energy-critical exponents. Following the concentration-compactness approach, we prove that if a solution $u$ is bounded in the critical Sobolev space throughout its lifespan, that is, $u\in L_t^\infty \dot{H}_x^{s_c}$, then $u$ is global and scatters. 
\end{abstract}
\address{Department of Mathematics, University of California at Los Angeles,
               Los Angeles, CA 90095-1555, USA}
        \email{murphy@math.ucla.edu}
\maketitle
\tableofcontents
\section{Introduction}
\noindent We consider the initial-value problem for defocusing nonlinear Schr\"odinger equations of the form 

\begin{equation}\label{nls} \left\{ \begin{array}{ll} (i\partial_t+\Delta) u=\vert u\vert^p u  \\ u(0,x)=u_0(x), \end{array}\right.
\end{equation} 

\noindent where $p> 0$ is chosen to lie between the mass- and energy-critical exponents, that is, $\tfrac{4}{d}<p<\tfrac{4}{d-2}$. Here $u:\R_t\times\R_x^d\to\C$ is a complex-valued function of time and space.

The class of solutions to \eqref{nls} is left invariant by the scaling \begin{equation}\nonumber u(t,x)\mapsto\lambda^{\frac{2}{p}}u(\lambda^2t,\lambda x)\end{equation} for $\lambda>0$. This scaling defines a notion of \emph{criticality}. In particular, one can check that the only homogeneous $L_x^2$-based Sobolev space that is left invariant by this scaling is $\dot{H}_x^{s_c}(\R^d)$, where the \emph{critical regularity} $s_c$ is given by $s_c:=\tfrac{d}{2}-\tfrac{2}{p}.$ If we take $u_0\in\dot{H}_x^s(\R^d)$ in \eqref{nls}, then for $s=s_c$, we call the problem \emph{critical}. For $s>s_c$, we call the problem \emph{subcritical}, while for $s<s_c$ the problem is \emph{supercritical}.

We consider the critical problem for \eqref{nls} in the \emph{inter-critical} regime, that is, $0<s_c<1$. For $(d,s_c)$ satisfying an appropriate set of constraints, we prove that any maximal-lifespan solution that is uniformly bounded (throughout its lifespan) in $\dot{H}_x^{s_c}(\R^d)$ must be global and scatter. 

We begin by making the notion of a solution more precise.
\begin{definition}[Solution]\label{solution} A function $u:I\times\R^d\to\mathbb{C}$ on a non-empty time interval $I\ni 0$ is a \emph{solution} to \eqref{nls} if it belongs to $C_t\dot{H}_x^{s_c}(K\times\R^d)\cap L_{t,x}^{p(d+2)/2}(K\times\R^d)$ for every compact $K\subset I$ and obeys the Duhamel formula 
\begin{equation}\nonumber u(t)=e^{it\Delta}u_0-i\int_0^t e^{i(t-s)\Delta}(\vert u\vert^p u)(s)\, ds
\end{equation}
for all $t\in I$. We call $I$ the \emph{lifespan} of $u$; we say $u$ is a \emph{maximal-lifespan solution} if it cannot be extended to any strictly larger interval. If $I=\R$, we say $u$ is \emph{global}.
\end{definition}

We define the \emph{scattering size} of a solution $u$ to \eqref{nls} on a time interval $I$ by 
\begin{equation}\label{scattering size} S_I(u):=\int_I\int_{\R^d}\vert u(t,x)\vert^{\frac{p(d+2)}{2}}\, dx\, dt.
\end{equation} 
Standard arguments show that if a solution $u$ to \eqref{nls} is global, with $S_{\R}(u)<\infty$, then it \emph{scatters}; that is, there exist unique $u_\pm\in\dot{H}_x^{s_c}(\R^d)$ such that $$\lim_{t\to\pm\infty}\norm{u(t)-e^{it\Delta}u_{\pm}}_{\dot{H}_x^{s_c}(\R^d)}=0.$$

The goal of this paper is to address some cases of the following
\begin{conjecture}\label{the conjecture}
Let $d\geq1$ and $p>0$ such that $s_c:=\tfrac{d}{2}-\tfrac{2}{p}\geq 0$. Let $u:I\times\R^d\to\C$ be a maximal-lifespan solution to \eqref{nls} such that $u\in L_t^\infty \dot{H}_x^{s_c}(I\times\R^d)$. Then $u$ is global and scatters, with 
\begin{equation}S_\R(u)\leq C(\norm{u}_{L_t^\infty \dot{H}_x^{s_c}(\R\times\R^d)})\nonumber\end{equation} 
for some function $C:[0,\infty)\to [0,\infty).$ 
\end{conjecture}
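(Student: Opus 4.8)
The plan is to run the concentration-compactness-and-rigidity program of Kenig–Merle, adapted to the setting where the conserved/controlled quantity is the critical Sobolev norm $\|u\|_{L_t^\infty\dot H_x^{s_c}}$ rather than an energy or mass. Concretely, I would argue by contradiction: if the conjecture fails for some $(d,s_c)$, then there is a threshold $E_c<\infty$ such that all solutions with $\|u\|_{L_t^\infty\dot H_x^{s_c}}<E_c$ are global with $S_\R(u)$ finite and controlled, while there exist solutions with norm arbitrarily close to $E_c$ whose scattering size blows up. The first major step is a linear profile decomposition for sequences bounded in $\dot H_x^{s_c}$, respecting the symmetries of the equation (translations in space and time, and the $\dot H^{s_c}$-critical scaling $\lambda^{2/p}u(\lambda^2 t,\lambda x)$), together with a nonlinear profile decomposition obtained by solving \eqref{nls} with each profile as data and using the stability theory for \eqref{nls}. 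A decoupling inequality for the $\dot H^{s_c}$ norm of the profiles, plus a Strichartz/Besov estimate controlling the nonlinear evolution in terms of $\dot H^{s_c}$-data, then forces that a near-threshold blowup sequence must be carried by a single profile; extracting this profile and passing to the limit yields a \emph{minimal blowup solution} — a global-in-its-lifespan solution $u_c$ with $\|u_c\|_{L_t^\infty\dot H_x^{s_c}}=E_c$, infinite scattering size on at least one side, and whose orbit $\{u_c(t)\}$ is precompact in $\dot H_x^{s_c}$ modulo the scaling and translation symmetries (so there exist $N(t)>0$, $x(t)\in\R^d$ with $\{N(t)^{-2/p}u_c(t, N(t)^{-1}(x-x(t)))\}$ precompact).

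Once the minimal blowup solution is in hand, the second half is the rigidity argument: show no such $u_c$ can exist. I would first use the almost periodicity together with the local theory to control the frequency scale $N(t)$ — ruling out or handling rapid cascades and self-similar-type behavior, and in the finite-time blowup case deriving a contradiction with the uniform $\dot H^{s_c}$ bound (this is where the hypothesis $u\in L_t^\infty\dot H^{s_c}$ does real work, since it prevents the concentration that finite-time blowup would require). For the global solution, I would establish additional regularity and decay: a "negative regularity" / double-Duhamel argument upgrading $u_c\in\dot H^{s_c}$ to $u_c\in L^2$ (i.e. into the mass-subcritical regime relative to the blowup scenario), so that conserved mass becomes available, and then run a Lin–Strauss Morawetz or interaction-Morawetz inequality which, combined with the compactness of the orbit, forces $u_c\equiv 0$ — contradicting $E_c>0$. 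Since the problem is defocusing there is no soliton to obstruct this, so the Morawetz step should close cleanly.

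The main obstacle — and the reason the theorem is only proved for $(d,s_c)$ in a restricted range — is the \emph{negative regularity / additional decay} step in the rigidity part. In the mass- and energy-critical problems the controlled norm is itself a conserved quantity, but here $\dot H^{s_c}$ with $0<s_c<1$ is neither conserved nor coercive over a conservation law, so one cannot directly invoke mass or energy conservation for the minimal solution; one must first \emph{bootstrap} the minimal blowup solution into a space where a conservation law applies, via an iterated-Duhamel estimate exploiting almost periodicity to gain regularity below $s_c$. Making that bootstrap converge requires Strichartz-type inequalities and fractional-calculus (Leibniz/chain-rule) estimates for the nonlinearity $|u|^p u$ with $p$ possibly non-integer, and the relevant numerology (how much regularity can be gained per iteration versus how much the nonlinearity costs) only closes when $d$ and $s_c$ satisfy explicit inequalities — exactly the "appropriate set of constraints" alluded to in the introduction. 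A secondary difficulty is the profile decomposition itself when $s_c$ is small: the dispersive/Strichartz estimates at $\dot H^{s_c}$ regularity are weaker, so the inverse Strichartz inequality and the orthogonality bookkeeping (especially decoupling of profiles living at different scales or time-translated to $\pm\infty$) need care, and the nonlinear profiles must be defined using the scattering theory at critical regularity rather than finite-energy arguments.
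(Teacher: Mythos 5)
You are proposing a proof of Conjecture \ref{the conjecture}, but note that the paper does not prove this statement in the stated generality (all $d\geq 1$, $s_c\geq 0$) and neither do you: what is actually proved is Theorem \ref{swamprat}, under the constraints \eqref{constraints}, and your sketch must be measured against that. Your first half --- the threshold $E_c$, linear profile decomposition, nonlinear profiles constructed via the scattering theory, stability, Palais--Smale, and extraction of a minimal blowup solution that is almost periodic modulo symmetries --- is exactly the paper's Section \ref{reduction section}. However, you pass over what the paper identifies as the chief novelty of that step: the decoupling of the \emph{nonlinear} profiles (Lemma \ref{decouple lemma}) when the derivative $\vert\nabla\vert^{s_c}$ is nonlocal and $\vert u\vert^pu$ is non-polynomial, so that no pointwise estimate in the spirit of the mass- or energy-critical arguments is available. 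The paper handles this by reworking the proof of the fractional chain rule at the level of individual Littlewood--Paley pieces, with maximal-function, vector-maximal and local-smoothing input; this is also what forces $p>1$ and the exclusion of $(d,s_c)=(5,\tfrac12)$. Your secondary remarks about "orthogonality bookkeeping" do not touch this difficulty.

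The genuine gap is in your rigidity step. You propose to upgrade the minimal solution to $L^2_x$ by a double-Duhamel/negative-regularity argument, invoke conservation of mass, and then run a Lin--Strauss or interaction Morawetz inequality. First, the double-Duhamel trick is precisely the device whose use confines the Killip--Vi\c{s}an supercritical work to $s_c>1$ and $d\geq 5$; in the range $d\in\{3,4,5\}$, $0<s_c<1$ there is no known way to make it converge, and the paper does not use it. Second, your target is wrong in one of the two blowup scenarios: negative regularity (hence finite mass) is obtained in the paper only in the rapid frequency-cascade regime $\int_0^{T_{max}}N(t)^{3-4s_c}\,dt<\infty$, via the long-time Strichartz estimate (Proposition \ref{lts lemma}) combined with the no-waste Duhamel formula (Proposition \ref{no waste}), and the contradiction there is that the mass is zero --- no Morawetz inequality is used. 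In the quasi-soliton regime one cannot expect to gain decay below $s_c$, and the paper instead proves a \emph{frequency-localized} interaction Morawetz inequality (Proposition \ref{flim}), whose error terms are controlled again by the long-time Strichartz estimate; it is here, not in any bootstrap numerology, that the exclusion of $(d,s_c)\in\{3\}\times(\tfrac34,1)$ arises (an error term that after spatial truncation would demand $\dot H^1_x$ control). Without this long-time Strichartz machinery, or some substitute for it, your Morawetz step does not close: the right-hand side of \eqref{int mor} requires $L^\infty_tL^2_x$ and $L^\infty_t\dot H^{1/2}_x$ bounds that an $\dot H^{s_c}$-bounded solution with $s_c\geq\tfrac12$ does not possess, and truncating to high frequencies generates exactly the error terms that Sections 5 and 7 of the paper exist to handle.
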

For two special cases, it is unnecessary to take $u\in L_t^\infty \dot{H}_x^{s_c}$ as an additional hypothesis, as this bound follows from conservation laws. In particular, in the mass-critical case, $s_c=0$ (i.e. $p=\tfrac{4}{d}$), the fact that $u\in L_t^\infty L_x^2$ follows from the conservation of \emph{mass}, defined by $$M[u(t)]:=\int_{\R^d}\vert u(t,x)\vert^2\,dx,$$
while in the energy-critical case, $s_c=1$ (i.e. $p=\tfrac{4}{d-2}$, $d\geq 3$), the fact that $u\in L_t^\infty \dot{H}_x^1$ follows from the conservation of \emph{energy}, defined by $$E[u(t)]:=\int_{\R^d}\tfrac12\vert\nabla u(t,x)\vert^2+\tfrac{1}{p+2}\vert u(t,x)\vert^{p+2}\,dx.$$

Due to the presence of conserved quantities at critical regularity, the mass- and energy-critical equations have been the most widely studied; in fact, Conjecture \ref{the conjecture} has been settled in these cases. For $s_c\notin\{0,1\}$, the hypothesis that $u$ stays bounded in $\dot{H}_x^{s_c}$ in Conjecture \ref{the conjecture} plays the role of the `missing conservation law' at critical regularity; the aim of this paper is to show that with this extra assumption, the techniques developed to treat the mass- and energy-critical NLS can be applied in the inter-critical regime, $0<s_c<1.$ 

The defocusing energy-critical case was handled first by Bourgain \cite{bourgain}, Grillakis \cite{grillakis}, and Tao \cite{Tao:gwp radial} for radial data, and subsequently by Colliander--Keel--Staffilani--Takaoka--Tao \cite{CKSTT:gwp}, Ryckman--Vi\c{s}an \cite{RV}, and Vi\c{s}an \cite{Monica:thesis art, Monica:thesis} for arbitrary data. (See also \cite{kenig merle, KV5} for results in the focusing case.) The primary obstacle to establishing these results was the lack of any a priori estimates with critical scaling (besides the conservation of energy); that is to say, none of the known monotonicity formulae for NLS (i.e. Morawetz inequalities) scale like the energy (in contrast to the energy-critical nonlinear wave equation, for example). It was ultimately the `induction on energy' technique of Bourgain that showed how one can move beyond this difficulty: by finding a bubble of concentration inside a solution, one introduces a characteristic length scale into a scale-invariant problem. Having `broken' the scaling symmetry in this way, the available Morawetz inequalities come back into play, despite their non-critical scaling. All subsequent work for NLS at critical regularity has built upon this fundamental idea.

In the energy-critical case, the critical regularity associated to the available Morawetz estimates is lower than the critical regularity of the problem. Bourgain was able to make use of the Lin--Strauss Morawetz inequality (appearing first in \cite{LS}), which scales like $\dot{H}_x^{\frac12}$ and is well-suited to the radial case; to remove the radial assumption, Colliander--Keel--Staffilani--Takaoka--Tao introduced the interaction Morawetz inequality (see \cite{CKSTT}), which has the scaling of $\dot{H}_x^{\frac14}$ (but still requires control over at least half of a derivative, cf. \eqref{int mor} below). These considerations lead us to believe that the techniques developed to handle the energy-critical problem should be applicable to resolve Conjecture \ref{the conjecture} in the case $s_c\geq\tfrac12$. 

In particular, we will be making use of the concentration-compactness (or `minimal counterexample') approach to induction on energy. Minimal counterexamples were introduced over the course of several papers in the context of the mass-critical problem (see, for example, \cite{begout-vargas, bourg.2d, carles-keraani, keraani, Keraani:L2, merle-vega}); however, the first application of minimal counterexamples to establish a global well-posedness result was carried out by Kenig--Merle \cite{kenig merle}, who developed the technique in the focusing, energy-critical setting.

Minimal counterexamples were also used to establish Conjecture \ref{the conjecture} in the mass-critical setting, first for spherically-symmetric data in dimensions $d\geq 2$ (see \cite{KTV, KVZ, TVZ:sloth}), and subsequently for arbitrary initial data in all dimensions by Dodson \cite{dodson:3, dodson:2, dodson:1}. (For results in the focusing case, see \cite{dodson:f, KTV, KVZ}.) Notice that in the mass-critical case, the critical regularity of the problem is lower than that of the available Morawetz estimates; thus one needs to prove additional regularity (instead of decay) to access these estimates. We pause here to point out \cite{revisit2, revisit1}, as well, which revisit the defocusing energy-critical problem from the perspective of minimal counterexamples. 

The first case of Conjecture \ref{the conjecture} at non-conserved critical regularity to be addressed was the case $d=3$ and $s_c=\tfrac12$, in which case the nonlinearity is cubic. Kenig--Merle \cite{KM} were able to handle this case by using their concentration-compactness technique (as in \cite{kenig merle}), together with the Lin--Strauss Morawetz inequality (which is scaling-critical in this case). As we will see, this case also falls into the range of cases that we consider, although we will opt to use the interaction Morawetz inequality instead.

Some cases of Conjecture \ref{the conjecture} in the energy-supercritical regime (i.e. $s_c>1$) have also been handled by Killip--Vi\c{s}an \cite{KV:supercritical}, also through the use of minimal counterexamples. In particular, they deal with the case of a cubic nonlinearity for $d\geq 5$, along with some other cases for which $s_c>1$ and $d\geq 5$. Their restriction to high dimensions stems ultimately from their use of the so-called `double Duhamel trick'; for more details, see \cite{KV:supercritical} and the references cited therein.  

Before we discuss our contribution, we note that the analogous conjecture has also been studied for the nonlinear wave equation. For progress in the energy-supercritical case, one can refer to works of Kenig--Merle \cite{KM NLW}, Killip--Vi\c{s}an \cite{KV NLW, KV NLW2}, and Bulut \cite{bulut:1, bulut:2, bulut:3}. For some results in the energy-subcritical case with radial data, see \cite{shen1, shen2}. 

Finally, we are in a position to describe the cases of Conjecture \ref{the conjecture} that we will address in this paper. As mentioned above, we will work in the inter-critical regime, $0<s_c<1$ (that is, $\tfrac{4}{d}<p<\tfrac{4}{d-2}$). Our primary restriction is technical; namely, we only consider cases for which $p\geq 1$. This restriction serves to simplify the analysis, which still becomes a bit complicated. For example, when we need to estimate a fractional number of derivatives of the function $G(z)=\vert z\vert^p$, things are quite a bit simpler when $G$ is locally Lipschitz, rather than merely H\"older continuous.

Next, in order to make use of the interaction Morawetz inequality, we restrict to the cases $d\geq 3$ and $s_c\geq\tfrac12$ (cf. \eqref{int mor} below). For this restriction to be compatible with $p\geq 1$, we must then restrict to $d\leq 5$. The use of the interaction Morawetz inequality ultimately leads to a further (more severe) restriction: when $d=3$, we must exclude the cases $\tfrac{3}{4}<s_c<1$. Let us briefly describe the reason for this additional restriction.

The standard interaction Morawetz inequality may be written as follows: for $u$ solving \eqref{nls}, we have
				\begin{align}
				-\int_I\iint_{\R^d\times\R^d}& 
				\vert u(t,x)\vert^2\Delta(\tfrac{1}{\vert \cdot\vert})(x-y)\vert u(t,y)\vert^2\,dx\,dy\,dt\nonumber
				\\ &\lesssim\xnorm{u}{\infty}{2}{I}^2\xnorm{\vert\nabla\vert^{1/2}u}{\infty}{2}{I}^2.
				\label{int mor}\end{align}
As we are in the case $s_c\geq \tfrac{1}{2}$, we see that to guarantee that the right-hand side is finite, we must truncate the solution $u$ to high frequencies (that is, we must work with $u_{\geq N}$ for some $N>0$). In Section \ref{flim section}, we do exactly this. However, $u_{\geq N}$ is no longer a solution to \eqref{nls}; thus, the truncation results in error terms for \eqref{int mor} that must be handled to arrive at a useful estimate. 
When $d=3$ and $s_c>\tfrac{3}{4}$, we find that there is one error term that we cannot handle unless we also impose a spatial truncation on the weight we use to derive \eqref{int mor}; see, for example, \cite{CKSTT:gwp, revisit2}, which address the case $d=3$, $s_c=1$. This spatial truncation, however, results in even \emph{more} error terms; it turns out that some of these additional error terms then require control over the $\dot{H}_x^1$-regularity of the solution. Thus, in the energy-critical case, one can push the argument through, while in our case we cannot succeed without significant additional input. For further comments, see Remark~\ref{exclusion}.

Our final restriction is to exclude the case $(d,s_c)=(5,\tfrac12)$, for which $p=1$. In this case, the proof of Lemma \ref{decouple lemma}, which is essential to the reduction to almost periodic solutions in Section \ref{reduction section}, breaks down. For further discussion, see Remark~\ref{goodbye p=1}.

To summarize, our main results will apply to \eqref{nls} with $(d,s_c)$ satisfying the following: 
\begin{equation}\label{constraints}
\left\{\begin{array}{llllll}
			\tfrac12\leq s_c\leq\tfrac34&\text{if }d=3,
\vspace{1mm}
			\\ \tfrac12\leq s_c<1&\text{if }d=4,
\vspace{1mm}
				\\ \tfrac12<s_c<1&\text{if }d=5.
\end{array}\right.
\end{equation}
It is worth mentioning, however, that many of our results will apply to a less restrictive set of $(d,s_c)$. In particular, many of our results will apply to
\begin{equation}\label{constraints2}
d\in\{3,4,5\}\quad\text{and}\quad \tfrac12\leq s_c<1,\quad\text{excluding}\quad(d,s_c)=(5,\tfrac12).
\end{equation}
As we proceed, we will keep track of which restrictions are necessary for which results.

Our main result is the following:

\begin{theorem}\label{swamprat} Suppose $(d,s_c)$ satisfies \eqref{constraints}. Let $u:I\times\R^d\to\C$ be a maximal-lifespan solution to \eqref{nls} such that $u\in L_t^\infty \dot{H}_x^{s_c}(I\times\R^d).$ Then $u$ is global and scatters, with 
			$$S_\R(u)\leq C(\norm{u}_{L_t^\infty\dot{H}_x^{s_c}(\R\times\R^d)})$$
for some function $C:[0,\infty)\to[0,\infty).$  
\end{theorem}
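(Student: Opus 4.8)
The plan is to argue by contradiction via the concentration-compactness / minimal counterexample paradigm. First I would set up a scattering-size threshold: define, for each $E\geq 0$, the quantity $\mathcal{L}(E)$ to be the supremum of $S_I(u)$ over all maximal-lifespan solutions $u$ to \eqref{nls} with $\|u\|_{L_t^\infty\dot H_x^{s_c}(I\times\R^d)}\leq E$, and let $E_c$ be the infimum of those $E$ for which $\mathcal{L}(E)=\infty$. The small-data theory (standard Strichartz plus the stability/local well-posedness theory for \eqref{nls}, which holds in the range \eqref{constraints2}) shows $E_c>0$; the goal is to prove $E_c=\infty$, which immediately yields the theorem, since the bound $S_\R(u)\leq C(\|u\|_{L_t^\infty\dot H_x^{s_c}})$ with $C=\mathcal{L}$ and global existence then follow from the stability theory. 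So suppose toward a contradiction that $E_c<\infty$.

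Next I would extract a minimal counterexample. Taking a sequence of solutions $u_n$ with $\|u_n\|_{L_t^\infty\dot H_x^{s_c}}\to E_c$ and $S_{I_n}(u_n)\to\infty$, one applies a linear profile decomposition adapted to $\dot H_x^{s_c}$ together with a nonlinear profile decomposition and a stability argument (in the spirit of Kenig--Merle and Killip--Vi\c{s}an) to conclude that, after passing to a subsequence and using symmetries, there is a single bad profile: a maximal-lifespan solution $u$ with $\|u\|_{L_t^\infty\dot H_x^{s_c}}=E_c$, $S_I(u)=\infty$ on \emph{both} halves of its lifespan, and whose orbit $\{u(t):t\in I\}$ is precompact in $\dot H_x^{s_c}(\R^d)$ modulo the scaling and translation symmetries — i.e., $u$ is \emph{almost periodic}: there exist $N(t)>0$, $x(t)\in\R^d$, and for each $\eta>0$ a compact $K\subset\R^d$ with
\[
\int_{|x-x(t)|\geq K/N(t)}\big||\nabla|^{s_c}u(t,x)\big|^2\,dx+\int_{|\xi|\leq \eta N(t)}|\xi|^{2s_c}|\widehat{u(t)}(\xi)|^2\,d\xi\leq\eta
\]
for all $t\in I$. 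One then further reduces, using a standard rescaling/compactness argument, to a small number of scenarios for the frequency scale function $N(t)$ (e.g. a finite-time blowup solution, a soliton-like solution with $N(t)\equiv 1$, and possibly a low-to-high or quasi-soliton case), so that it suffices to preclude each.

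Finally I would rule out these almost periodic solutions using the interaction Morawetz inequality \eqref{int mor}. The key difficulty — and the main obstacle — is precisely the mismatch in scaling noted in the introduction: the right-hand side of \eqref{int mor} involves $\||\nabla|^{1/2}u\|_{L_t^\infty L_x^2}$ and $\|u\|_{L_t^\infty L_x^2}$, neither of which is controlled by the critical norm $\dot H_x^{s_c}$ when $s_c\geq \tfrac12$. The remedy (carried out in Section \ref{flim section}) is to apply the inequality to the high-frequency truncation $u_{\geq N}$ for suitable $N>0$; since $u_{\geq N}$ solves \eqref{nls} only up to a commutator/error, one must control the resulting error terms on the right-hand side of the frequency-truncated interaction Morawetz estimate — this is where the dimensional restriction $d\leq 5$, the hypothesis $p\geq 1$ (so that $G(z)=|z|^pz$ is locally Lipschitz and fractional chain/product rules apply cleanly), and, for $d=3$, the exclusion of $s_c>\tfrac34$ all enter, the last because a spatial truncation of the Morawetz weight would otherwise be forced and would generate error terms requiring $\dot H_x^1$-control that is unavailable here (cf. Remark~\ref{exclusion}). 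Once the truncated interaction Morawetz estimate is established with acceptable errors, one runs it against the almost periodicity: the compactness forces a lower bound on the left-hand side proportional to the length of time (or, after a change of variables governed by $N(t)$, a divergent quantity), while the right-hand side stays bounded — a contradiction. The no-rapid-frequency-cascade and no-quasisoliton type arguments, together with a local-constancy property of $N(t)$ and the standard reduction of $N(t)$ to normalized forms, complete the exclusion of every scenario, hence $E_c=\infty$ and the theorem follows.
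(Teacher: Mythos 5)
Your overall skeleton (threshold $E_c$, minimal counterexample, reduction to almost periodic solutions, frequency-truncated interaction Morawetz) matches the paper's strategy, but there are two genuine gaps in how you propose to close the argument. First, the interaction Morawetz inequality, even after frequency truncation, cannot rule out \emph{every} scenario for $N(t)$. After normalizing $N(t)\equiv N_k\geq 1$ on characteristic subintervals, the paper splits into two cases according to whether $K=\int_0^{T_{max}}N(t)^{3-4s_c}\,dt$ is finite or infinite. The Morawetz estimate only handles the quasi-soliton case $K=\infty$; in the rapid frequency-cascade case $K<\infty$ (which includes finite-time blowup with $N(t)\to\infty$ fast), the truncated Morawetz bound $\lesssim_u\eta(N^{1-4s_c}+K)$ is finite and yields no contradiction. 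That scenario requires an entirely different mechanism: one upgrades the long-time Strichartz estimate via the no-waste Duhamel formula to prove negative regularity ($u\in L_t^\infty\dot H_x^{-\eps}$), and then uses conservation of mass together with $N(t)\to\infty$ to force $u\equiv 0$. Your proposal says nothing about how this case would be excluded, and "compactness forces a lower bound proportional to the length of time" does not apply there. Second, your description of the quasi-soliton contradiction is off: the right-hand side of the frequency-localized Morawetz estimate does \emph{not} stay bounded as $|I|$ grows — it grows like $K$ — and the contradiction comes instead from the smallness factor $\eta$ in $\eta(N^{1-4s_c}+K)$ versus the lower bound $\gtrsim_u K$ coming from almost periodicity. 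Obtaining that smallness (and controlling all the commutator errors from the truncation) is exactly what the long-time Strichartz estimate of Proposition \ref{lts lemma} provides, and this tool is absent from your outline.

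A further, smaller point: you treat the extraction of the minimal counterexample as routine "in the spirit of Kenig--Merle and Killip--Vi\c{s}an," but in the inter-critical regime the standard decoupling of nonlinear profiles fails to follow from pointwise estimates, because $\nsc$ with $0<s_c<1$ is nonlocal and the nonlinearity is non-polynomial. The paper's Lemma \ref{decouple lemma}, proved by reopening the fractional chain rule at the level of individual Littlewood--Paley pieces with maximal-function and local-smoothing arguments, is the main new ingredient of the reduction step (and is the reason $(d,s_c)=(5,\tfrac12)$ is excluded); a complete proof would need to supply this or an equivalent substitute.
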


To establish Theorem \ref{swamprat}, we will model our approach after several sources, including \cite{KV, KV5, KV:supercritical, revisit2, revisit1}. In particular, Section \ref{local theory} follows the presentation in \cite{KV, KV:supercritical}; Section \ref{reduction section} draws heavily from \cite{KV5}; and the presentation of the remaining sections is inspired largely by \cite{revisit2, revisit1}. We note also that we rely on \cite{KV} for several standard results regarding almost periodic solutions in the outline below. 

The first step towards a global-in-time theory for \eqref{nls} is to develop a good local-in-time theory for this equation. In particular, building off arguments of Cazenave--Weissler \cite{cw}, one can prove the following
\begin{theorem}[Local well-posedness]\label{local theorem} Let $(d,s_c)$ satisfy \eqref{constraints2}. Then, given $u_0\in\dot{H}_x^{s_c}(\R^d)$, there exists a unique maximal-lifespan solution $u:I\times\R^d\to\C$ to \eqref{nls}. Moreover, this solution satisfies the following:

$\bullet$ (Local existence) $I$ is an open neighborhood of $0$.

$\bullet$ (Blowup criterion) If $\sup I$ is finite, then $u$ blows up forward in time, in the sense that $S_{[0,\sup I)}(u)=\infty$. If $\inf I$ is finite, then $u$ blows up backwards in time, in the sense that $S_{(\inf I,0]}(u)=\infty.$ 

$\bullet$ (Scattering) If $\sup I=+\infty$ and $u$ does not blow up forward in time, then $u$ scatters forward in time; that is, there exists a unique $u_+\in \dot{H}_x^{s_c}(\R^d)$ such that 
\begin{equation}\label{scattering} \lim_{t\to\infty}\norm{u(t)-e^{it\Delta}u_+}_{\dot{H}_x^{s_c}(\R^d)}=0.
\end{equation} Conversely, for any $u_+\in\dot{H}_x^{s_c}(\R^d)$, there is a unique solution $u$ to \eqref{nls} so that \eqref{scattering} holds. The analogous statements hold backward in time.

$\bullet$ (Small-data global existence) There exists $\eta_0=\eta_0(d,p)$ such that if $$\norm{u_0}_{\dot{H}_x^{s_c}(\R^d)}^2<\eta_0,$$ then $u$ is global and scatters, with $S_\R(u)\lesssim\norm{u_0}_{\dot{H}_x^{s_c}(\R^d)}^{p(d+2)/2}.$ 
\end{theorem}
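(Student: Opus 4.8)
\emph{Overall strategy.} The plan is to prove Theorem~\ref{local theorem} by a Cazenave--Weissler contraction argument at the critical regularity $\dot{H}_x^{s_c}$, and then to extract the maximal-lifespan, blowup-criterion, scattering, and small-data assertions from it by the standard soft arguments. The first task is to choose good spaces. Since $\tfrac12\le s_c<1$, I would fix a Schr\"odinger-admissible pair $(q,r)$ --- $\tfrac{2}{q}+\tfrac{d}{r}=\tfrac{d}{2}$, with $r$ slightly below $\tfrac{2d}{d-2}$ --- chosen so that, by Sobolev embedding in $x$ and H\"older in $t$, control of $\nsc u$ in $L_t^qL_x^r$ dominates the scattering norm $\|u\|_{L_{t,x}^{p(d+2)/2}}$; the dual pair $(q',r')$ will receive the nonlinearity. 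On an interval $I\ni 0$ I would then run a fixed-point argument for the Duhamel map $\Phi(u)=e^{it\Delta}u_0-i\int_0^t e^{i(t-s)\Delta}(|u|^pu)(s)\,ds$ on the complete metric space of $u$ with $\|\nsc u\|_{L_t^qL_x^r(I\times\R^d)}$ bounded and $\|u\|_{L_{t,x}^{p(d+2)/2}(I\times\R^d)}$ small, equipped with a weaker (regularity-zero) Strichartz metric so that no fractional derivative ever lands on a difference of nonlinearities.

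\emph{The key nonlinear estimate.} The crux is to bound the nonlinearity in the Strichartz dual space at the critical level: using the inhomogeneous Strichartz inequality together with the fractional product and chain rules, one wants
\[
\|\nsc(|u|^pu)\|_{L_t^{q'}L_x^{r'}(I\times\R^d)}\lesssim \|u\|_{L_{t,x}^{p(d+2)/2}(I\times\R^d)}^{p}\,\|\nsc u\|_{L_t^qL_x^r(I\times\R^d)},
\]
together with a matching Lipschitz bound for differences measured in the regularity-zero metric. This is where the hypothesis $p\ge1$ is used decisively: $G(z)=|z|^pz$ is then $C^1$ with $G'$ Lipschitz (if $p\ge2$) or H\"older continuous of exponent $p-1$ (if $1\le p<2$), so the fractional chain rule applies cleanly for $s_c<1\le p$; for $p<1$ the map $G$ is merely H\"older and this step becomes delicate. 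Verifying admissibility of $(q,r)$ and checking that all the resulting H\"older and Sobolev exponents balance against the scaling relation $s_c=\tfrac{d}{2}-\tfrac{2}{p}$ is the main bookkeeping burden of the proof; everything else is routine.

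\emph{Contraction, uniqueness, small data.} Granting these estimates, $\Phi$ is a contraction on a small ball provided $\|e^{it\Delta}u_0\|_{L_{t,x}^{p(d+2)/2}(I\times\R^d)}\le\eta$ for a suitable $\eta=\eta(d,p)$. By the Strichartz inequality this quantity is $\lesssim\|u_0\|_{\dot{H}_x^{s_c}}$, so the smallness holds on $I=\R$ when $\|u_0\|_{\dot{H}_x^{s_c}}$ is small --- yielding small-data global existence together with $S_\R(u)\lesssim\|u_0\|_{\dot{H}_x^{s_c}}^{p(d+2)/2}$ --- while for arbitrary $u_0$ the global scattering norm of $e^{it\Delta}u_0$ is finite and hence, by dominated convergence, small on a sufficiently short $I\ni0$. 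This gives existence and uniqueness in the ball; unconditional uniqueness in the class $C_t\dot{H}_x^{s_c}\cap L_{t,x}^{p(d+2)/2}$ follows because any solution in that class has small scattering norm on a short subinterval and so, by a short bootstrap with the same estimates, lies in the ball there. Patching all such solutions yields a unique maximal-lifespan solution on an open interval $I\ni0$.

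\emph{Blowup criterion and scattering.} If $\sup I<\infty$ but $S_{[0,\sup I)}(u)<\infty$, then the nonlinear estimate and Strichartz control $\nsc u$ on $[0,\sup I)$, so $u(t)$ converges in $\dot{H}_x^{s_c}$ as $t\uparrow\sup I$; re-running the local theory from a time near $\sup I$ extends $u$, contradicting maximality --- this is the blowup criterion (and symmetrically backward in time). Similarly, if $\sup I=+\infty$ and $S_{[0,\infty)}(u)<\infty$, the tails of $\int_0^\infty e^{-is\Delta}(|u|^pu)(s)\,ds$ are small in the Strichartz dual norm, so the integral converges in $\dot{H}_x^{s_c}$ and $u_+:=u_0-i\int_0^\infty e^{-is\Delta}(|u|^pu)(s)\,ds$ obeys $\|u(t)-e^{it\Delta}u_+\|_{\dot{H}_x^{s_c}}\to0$. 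Conversely, given $u_+\in\dot{H}_x^{s_c}$, the map $v\mapsto e^{it\Delta}u_++i\int_t^\infty e^{i(t-s)\Delta}(|v|^pv)(s)\,ds$ is a contraction on $[T,\infty)$ for $T$ large, since $\|e^{it\Delta}u_+\|_{L_{t,x}^{p(d+2)/2}([T,\infty))}\to0$; this produces a solution asymptotic to $e^{it\Delta}u_+$, which the local theory extends to all of $[0,\infty)$. The analogous statements backward in time are identical. As indicated above, I expect the one genuinely nontrivial point to be the critical nonlinear estimate and the attendant choice of exponents; the rest is a routine adaptation of Cazenave--Weissler.
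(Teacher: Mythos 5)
Your proposal is correct in outline, but it takes a genuinely different route from the paper. The paper never runs a contraction directly at critical regularity: it quotes the Cazenave--Weissler result (Theorem~\ref{standard lwp}) for data in the inhomogeneous space $H_x^{s_c}$, where the fixed point closes in mass-critical Strichartz spaces so that no fractional derivative ever acts on a difference of nonlinearities, and then removes the $L_x^2$ assumption a posteriori by approximating $u_0\in\dot H_x^{s_c}$ by $H_x^{s_c}$ data and invoking the stability theory (Lemma~\ref{short-time} and Theorem~\ref{stability}), whose proof via Lemma~\ref{derivatives of differences} is where $p\geq 1$ enters; Theorem~\ref{local theorem} is then derived from these two ingredients by standard arguments which the paper omits. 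Your plan is instead the classical direct critical contraction: a two-norm scheme in which $\nsc u$ is bounded in the admissible pair $\big(\tfrac{p(d+2)}{2},\tfrac{2dp(d+2)}{d^2p+2dp-8}\big)$ (so Sobolev embedding controls the scattering norm with no H\"older in time, which matters on unbounded intervals), while the contraction is run in a derivative-free scale-invariant metric; your key estimate is exactly the bound $\norm{\nsc(\vert u\vert^pu)}_{L_{t,x}^{2(d+2)/(d+4)}}\lesssim\norm{u}_{L_{t,x}^{p(d+2)/2}}^p\norm{\nsc u}_{L_{t,x}^{2(d+2)/d}}$ that the paper itself uses in the remark following Lemma~\ref{bilinear strichartz}, so it is certainly available in this exponent range. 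What your route buys is a self-contained proof with no detour through $H_x^{s_c}$ data and no logical dependence on the stability theorem; what it costs is precisely the bookkeeping you flag: closing the difference estimate at zero regularity requires either a non-admissible (``exotic'') inhomogeneous Strichartz estimate or a carefully chosen admissible zero-derivative pair together with an interpolation step that extracts smallness from the scattering norm, plus a verification that the ball is complete in the weak metric; none of this is automatic, though it is known to work in the inter-critical range (Kenig--Merle carry out exactly such an argument for $d=3$, $s_c=\tfrac12$). The paper's choice is economical rather than necessary: the stability theorem is needed anyway for the Palais--Smale argument in Section~\ref{reduction section}, so the removal of the $L^2$ hypothesis comes essentially for free there.
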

\begin{remark}
We note here that the notion of blowup described above corresponds exactly to the impossibility of extending the solution to a larger time interval in the class described in Definition \ref{solution}.
\end{remark}
In Section \ref{local theory}, we will establish this theorem as a corollary of a local well-posedness result of Cazenave--Weissler \cite{cw} (Theorem \ref{standard lwp}) and a stability result (Theorem \ref{stability}). This stability result plays an essential role in the argument we present, specifically in the proof of Theorem \ref{reduction}. 

\subsection{Outline of the proof of Theorem \ref{swamprat}}\label{outline} The proof is by contradiction. We first recall that Theorem \ref{swamprat} holds if we restrict to sufficiently small initial data (cf. Theorem \ref{local theorem}); thus, the failure of Theorem \ref{swamprat} would imply the existence of a `threshold' size, below which Theorem \ref{swamprat} holds, but above which we can find (almost) counterexamples. Using a limiting argument, we then find blowup solutions \emph{at} this threshold, so-called `minimal blowup solutions'. By carefully analyzing such solutions, we can show that they must have so many nice properties that in fact, they cannot exist at all.

The main property of these special counterexamples is that of almost periodicity modulo symmetries:

\begin{definition}[Almost periodic solutions]\label{almost periodic definition} Let $s_c>0$. A solution $u$ to \eqref{nls} with lifespan $I$ is said to be \emph{almost periodic (modulo symmetries)} if $u\in L_t^\infty\dot{H}_x^{s_c}(I\times\R^d)$ and there exist functions $N:I\to\R^+,$ $x:I\to\R^d$, and $C:\R^+\to \R^+$ such that for all $t\in I$ and all $\eta>0$,
			$$\int_{\vert x-x(t)\vert\geq \frac{C(\eta)}{N(t)}}\big\vert\nsc u(t,x)\big\vert^2\, dx
			+\int_{\vert\xi\vert\geq C(\eta)N(t)}\vert\xi\vert^{2s_c}\vert\wh{u}(t,\xi)\vert^2\, d\xi\leq\eta.$$
We call $N$ the \emph{frequency scale function}, $x$ the \emph{spatial center function}, and $C$ the \emph{compactness modulus function}. 
\end{definition} 
\begin{remark}\label{arzela ascoli} By the Arzel\`a--Ascoli theorem, a family of functions is precompact in $\dot{H}_x^{s_c}(\R^d)$ if and only if it is norm-bounded and there exists a compactness modulus function $C$ such that 
				$$\int_{\vert x\vert\geq C(\eta)}\big\vert\nsc f(x)\big\vert^2\,dx
				+\int_{\vert \xi\vert\geq C(\eta)}\vert\xi\vert^{2s_c}\vert\wh{f}(\xi)\vert^2\,d\xi\leq \eta$$ 
uniformly for all functions $f$ in the family. Thus, an equivalent formulation of Definition \ref{almost periodic definition} is the following: $u$ is almost periodic (modulo symmetries) if and only if
			$$\{u(t):t\in I\}\subset\{\lambda^{\frac{2}{p}}f(\lambda(x+x_0)):
			\lambda\in (0,\infty),\ x_0\in\R^d,\ \text{and}\ f\in K\}$$ 
for some compact $K\subset\dot{H}_x^{s_c}(\R^d).$ 

Furthermore, Sobolev embedding gives that every compact set in $\dot{H}_x^{s_c}(\R^d)$ is also compact in $L_x^{\frac{dp}{2}}(\R^d)$; thus, for any almost periodic solution $u:I\times\R^d\to\C$, we also have 
			$$\int_{\vert x-x(t)\vert\geq \frac{C(\eta)}{N(t)}}\vert u(t,x)\vert^{\frac{dp}{2}}\,dx\leq\eta$$ 
for all $t\in I$ and $\eta>0$.  
\end{remark}

\begin{remark}\label{another consequence} Another consequence of almost periodicity modulo symmetries is the existence of a function $c:\R^+\to\R^+$ so that for all $t\in I$ and all $\eta>0$, 
			$$\int_{\vert x-x(t)\vert\leq \frac{c(\eta)}{N(t)}}\big\vert\nsc u(t,x)\big\vert^2\, dx
			+\int_{\vert\xi\vert\leq c(\eta)N(t)}\vert\xi\vert^{2s_c}\vert\wh{u}(t,\xi)\vert^2\, d\xi\leq\eta.$$
\end{remark}

One can show (see \cite[Lemma 5.18]{KV}, for example) that the modulation parameters of almost periodic solutions obey the following local constancy property: 
\begin{lemma}[Local constancy]\label{local constancy} Let $u:I\times\R^d\to\C$ be a maximal-lifespan almost periodic solution to \eqref{nls}. Then there exists $\delta=\delta(u)>0$ such that if $t_0\in I$, then 
				$$[t_0-\delta N(t_0)^{-2},t_0+\delta N(t_0)^{-2}]\subset I,$$ 
with 
				$$N(t)\sim_u N(t_0)\indent\text{for}\quad\vert t-t_0\vert\leq\delta N(t_0)^{-2}.$$ 
\end{lemma}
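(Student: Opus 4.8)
The plan is to run the standard rescaling-and-compactness argument (cf.\ \cite[Lemma 5.18]{KV}); we may assume $u\not\equiv0$, since otherwise the statement is vacuous. First I would use the scaling $u(t,x)\mapsto\lambda^{2/p}u(\lambda^2 t,\lambda x)$, which is $\dot{H}_x^{s_c}$-critical and, together with space--time translations, maps maximal-lifespan almost periodic solutions to maximal-lifespan almost periodic solutions with the \emph{same} compactness modulus function $C$, while preserving $\norm{u}_{L_t^\infty\dot{H}_x^{s_c}}$ and $\inf_{t\in I}\norm{u(t)}_{\dot{H}_x^{s_c}}$. A dilation together with a time translation then reduces matters to the case $t_0=0$, $N(0)=1$, and it suffices to produce $\delta>0$ and $C_0\geq1$, depending only on $C$, $\norm{u}_{L_t^\infty\dot{H}_x^{s_c}}$, and $\inf_{t\in I}\norm{u(t)}_{\dot{H}_x^{s_c}}$, such that $[-\delta,\delta]\subset I$ and $C_0^{-1}\leq N(t)\leq C_0$ for all $|t|\leq\delta$; the general case follows by undoing the rescaling.

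The key input is a uniform local theory over precompact sets: for precompact $K\subset\dot{H}_x^{s_c}(\R^d)$ there exist $T=T(K)>0$ and $L=L(K)<\infty$ such that every maximal-lifespan solution to \eqref{nls} with data in $K$ has lifespan containing $[-T,T]$ with $S_{[-T,T]}\leq L$, and such that if $\phi_n\to\phi$ in $\dot{H}_x^{s_c}$ then the solutions with data $\phi_n$ converge, in $C_t\dot{H}_x^{s_c}([-T,T]\times\R^d)$, to the solution with data $\phi$. This is a standard consequence of Theorem~\ref{local theorem} and the stability result (Theorem~\ref{stability}): for a single datum, the global Strichartz estimate yields $e^{it\Delta}\phi\in L_{t,x}^{p(d+2)/2}(\R\times\R^d)$, hence smallness of its $L_{t,x}^{p(d+2)/2}$-norm over a short interval about $0$; covering $K$ by finitely many small balls then supplies a common $T$, after which the small-data and stability estimates give the remaining claims. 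Next I would introduce, for each $\tau\in I$, the rescaled solution
\[
v_\tau(s,y):=N(\tau)^{-2/p}\,u\big(\tau+N(\tau)^{-2}s,\ N(\tau)^{-1}y+x(\tau)\big),
\]
a maximal-lifespan solution with frequency scale function $N_{v_\tau}(s)=N(\tau)^{-1}N(\tau+N(\tau)^{-2}s)$ and spatial center $N(\tau)\big(x(\tau+N(\tau)^{-2}s)-x(\tau)\big)$, so that $N_{v_\tau}(0)=1$ and $x_{v_\tau}(0)=0$. A change of variables, using the identity $2s_c+\tfrac{4}{p}=d$, shows that the almost periodicity of $u$ (Definition~\ref{almost periodic definition} and Remarks~\ref{arzela ascoli}, \ref{another consequence}) transfers to each $v_\tau$ with the same modulus $C$; in particular $\{v_\tau(0):\tau\in I\}$ is bounded in $\dot{H}_x^{s_c}$ with uniformly small spatial and frequency tails, hence precompact by Arzel\`a--Ascoli (Remark~\ref{arzela ascoli}). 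Applying the uniform local theory with $K=\overline{\{v_\tau(0):\tau\in I\}}$ produces $T>0$ lying in the lifespan of every $v_\tau$; undoing the scaling at $\tau=t_0$ yields $[t_0-TN(t_0)^{-2},\,t_0+TN(t_0)^{-2}]\subset I$, which is the first assertion with $\delta:=T$.

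For the bound $N(t)\sim_u N(t_0)$ on this interval I would argue by contradiction. If it fails, then for each $n$ there are $\tau_n,\tau_n'\in I$ with $N(\tau_n)^2|\tau_n-\tau_n'|\leq\tfrac{1}{n}$ but $N(\tau_n')/N(\tau_n)\notin[n^{-1},n]$; after passing to a subsequence, $N(\tau_n')/N(\tau_n)\to0$ or $\to\infty$. Set $v_n:=v_{\tau_n}$ and $s_n:=N(\tau_n)^2(\tau_n'-\tau_n)\to0$, so that $N_{v_n}(s_n)=N(\tau_n')/N(\tau_n)$. By precompactness, along a further subsequence $v_n(0)\to\phi$ in $\dot{H}_x^{s_c}$, and $\phi\neq0$ because $\norm{v_n(0)}_{\dot{H}_x^{s_c}}=\norm{u(\tau_n)}_{\dot{H}_x^{s_c}}\geq\inf_{t\in I}\norm{u(t)}_{\dot{H}_x^{s_c}}>0$ (a standard consequence of small-data theory for nonzero maximal-lifespan solutions; cf.\ \cite{KV}). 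By the convergence statement in the uniform local theory, $v_n\to v_\phi$ in $C_t\dot{H}_x^{s_c}([-T,T]\times\R^d)$, where $v_\phi$ is the solution to \eqref{nls} with data $\phi$; since $s_n\in[-T,T]$ eventually, $v_n(s_n)\to v_\phi(0)=\phi$ in $\dot{H}_x^{s_c}$. On the other hand, the almost periodicity of $v_n$ with modulus $C$ (Definition~\ref{almost periodic definition} and Remark~\ref{another consequence}) forces $\norm{v_n(s_n)}_{\dot{H}_x^{s_c}}^2$ to concentrate in the frequency region $|\xi|\sim N_{v_n}(s_n)$, which recedes to arbitrarily high frequencies when $N_{v_n}(s_n)\to\infty$ and to arbitrarily low ones when $N_{v_n}(s_n)\to0$; thus, for every fixed $R>0$, the Littlewood--Paley pieces $P_{\leq R}v_n(s_n)$ (resp.\ $P_{\geq R}v_n(s_n)$) tend to $0$ in $\dot{H}_x^{s_c}$. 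Combined with $v_n(s_n)\to\phi$ this forces $\phi=0$, a contradiction; shrinking $\delta$ as needed then completes the proof.

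I expect the substantive work to lie in establishing the uniform local theory over precompact sets and in carefully verifying that the rescaled solutions $v_\tau$ inherit almost periodicity with the \emph{same} compactness modulus function; by comparison the limiting argument for the second assertion is routine, its only delicate point being the non-degeneracy $\phi\neq0$, which is precisely why one invokes the lower bound $\inf_{t\in I}\norm{u(t)}_{\dot{H}_x^{s_c}}>0$.
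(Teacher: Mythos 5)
Your proposal is correct, and it is essentially the argument the paper relies on: the paper gives no proof of Lemma \ref{local constancy}, deferring to \cite[Lemma 5.18]{KV}, and your rescaling-plus-compactness scheme (normalizing the modulation parameters, applying a uniform local theory/stability argument over the precompact normalized orbit to get the lifespan inclusion, and ruling out degeneration of $N$ by combining convergence of the rescaled data with the lower bound $\inf_{t\in I}\norm{u(t)}_{\dot{H}_x^{s_c}}>0$ for nonzero solutions) is exactly that standard proof. The only cosmetic caveat is that, as in \cite{KV}, the lemma should be read for nonzero $u$ (for $u\equiv 0$ one may simply take $N\equiv 1$), a point you already address.
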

Given a maximal-lifespan almost periodic solution $u:I\times\R^d\to \C$ to \eqref{nls}, Lemma~\ref{local constancy} allows us to subdivide $I$ into \emph{characteristic subintervals} $J_k$ on which $N(t)$ is constant and equal to some $N_k$, with $\vert J_k\vert \sim_u N_k^{-2}$. To do this, we need to modify the compactness modulus function by a time-independent multiplicative factor.

The local constancy property also has the following consequence (see \cite[Corollary~5.19]{KV}):
\begin{corollary}[$N(t)$ at blowup]\label{N at blowup} Let $u:I\times\R^d\to\C$ be a maximal-lifespan almost periodic solution to \eqref{nls}. If $T$ is any finite endpoint of $I$, then $N(t)\gtrsim_u\vert T-t\vert^{-1/2}$; in particular, $\lim_{t\to T}N(t)=\infty.$ If $I$ is infinite or semi-infinite, then for any $t_0\in I$, we have $N(t)\gtrsim\jap{t-t_0}^{-1/2}.$ 
\end{corollary}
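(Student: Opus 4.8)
The whole of this corollary is squeezed out of Lemma~\ref{local constancy}, so I would present it as a short bookkeeping argument rather than anything substantial.

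\emph{Finite endpoints.} Suppose $T=\sup I<\infty$ (the case $T=\inf I$ is identical after time reversal). Fix $t\in I$; since Lemma~\ref{local constancy} would otherwise push $I$ past $T$, necessarily $t<T$. That lemma gives $[t-\delta N(t)^{-2},\,t+\delta N(t)^{-2}]\subset I$, so in particular $t+\delta N(t)^{-2}\le\sup I=T$. Rearranging yields $\delta N(t)^{-2}\le|T-t|$, i.e.\ $N(t)\ge\sqrt{\delta}\,|T-t|^{-1/2}$, which is the claimed bound (implicit constant $\sqrt{\delta(u)}$); letting $t\to T$ forces $N(t)\to\infty$.

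\emph{Infinite or semi-infinite $I$.} Fix $t_0\in I$. By time reversal it suffices to bound $N(t)$ for $t\in I$ with $t\ge t_0$. If $\sup I<\infty$ then $\sup I$ is a finite endpoint and $0\le t-t_0<\sup I-t_0$ is bounded, so the first part already gives $N(t)\gtrsim_u|\sup I-t|^{-1/2}\ge(\sup I-t_0)^{-1/2}\gtrsim_{u,t_0}\jap{t-t_0}^{-1/2}$. If instead $\sup I=\infty$, apply Lemma~\ref{local constancy} at the point $t$: either $t_0<t-\delta N(t)^{-2}$, in which case $\delta N(t)^{-2}<t-t_0$ and hence $N(t)>\sqrt{\delta}\,|t-t_0|^{-1/2}\gtrsim\jap{t-t_0}^{-1/2}$; or $t_0\in[t-\delta N(t)^{-2},t+\delta N(t)^{-2}]$, in which case local constancy of $N$ on that interval gives $N(t)\sim_u N(t_0)$ while $|t-t_0|\le\delta N(t)^{-2}\lesssim_u N(t_0)^{-2}$ is bounded, so again $N(t)\gtrsim_{u,t_0}\jap{t-t_0}^{-1/2}$. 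Here I have used only the elementary inequality $|s|^{-1/2}\ge\jap{s}^{-1/2}$.

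There is no genuine obstacle; the only things to watch are the case split in the second part and the fact that, when $t$ is close to the reference point $t_0$, the implicit constant legitimately depends on the fixed value $N(t_0)$ as well as on $u$, which is consistent with the statement. If one prefers a more uniform-looking argument, one can instead subdivide $I\cap[t_0,\infty)$ into the characteristic subintervals $J_k$ furnished by Lemma~\ref{local constancy}, on which $N\equiv N_k$ with $|J_k|\sim_u N_k^{-2}$ and adjacent frequency scales comparable; then $N_k\le 2^{m-k}N_m$ for $k\le m$, so for $t\in J_m$ one gets $t-t_0\ge\sum_{k<m}|J_k|\gtrsim_u\sum_{k<m}N_k^{-2}\gtrsim N_m^{-2}$, i.e.\ $N(t)\gtrsim_u|t-t_0|^{-1/2}$, recovering the same conclusion.
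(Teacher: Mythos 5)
Your proof is correct and is essentially the standard argument the paper has in mind (it proves nothing itself, deferring to \cite[Corollary~5.19]{KV}): both parts are read off directly from Lemma~\ref{local constancy} by comparing the window $[t-\delta N(t)^{-2},t+\delta N(t)^{-2}]\subset I$ with the endpoint, respectively with $t_0$. Your handling of the case $|t-t_0|\le\delta N(t)^{-2}$, where the implicit constant picks up a dependence on $N(t_0)$ (hence on $u$ and the fixed $t_0$), is the right reading of the statement, and the alternative argument via characteristic subintervals works as well once the factor $2$ there is replaced by the $u$-dependent comparability constant for adjacent $N_k$.
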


Finally, we need the following result, which relates the frequency scale function of an almost periodic solution to its Strichartz norms. 
\begin{lemma}[Spacetime bounds]\label{spacetime bounds} Let $(d,s_c)$ satisfy \eqref{constraints2}, and suppose $u$ is an almost periodic solution to \eqref{nls} on a time interval $I$. Then 
$$\int_I N(t)^2\, dt\lesssim_u \xnorm{\nsc u}{2}{\frac{2d}{d-2}}{I}^2\lesssim_u 1+\int_I N(t)^2\, dt.$$ 
\end{lemma}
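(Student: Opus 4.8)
The plan is to pass to the characteristic subintervals furnished by Lemma~\ref{local constancy}, to prove the lower bound by a direct pointwise‐in‐time estimate, and to prove the upper bound from the local theory. Throughout we assume $u\not\equiv 0$ (the statement being vacuous otherwise). Using Lemma~\ref{local constancy} we decompose $I$ into characteristic intervals $J_k$, on each of which $N(t)$ is comparable to a constant $N_k$ and $|J_k|\sim_u N_k^{-2}$ (so that $\int_{J_k}N(t)^2\,dt\sim_u 1$), together with at most two shorter pieces abutting any non-blowup endpoints of $I$; in particular $I$ is a disjoint union of $\lesssim_u 1+\int_I N(t)^2\,dt$ subintervals $J'$, each with $\int_{J'}N(t)^2\,dt\lesssim_u 1$.

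For the lower bound, fix $t\in I$; no partition is needed. First we claim $m_u:=\inf_{t\in I}\norm{u(t)}_{\dot H_x^{s_c}}>0$. If not, choose $t_n\in I$ with $\norm{u(t_n)}_{\dot H_x^{s_c}}\to 0$; for $n$ large the small-data global result of Theorem~\ref{local theorem} (applied at time $t_n$, using the time-translation invariance of \eqref{nls}), together with uniqueness, forces $u$ to be global on $I$ with $S_I(u)\lesssim\norm{u(t_n)}_{\dot H_x^{s_c}}^{p(d+2)/2}$, and letting $n\to\infty$ gives $S_I(u)=0$, i.e.\ $u\equiv 0$, a contradiction. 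Now, applying Definition~\ref{almost periodic definition} with $\eta=m_u^2/2$, at most $m_u^2/2$ of the quantity $\norm{u(t)}_{\dot H_x^{s_c}}^2\geq m_u^2$ lies outside the ball $B_t:=\{x:|x-x(t)|<C(m_u^2/2)/N(t)\}$, so $\norm{\nsc u(t)}_{L_x^2(B_t)}^2\geq m_u^2/2$. Since $|B_t|\sim_u N(t)^{-d}$, Hölder's inequality on $B_t$ gives
\[
\tfrac{m_u}{\sqrt 2}\ \leq\ \norm{\nsc u(t)}_{L_x^2(B_t)}\ \leq\ |B_t|^{1/d}\norm{\nsc u(t)}_{L_x^{2d/(d-2)}(B_t)}\ \lesssim_u\ N(t)^{-1}\norm{\nsc u(t)}_{L_x^{2d/(d-2)}}\,,
\]
hence $\norm{\nsc u(t)}_{L_x^{2d/(d-2)}}\gtrsim_u N(t)$. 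Squaring and integrating in $t$ over $I$ yields $\int_I N(t)^2\,dt\lesssim_u\xnorm{\nsc u}{2}{\frac{2d}{d-2}}{I}^2$.

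For the upper bound, since $\xnorm{\nsc u}{2}{\frac{2d}{d-2}}{I}^2=\sum_{J'}\xnorm{\nsc u}{2}{\frac{2d}{d-2}}{J'}^2$ over the partition above, and there are $\lesssim_u 1+\int_I N(t)^2\,dt$ pieces $J'$, it suffices to prove $\xnorm{\nsc u}{2}{\frac{2d}{d-2}}{J'}\lesssim_u 1$ for every subinterval $J'$ with $\int_{J'}N(t)^2\,dt\lesssim_u 1$. Rescaling $J'$ so that its frequency scale is $\equiv 1$, we have $|J'|\lesssim_u 1$ and, by almost periodicity, the rescaled solution takes values, at every time, in a single compact set $K\subset\dot H_x^{s_c}$ (depending only on $u$) modulo symmetries. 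By the local well-posedness theory of Theorem~\ref{local theorem} together with the stability theory of Theorem~\ref{stability} (cf.\ the standard facts about almost periodic solutions in \cite{KV}), there exist $T_\ast=T_\ast(u)>0$ and $L=L(u)<\infty$ so that any solution whose data lies in $K$ modulo symmetries has all relevant Strichartz norms bounded by $L$ on the interval of length $T_\ast$ following the data time. Since the rescaled solution has data in $K$ modulo symmetries at \emph{every} time in $J'$, we may split $J'$ into $\lesssim_u|J'|/T_\ast\lesssim_u 1$ intervals of length $T_\ast$, apply this bound on each, and sum; undoing the rescaling gives $\xnorm{\nsc u}{2}{\frac{2d}{d-2}}{J'}\lesssim_u 1$.

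The main obstacle is this last, uniform-in-$J'$ bound. Because the problem is critical, the local theory only provides an existence time—and a Strichartz bound—depending on the \emph{profile} of the initial data rather than merely on its $\dot H_x^{s_c}$-norm, so extracting a single $T_\ast(u)$ and a single $L(u)$ valid across all (infinitely many) characteristic intervals genuinely requires the precompactness of $\{u(t):t\in I\}$ modulo symmetries together with the continuous dependence provided by the stability result. By contrast, once $m_u>0$ is established, the lower bound amounts to little more than Hölder's inequality fed by the spatial concentration built into the definition of almost periodicity.
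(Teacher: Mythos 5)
Your argument is correct and is essentially the proof the paper has in mind: the paper omits details and points to the adaptation of \cite[Lemma 5.21]{KV}, whose key observation — that $\int_I N(t)^2\,dt$ counts the characteristic subintervals — is exactly what drives your upper bound, while your lower bound (spatial concentration from Definition~\ref{almost periodic definition} plus H\"older on the ball of radius $C(\eta)/N(t)$, after noting $\inf_t\norm{u(t)}_{\dot H_x^{s_c}}>0$) is the same as in that reference. Your handling of the uniform per-interval bound via precompactness modulo symmetries and the local/stability theory is the standard and intended route, so no gaps to report.
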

One can prove this result by adapting the proof of \cite[Lemma 5.21]{KV}; the key is to notice that $\int_I N(t)^2\,dt$ is approximately the number of characteristic subintervals inside $I$. The restriction on $(d,s_c)$ is not actually necessary, but it covers our cases of interest. 

With these preliminaries established, we can now describe the first major step in the proof of Theorem \ref{swamprat}. 

\begin{theorem}[Reduction to almost periodic solutions]\label{reduction} Suppose that Theorem~\ref{swamprat} fails for $(d,s_c)$ satisfying \eqref{constraints2}. Then there exists a maximal-lifespan solution $u:I\times\R^d\to\C$ to \eqref{nls} such that $u\in L_t^\infty\dot{H}_x^{s_c}(I\times\R^d)$, $u$ is almost periodic modulo symmetries, and $u$ blows up both forward and backward in time. Moreover, $u$ has minimal $L_t^\infty \dot{H}_x^{s_c}$-norm among all blowup solutions; i.e., $$\sup_{t\in I}\norm{u(t)}_{\dot{H}_x^{s_c}(\R^d)}\leq\sup_{t\in J}\norm{v(t)}_{\dot{H}_x^{s_c}(\R^d)}$$ for all maximal-lifespan solutions $v:J\times\R^d\to\C$ that blow up in at least one time direction. 
\end{theorem}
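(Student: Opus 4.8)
The plan is to run the standard concentration-compactness machinery of Kenig--Merle / Killip--Vi\c{s}an, adapted to the $\dot H^{s_c}_x$-bounded setting as in \cite{KV5}. First I would define, for each $E \geq 0$,
\begin{equation}\nonumber
L(E) := \sup\bigl\{ S_{I}(u) : u \text{ a maximal-lifespan solution to \eqref{nls}, } \|u\|_{L_t^\infty \dot H^{s_c}_x(I\times\R^d)} \leq E \bigr\},
\end{equation}
and set $E_c := \sup\{ E : L(E) < \infty\}$. The small-data theory in Theorem \ref{local theorem} gives $L(E) \lesssim E^{p(d+2)/2}$ for $E^2 < \eta_0$, so $E_c > 0$; the assumed failure of Theorem \ref{swamprat} forces $E_c < \infty$. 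By definition of $E_c$ there is a sequence of maximal-lifespan solutions $u_n : I_n \times \R^d \to \C$ with $\|u_n\|_{L_t^\infty \dot H^{s_c}_x} \to E_c$ (we may take the bound $\le E_c$ by absorbing small corrections, or work with $E_n \to E_c$) and $S_{I_n}(u_n) \to \infty$. After rescaling and time-translating, I would arrange that each $u_n$ blows up at rate-one on both sides of $0$ in the sense that $S_{[0,\sup I_n)}(u_n) = S_{(\inf I_n, 0]}(u_n) = \tfrac12 S_{I_n}(u_n) \to \infty$.

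The heart of the argument is a \emph{linear profile decomposition} for sequences bounded in $\dot H^{s_c}_x(\R^d)$ together with a \emph{nonlinear profile decomposition} obtained by solving \eqref{nls} with each profile as data. I would apply the profile decomposition to the sequence $u_n(0) \in \dot H^{s_c}_x$, writing
\begin{equation}\nonumber
u_n(0) = \sum_{j=1}^{J} g_n^j e^{it_n^j \Delta} \phi^j + w_n^J,
\end{equation}
where $g_n^j$ are symmetries (scaling and translation), the parameters are asymptotically orthogonal, and the remainder $w_n^J$ is asymptotically small in the relevant Strichartz sense as $J \to \infty$ and $n\to\infty$. One then replaces each profile by the corresponding nonlinear solution and invokes the stability theory (Theorem \ref{stability}) to compare the genuine solution $u_n$ with the superposition of nonlinear profiles plus the linear evolution of $w_n^J$. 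A Pythagoras-type decoupling of the $\dot H^{s_c}_x$-norms (this is where Lemma \ref{decouple lemma} enters, hence the exclusion of $(5,\tfrac12)$) shows that if more than one profile were nontrivial, or if a single profile had strictly smaller $\dot H^{s_c}_x$-norm than $E_c$, then every nonlinear profile would have subcritical $\dot H^{s_c}_x$-mass and hence finite scattering size $S < \infty$; stability would then force $S_{I_n}(u_n) < \infty$ for large $n$, a contradiction. Therefore exactly one profile survives, with $\|\phi^1\|_{\dot H^{s_c}_x} = E_c$ and $w_n^J \to 0$, and (after removing the symmetries $g_n^1$ and the time-shift $t_n^1$, which one checks converges or can be sent to $0$) the solution $u$ with data $\phi^1$ — call it $u: I \times \R^d \to \C$ — is a minimal blowup solution: it has $\|u\|_{L_t^\infty \dot H^{s_c}_x} = E_c$, minimal among all blowup solutions by construction of $E_c$, and $S_{[0,\sup I)}(u) = S_{(\inf I, 0]}(u) = \infty$ so that it blows up in both time directions (here the blowup criterion of Theorem \ref{local theorem} identifies infinite scattering size with non-extendability).

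It remains to upgrade this minimal blowup solution to one that is \emph{almost periodic modulo symmetries}. For this I would show that the orbit $\{u(t) : t \in I\}$ is precompact in $\dot H^{s_c}_x(\R^d)$ modulo the scaling and translation symmetries; by Remark \ref{arzela ascoli} this is exactly almost periodicity. The standard argument: given any sequence of times $t_n \in I$, the translated solutions $u(\cdot + t_n)$ are again minimal blowup solutions (with $\|u(t_n)\|_{\dot H^{s_c}_x} = E_c$, blowing up in both directions), so running the profile decomposition on $u(t_n)$ and repeating the single-profile rigidity argument above forces $u(t_n)$, modulo symmetries, to converge strongly along a subsequence in $\dot H^{s_c}_x$; this precompactness yields the functions $N(t)$ and $x(t)$ and the compactness modulus function $C$.

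The main obstacle I anticipate is the profile decomposition and its companion nonlinear analysis: establishing a linear profile decomposition adapted to $\dot H^{s_c}_x$ (with the correct symmetry group and asymptotic orthogonality), verifying the $\dot H^{s_c}_x$-norm decoupling (Lemma \ref{decouple lemma} — delicate precisely because it fails when $p=1$, $d=5$), and handling the nonlinear profiles whose time-parameters $t_n^j$ escape to $\pm\infty$, where one must use the scattering statement of Theorem \ref{local theorem} to realize them as asymptotic linear evolutions. Feeding everything into the stability theorem requires care that the error terms — both from the remainder $w_n^J$ and from the cross terms in the sum of nonlinear profiles — are genuinely small in the exotic Strichartz spaces at regularity $s_c$; the Hölder-continuity issues with $G(z)=|z|^p$ mentioned in the introduction are exactly why the hypothesis $p \geq 1$ is imposed here.
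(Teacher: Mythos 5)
Your outline follows the same route as the paper: define $L(E)$ and the critical threshold $E_c$, apply the linear profile decomposition (Lemma \ref{linear profile decomposition}) at a sequence of times, build nonlinear profiles, feed the superposition plus linear remainder into the stability theorem (Theorem \ref{stability}), conclude there is exactly one profile, and then obtain almost periodicity by running the same Palais--Smale-type rigidity on arbitrary time translates. So the skeleton is right. But there are two points where your proposal passes over precisely the content that the paper identifies as the actual work. First, you attribute Lemma \ref{decouple lemma} to the ``Pythagoras-type decoupling of the $\dot H^{s_c}_x$-norms.'' That norm decoupling is \eqref{strong decoupling}, and it comes for free from the linear profile decomposition. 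Lemma \ref{decouple lemma} is something else: it is the decoupling of the \emph{nonlinearity}, i.e.\ the statement that $\nsc\bigl[F\bigl(\sum_j v_n^j\bigr)-\sum_j F(v_n^j)\bigr]$ and the cross term with $e^{it\Delta}w_n^J$ vanish in $N^0$, which is exactly what is needed to verify that $u_n^J$ is an approximate solution before Theorem \ref{stability} can be invoked. Your proposal flags ``error terms in exotic Strichartz spaces'' as an anticipated obstacle but offers no mechanism for handling them, and this is where all known arguments fail in the inter-critical setting: the Keraani-style pointwise estimates used at $s_c\in\{0,1\}$ are unavailable because $\nsc$ is nonlocal, the square-function substitute of \cite{KV:supercritical} needs $s_c>1$, and the paraproduct argument of \cite{KM} needs a polynomial nonlinearity. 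The paper's resolution — reopening the fractional chain rule frequency by frequency with the difference operator $\delta_y$, controlling the resulting terms by maximal and vector-maximal functions together with \eqref{a general inequality}, and running a local smoothing argument (the proof of \eqref{weird local smoothing}) for the terms in which $w_n^J$ only appears with derivatives — is the chief novelty of the section, and it is also where the exclusion of $(d,s_c)=(5,\tfrac12)$ genuinely originates: when $p=1$ the expansion \eqref{expansion1}--\eqref{expansion3} no longer produces terms in which $f$ is paired against $g$ with the difference landing on one of them (Remark \ref{goodbye p=1}).

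Second, your step ``every nonlinear profile would have subcritical $\dot H^{s_c}_x$-mass and hence finite scattering size'' quietly uses that the decoupling of the $\dot H^{s_c}_x$-norms at $t=0$ persists for all times in the lifespans of the nonlinear profiles. Since $L(E)$ is defined through the $L_t^\infty\dot H^{s_c}_x$-norm and this norm is \emph{not} conserved by \eqref{nls}, this persistence is not automatic; the paper explicitly notes that one must adapt \cite[Lemma 3.3]{KV5} to obtain it before the criticality of $E_c$ can be used to rule out multiple profiles or a single subcritical profile. Neither issue invalidates your strategy, but as written the proposal assumes exactly the two ingredients whose proofs constitute the paper's argument.
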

We sketch a proof of Theorem \ref{reduction} in Section \ref{reduction section}. By now, the reduction to almost periodic solutions is a fairly standard technique in the study of dispersive equations at critical regularity. Keraani \cite{Keraani:L2} first proved the existence of minimal blowup solutions (in the mass-critical setting), while Kenig--Merle \cite{kenig merle} were the first to use them as a tool to prove global well-posedness (in the energy-critical setting). For many more examples of these techniques, one can refer to \cite{KM, KM NLW, KTV, KV5, KV, KV:supercritical, KV NLW, KV NLW2, KVZ, TVZ, TVZ:sloth}, for example.

Still, while the underlying ideas are well-established, we will see that to carry out the reduction in the cases we consider will require some new ideas and careful analysis; indeed, the resolution of this problem is the chief novelty of this paper. One of the principal difficulties arises in the proof of Lemma \ref{decouple lemma}, in which we establish a decoupling of nonlinear profiles in order to show that a sequence of approximate solutions to \eqref{nls} converges in some sense to a true solution. 

For the mass- and energy-critical cases, one can use pointwise estimates and well-known arguments of \cite{keraani} to establish this decoupling; in our setting, the nonlocal nature of $\nsc$ prevents the direct use of any pointwise estimates. The authors of \cite{KV:supercritical}, who dealt with some cases in the energy-supercritical regime, overcame this difficulty by establishing analogous pointwise estimates for a square function of Strichartz that shares estimates with $\nsc$ (see \cite{strichartz square}). With such estimates in hand, the usual arguments can then be pushed through. Their approach does not work in our setting, however, as it strongly relies on the fact that $s_c>1$. In \cite{KM}, the authors treat a cubic nonlinearity in dimension $d=3$ (in which case $s_c=\tfrac12$); by exploiting the polynomial nature of the nonlinearity and employing a paraproduct estimate, they too overcome the nonlocal nature of $\nsc$ and put themselves in a position where the standard arguments are applicable. In our setting, the combination of fractional derivatives and non-polynomial nonlinearities presents a new technical challenge. Ultimately, the resolution of this problem comes from a careful reworking of the proof of the fractional chain rule, in which the Littlewood--Paley square function allows us to work at the level of individual frequencies. By making use of some tools from harmonic analysis (including maximal and vector maximal inequalities), we are eventually able to adapt the standard arguments to establish the decoupling in our setting. For further discussion, see Section \ref{reduction section}.

After establishing Theorem \ref{reduction}, we make some further refinements to the class of solutions we consider. First, we can use a rescaling argument to restrict our attention to almost periodic solutions that do not escape to arbitrarily low frequencies on at least half of their maximal lifespan, say $[0,T_{max})$. We will not include the details here; one can instead refer to Section 4 in any of \cite{KTV, KV5, TVZ:sloth}. Next, following \cite{dodson:3}, we will divide these solutions into two classes that depend on the control given by the interaction Morawetz inequality; these classes will correspond to the `rapid frequency-cascade' and `quasi-soliton' scenarios. Finally, as described above, we use Lemma~\ref{local constancy} to subdivide $[0,T_{max})$ into characteristic subintervals $J_k$ and set $N(t)$ to be constant and equal to $N_k$ on each $J_k$, with $\vert J_k\vert\sim_u N_k^{-2}$. In this way, we arrive at

\begin{theorem}[Two special scenarios for blowup]\label{special scenarios} Suppose that Theorem \ref{swamprat} fails for $(d,s_c)$ satisfying \eqref{constraints2}. Then there exists an almost periodic solution $u:[0,T_{max})\times\R^d\to\C$ that blows up forward in time, with $$N(t)\equiv N_k\geq 1$$ for each $t\in J_k$, where $[0,T_{max})=\cup_k J_k$. Furthermore,
$$\text{either}\quad\int_0^{T_{max}}N(t)^{3-4s_c}\, dt<\infty\quad\text{or}\quad\int_0^{T_{max}} N(t)^{3-4s_c}\, dt=\infty.$$
\end{theorem}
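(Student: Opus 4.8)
The plan is to assemble three ingredients already in play — Theorem~\ref{reduction}, a standard rescaling reduction, and Lemma~\ref{local constancy} — after which the displayed alternative is a tautology. I would start from Theorem~\ref{reduction}: since Theorem~\ref{swamprat} is assumed to fail for our $(d,s_c)$, it produces a maximal-lifespan almost periodic solution $u:I\times\R^d\to\C$ with $u\in L_t^\infty\dot H_x^{s_c}(I\times\R^d)$, of minimal $L_t^\infty\dot H_x^{s_c}$-norm among all blowup solutions, that blows up both forward and backward in time. I will write $N$, $x$, $C$ for its modulation parameters as in Definition~\ref{almost periodic definition}.

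Next I would run the by-now-standard normalization argument to reduce to a solution that does not escape to arbitrarily low frequencies on (at least) half of its lifespan; by the time-reversal symmetry of \eqref{nls}, this half may be taken to be the forward one. The mechanism is that if $\liminf_{t\to\sup I}N(t)=0$, one extracts times $t_n$ with $N(t_n)\to 0$, rescales $u$ about $t_n$ by $N(t_n)$ and translates $t_n$ to the origin, and uses the almost periodicity of $u$ together with the minimality of $\|u\|_{L_t^\infty\dot H_x^{s_c}}$ to pass to a limiting solution, which is again a minimal blowup solution but now with controlled low frequencies. After this reduction, a time translation, and one final application of the scaling $u(t,x)\mapsto\lambda^{2/p}u(\lambda^2 t,\lambda x)$, I may assume $u$ is posed on $[0,T_{max})$ with $T_{max}=\sup I$, blows up forward in time (so $S_{[0,T_{max})}(u)=\infty$), remains almost periodic of minimal norm, and satisfies $N(t)\gtrsim_u 1$ — hence, after rescaling once more, $N(t)\geq 1$ — on $[0,T_{max})$. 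I would not reproduce the details, as they are routine and appear in Section~4 of any of \cite{KTV, KV5, TVZ:sloth}.

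Finally I would apply Lemma~\ref{local constancy}. After modifying the compactness modulus function $C$ by a time-independent multiplicative factor (as described following that lemma), the local constancy of $N$ together with $N(t)\geq 1$ lets me partition $[0,T_{max})=\bigcup_k J_k$ into consecutive characteristic subintervals on each of which $N(t)$ is redefined to equal a single value $N_k$, with $|J_k|\sim_u N_k^{-2}$; since $N$ is bounded below on $[0,T_{max})$ after the normalization, each $N_k\geq 1$. The stated alternative is then automatic: the non-negative quantity $\int_0^{T_{max}}N(t)^{3-4s_c}\,dt$ — which, by $|J_k|\sim_u N_k^{-2}$, is comparable to $\sum_k N_k^{1-4s_c}$ — either is finite or is infinite, and these two cases are precisely the rapid frequency-cascade and quasi-soliton scenarios analyzed in the following sections. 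I do not expect a genuine obstacle in this statement; the one step requiring real care is the rescaling reduction of the second paragraph, where minimality and the almost-periodic compactness must be used in tandem — but this is exactly the argument carried out in the cited references, so I would simply quote it.
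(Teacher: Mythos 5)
Your proposal is correct and follows essentially the same route as the paper: invoke Theorem~\ref{reduction}, quote the standard rescaling/normalization argument (with the same references, \cite{KTV, KV5, TVZ:sloth}) to arrange $N(t)\geq 1$ on $[0,T_{max})$, then use Lemma~\ref{local constancy} to subdivide into characteristic subintervals with $N(t)\equiv N_k$ and $|J_k|\sim_u N_k^{-2}$, after which the stated dichotomy is automatic. The paper itself gives no more detail than this, so nothing further is required.
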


Thus, to establish Theorem \ref{swamprat}, it remains to preclude the existence of the almost periodic solutions described in Theorem \ref{special scenarios}. The main technical tool we will use to achieve this will be a long-time Strichartz inequality, Proposition \ref{lts lemma}. Such inequalities were originally developed by Dodson \cite{dodson:3} for almost periodic solutions in the mass-critical setting; for variants in the energy-critical setting, see \cite{revisit2, revisit1}. In this paper, we establish a long-time Strichartz estimate for the first time in the inter-critical regime. The proof of Proposition \ref{lts lemma} is by induction; the recurrence relation is derived with the aid of Strichartz estimates, together with a paraproduct estimate (Lemma \ref{paraproduct}) and a bilinear Strichartz inequality (Lemma \ref{bilinear strichartz}).

In Section \ref{frequency-cascade section}, we preclude the rapid frequency-cascade scenario, that is, almost periodic solutions as in Theorem \ref{special scenarios} for which $\smallint_0^{T_{max}}N(t)^{3-4s_c}\,dt<\infty.$ This proof requires two ingredients. The first ingredient is the long-time Strichartz inequality (Proposition \ref{lts lemma}), while the second is  the following

\begin{proposition}[No-waste Duhamel formula]\label{no waste} Let $u:[0,T_{max})\times\R^d\to\mathbb{C}$ be an almost periodic solution to \eqref{nls} with $N(t)\equiv N_k\geq 1$ on each characteristic subinterval $J_k$. Then for all $t\in[0,T_{max}),$ we have
$$u(t)=i\lim_{T\to T_{max}}\int_t^T e^{i(t-s)\Delta}(\vert u\vert^p u)(s)\, ds,$$ where the limits are taken in the weak $\dot{H}_x^{s_c}$ topology. 
\end{proposition}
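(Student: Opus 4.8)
The plan is to reduce matters to showing that, for each fixed $t\in[0,T_{max})$,
\[
e^{i(t-T)\Delta}u(T)\rightharpoonup 0\quad\text{in }\dot{H}_x^{s_c}(\R^d)\qquad\text{as }T\to T_{max}.
\]
Granting this, the Proposition follows: for $t<T<T_{max}$ the function $u$ solves \eqref{nls} on $[t,T]$, so the Duhamel formula based at time $T$ gives $u(t)=e^{i(t-T)\Delta}u(T)+i\int_t^T e^{i(t-s)\Delta}(|u|^pu)(s)\,ds$, and $T\to T_{max}$ removes the first summand in the weak topology. Since $\norm{e^{i(t-T)\Delta}u(T)}_{\dot{H}_x^{s_c}}=\norm{u(T)}_{\dot{H}_x^{s_c}}\leq\norm{u}_{L_t^\infty\dot{H}_x^{s_c}}$ is bounded uniformly in $T$, and the $\phi$ with $\wh{\phi}\in C_c^\infty(\R^d\setminus\{0\})$ are dense in $\dot{H}_x^{-s_c}$, it suffices to show $\ip{u(T_n)}{e^{i(T_n-t)\Delta}\phi}\to0$ in the $L_x^2$ pairing for every such $\phi$ and every sequence $T_n\uparrow T_{max}$; and for this it is enough to extract a suitable subsequence from each such sequence. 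Pass to a subsequence with $N(T_n)\to N_\infty\in[1,+\infty]$.

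\emph{Case $N_\infty=+\infty$}, which is automatic when $T_{max}<\infty$ by Corollary~\ref{N at blowup}, and in particular handles all finite $T_{max}$. Fix $R$ with $\operatorname{supp}\wh{\phi}\subset\{|\xi|\leq R\}$ and a Littlewood--Paley projection $P_{\leq R}$ equal to the identity there; as $e^{i\tau\Delta}$ preserves Fourier support, $\ip{u(T_n)}{e^{i(T_n-t)\Delta}\phi}=\ip{P_{\leq R}u(T_n)}{e^{i(T_n-t)\Delta}\phi}$. For any $\eta>0$ and all large $n$ we have $2R\leq c(\eta)N(T_n)$, so $\norm{P_{\leq R}u(T_n)}_{\dot{H}_x^{s_c}}\lesssim\eta^{1/2}$ by Remark~\ref{another consequence}; Cauchy--Schwarz and the unitarity of $e^{i\tau\Delta}$ on $\dot{H}_x^{-s_c}$ then give $\big|\ip{u(T_n)}{e^{i(T_n-t)\Delta}\phi}\big|\lesssim\eta^{1/2}\norm{\phi}_{\dot{H}_x^{-s_c}}$, and $\eta\downarrow0$ concludes. (In words: as $T\to T_{max}$ the solution escapes to ever higher frequencies, beyond the reach of a fixed test function.)

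\emph{Case $N_\infty<\infty$}, which forces $T_{max}=\infty$; here I would use dispersion. By Remark~\ref{arzela ascoli}, $u(T_n)=\lambda_n^{2/p}f_n(\lambda_n(\cdot+x_n))$ with $\lambda_n\sim N(T_n)\in[c,C]$ and, after a further subsequence, $f_n\to f_*$ in $\dot{H}_x^{s_c}$. By the scaling and translation symmetries the pairing equals $\ip{f_n}{e^{i\sigma_n\Delta}\Phi_n}$, where $\sigma_n:=(T_n-t)\lambda_n^{2}\to+\infty$ and $\Phi_n$ is a translate of $\lambda_n^{2/p-d}\phi(\cdot/\lambda_n)$; since $s_c=\tfrac d2-\tfrac2p$, one has $\norm{\Phi_n}_{\dot{H}_x^{-s_c}}=\norm{\phi}_{\dot{H}_x^{-s_c}}$ and $\norm{\Phi_n}_{L_x^1}=\lambda_n^{2/p}\norm{\phi}_{L_x^1}\lesssim1$. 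Replacing $f_n$ by $f_*$ costs only $\big|\ip{f_n-f_*}{e^{i\sigma_n\Delta}\Phi_n}\big|\leq\norm{f_n-f_*}_{\dot{H}_x^{s_c}}\norm{\phi}_{\dot{H}_x^{-s_c}}\to0$. Given $\eta>0$, split $f_*=(1-\chi_R)f_*+\chi_Rf_*$ with $\chi_R$ a smooth cutoff to $\{|x|\geq R\}$ and $R=R(\eta)$ large enough that $\norm{\chi_Rf_*}_{\dot{H}_x^{s_c}}\leq\eta$ (possible since $\chi_Rf_*\to0$ in $\dot{H}_x^{s_c}$). Then $\big|\ip{\chi_Rf_*}{e^{i\sigma_n\Delta}\Phi_n}\big|\leq\norm{\chi_Rf_*}_{\dot{H}_x^{s_c}}\norm{\Phi_n}_{\dot{H}_x^{-s_c}}\leq\eta\norm{\phi}_{\dot{H}_x^{-s_c}}$, while $(1-\chi_R)f_*$ lies in a fixed ball, so $\norm{(1-\chi_R)f_*}_{L_x^1}\lesssim_\eta1$ by H\"older and Sobolev embedding, and the dispersive estimate (uniform in the translation parameter) gives $\big|\ip{(1-\chi_R)f_*}{e^{i\sigma_n\Delta}\Phi_n}\big|\lesssim_\eta\sigma_n^{-d/2}\norm{\Phi_n}_{L_x^1}\to0$ as $n\to\infty$. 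Sending $n\to\infty$ and then $\eta\downarrow0$ finishes the proof.

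I expect the $T_{max}=\infty$ case to be the main obstacle, and inside it the deduction of smallness of the smoothly truncated function $\chi_Rf_*$ in the critical space $\dot{H}_x^{s_c}$ from the almost-periodicity bound (which only controls $\mathbf{1}_{\{|x|\geq R\}}\nsc f_*$ in $L_x^2$): since $s_c$ is non-integer one cannot commute $\nsc$ past the cutoff, so this step requires the fractional product/commutator estimates developed for Lemma~\ref{decouple lemma}, and the order of limits ($n\to\infty$ before $\eta\downarrow0$) must be respected. The hypothesis $N(t)\geq1$ is used essentially here, both to bound the physical width of the bubble (so the bulk lies in $L_x^1$) and to bound $\lambda_n$ above (so $\sigma_n\to\infty$); no control on $x(t)$ is needed, as the dispersive $L_x^1\to L_x^\infty$ bound and the conservation of $\norm{\cdot}_{\dot{H}_x^{-s_c}}$ are translation invariant. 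Finally, in the application to the rapid frequency-cascade scenario one always has $N(t)\to\infty$ (forced by $N(t)\geq1$ and $\int_0^{T_{max}}N(t)^{3-4s_c}\,dt<\infty$), so only the first case is needed there.
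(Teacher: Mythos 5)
Your argument is correct and is essentially the argument the paper has in mind: the paper omits the proof, deferring to an adaptation of \cite[Proposition 5.23]{KV}, whose strategy is exactly your reduction to $e^{i(t-T)\Delta}u(T)\rightharpoonup 0$ in $\dot H_x^{s_c}$, followed by frequency escape (via Remark~\ref{another consequence} and Corollary~\ref{N at blowup}) when $N(T)\to\infty$ and by almost periodicity combined with the dispersive estimate when $T_{max}=\infty$ with $N(T_n)$ bounded. Your twist of first passing to the compact limit $f_*$ and only then truncating in space (so that the tail smallness in $\dot H_x^{s_c}$ is a property of a fixed function, justified by density plus the uniform boundedness of multiplication by the cutoff from Lemma~\ref{fractional product rule} and Sobolev embedding) is a legitimate and clean way to carry out that adaptation, and the normalization $N(t)\geq 1$ enters exactly where you say it does.
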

To prove Proposition \ref{no waste}, one can adapt the proof of \cite[Proposition 5.23]{KV}; we omit the details. Using Proposition \ref{no waste} and Strichartz estimates, we can upgrade the information given by Proposition \ref{lts lemma} to see that a rapid frequency-cascade solution must have finite mass. In fact, we can show that the solution has zero mass, which contradicts that the solution blows up.

In Section \ref{Quasi}, we preclude the quasi-soliton scenario, that is, almost periodic solutions as in Theorem \ref{special scenarios} for which $\smallint_0^{T_{max}}N(t)^{3-4s_c}\,dt=\infty.$ The main ingredient is a frequency-localized interaction Morawetz inequality (Proposition \ref{flim}), which we prove in Section \ref{flim section}. To establish this estimate, we begin with the usual interaction Morawetz inequality, truncate to high frequencies, and use Proposition \ref{lts lemma} to control the resulting error terms. (As described above, one of these error terms eventually forces us to exclude the cases $(d,s_c)\in\{3\}\times(\tfrac34,1)$ from Theorem \ref{swamprat}; see Remark~\ref{exclusion}.) To rule out the quasi-soliton scenario and thereby complete the proof of Theorem \ref{swamprat}, we notice that the frequency-localized interaction Morawetz inequality provides uniform control over $\int_I N(t)^{3-4s_c}\,dt$ for all compact time intervals $I\subset[0,T_{max})$; thus we can derive a contradiction by taking $I$ to be sufficiently large inside of $[0,T_{max})$. 

\subsection*{Acknowledgements} I owe many thanks to my advisors, Rowan Killip and Monica Vi\c{s}an, for all of their guidance and support. I am very grateful to them, not only for bringing this problem to my attention, but also for engaging in many helpful discussions, and for a careful reading of the manuscript. This work was supported in part by NSF grant DMS-1001531 (P.I. Rowan Killip). 

\section{Notation and useful lemmas}\label{lemmata}
\subsection{Some notation} We write $X\lesssim Y$ or $Y\gtrsim X$ whenever $X\leq CY$ for some $C>0$. If $X\lesssim Y\lesssim X$, we write $X\sim Y$. If the implicit constant $C$ depends on the dimension $d$ or the power $p$, this dependence will be suppressed; dependence on additional parameters will be indicated with subscripts. For example, $X\lesssim_u Y$ indicates that $X\leq CY$ for some $C=C(u).$ 

For a spacetime slab $I\times\R^d$, we write $L_t^qL_x^r(I\times\R^d)$ to denote the Banach space of functions $u:I\times\R^d\to\C$ equipped with the norm
				$$\xnorm{u}{q}{r}{I}:=\left(\int_I \norm{u(t)}_{L_x^r(\R^d)}^q\,dt\right)^{1/q},$$ 
with the usual conventions when $q$ or $r$ is infinity. If $q=r$, we abbreviate $L_t^qL_x^q=L_{t,x}^q.$ At times we will also abbreviate $\norm{f}_{L_x^r(\R^d)}$ to $\norm{f}_{L_x^r}$ or $\norm{f}_r.$ 

We define the Fourier transform on $\R^d$ by $$\wh{f}(\xi):=(2\pi)^{-d/2}\int_{\R^d} e^{-ix\cdot\xi}f(x)\,dx.$$ 
For $s\in \R$, we can then define the fractional differentiation operator $\vert\nabla\vert^s$ via $$\wh{\vert\nabla\vert^s f}(\xi):=\vert\xi\vert^s\wh{f}(\xi),$$ which in turn defines the homogeneous Sobolev norm $$\norm{f}_{\dot{H}_x^s(\R^d)}:=\norm{\vert\nabla\vert^s f}_{L_x^2(\R^d)}.$$ 
\subsection{Basic harmonic analysis} Let $\varphi$ be a radial bump function supported in the ball $\{\xi\in\R^d:\vert \xi\vert\leq\tfrac{11}{10}\}$ and equal to 1 on the ball $\{\xi\in\R^d:\vert\xi\vert\leq 1\}.$ For $N\in 2^{\mathbb{Z}}$, we define the Littlewood--Paley projection operators via
\begin{align*}
&\wh{P_{\leq N} f}(\xi):=\wh{f_{\leq N}}(\xi):=\varphi(\xi/N)\wh{f}(\xi),
\\ &\wh{P_{> N}f}(\xi):=\wh{f_{> N}}(\xi):=(1-\varphi(\xi/N))\wh{f}(\xi),
\\ &\wh{P_{N}f}(\xi):=\wh{f_N}(\xi):=(\varphi(\xi/N)-\varphi(2\xi/N))\wh{f}(\xi).
\end{align*} We define $P_{<N}$ and $P_{\geq N}$ similarly. We also define $$P_{M<\cdot\leq N}:=P_{\leq N}-P_{\leq M}=\sum_{M<N'\leq N}P_{N'}$$ for $M<N.$ All such summations are understood to be over $N\in 2^{\mathbb{Z}}.$ Being Fourier multiplier operators, these Littlewood--Paley projection operators commute with $e^{it\Delta}$, as well as differential operators (for example, $i\partial_t+\Delta$). We will need the following standard estimates for these operators:
\begin{lemma}[Bernstein estimates] For $1\leq r\leq q\leq\infty$ and $s\geq 0$,
\begin{align*}
\norm{\vert\nabla\vert^{\pm s}P_{N}f}_{L_x^r(\R^d)}	&\sim N^{\pm s}\norm{P_Nf}_{L_x^r(\R^d)},
\\ \norm{\vert\nabla\vert^s P_{\leq N}f}_{L_x^r(\R^d)}&\lesssim N^s\norm{P_{\leq N}f}_{L_x^r(\R^d)},
\\ \norm{P_{\geq N}f}_{L_x^r(\R^d)}&\lesssim N^{-s}\norm{\vert\nabla\vert^s P_{\geq N}f}_{L_x^r(\R^d)},
\\ \norm{P_{\leq N}f}_{L_x^q(\R^d)}&\lesssim N^{\frac{d}{r}-\frac{d}{q}}\norm{P_{\leq N}f}_{L_x^r(\R^d)}.
\end{align*}
\end{lemma}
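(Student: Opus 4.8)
The plan is to reduce all four estimates to Young's convolution inequality, exploiting the elementary fact that each Littlewood--Paley multiplier is, after rescaling, the Fourier transform of a fixed Schwartz function, together with the routine ``fattening'' of projections needed to convert bounds with $\|f\|$ on the right-hand side into the stated bounds in terms of $\|P_Nf\|$, $\|P_{\leq N}f\|$, and $\|P_{\geq N}f\|$. Fix once and for all smooth, compactly supported functions $\tilde\varphi_0$ and $\tilde\varphi$ with $\tilde\varphi_0\equiv1$ on the closed annulus $\{\tfrac12\leq|\eta|\leq\tfrac{11}{10}\}$ (which contains $\operatorname{supp}(\varphi(\cdot)-\varphi(2\,\cdot))$) and $\operatorname{supp}\tilde\varphi_0$ still bounded away from the origin, and $\tilde\varphi\equiv1$ on $\operatorname{supp}\varphi$.

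First I would prove the frequency-localized estimate (the one with $\sim$). For $s\geq0$, the Fourier multiplier $|\xi|^{\pm s}\tilde\varphi_0(\xi/N)$ can be written as $N^{\pm s}\nu^{\pm}(\xi/N)$ with $\nu^{\pm}(\eta):=|\eta|^{\pm s}\tilde\varphi_0(\eta)$; since $\tilde\varphi_0$ is supported away from the origin, $\nu^{\pm}$ is smooth and compactly supported, so its inverse Fourier transform $\check\nu^{\pm}$ is Schwartz, in particular $\check\nu^{\pm}\in L^1(\R^d)$ with norm independent of $N$. As the symbol $|\xi|^{\pm s}\tilde\varphi_0(\xi/N)$ equals $|\xi|^{\pm s}$ on the Fourier support of $P_Nf$, we get $|\nabla|^{\pm s}P_Nf=N^{\pm s}\,(P_Nf)\ast\bigl(N^d\check\nu^{\pm}(N\,\cdot)\bigr)$, and Young's inequality gives $\bigl\||\nabla|^{\pm s}P_Nf\bigr\|_{L_x^r}\lesssim N^{\pm s}\|P_Nf\|_{L_x^r}$ for all $1\leq r\leq\infty$. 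Applying this for both signs and using $P_N=|\nabla|^{\mp s}|\nabla|^{\pm s}$ on the range of $P_N$ yields the claimed two-sided bound.

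Next come the truncated estimates. Here $|\xi|^s\varphi(\xi/N)$ need not be smooth at the origin when $s$ is not an even integer, so instead of bounding its kernel directly I would decompose dyadically, which keeps every kernel supported on an annulus away from the origin where $|\xi|^{\pm s}$ is harmless. The cases $s=0$ are trivial, so take $s>0$. For the second estimate, $\bigl\||\nabla|^sP_{\leq N}f\bigr\|_{L_x^r}\leq\sum_{M\leq N}\bigl\||\nabla|^sP_Mf\bigr\|_{L_x^r}\lesssim\sum_{M\leq N}M^s\|P_Mf\|_{L_x^r}\lesssim\bigl(\sum_{M\leq N}M^s\bigr)\|P_{\leq N}f\|_{L_x^r}\lesssim N^s\|P_{\leq N}f\|_{L_x^r}$, using the localized estimate, then $\|P_Mf\|_{L_x^r}=\|P_M P_{\leq N}f\|_{L_x^r}\lesssim\|P_{\leq N}f\|_{L_x^r}$ (Young again, valid since $M\leq N$), and finally the convergent geometric series $\sum_{M\leq N}M^s\sim N^s$. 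For the third estimate, $\|P_{\geq N}f\|_{L_x^r}\leq\sum_{M\geq N}\|P_Mf\|_{L_x^r}$, and writing $P_Mf=P_M|\nabla|^{-s}\bigl(|\nabla|^sP_{\geq N}f\bigr)$ for $M\geq N$ the localized estimate gives $\|P_Mf\|_{L_x^r}\lesssim M^{-s}\bigl\||\nabla|^sP_{\geq N}f\bigr\|_{L_x^r}$; summing $\sum_{M\geq N}M^{-s}\sim N^{-s}$ completes it.

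Finally, the Bernstein $L^r\to L^q$ estimate is pure convolution: $P_{\leq N}f=(P_{\leq N}f)\ast\bigl(N^d\check{\tilde\varphi}(N\,\cdot)\bigr)$ with $\check{\tilde\varphi}$ Schwartz, hence in $L^a(\R^d)$ for every $a\in[1,\infty]$; choosing $a$ by $\tfrac1a=1+\tfrac1q-\tfrac1r$ (which forces $a\geq1$ exactly when $q\geq r$), Young's inequality together with the scaling $\|N^d\check{\tilde\varphi}(N\,\cdot)\|_{L_x^a}=N^{d/r-d/q}\|\check{\tilde\varphi}\|_{L_x^a}$ gives the bound. I do not anticipate any serious obstacle here; the only points requiring care are the non-smoothness of $|\xi|^s$ at the origin (handled by the dyadic decompositions above rather than a direct kernel estimate) and the consistent use of fattened projectors, while the geometric summations converge precisely because $s>0$.
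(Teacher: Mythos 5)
The paper does not prove this lemma at all -- it is stated as a standard fact about Littlewood--Paley projections -- so there is no argument of the author's to compare against; judged on its own, your proof is the standard one (rescaled Schwartz kernels plus Young's inequality, with dyadic summation for the truncated estimates) and is essentially correct. One small inaccuracy to repair: the identities $P_Mf=P_MP_{\leq N}f$ for $M\leq N$ and $P_Mf=P_MP_{\geq N}f$ for $M\geq N$ fail at the borderline scale $M=N$, because $\varphi(\xi/N)$ (resp.\ $1-\varphi(2\xi/N)$) is not identically $1$ on the outer (resp.\ inner) edge of the annulus $\{\tfrac N2\leq|\xi|\leq\tfrac{11}{10}N\}$. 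The fix is immediate and does not change the architecture: either run the dyadic decomposition on $g=P_{\leq N}f$ (resp.\ $g=P_{\geq N}f$) itself, noting $P_Mg=0$ once $M$ exceeds (resp.\ falls below) a fixed multiple of $N$ and that each $P_M$ is bounded on $L^r$ uniformly by Young, or else handle the single scale $M=N$ separately via $P_{\leq N/2}=P_{\leq N/2}P_{\leq N}$, which gives $P_N=(1-P_{\leq N/2})P_{\leq N}$ and hence $\norm{P_Nf}_{L_x^r}\lesssim\norm{P_{\leq N}f}_{L_x^r}$. With that sentence corrected, the remaining steps (smoothness of $|\eta|^{\pm s}\tilde\varphi_0(\eta)$ away from the origin, the two-sided bound via $P_N=\vert\nabla\vert^{\mp s}\vert\nabla\vert^{\pm s}$ on frequency-localized functions, the geometric sums for $s>0$, and the scaling computation in Young's inequality for the $L^r\to L^q$ bound) are all sound.
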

\begin{lemma}[Littlewood--Paley square function estimates]\label{square function estimates} For $1<r<\infty$, 
\begin{align*}\norm{\big(\sum \vert P_N f(x)\vert^2\big)^{1/2}}_{L_x^r(\R^d)}&\sim \norm{f}_{L_x^r(\R^d)},
\\ \norm{\big(\sum N^{2s}\vert f_N(x)\vert^2\big)^{1/2}}_{L_x^r(\R^d)}& \sim\norm{\ntw{s}f}_{L_x^r(\R^d)}\quad\text{for }s\in\R,
\\ \norm{\big(\sum N^{2s}\vert f_{> N}(x)\vert^2\big)^{1/2}}_{L_x^r(\R^d)}&\sim\norm{\ntw{s}f}_{L_x^r(\R^d)}\quad\text{for }s>0.
\end{align*}
\end{lemma}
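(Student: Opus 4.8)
# Proof Proposal for Lemma \ref{square function estimates}

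The plan is to establish the three square function identities in order, bootstrapping each from the previous one and from standard Littlewood--Paley theory. The starting point is the classical Littlewood--Paley inequality: for $1<r<\infty$, the Mikhlin--H\"ormander multiplier theorem (applied to the vector-valued kernel built from the bump functions $\varphi(\xi/N)-\varphi(2\xi/N)$, together with Khintchine's inequality and randomized sums) gives
$$\bnorm{\big(\sum_N \vert P_N f\vert^2\big)^{1/2}}_{L_x^r(\R^d)}\sim\norm{f}_{L_x^r(\R^d)}.$$
This is the first displayed identity. For the upper bound one uses Khintchine to replace the square function by $\sup_{\pm}\|\sum_N \pm P_N f\|_{L_x^r}$ and notes that each $\sum_N \pm P_N$ is a Mikhlin multiplier with bounds uniform in the choice of signs; for the lower bound one dualizes, exploiting that $(L_x^r)^* = L_x^{r'}$ with $r'$ also in $(1,\infty)$, together with the almost-orthogonality relation $\sum_N P_N \tilde P_N = \mathrm{Id}$ where $\tilde P_N$ is a fattened projection.

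For the second identity, I would introduce the fattened projections $\tilde P_N := P_{N/2} + P_N + P_{2N}$, which satisfy $P_N = \tilde P_N P_N$, and write $N^{2s}\vert f_N\vert^2 = \vert \tilde P_N (\vert\nabla\vert^s f)_{?}\vert^2$ — more precisely, $N^s f_N = m_N(\nabla)(\vert\nabla\vert^s f)$ where $m_N(\xi) = N^s\vert\xi\vert^{-s}(\varphi(\xi/N)-\varphi(2\xi/N))$ is supported in $\vert\xi\vert\sim N$ and, after rescaling $\xi\mapsto N\xi$, has Mikhlin norms bounded uniformly in $N$. Thus the map $g\mapsto (N^s g_N)_N$ and the square-function map are comparable to the corresponding maps built from a single fixed Mikhlin symbol, and the first identity (applied to $g=\vert\nabla\vert^s f$, together with a uniform-in-$N$ version of the multiplier theorem for the family $\{m_N\}$) yields
$$\bnorm{\big(\sum_N N^{2s}\vert f_N\vert^2\big)^{1/2}}_{L_x^r}\sim\bnorm{\big(\sum_N \vert (\vert\nabla\vert^s f)_N\vert^2\big)^{1/2}}_{L_x^r}\sim\norm{\vert\nabla\vert^s f}_{L_x^r}.$$
Care is needed only in checking that the constants in the vector-valued Mikhlin theorem do not degenerate as $N$ ranges over $2^{\mathbb Z}$, which follows from the scaling invariance of the hypotheses.

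For the third identity (valid for $s>0$), the idea is to sum the second identity: since $f_{>N} = \sum_{N'>N} f_{N'}$, one has $\sum_N N^{2s}\vert f_{>N}(x)\vert^2$, and I would bound this both above and below by $\sum_{N'} (N')^{2s}\vert f_{N'}(x)\vert^2$ using the restriction $s>0$. Concretely, by Cauchy--Schwarz with the geometric weight $(N/N')^{s}$ (summable precisely because $s>0$),
$$N^{2s}\vert f_{>N}(x)\vert^2 \le N^{2s}\Big(\sum_{N'>N}(N')^{-s}\Big)\Big(\sum_{N'>N}(N')^{s}\vert f_{N'}(x)\vert^2\Big)\lesssim \sum_{N'>N}(N/N')^{s}(N')^{2s}\vert f_{N'}(x)\vert^2,$$
and summing in $N$ and interchanging the order of summation gives $\sum_N N^{2s}\vert f_{>N}\vert^2\lesssim\sum_{N'}(N')^{2s}\vert f_{N'}\vert^2$ pointwise; the reverse inequality is immediate since $f_N = f_{>N/2}-f_{>N}$, so $N^{2s}\vert f_N\vert^2\lesssim N^{2s}\vert f_{>N/2}\vert^2 + N^{2s}\vert f_{>N}\vert^2$, and again one sums. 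Combining with the second identity gives the third. I expect the main obstacle to be purely bookkeeping: making the uniform-in-$N$ application of the vector-valued multiplier theorem in the second step fully rigorous (as opposed to the routine pointwise weighted estimates in step three), and in particular verifying that the $\{m_N\}$ family has uniformly bounded H\"ormander--Mikhlin constants after rescaling. Since these facts are entirely standard, I would likely just cite a reference such as \cite{KV} or a harmonic analysis text rather than reproduce the multiplier theory in full.
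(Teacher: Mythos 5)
Your proposal is correct, but note that the paper offers no proof of this lemma at all: it is stated in Section \ref{lemmata} as a standard fact of Littlewood--Paley theory (the sort of thing one cites from a textbook such as Stein or from \cite{KV}), so there is no argument of the paper's to compare against step by step. Your sketch is the standard route and all three steps are sound: the first equivalence via Khintchine plus Mikhlin--H\"ormander and duality with $\sum_N P_N\widetilde P_N=\mathrm{Id}$; the second by writing $N^s f_N=m_N(\nabla)\big(\vert\nabla\vert^s f\big)$ with $m_N(\xi)=N^s\vert\xi\vert^{-s}\big(\varphi(\xi/N)-\varphi(2\xi/N)\big)$; and the third by the pointwise weighted Cauchy--Schwarz estimate (using $s>0$) in one direction and the telescoping identity $f_N=f_{>N/2}-f_{>N}$ in the other, both of which reduce matters to the second equivalence. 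The only place where your wording slightly understates what is needed is the ``uniform-in-$N$ version of the multiplier theorem'': uniform scalar Mikhlin bounds for the family $\{m_N\}$ are not by themselves the statement you use, since you apply the operators inside an $\ell^2$-valued square function. The clean way to close this is to observe that after rescaling the kernel of $m_N(\nabla)$ is $N^d k(Nx)$ with $k$ Schwartz, so $\vert m_N(\nabla)h\vert\lesssim \M h$ pointwise uniformly in $N$, and then invoke the Fefferman--Stein vector-valued maximal inequality (equivalently, the $\ell^2$-valued Mikhlin theorem) to pass to the square function; with that remark, or simply a citation as you suggest, your proof is complete and entirely consistent with the paper's treatment of the lemma as standard.
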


We will also need the following general inequalities, which appear originally in \cite{CW}. For a textbook treatment, one can refer to \cite{taylor}. 
\begin{lemma}[Fractional product rule, \cite{CW}]\label{fractional product rule} Let $s\in(0,1]$ and $1<r,r_1,r_2,q_1,q_2<\infty$ such that $\tfrac{1}{r}=\tfrac{1}{r_i}+\tfrac{1}{q_i}$ for $i=1,2$. Then $$\norm{\ntw{s}(fg)}_r\lesssim\norm{f}_{r_1}\norm{\ntw{s}g}_{q_1}+\norm{\ntw{s}f}_{r_2}\norm{g}_{q_2}.$$
\end{lemma}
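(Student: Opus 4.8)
The plan is to prove this bilinear estimate by a paraproduct (Littlewood--Paley) decomposition, reducing everything to the square function estimates of Lemma~\ref{square function estimates}, the Hardy--Littlewood maximal inequality, and its vector-valued (Fefferman--Stein) analogue. With $\wt g_N:=g_{N/8<\cdot\leq 8N}$, decompose
\[
fg=\sum_N f_{\leq N/8}\,g_N\;+\;\sum_N f_N\,g_{\leq N/8}\;+\;\sum_N f_N\,\wt g_N=:\Pi_1+\Pi_2+\Pi_3,
\]
so that $\Pi_1$ collects the frequency interactions in which $g$ carries the high frequency, $\Pi_2$ those in which $f$ does, and $\Pi_3$ the comparable-frequency interactions. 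It then suffices to show $\norm{\ntw{s}\Pi_1}_r+\norm{\ntw{s}\Pi_3}_r\lesssim\norm{f}_{r_1}\norm{\ntw{s}g}_{q_1}$: running the $\Pi_1$ estimate with the roles of $f$ and $g$ interchanged and $(r_1,q_1)$ replaced by $(q_2,r_2)$ bounds $\norm{\ntw{s}\Pi_2}_r$ by $\norm{\ntw{s}f}_{r_2}\norm{g}_{q_2}$.

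For $\Pi_1$, the summand $f_{\leq N/8}g_N$ has Fourier support in an annulus $\vert\xi\vert\sim N$, so $\ntw{s}\Pi_1=\sum_N\ntw{s}(f_{\leq N/8}g_N)$ is --- up to harmless fattened Littlewood--Paley projections --- of square-function type. Using Lemma~\ref{square function estimates} together with the kernel bound $\vert\ntw{s}P_N h(x)\vert\lesssim N^s(\mathcal M h)(x)$, where $\mathcal M$ denotes the Hardy--Littlewood maximal function, one obtains $\norm{\ntw{s}\Pi_1}_r\lesssim\bnorm{\big(\sum_N N^{2s}\,(\mathcal M(f_{\leq N/8}g_N))^2\big)^{1/2}}_r$; the Fefferman--Stein inequality removes the maximal functions, and the pointwise bound $\vert f_{\leq N/8}(x)\vert\lesssim(\mathcal M f)(x)$ then pulls $f$ out of the square in $N$. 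Finally, Hölder's inequality (with $\tfrac1r=\tfrac1{r_1}+\tfrac1{q_1}$), the scalar maximal inequality (using $r_1>1$), and Lemma~\ref{square function estimates} once more for $\ntw{s}g$ (using $1<q_1<\infty$) give $\norm{\ntw{s}\Pi_1}_r\lesssim\norm{\mathcal M f}_{r_1}\norm{\ntw{s}g}_{q_1}\lesssim\norm{f}_{r_1}\norm{\ntw{s}g}_{q_1}$.

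The main obstacle is $\Pi_3$, where $f_N\wt g_N$ is only known to have Fourier support in a ball $\vert\xi\vert\lesssim N$ rather than an annulus, so $\ntw{s}$ cannot be inverted on the summands directly. The remedy is to reorganize by output frequency: write $\ntw{s}\Pi_3=\sum_M\ntw{s}P_M\big(\sum_{N\gtrsim M}f_N\wt g_N\big)$, apply the square function estimate in $M$, and run a Schur-type estimate in the pair $(M,N)$ --- this is precisely where the strict positivity $s>0$ enters, via $\sum_{M\lesssim N}(M/N)^s\lesssim 1$. After this one is reduced to $\bnorm{\big(\sum_N N^{2s}\,(\mathcal M(f_N\wt g_N))^2\big)^{1/2}}_r$, and the argument closes exactly as for $\Pi_1$: Fefferman--Stein, the pointwise bound $\vert f_N(x)\vert\lesssim(\mathcal M f)(x)$, Hölder, and Lemma~\ref{square function estimates} for $\ntw{s}g$. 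I expect this reorganization of $\Pi_3$, and the care needed to make the fattened projections and the kernel bounds for $\ntw{s}P_N$ precise, to be the only genuinely delicate parts of the argument; the hypothesis $s\leq 1$ is not essential for this form of the estimate, while the conditions $1<r,r_i,q_i<\infty$ enter exactly through the maximal and square function inequalities.
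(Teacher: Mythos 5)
Your argument is correct in its essentials, but there is nothing in the paper to compare it against: Lemma \ref{fractional product rule} is not proved here, it is quoted from Christ--Weinstein \cite{CW}, with \cite{taylor} indicated for a textbook treatment. What you have written is in substance that standard proof: a paraproduct splitting into low--high, high--low and comparable-frequency pieces, the kernel bound $\vert \ntw{s}P_N h\vert\lesssim N^s Mh$ (with $M$ the Hardy--Littlewood maximal function), the Fefferman--Stein vector maximal inequality, the pointwise bound $\vert f_{\leq N}\vert\lesssim Mf$, and H\"older. The only genuinely ``fractional'' step is the Schur-type summation $\sum_{M\lesssim N}(M/N)^s\lesssim 1$ that reorganizes the comparable-frequency piece by output frequency; this is the mirror image of the summation inequality \eqref{a general inequality} used later in the paper (your version needs $s>0$, the paper's needs $s<1$), and its functional form is the third display of Lemma \ref{square function estimates}. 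Indeed, the toolkit you rely on --- Lemma \ref{square function estimates}, scalar and vector maximal estimates, working frequency by frequency --- is exactly what the paper itself deploys in Section \ref{reduction section} when it opens up the proof of the fractional chain rule, so your route is consistent with the sources the paper points to. Two cosmetic points: with your conventions the three paraproducts double count the pairs in which the frequency of $g$ is exactly $8$ times that of $f$ (harmless for an upper bound, but the asserted identity $fg=\Pi_1+\Pi_2+\Pi_3$ requires adjusting strict versus non-strict inequalities), and the reduction of $\norm{\ntw{s}\Pi_1}_r$ to a square function should be justified by noting that the summands have Fourier support in annuli $\vert\xi\vert\sim N$ with bounded overlap, so that $P_M\ntw{s}\Pi_1$ only sees the finitely many $N\sim M$ --- the same bookkeeping you already carry out for $\Pi_3$. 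Your closing remarks are also accurate: the restriction $s\leq 1$ is not needed for this form of the estimate, while $1<r,r_i,q_i<\infty$ enter precisely through the square function and maximal inequalities.
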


\begin{lemma}[Fractional chain rule, \cite{CW}]\label{fractional chain rule} Suppose $G\in C^1(\C),$ $s\in(0,1]$, and $1<r,r_1,r_2<\infty$ are such that $\tfrac{1}{r}=\tfrac{1}{r_1}+\tfrac{1}{r_2}.$ Then $$\norm{\vert\nabla\vert^s G(u)}_r\lesssim \norm{G'(u)}_{r_1}\norm{\vert\nabla\vert^s u}_{r_2}.$$
\end{lemma}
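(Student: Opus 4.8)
The plan is to treat the genuinely fractional range $0<s<1$ (the endpoint $s=1$ is merely the ordinary chain rule $\ntw{1}G(u)=G'(u)\nabla u$ followed by H\"older), and to work frequency-by-frequency. First, replacing $G$ by $G-G(0)$ changes neither $\ntw{s}G(u)$ nor $G'$, so we may assume $G(0)=0$. Then, by the square function estimate of Lemma~\ref{square function estimates}, it suffices to control $\norm{\big(\sum_N N^{2s}\vert P_N G(u)\vert^2\big)^{1/2}}_r$, the sum being over dyadic $N$.

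For each fixed $N$, split $G(u)=G(u_{\leq N})+\big[G(u)-G(u_{\leq N})\big]$ and estimate $P_N$ of each. For the low piece, move a derivative onto $G(u_{\leq N})$ via Bernstein, apply the ordinary chain rule (legitimate since $u_{\leq N}$ is smooth) and H\"older:
$$\norm{P_N G(u_{\leq N})}_r\lesssim N^{-1}\norm{\nabla\big(G(u_{\leq N})\big)}_r=N^{-1}\norm{G'(u_{\leq N})\,\nabla u_{\leq N}}_r\lesssim N^{-1}\norm{G'(u_{\leq N})}_{r_1}\norm{\nabla u_{\leq N}}_{r_2};$$
writing $\nabla u_{\leq N}=\sum_{M\leq N}\nabla u_M$ and using Bernstein gives $\norm{\nabla u_{\leq N}}_{r_2}\lesssim\sum_{M\leq N}M^{1-s}\norm{\ntw{s}u_M}_{r_2}$, so after multiplying by $N^s$ we obtain the kernel factor $(M/N)^{1-s}$, which is summable in $M\leq N$ precisely because $s<1$. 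For the high piece, the fundamental theorem of calculus (applied to real and imaginary parts) gives $\vert G(u)-G(u_{\leq N})\vert\lesssim\big(\int_0^1\vert G'(u_{\leq N}+\theta u_{>N})\vert\,d\theta\big)\vert u_{>N}\vert$, whence
$$\norm{P_N\big[G(u)-G(u_{\leq N})\big]}_r\lesssim\bnorm{\int_0^1\vert G'(u_{\leq N}+\theta u_{>N})\vert\,d\theta}_{r_1}\norm{u_{>N}}_{r_2},$$
and $\norm{u_{>N}}_{r_2}\lesssim\sum_{M>N}M^{-s}\norm{\ntw{s}u_M}_{r_2}$ produces the kernel factor $(N/M)^{s}$, summable in $M>N$ since $s>0$.

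Combining, one gets a pointwise bound of the form $N^{s}\vert P_N G(u)\vert\lesssim\sum_M K(N,M)\,a_M\,b_M$ with $K$ an off-diagonal-decaying Schur kernel, $b_M\sim M^{s}\vert u_M\vert$ (whose $\ell^2_M$ square function is comparable to $\vert\ntw{s}u\vert$ by Lemma~\ref{square function estimates}), and $a_M$ a $G'$-type factor. One then closes the estimate using Schur's test to sum the kernel, H\"older in $x$ with exponents $r_1,r_2$, the Littlewood--Paley square function estimate, and the Fefferman--Stein vector-valued maximal inequality to absorb the Littlewood--Paley truncations and the kernel averages. (An alternative to the Littlewood--Paley route is to use the equivalence, for $1<r<\infty$ and $0<s<1$, between $\norm{\ntw{s}F}_r$ and the $L_x^r$-norm of a fractional-difference square function built from $F(x)-F(x-y)$; composition then enters directly through the same FTC identity.)

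The main obstacle is the control of the $G'$ factors --- namely $\norm{G'(u_{\leq N})}_{r_1}$ and $\norm{\int_0^1\vert G'(u_{\leq N}+\theta u_{>N})\vert\,d\theta}_{r_1}$, uniformly in $N$, by $\norm{G'(u)}_{r_1}$ --- given only $G\in C^1$, so that $G'$ is continuous but neither bounded nor monotone in $\vert z\vert$. The standard remedy is a covering/maximal-function argument: these quantities are dominated pointwise by a Hardy--Littlewood maximal function of $\vert G'(u)\vert$, after which the Hardy--Littlewood maximal inequality on $L_x^{r_1}$ finishes the job. (When $G$ is moreover locally Lipschitz --- as for $G(z)=\vert z\vert^p z$ with $p\geq1$ --- the elementary inequality $\vert G'(a+b)\vert\lesssim\vert G'(a)\vert+\vert G'(b)\vert$ makes this step essentially immediate; this is exactly the feature the paper later exploits when reworking the present lemma at the level of individual Littlewood--Paley frequencies.)
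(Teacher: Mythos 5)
You should know at the outset that the paper offers no proof of Lemma \ref{fractional chain rule}: it is quoted from Christ--Weissler, I mean Christ--Weinstein \cite{CW}, with a textbook treatment in \cite{taylor}, and your Littlewood--Paley scheme (square-function reduction, a low/high splitting at each dyadic frequency, Bernstein, Hardy--Littlewood and Fefferman--Stein maximal inequalities) is precisely the standard route---the same circle of ideas the paper itself opens up in Section \ref{reduction section}. The architecture is therefore fine, but there is a genuine gap exactly where you locate ``the main obstacle'', and your proposed remedy does not close it. For a general $G\in C^1(\C)$ it is \emph{false} that $\vert G'(u_{\leq N}(x))\vert$ or $\int_0^1\vert G'(u_{\leq N}(x)+\theta u_{>N}(x))\vert\,d\theta$ is pointwise dominated by $\M\big(\vert G'(u)\vert\big)(x)$: the points $u_{\leq N}(x)$ and $u_{\leq N}(x)+\theta u_{>N}(x)$ are averaged, respectively intermediate, values that need not lie anywhere near the range of $u$, and continuity of $G'$ alone gives no control of $G'$ at such points in terms of $G'\circ u$. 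No covering argument can repair this, because at that level of generality the asserted inequality itself fails: in $d=1$ take $s=\tfrac12$, $r_2=\tfrac32$, $r_1$ large, $G(z)=\sin^2(\tfrac{\pi}{2}\Re z)$ (so $G'$ vanishes at $0$ and at $1$ while $G(1)\neq G(0)$), and let $u_\eps$ be a mollification of $\chi_{[0,1]}$ at scale $\eps$; then $\norm{G'(u_\eps)}_{r_1}\lesssim\eps^{1/r_1}\to0$, while $\norm{\ntw{1/2}u_\eps}_{r_2}\lesssim1$ and $\norm{\ntw{1/2}G(u_\eps)}_{r}\gtrsim1$ uniformly in $\eps$.

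Consequently some structural hypothesis on $G'$ beyond $G\in C^1$ is indispensable, and it is what the proofs in the literature, and every application in this paper ($G(z)=\vert z\vert^pz$ or $\vert z\vert^p$ with $p\geq1$), actually use: for instance the quasi-subadditivity $\vert G'(a+b)\vert\lesssim\vert G'(a)\vert+\vert G'(b)\vert$. Note it is this power-type structure, not local Lipschitzness, that matters---the counterexample above is smooth. Granting it, your maximal-function step becomes correct: for $\vert G'(z)\vert\sim\vert z\vert^{p}$ with $p\geq1$ one has $\vert G'(u_{\leq N})\vert\lesssim(\M u)^{p}\leq\M(\vert u\vert^{p})\sim\M(\vert G'(u)\vert)$ by Jensen, and similarly for the segment term since $\vert u_{>N}\vert\lesssim\M u$. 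So the content of your final parenthesis is not an aside; it is the actual proof. One further caution: for $1<r<2$ you cannot simply sum your displayed norm bounds over $N$ (that yields only a Besov-type estimate, which does not control $\norm{\ntw{s}G(u)}_{r}$ in that range); you must keep the low- and high-frequency estimates in pointwise, kernel/maximal-function form so that the $\ell^2_N$ summation happens inside the $L_x^r$ norm, and only then apply H\"older, Lemma \ref{square function estimates}, and the vector-valued maximal inequality. Your closing paragraph gestures at exactly this, and the paper's Section \ref{reduction section} shows how to carry it out; it just needs to be implemented from the start rather than after taking norms.
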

We will also make use of the following refinement of the fractional chain rule, which appears in \cite{KV NLW2}:

\begin{lemma}[Derivatives of differences, \cite{KV NLW2}]\label{derivatives of differences} Fix $p>1$ and $0<s<1$. Then for $1<r,r_1,r_2<\infty$ such that $\tfrac{1}{r}=\tfrac{1}{r_1}+\tfrac{p-1}{r_2},$ we have
$$\norm{\vert\nabla\vert^s[\vert u+v\vert^p-\vert u\vert^p]}_r\lesssim\norm{\vert\nabla\vert^s u}_{r_1}\norm{v}_{r_2}^{p-1}+\norm{\vert\nabla\vert^s v}_{r_1}\norm{u+v}_{r_2}^{p-1}.$$
\end{lemma}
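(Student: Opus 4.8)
\emph{The plan.} I would argue as in \cite{KV NLW2}. Write $G(z)=|z|^p$; since $p>1$, $G\in C^1(\C)$ with $|G'(z)|\lesssim|z|^{p-1}$, and the fundamental theorem of calculus gives
\begin{equation}\nonumber
|u+v|^p-|u|^p=\int_0^1\ip{G'(u+\theta v)}{v}\,d\theta
=p\,\Re\bigg(\bar v\int_0^1|u+\theta v|^{p-2}(u+\theta v)\,d\theta\bigg),
\end{equation}
where $\langle\cdot,\cdot\rangle$ is the real inner product on $\C\cong\R^2$. Since $\vert\nabla\vert^s$ commutes with $\Re$, it suffices to bound $\vert\nabla\vert^s$ of the right-hand side. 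Throughout I would use the elementary pointwise bounds $|u+\theta v|\le(1-\theta)|u|+\theta|u+v|\le|u|+|v|$ and $|u|\lesssim|u+v|+|v|$ to control the size of the integrand.

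\emph{How to run the estimate.} One cannot simply feed this identity into Lemmas~\ref{fractional product rule} and \ref{fractional chain rule}: the naive product-rule split pairs $\norm{\vert\nabla\vert^s v}_{r_1}$ with the full size $\norm{u}_{r_2}^{p-1}+\norm{v}_{r_2}^{p-1}$ of the integrand, which is too large (this is already visible when $u+v$ is a small-amplitude, high-frequency bump and $u\approx-(u+v)$, so that $|u+v|^p-|u|^p$ is essentially $-|u|^p$ and the left-hand side is small). I would therefore split into cases according to the relative sizes of $\norm{u+v}_{r_2}$ and $\norm{v}_{r_2}$ (and, where needed, of $\norm{\vert\nabla\vert^s u}_{r_1}$ and $\norm{\vert\nabla\vert^s v}_{r_1}$). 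When $\norm{u+v}_{r_2}\gtrsim\norm{v}_{r_2}$ — so also $\norm{u}_{r_2}\lesssim\norm{u+v}_{r_2}$ — one estimates the identity with Littlewood--Paley projections, putting $\vert\nabla\vert^s$ on the frequency-$\sim N$ pieces of $v$ (respectively of $u+v$), summing via the square-function estimates of Lemma~\ref{square function estimates}; now every auxiliary amplitude factor is dominated by $\norm{u+v}_{r_2}$, and the genuinely ``second order in $v$'' piece — estimated by placing $\vert\nabla\vert^s$ on a low-frequency copy of $u$ and using the chain rule at the level of a single frequency — produces $\norm{\vert\nabla\vert^s u}_{r_1}\norm{v}_{r_2}^{p-1}$. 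In the complementary regime $\norm{u+v}_{r_2}\ll\norm{v}_{r_2}$, which forces $\norm{u}_{r_2}\sim\norm{v}_{r_2}$, one abandons the identity and simply writes $\vert\nabla\vert^s[|u+v|^p-|u|^p]=\vert\nabla\vert^s|u+v|^p-\vert\nabla\vert^s|u|^p$, estimating each term with the ordinary fractional chain rule (Lemma~\ref{fractional chain rule}) and using $\norm{\vert\nabla\vert^s(u+v)}_{r_1}\le\norm{\vert\nabla\vert^s u}_{r_1}+\norm{\vert\nabla\vert^s v}_{r_1}$ together with $\norm{u}_{r_2}\sim\norm{v}_{r_2}$. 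Finally I would treat $p\ge2$ and $1<p<2$ separately: for $p\ge2$, $G'$ is $C^1$ and the chain rule applies directly, while for $1<p<2$ (relevant for $d\in\{4,5\}$) $G'=p|z|^{p-2}z$ is only Hölder continuous of order $p-1$, so the Hölder version of the fractional chain rule (as in \cite{Monica:thesis}) must be used instead. This dichotomy in the regularity of $G'$ is part of the reason $p>1$ is hypothesized.

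\emph{Main obstacle.} The delicate step is the first case: performing the Littlewood--Paley analysis so that $\vert\nabla\vert^s v$ is ultimately paired with $\norm{u+v}_{r_2}^{p-1}$ and not with $\norm{v}_{r_2}^{p-1}$ — that is, identifying and exploiting the cancellations among the paraproduct contributions that the crude product rule misses — and keeping the whole accounting consistent with the asymmetric right-hand side $\norm{\vert\nabla\vert^s u}_{r_1}\norm{v}_{r_2}^{p-1}+\norm{\vert\nabla\vert^s v}_{r_1}\norm{u+v}_{r_2}^{p-1}$. The low-regularity range $1<p<2$, where $G'$ is merely Hölder, adds a further layer of technical bookkeeping to every step.
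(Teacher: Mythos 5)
First, a point of orientation: the paper does not prove this lemma at all --- it is quoted verbatim from \cite{KV NLW2} --- so your write-up cannot be measured against an in-paper argument and must stand on its own as a proof. As it stands it does not, because the one step that constitutes the lemma is exactly the step you defer. In your main regime $\norm{u+v}_{r_2}\gtrsim\norm{v}_{r_2}$, feeding the fundamental-theorem identity into Lemmas \ref{fractional product rule} and \ref{fractional chain rule} (together with $\vert u+\theta v\vert\lesssim\vert u\vert+\vert u+v\vert$) yields, for $p\geq 2$, a bound of the shape $\norm{\vert\nabla\vert^s v}_{r_1}\norm{u+v}_{r_2}^{p-1}+\big(\norm{\vert\nabla\vert^s u}_{r_1}+\norm{\vert\nabla\vert^s v}_{r_1}\big)\norm{v}_{r_2}\norm{u+v}_{r_2}^{p-2}$. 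Contrary to your diagnosis, the $\norm{\vert\nabla\vert^s v}_{r_1}$-contributions are already acceptable in this regime; the obstruction is the term $\norm{\vert\nabla\vert^s u}_{r_1}\norm{v}_{r_2}\norm{u+v}_{r_2}^{p-2}$, which for $p>2$ is \emph{not} dominated by the asserted right-hand side when $\norm{v}_{r_2}\ll\norm{u+v}_{r_2}$ and $\norm{\vert\nabla\vert^s v}_{r_1}$ is small. Your answer --- a Littlewood--Paley decomposition in which one ``identifies and exploits the cancellations that the crude product rule misses'' --- names this difficulty but supplies no mechanism for it; that cancellation \emph{is} the lemma, so the proposal has a genuine gap at its core. (Note also that at $p=2$ the crude route already closes, so your ``easy case $p\geq2$'' is really only $p=2$.)

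Moreover, your proposed dichotomy points the wrong way. The asymmetric pairing of $\vert\nabla\vert^s u$ with $\norm{v}_{r_2}^{p-1}$ alone is a $1<p\leq 2$ phenomenon: it reflects the double-difference bound $\big\vert\,\vert a+c\vert^p-\vert a\vert^p-\vert b+c\vert^p+\vert b\vert^p\,\big\vert\lesssim\vert a-b\vert\,\vert c\vert^{p-1}$ of Lemma \ref{stupid lemma}, which, inserted into the difference-kernel and maximal-function machinery of Lemma \ref{aux lemma} and Section \ref{reduction section}, is the natural route to the estimate in that range (with some residual terms still to be absorbed). For $p>2$ that pointwise bound acquires an extra factor $\vert a-b\vert\,\vert c\vert(\vert a\vert+\vert b\vert+\vert c\vert)^{p-2}$, and no bookkeeping with the $C^1$ chain rule produces the stated right-hand side: taking $u$ a unit-scale bump of height $M$ and $v$ a bump of height $R^{-d/r_2}$ and width $R$ (so $\norm{v}_{r_2}\sim 1$ and $\norm{\vert\nabla\vert^s v}_{r_1}\sim R^{d/r_1-d/r_2-s}$), the left-hand side is $\gtrsim M^{p-1}R^{-d/r_2}$ from the linear-in-$v$ term $\vert u\vert^{p-1}\vert v\vert$, while the claimed right-hand side is $\sim M+M^{p-1}R^{d/r_1-d/r_2-s}$; choosing $r_1>d/s$ and $M$ a large power of $R$ makes the left side strictly larger when $p>2$. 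So for $p>2$ your plan cannot be completed as described (and the estimate in this unrestricted form is itself suspect); you should either restrict to $1<p\leq 2$ --- which covers the only place the paper invokes the lemma where its asymmetric strength matters, namely the $d\in\{4,5\}$ case of Proposition \ref{lts lemma}, the stability argument needing only a weaker bound in which every term carries a factor of the perturbation --- or reproduce the precise hypotheses and argument of \cite{KV NLW2} rather than reconstructing them by analogy.
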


Finally, we prove a paraproduct estimate, very much in the spirit of Lemma 2.3 in \cite{revisit1}.
\begin{lemma}[Paraproduct estimate]\label{paraproduct} Fix $d\in\{3,4,5\}.$

(a) For $p>0$ such that $s_c:=\tfrac{d}{2}-\tfrac{2}{p}\in[\tfrac12,1)$, we have $$\norml{\vert\nabla\vert^{-\frac12s_c}(fg)}{\frac{2d}{d+2}}\lesssim \norml{\vert\nabla\vert^{-\frac12s_c}f}{\frac{2d}{d-2}}\norml{\vert\nabla\vert^{\frac12s_c}g}{\frac{4dp}{p(d+8)-4}}.$$

(b) We also have  $$\norml{\ntw{-\frac{2}{5}}(fg)}{\frac{2d}{d+2}}\lesssim\norml{\ntw{-\frac25}f}{\frac{10d}{5d-11}}\norml{\ntw{\frac25}g}{\frac{2d}{5}}.$$ 
\end{lemma}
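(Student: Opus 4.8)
The plan is to prove both estimates as consequences of a Littlewood--Paley paraproduct decomposition combined with the Coifman--Meyer-type philosophy, but carried out by hand since we want explicit Lebesgue exponents. Write $fg = \sum_{N} f_{\ll N}\, g_N + \sum_N f_N\, g_{\ll N} + \sum_{N\sim M} f_N g_M$, where the first term is the ``high-frequency in $g$'' piece, the second the ``high-frequency in $f$'' piece, and the third the diagonal piece. The point of the weights: in part (a) we are differentiating by the \emph{negative} order $-\tfrac12 s_c$, so on the first (and diagonal) pieces the output frequency is $\sim N$ and we can move $\vert\nabla\vert^{-\frac12 s_c}$ onto $g_N$ using Bernstein, gaining $N^{-\frac12 s_c}$, which is exactly what is needed to match the $\vert\nabla\vert^{\frac12 s_c}g$ on the right; the $f$ factor then appears with no derivative, and we insert $\vert\nabla\vert^{-\frac12 s_c}\vert\nabla\vert^{\frac12 s_c}$ and use Bernstein on the low-frequency factor to absorb the positive power. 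The second piece (high frequency in $f$) is the dangerous one: there the output frequency is $\sim N$, the derivative $\vert\nabla\vert^{-\frac12 s_c}$ wants to land on $f_N$, which is fine and gives $\norm{\vert\nabla\vert^{-\frac12 s_c}f}$, but then $g_{\ll N}$ carries no derivative and we must supply the $\vert\nabla\vert^{\frac12 s_c}$ by hand — here one uses the square-function characterization of $\norm{\vert\nabla\vert^{-\frac12 s_c}f}_{L_x^{2d/(d-2)}}$ together with Bernstein on $g_{\ll N}$ and a vector-valued (Fefferman--Stein) maximal inequality to handle the sum over $N$ of the low-frequency pieces of $g$.

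In more detail, the routine is: (i) fix a piece of the decomposition; (ii) apply Hölder in $x$ with the exponents dictated by the statement (check $\tfrac{d+2}{2d} = \tfrac{d-2}{2d} + \tfrac{p(d+8)-4}{4dp}$, which should hold precisely because $s_c = \tfrac d2 - \tfrac2p$); (iii) for each dyadic $N$ use Bernstein to trade the fractional derivative for a power of $N$ on whichever factor is frequency-localized at scale $N$; (iv) recognize the remaining dyadic sums as square functions or as maximal functions and invoke Lemma \ref{square function estimates} and the Hardy--Littlewood maximal / vector maximal inequalities to sum in $N$. For part (b), this is just the special numerology $s_c \rightsquigarrow \tfrac45$ (so $\tfrac12 s_c \rightsquigarrow \tfrac25$) with $d=5$ inserted into the $L^{2d/5}$ and $L^{10d/(5d-11)}$ slots — indeed with $d=5$ one has $\tfrac{2d}{d-2}=\tfrac{10}{3}$ and $\tfrac{10d}{5d-11}=\tfrac{50}{14}=\tfrac{25}{7}$ and $\tfrac{2d}{5}=2$ — so (b) follows from exactly the same decomposition and estimates with the exponents specialized; one should double-check the Hölder relation $\tfrac{d+2}{2d}=\tfrac{5d-11}{10d}+\tfrac52\cdot\tfrac1{2d}$ at $d=5$, i.e. $\tfrac{7}{10}=\tfrac{14}{50}+\tfrac54\cdot\tfrac15$, which it does.

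The main obstacle, as in \cite{revisit1}, is the high-frequency-in-$f$ / low-frequency-in-$g$ piece $\sum_N f_N\, g_{\ll N}$: this is where one genuinely needs the negative-order derivative to be ``on the large factor'' and must compensate the missing $\vert\nabla\vert^{\frac12 s_c}$ on $g$ using Bernstein in a way that produces a convergent geometric series in the ratio of frequency scales, together with a vector-valued maximal inequality to control $\big\|(\sum_N |P_{\ll N} g|^2)^{1/2}\big\|$ — one wants a bound of the form $\lesssim \big\|(\sum_N |g_N|^2)^{1/2}\big\|$ after summing the geometric series, which is then $\sim \norm{\vert\nabla\vert^{\frac12 s_c}g}$ once the $N^{\frac12 s_c}$ weights are reinstated. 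The diagonal piece $\sum_{N\sim M}$ is routine (almost-orthogonality plus Cauchy--Schwarz in the dyadic parameter), and the first piece is the easiest. One should also pause to verify that all the Lebesgue exponents appearing — in particular $\tfrac{4dp}{p(d+8)-4}$ and $\tfrac{10d}{5d-11}$ — lie strictly between $1$ and $\infty$ for the relevant range $s_c\in[\tfrac12,1)$, $d\in\{3,4,5\}$, so that Lemma \ref{square function estimates} and the maximal inequalities are applicable; this is where the standing constraints \eqref{constraints2} quietly get used.
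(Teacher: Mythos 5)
Your low--high and high--low pieces can indeed be closed by hand (kernel/Bernstein bounds, the square function estimates of Lemma \ref{square function estimates}, Fefferman--Stein, and the Sobolev embedding $\dot H^{\frac12 s_c,\frac{4dp}{p(d+8)-4}}\hookrightarrow L^{d/2}$), but the step you call ``routine'' --- the diagonal piece $\sum_{N\sim M}f_Ng_M$ --- is exactly where your scheme breaks. For that piece the output frequency is \emph{not} $\sim N$; $P_K(f_Ng_N)$ is nonzero for every $K\lesssim N$, so the outputs stack up at low frequencies and are not almost orthogonal, and the outer $\vert\nabla\vert^{-\frac12 s_c}$ now costs a large factor $K^{-\frac12 s_c}$ there. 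With the Lebesgue exponents forced by the statement, the only available compensation is Bernstein on the output, and a short computation (using $s_c=\tfrac d2-\tfrac2p$) shows the gain is exactly $K^{\frac12 s_c}$: the two powers cancel identically, leaving nothing with which to sum over $K\lesssim N$, and the remaining factors $\norm{\vert\nabla\vert^{-\frac12 s_c}f_N}_{\frac{2d}{d-2}}\norm{\vert\nabla\vert^{\frac12 s_c}g_N}_{\frac{4dp}{p(d+8)-4}}$ are only $\ell^q$-summable in $N$ with $q>2$ (since $L^q\hookrightarrow \dot B^0_{q,q}$, not $\dot B^0_{q,2}$, for $q>2$), so Cauchy--Schwarz in the dyadic parameter does not close either. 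This is a genuine gap, not a detail: the crude frequency-by-frequency estimate of the high--high interaction is logarithmically divergent at these exponents.

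The paper avoids this by reversing the order of operations: it first applies the (negative-order) Sobolev/Hardy--Littlewood--Sobolev embedding to discard the outer $\vert\nabla\vert^{-\frac12 s_c}$ entirely, landing in $L^{\frac{4dp}{p(3d+4)-4}}$ --- this is precisely where the hypothesis $s_c<1$ for $d=3$ is used, to keep that exponent above $1$ --- and only then estimates the remaining bilinear operator, whose symbol $\vert\xi_1\vert^{\frac12 s_c}\vert\xi_2\vert^{-\frac12 s_c}$ restricted to $\vert\xi_1\vert\lesssim\vert\xi_2\vert$ is bounded of order zero, by the Coifman--Meyer theorem; with no outer negative derivative left, the high--high interactions cause no trouble. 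If you want to keep your hands-on approach you must do something equivalent for the diagonal piece (e.g.\ apply HLS to it first, or invoke a bilinear multiplier theorem), since pure Bernstein plus orthogonality cannot work there. Separately, note that part (b) is not a $d=5$ specialization of (a): it is stated for all $d\in\{3,4,5\}$ with the fixed exponent $\tfrac25$ (it is used later with $s_c=\tfrac12$ in every dimension), and your numerical check of the H\"older relation at $d=5$ is also miscomputed; the correct statement follows, as the paper says, by running the same argument with the exponents $\tfrac25$, $\tfrac{10d}{5d-11}$, $\tfrac{2d}{5}$ in place of $\tfrac12 s_c$, $\tfrac{2d}{d-2}$, $\tfrac{4dp}{p(d+8)-4}$.
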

\begin{proof} For (a), we will prove the equivalent estimate
				$$\norml{\vert\nabla\vert^{-\frac12s_c}\left(\vert\nabla\vert^{\frac12s_c}f\, \vert\nabla\vert^{-\frac12s_c}g\right)}{\frac{2d}{d+2}}
				\lesssim\norml{f}{\frac{2d}{d-2}}\norml{g}{\frac{4dp}{p(d+8)-4}}$$ 
by decomposing the left-hand side into low-high and high-low frequency interactions. More precisely, we introduce the projections $\pi_{l,h}$ and $\pi_{h,l}$, defined for any pair of functions $\phi,\psi$ by 
		$$\pi_{l,h}(\phi,\psi):=\sum_{N\lesssim M}\phi_N\psi_M
		\quad\text{and}\quad
		\pi_{h,l}(\phi,\psi):=\sum_{N\gg M}\phi_N\psi_M.$$ 

We first consider the low-high interactions: by Sobolev embedding, we have 
\begin{equation}\label{paraproduct lemma 1}\norml{\vert\nabla\vert^{-\frac12s_c}\pi_{l,h}(\vert\nabla\vert^{\frac12s_c}f,\vert\nabla\vert^{-\frac12s_c}g)}{\frac{2d}{d+2}}\lesssim\norml{\pi_{l,h}(\vert\nabla\vert^{\frac12s_c}f,\vert\nabla\vert^{-\frac12s_c}g)}{\frac{4dp}{p(3d+4)-4}}.
\end{equation} 
We note here that when $d=3$, the assumption $s_c<1$ guarantees that
				$$\tfrac{4dp}{p(3d+4)-4}>1.$$
If we now consider the multiplier of the operator given by $$T(f,g):=\pi_{l,h}(\vert\nabla\vert^{\frac12s_c}f,\vert\nabla\vert^{-\frac12s_c}g),$$ that is, $$\sum_{N\lesssim M}\vert\xi_1\vert^{\frac12s_c}\wh{f_N}(\xi_1)\vert\xi_2\vert^{-\frac12s_c}\wh{g_M}(\xi_2),$$ then we see that this multiplier is a symbol of order zero with $\xi=(\xi_1,\xi_2)$. Thus, continuing from \eqref{paraproduct lemma 1}, we can cite a theorem of Coifman--Meyer (see \cite{coifman meyer 1, coifman meyer 2}, for example) to conclude
			$$\norml{\vert\nabla\vert^{-\frac12s_c}\pi_{l,h}(\vert\nabla\vert^{\frac12s_c}f,\vert\nabla\vert^{-\frac12s_c}g)}{\frac{2d}{d+2}}
			\lesssim\norml{f}{\frac{2d}{d-2}}\norml{g}{\frac{4dp}{p(d+8)-4}}.$$ 

We now consider the high-low interactions: if we consider the multiplier of the operator given by $$S(f,h):=\vert\nabla\vert^{-\frac12s_c}\pi_{h,l}(\vert\nabla\vert^{\frac12s_c}f,h),$$ that is,
$$\sum_{N\gg M}\vert\xi_1+\xi_2\vert^{-\frac12s_c}\vert\xi_1\vert^{\frac12s_c}\wh{f_N}(\xi_1)\wh{h_M}(\xi_2),$$ then we see that this multiplier is also a symbol of order zero. Thus, using the result cited above, along with Sobolev embedding, we can estimate
\begin{align*} \norml{\vert\nabla\vert^{-\frac12s_c}\pi_{h,l}(\vert\nabla\vert^{\frac12s_c}f,\vert\nabla\vert^{-\frac12s_c}g)}{\frac{2d}{d+2}}&\lesssim\norml{f}{\frac{2d}{d-2}}\norml{\vert\nabla\vert^{-\frac12s_c}g}{\frac{d}{2}}
\\ &\lesssim \norml{f}{\frac{2d}{d-2}}\norml{g}{\frac{4dp}{p(d+8)-4}}.
\end{align*} Combining the low-high and high-low interactions, we recover (a). Mutis mutandis, the exact same proof gives (b). \end{proof}
\subsection{Strichartz estimates} Let $e^{it\Delta}$ be the free Schr\"odinger propagator,
			$$[e^{it\Delta}f](x)=\tfrac{1}{(4\pi it)^{d/2}}\int_{\R^d} e^{i\vert x-y\vert^2/4t}f(y)\,dy$$
for $t\neq 0$. This explicit formula immediately implies the dispersive estimate $$\norm{e^{it\Delta}f}_{L_x^\infty(\R^d)}\lesssim\vert t\vert^{-\frac{d}{2}}\norm{f}_{L_x^1(\R^d)}$$
for $t\neq 0$. Interpolating with $\norm{e^{it\Delta} f}_{L_x^2(\R^d)}=\norm{f}_{L_x^2(\R^d)}$ (cf. Plancherel), one arrives at
 \begin{equation}\label{dispersive} \norm{e^{it\Delta}f}_{L_x^r(\R^d)}\lesssim \vert t\vert^{-(\frac{d}{2}-\frac{d}{r})}\norm{f}_{L_x^{r'}(\R^d)}
\end{equation}
for $t\neq 0$ and $2\leq r\leq\infty$, with $\tfrac1r+\tfrac{1}{r'}=1.$ This estimate can be used to prove the standard Strichartz estimates, which we state below. First, we need the following
\begin{definition}[Admissible pairs] For $d\geq 3$, we call a pair of exponents $(q,r)$ \emph{Schr\"odinger admissible} if $$\tfrac{2}{q}+\tfrac{d}{r}=\tfrac{d}{2}\indent\text{and}\indent2\leq q,r\leq\infty.$$ For a spacetime slab $I\times\R^d$, we define $$\norm{u}_{S^0(I)}:=\sup\left\{\xnorm{u}{q}{r}{I}:(q,r)\text{ admissible}\right\}.$$ We define $S^0(I)$ to be the closure of the test functions under this norm, and denote the dual of $S^0(I)$ by $N^0(I)$. We note $$\norm{u}_{N^0(I)}\lesssim \xnorm{u}{q'}{r'}{I}\quad\text{for any admissible pair }(q,r).$$
\end{definition}
We now state the Strichartz estimates in the form we will need them.
\begin{lemma}[Strichartz] Let $s\geq 0$, let $I$ be a compact time interval, and let $u:I\times\R^d\to\C$ be a solution to the forced Schr\"odinger equation $$(i\partial_t+\Delta)u=F.$$ Then $$\norm{\vert\nabla\vert^s u}_{S^0(I)}\lesssim\norm{\vert\nabla\vert^s u(t_0)}_{L_x^2(\R^d)}+\norm{\vert\nabla\vert^s F}_{N^0(I)}$$ for any $t_0\in I$.
\end{lemma}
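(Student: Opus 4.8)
The plan is to derive everything from the Duhamel representation
$$u(t)=e^{i(t-t_0)\Delta}u(t_0)-i\int_{t_0}^t e^{i(t-s)\Delta}F(s)\,ds,$$
which holds because $u$ solves the forced equation. Since $\vert\nabla\vert^s$ is a Fourier multiplier it commutes with $e^{it\Delta}$ and with the time integral, so applying $\vert\nabla\vert^s$ to both sides reduces the claim to the case $s=0$, with $u(t_0)$ replaced by $\vert\nabla\vert^s u(t_0)$ and $F$ by $\vert\nabla\vert^s F$. By the definitions of the $S^0$ and $N^0$ norms, it then suffices to prove, for every pair of admissible exponents $(q,r)$, the homogeneous estimate
$$\xnorm{e^{it\Delta}f}{q}{r}{I}\lesssim\norml{f}{2}$$
and, for every admissible $(\tilde q,\tilde r)$, the inhomogeneous estimate
$$\bnorm{\int_{t_0}^t e^{i(t-s)\Delta}F(s)\,ds}_{L_t^qL_x^r(I\times\R^d)}\lesssim\xnorm{F}{\tilde q'}{\tilde r'}{I},$$
with constants independent of $I$; taking suprema over $(q,r)$ and infima over representations of $F$ recovers the stated inequality.

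First I would establish the homogeneous estimate, together with the off-diagonal bound for the untruncated operator $g\mapsto\int_\R e^{i(t-s)\Delta}g(s)\,ds$ from $L_t^{\tilde q'}L_x^{\tilde r'}$ to $L_t^qL_x^r$, by the standard $TT^*$ argument: writing $T_j:L_x^2\to L_t^{q_j}L_x^{r_j}$ for the two propagator maps, the desired bounds are equivalent to the boundedness of $T_1T_2^*$, which is the operator above. For $q,\tilde q>2$ this follows by combining the dispersive estimate \eqref{dispersive} with the one-dimensional Hardy--Littlewood--Sobolev inequality in the time variable and Minkowski's inequality. The endpoint case $q=2$ (which, since $d\geq 3$, occurs precisely at $r=\tfrac{2d}{d-2}$) is not reachable this way; there one dyadically decomposes the kernel according to $\vert t-s\vert\sim 2^j$ and runs the bilinear interpolation and Whitney-type summation argument of Keel--Tao. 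I expect this endpoint analysis to be the main technical point of the proof, although it is by now entirely standard.

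Finally, to pass from the untruncated operator to the retarded operator $F\mapsto\int_{t_0}^t e^{i(t-s)\Delta}F(s)\,ds$, I would invoke the Christ--Kiselev lemma: since the time exponents satisfy $\tilde q'<q$ away from the double endpoint $(q,\tilde q)=(2,2)$, the insertion of the cutoff $\{s<t\}$ in the $s$-integral preserves boundedness, and one obtains the inhomogeneous estimate with constant independent of $I$ (all bounds being for the full-line operators, which dominate their restrictions to $I$). At the double endpoint $(q,r)=(\tilde q,\tilde r)=(2,\tfrac{2d}{d-2})$, where Christ--Kiselev fails, the retarded operator must be treated directly, once more following the Keel--Tao scheme. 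Combining the homogeneous and inhomogeneous estimates with the reduction to $s=0$ completes the argument; since these are textbook facts, in the paper one would simply cite the standard references for Strichartz estimates rather than reproduce the details.
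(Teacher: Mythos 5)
Your proposal is correct and matches the paper's approach: the paper likewise treats the dispersive estimate \eqref{dispersive} as the key ingredient and simply cites Keel--Tao for the endpoint $(2,\tfrac{2d}{d-2})$ and Ginibre--Velo/Strichartz for the non-endpoint cases, which is precisely the standard $TT^*$/Christ--Kiselev argument you outline (with the commutation of $\vert\nabla\vert^s$ handling general $s$). Nothing further is needed.
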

\begin{proof} As mentioned, the key ingredient is \eqref{dispersive}. For the endpoint $(q,r)=(2,\tfrac{2d}{d-2})$ in $d\geq 3$, see \cite{keel tao}. For the non-endpoint cases, see \cite{ginibre velo smoothing, strichartz}, for example.
\end{proof}
The free propagator also obeys some local smoothing estimates (see \cite{cons, sj, lvega} for the original results). We will make use of the following, which appears as Proposition~4.14 in \cite{KV}: 
\begin{lemma}[Local smoothing]\label{local smoothing} For any $f\in L_x^2(\R^d)$ and any $\eps>0$, 
$$\int_\R\int_{\R^d}\big\vert[\vert\nabla\vert^{\frac{1}{2}}e^{it\Delta}f](x)\big\vert^2\langle x\rangle^{-1-\eps}\,dx\,dt\lesssim_\eps\norm{f}_{L_x^2(\R^d)}^2.$$ 
\end{lemma}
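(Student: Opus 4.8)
\textbf{Proof proposal for Lemma~\ref{local smoothing} (local smoothing).}

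The plan is to reduce the estimate to a one-dimensional stationary-phase/resolvent bound after freezing the transverse frequencies, in the classical style of Constantin--Saut, Sj\"olin, and Vega. First I would reduce by Plancherel in $t$: writing $e^{it\Delta}f$ via its Fourier representation and taking the $L_t^2$-norm turns the left-hand side into an integral over the frequency sphere $\{|\xi|^2 = \tau\}$, so it suffices to prove a uniform (in $\tau$) trace-type inequality for the restriction of $\wh f$ to that sphere against the weight $\jap{x}^{-1-\eps}$. Equivalently, and more conveniently, I would instead exploit the $TT^*$ structure: the claimed bound is equivalent to boundedness on $L_{t,x}^2$ of the operator $\jap{x}^{-\frac{1+\eps}{2}}|\nabla|^{1/2} e^{it\Delta}|\nabla|^{1/2} e^{-it\Delta}\jap{x}^{-\frac{1+\eps}{2}}$ (with a convolution in $t$), whose kernel can be computed explicitly from the free propagator $e^{it\Delta}$. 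So the second step is to write this kernel out using the explicit formula for $e^{it\Delta}$ quoted just above the statement, carry out the time integration, and recognize the resulting spatial operator as (a smooth modification of) $\jap{x}^{-\frac{1+\eps}{2}}\,|\nabla|\,(-\Delta)^{-1}\,\jap{x}^{-\frac{1+\eps}{2}}$ — that is, essentially $\jap{x}^{-\frac{1+\eps}{2}}\,|\nabla|^{-1}\,\jap{x}^{-\frac{1+\eps}{2}}$ up to harmless bounded pieces.

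Then the third step is to prove that this fixed spatial operator is bounded on $L_x^2(\R^d)$. For $d\ge 3$ the operator $|\nabla|^{-1}$ has integral kernel $c_d|x-y|^{-(d-1)}$, so the operator in question has kernel $\jap{x}^{-\frac{1+\eps}{2}}|x-y|^{-(d-1)}\jap{y}^{-\frac{1+\eps}{2}}$; by Schur's test (or the Stein--Weiss inequality for $L^2$ with the symmetric weights $|x|^{-\frac{1+\eps}{2}}$, which is exactly the borderline Hardy--Littlewood--Sobolev-with-weights case that holds once $\eps>0$) this kernel defines a bounded operator on $L_x^2(\R^d)$, with constant depending on $\eps$. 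Tracking the equivalence back through $TT^*$ gives the claimed estimate with implicit constant depending only on $\eps$ and $d$.

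The main obstacle is the clean execution of the kernel computation in the second step: carrying out the $t$-integral of the product of two Schr\"odinger kernels against the fractional derivatives $|\nabla|^{1/2}$ is where all the cancellation lives, and one must be careful that the $|t|^{-d/2}$ singularities at $t=0$ are tamed precisely by the two half-derivatives (this is the heart of the smoothing phenomenon) and that the large-$|t|$ behavior is controlled. In practice it is cleanest to do this on the Fourier side: after Plancherel in $t$ one is simply weighing $\wh f$ restricted to spheres, and the half-derivatives combined with the Jacobian of the map $\xi\mapsto|\xi|^2$ produce exactly a uniformly bounded surface measure, reducing everything to the weighted trace inequality $\||\nabla|^{1/2}\jap{x}^{-\frac{1+\eps}{2}}g\|_{L_x^2}^2 \lesssim_\eps \|g\|_{L_x^2}^2$-type statement, which is again the weighted $L^2$ boundedness handled by Schur/Stein--Weiss above. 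Since this is a known result (Proposition~4.14 of \cite{KV}), I would in the write-up either give this short self-contained argument or simply cite it; the sketch above is the argument I would reconstruct.
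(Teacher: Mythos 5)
There is a genuine gap in the middle of your argument, and it is exactly where you said the heart of the matter lies. When you pass to $TT^*$ and ``carry out the time integration,'' the integral $\int_\R e^{-it\tau}e^{it\Delta}\,dt$ does not produce the resolvent $(-\Delta)^{-1}$ at any stage: boundedness of the convolution-in-$t$ operator $TT^*$ on $L_{t,x}^2$ is equivalent, by Plancherel in $t$, to a bound on $\sup_\tau\big\|\langle x\rangle^{-\frac{1+\eps}{2}}\vert\nabla\vert\,\delta(-\Delta-\tau)\,\langle x\rangle^{-\frac{1+\eps}{2}}\big\|_{L^2_x\to L^2_x}$, i.e.\ to uniform-in-$\tau$ bounds on the weighted spectral measure of $-\Delta$ (equivalently, on $\mathrm{Im}\,(-\Delta-\tau-i0)^{-1}$ for \emph{every} energy $\tau$, the limiting absorption principle). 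The kernel of $\delta(-\Delta-\tau)$ is $\widehat{d\sigma_{\sqrt\tau}}(x-y)$, which decays like $\vert x-y\vert^{-\frac{d-1}{2}}$, not $\vert x-y\vert^{-(d-1)}$; the single zero-energy operator $\langle x\rangle^{-\frac{1+\eps}{2}}\vert\nabla\vert^{-1}\langle x\rangle^{-\frac{1+\eps}{2}}$ is not what the estimate reduces to, and bounding it (your Schur/Stein--Weiss step, which is correct but beside the point for $d\geq3$) does not imply the lemma. A quick sanity check: the lemma is invariant under $f\mapsto f(\lambda x)$, so any faithful reduction must retain a scale-invariant \emph{family} of bounds (spheres of all radii); a single bound against the inhomogeneous weight cannot encode that uniformity. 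Relatedly, your closing claim that the uniform trace inequality ``is again handled by Schur/Stein--Weiss above'' conflates two different estimates: the trace/extension bound $\big\|\langle x\rangle^{-\frac{1+\eps}{2}}\widehat{g\,d\sigma_r}\big\|_{L^2_x}\lesssim_\eps\norm{g}_{L^2(d\sigma_r)}$, uniformly in $r$, genuinely uses the curvature of the sphere (decay of $\widehat{d\sigma}$) or a one-dimensional Plancherel argument, not the Riesz-kernel Schur bound.

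Your opening reduction (Plancherel in $t$, uniform-in-$\tau$ weighted trace inequality on the spheres $\{\vert\xi\vert^2=\tau\}$, with the half-derivative cancelling the Jacobian $2\vert\xi\vert$) is the correct route, and is essentially how the cited source proceeds: the paper does not prove this lemma but quotes it as Proposition~4.14 of \cite{KV} (with the original results in \cite{cons, sj, lvega}), where the proof freezes the dominant frequency direction, applies Plancherel in $(t,x')$, and uses $\langle x\rangle^{-1-\eps}\leq\langle x_1\rangle^{-1-\eps}$ to sum the resulting $\sup_{x_1}$ bound. If you replace your steps two and three by an honest proof of the uniform weighted trace estimate (or by that one-dimensional argument), the write-up is fine; as it stands, the reduction to $\langle x\rangle^{-\frac{1+\eps}{2}}\vert\nabla\vert^{-1}\langle x\rangle^{-\frac{1+\eps}{2}}$ is incorrect and the proof does not close.
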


Next, we record the following bilinear Strichartz estimates. The version we need can be deduced from (the proof of) Corollary 4.19 in \cite{KV}.
\begin{lemma}[Bilinear Strichartz]\label{bilinear strichartz} Let $0<s_c<\tfrac{d-1}{2}$. For any spacetime slab $I\times\R^d$ and any frequencies $M>0$ and $N>0$, we have 
				$$\xnorms{u_{\leq M}v_{\geq N}}{2}{I}
				\lesssim M^{(\frac{d-1}{2}-s_c)}N^{-(\frac12+s_c)}
				\norm{\nsc u_{\leq M}}_{S_0^*(I)}\norm{\nsc v_{\geq N}}_{S_0^*(I)},$$
where 
			$$\norm{u}_{S_0^*(I)}
			:=\xnorm{u}{\infty}{2}{I}+\xnorms{(i\partial_t+\Delta)u}{\frac{2(d+2)}{d+4}}{I}.$$
\end{lemma}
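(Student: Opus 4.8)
The plan is to prove the bilinear Strichartz estimate
$$\xnorms{u_{\leq M}v_{\geq N}}{2}{I} \lesssim M^{(\frac{d-1}{2}-s_c)}N^{-(\frac12+s_c)}\norm{\nsc u_{\leq M}}_{S_0^*(I)}\norm{\nsc v_{\geq N}}_{S_0^*(I)}$$
by reducing to a standard bilinear refinement of the Strichartz inequality for the inhomogeneous Schr\"odinger equation, which is exactly the content of \cite[Corollary 4.19]{KV}. First I would recall the classical bilinear estimate in its Duhamel form: if $(i\partial_t + \Delta)u = F$ and $(i\partial_t+\Delta)v = G$ with $u,v$ spectrally localized to $|\xi|\sim M$ and $|\xi|\sim N$ respectively with $M \ll N$, then
$$\xnorms{uv}{2}{I} \lesssim \Big(\tfrac{M}{N}\Big)^{\frac{d-1}{2}}\big(\|u(t_0)\|_{L_x^2} + \|F\|_{L_{t,x}^{2(d+2)/(d+4)}}\big)\big(\|v(t_0)\|_{L_x^2} + \|G\|_{L_{t,x}^{2(d+2)/(d+4)}}\big),$$
which follows from interpolating the $L_{t,x}^2$ bound coming from the transversality of the two frequency annuli (the Fourier supports in the paraboloid geometry) against energy estimates; the dual exponent $\tfrac{2(d+2)}{d+4}$ is the dual Strichartz exponent. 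This is precisely where I would cite \cite{KV} rather than reprove it.

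Next I would pass from the homogeneous-$L^2$ formulation to the $\nsc$-weighted formulation required by the statement. Apply the displayed bilinear estimate with $u_{\leq M}$ and $v_{\geq N}$ in place of $u,v$; since $u_{\leq M}$ really means $P_{\leq M}u$ but the relevant part interacting with $v_{\geq N}$ is frequency-localized near $|\xi|\sim M'$ for $M'\leq M$, I would first dyadically decompose $u_{\leq M} = \sum_{M'\leq M}u_{M'}$ and $v_{\geq N} = \sum_{N'\geq N}v_{N'}$, apply the single-frequency-block bilinear estimate to each pair $(u_{M'},v_{N'})$ with $M' \ll N'$ (which holds since $M \lesssim N$ with an implied constant we can absorb, or one truncates so that $M'\le cN'$), and then on each block convert $\|u_{M'}(t_0)\|_{L_x^2}\sim (M')^{-s_c}\|\nsc u_{M'}(t_0)\|_{L_x^2}$ via Bernstein, and similarly for the forcing term; likewise $\|v_{N'}(t_0)\|_{L_x^2}\sim (N')^{-s_c}\|\nsc v_{N'}(t_0)\|_{L_x^2}$. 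Collecting the powers, the block $(M',N')$ contributes $(M'/N')^{\frac{d-1}{2}}(M')^{-s_c}(N')^{-s_c}$ times the $\nsc$-weighted norms, which rearranges to $(M')^{\frac{d-1}{2}-s_c}(N')^{-\frac{d-1}{2}-s_c}$. Summing in $N'\geq N$ converges (since $\frac{d-1}{2}+s_c>0$) and produces $N^{-(\frac{d-1}{2}+s_c)} = N^{-(\frac12 + s_c)}\cdot N^{-\frac{d-2}{2}}$; summing in $M'\leq M$ converges provided $\frac{d-1}{2}-s_c>0$, i.e. $s_c < \frac{d-1}{2}$, which is exactly the hypothesis, and gives $M^{\frac{d-1}{2}-s_c}$ — but I need to be a little careful that the cross terms (different $M'$ blocks, different $N'$ blocks) can be summed, which is handled by the usual almost-orthogonality: for fixed output the relevant pairs have comparable frequencies, or one uses Schur's test / Cauchy--Schwarz in the $\ell^2$ Littlewood--Paley square functions. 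After reassembling via the square function estimates (Lemma \ref{square function estimates}), the $\nsc u_{\leq M}$ norms combine into $\|\nsc u_{\leq M}\|_{S_0^*(I)}$ and similarly for $v$.

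The main technical point — and the step I expect to require the most care — is the almost-orthogonality bookkeeping when summing the dyadic blocks and reconstructing the $S_0^*$ norms, together with making sure the forcing-term endpoint exponent $\tfrac{2(d+2)}{d+4}$ is handled consistently (it is the dual of the $L_{t,x}^{2(d+2)/d}$ Strichartz norm, so it interacts correctly with the $L_{t,x}^2$ output by the bilinear interpolation). One has to check that the bilinear estimate for single frequency blocks, which naturally has $L_x^2$ data norms and $L_{t,x}^{2(d+2)/(d+4)}$ forcing norms, upgrades to the full $S_0^*$ norm without loss; this is standard but requires invoking that $\|\cdot\|_{S_0^*}$ dominates both $\xnorm{\cdot}{\infty}{2}{I}$ and $\xnorms{(i\partial_t+\Delta)\cdot}{\frac{2(d+2)}{d+4}}{I}$ by definition, so the passage is immediate once the dyadic sum is arranged. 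I would not belabor the transversality computation itself, since it is exactly \cite[Corollary 4.19]{KV}; the contribution here is purely the rescaling to critical regularity and the convergence of the dyadic sums, which is why the hypothesis $s_c < \frac{d-1}{2}$ appears precisely at the high-frequency-of-$u$ end.
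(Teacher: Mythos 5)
Your overall route is the same one the paper takes: it simply cites (the proof of) \cite[Corollary 4.19]{KV} and the deduction you sketch -- dyadic decomposition, Bernstein to trade $L_x^2$ data/forcing norms for $\nsc$-weighted ones, and geometric summation with $s_c<\tfrac{d-1}{2}$ entering at the low-frequency end -- is exactly how that corollary is adapted to regularity $s_c$. However, there is a concrete error at the one step that carries all the content of the lemma, namely the exponents. The bilinear refinement you quote does not have the factor $(M/N)^{\frac{d-1}{2}}$; the correct gain (and what \cite{KV} proves) is $M^{\frac{d-1}{2}}N^{-\frac12}$. The version with $(M/N)^{\frac{d-1}{2}}$ is false for $d\geq 3$ (the high-frequency gain from curvature is only $N^{-1/2}$; your version is not even scale-consistent with the $L_x^2$ data norms), and the mistake propagates through your bookkeeping: you arrive at $N^{-(\frac{d-1}{2}+s_c)}$ rather than $N^{-(\frac12+s_c)}$, and the attempted reconciliation $N^{-(\frac{d-1}{2}+s_c)}=N^{-(\frac12+s_c)}N^{-\frac{d-2}{2}}$ does not salvage it -- for $N<1$ that bound does not imply the stated one, and in any case it rests on an input that is not true. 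With the correct factor everything falls into place: the block $(M',N')$ contributes $(M')^{\frac{d-1}{2}-s_c}(N')^{-(\frac12+s_c)}$ times the localized $S_0^*$ norms, the sum over $M'\leq M$ converges precisely because $s_c<\tfrac{d-1}{2}$, and the sum over $N'\geq N$ converges because $s_c>-\tfrac12$, yielding exactly the stated estimate.

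Two smaller remarks. First, no almost-orthogonality argument is needed when recombining the blocks: since you are only after an upper bound on an $L_{t,x}^2$ norm, the triangle inequality suffices, because the per-block bounds sum geometrically and $\norm{\nsc u_{M'}}_{S_0^*(I)}\lesssim\norm{\nsc u_{\leq M}}_{S_0^*(I)}$ (uniform boundedness of $P_{M'}$ on $L_t^\infty L_x^2$ and on $L_{t,x}^{\frac{2(d+2)}{d+4}}$); the square-function machinery you invoke is not required. Second, you do need the dyadic decomposition in the low-frequency factor (Bernstein applied directly to $u_{\leq M}$ would produce a divergent sum of $(M')^{-s_c}$), so that part of your plan is the right one; the blocks with $M'\gtrsim N'$, which occur when $M\gg N$, are still covered by the estimate of \cite{KV} (stated for arbitrary $M,N>0$), so no truncation $M'\leq cN'$ is needed.
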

\begin{remark} We will use Lemma \ref{bilinear strichartz} in the proof of Proposition \ref{lts lemma}. In that context, we will have $u=v$ an almost periodic solution to \eqref{nls} and $I=J_k$, a characteristic subinterval. In this case, interpolating between $u\in L_t^\infty\dot{H}_x^{s_c}$ and Lemma~\ref{spacetime bounds} gives 
					$$\norm{\nsc u}_{S^0(J_k)}\lesssim_u 1,$$ 
so that we can use the fractional chain rule and Sobolev embedding to estimate
\begin{align*}
\xnorms{\nsc(\vert u\vert^p u)}{\frac{2(d+2)}{d+4}}{J_k}&\lesssim\xnorms{u}{\frac{p(d+2)}{2}}{J_k}^p\xnorms{\nsc u}{\frac{2(d+2)}{d}}{J_k}
\\ &\lesssim \norm{\nsc u}_{S^0(J_k)}^{p+1}
\\ &\lesssim_u 1.
\end{align*}
Thus, in this setting, an application of Lemma \ref{bilinear strichartz} gives
$$\xnorms{u_{\leq M}u_{\geq N}}{2}{J_k}\lesssim_u M^{(\frac{d-1}{2}-s_c)}N^{-(\frac12+s_c)}.$$
\end{remark}
\subsection{Concentration-compactness} We record here the linear profile decomposition that we will use to prove the reduction in Theorem \ref{reduction}. We begin with the following 
\begin{definition}[Symmetry group]\label{symmetry group} For any position $x_0\in\R^d$ and scaling parameter $\lambda>0$, we define a unitary transformation $g_{x_0,\lambda}:\dot{H}_x^{s_c}(\R^d)\to\dot{H}_x^{s_c}(\R^d)$ by $$[g_{x_0,\lambda}f](x):=\lambda^{-\frac{2}{p}}f\left(\lambda^{-1}(x-x_0)\right)$$ (recall $s_c:=\tfrac{d}{2}-\tfrac{2}{p}$). We let $G$ denote the collection of such transformations. For a function $u:I\times\R^d\to\C$, we define $T_{g_{x_0,\lambda}} u:\lambda^2 I\times\R^d\to\C$ by the formula $$[T_{g_{x_0,\lambda}}u](t,x):=\lambda^{-\frac{2}{p}}u\left(\lambda^{-2}t,\lambda^{-1}(x-x_0)\right),$$ where $\lambda^2I:=\{\lambda^2 t:t\in I\}.$ Note that if $u$ is a solution to \eqref{nls}, then $T_g u$ is a solution to \eqref{nls} with initial data $gu_0$. 
\end{definition}
\begin{remark} It is easily seen that $G$ is a group under composition. The map $u\mapsto T_g u$ maps solutions to \eqref{nls} to solutions with the same scattering size, that is, $S(T_gu)=S(u)$. Furthermore, $u$ is a maximal-lifespan solution if and only if $T_gu$ is a maximal-lifespan solution.
\end{remark}
We now state the linear profile decomposition in the form that we need. For $s_c=0$, this result was originally proven in \cite{ begout-vargas, carles-keraani, merle-vega}, while for $s_c=1$ it was established in \cite{keraani}. In the generality we need, a proof can be found in \cite{shao}.
\begin{lemma}[Linear profile decomposition, \cite{shao}]\label{linear profile decomposition} Fix $0<s_c<1$ and let $\{u_n\}_{n\geq 1}$ be bounded sequence in $\dot{H}_x^{s_c}(\R^d).$ Then, after passing to a subsequence if necessary, there exist functions $\{\phi^j\}_{j\geq 1}\subset\dot{H}_x^{s_c}(\R^d),$ group elements $g_n^j\in G$, and times $t_n^j\in\R$ such that for all $J\geq 1$, we have the decomposition $$u_n=\sum_{j=1}^J g_n^j e^{it_n^j\Delta}\phi^j+w_n^J$$ with the following properties:

$\bullet$ For all $n$ and all $J\geq 1$, we have $w_n^J\in\dot{H}_x^{s_c}(\R^d)$, with 
\begin{equation}\label{asymptotic vanishing}\lim_{J\to\infty}\limsup_{n\to\infty}\ \xnorms{e^{it\Delta}w_n^J}{\frac{p(d+2)}{2}}{\R}=0.\end{equation}

$\bullet$ For any $j\neq k$, we have the following asymptotic orthogonality of parameters: 
\begin{equation}\label{asymptotic orthogonality}\frac{\lambda_n^j}{\lambda_n^k}+\frac{\lambda_n^k}{\lambda_n^j}+\frac{\vert x_n^j-x_n^k\vert^2}{\lambda_n^j\lambda_n^k}+\frac{\vert t_n^j(\lambda_n^j)^2-t_n^k(\lambda_n^k)^2\vert}{\lambda_n^j\lambda_n^k}\to\infty\quad \text{as}\ n\to\infty.\end{equation}

$\bullet$ For any $J\geq 1$, we have the decoupling properties: 
\begin{equation}\label{strong decoupling}\lim_{n\to\infty}\bigg[\norm{\nsc u_n}_2^2-\sum_{j=1}^J\norm{\nsc\phi^j}_2^2-\norm{\nsc w_n^J}_2^2\bigg]=0,\end{equation}

and for any $1\leq j\leq J$, 
\begin{equation}\label{weak decoupling}e^{-it_n^j\Delta}[(g_n^j)^{-1}w_n^J]\rightharpoonup 0\quad\text{weakly in }\dot{H}_x^{s_c}\ \text{as }n\to\infty.\end{equation}
\end{lemma}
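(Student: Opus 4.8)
\textbf{Proof proposal for the linear profile decomposition (Lemma~\ref{linear profile decomposition}).}

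The plan is to follow the now-standard inductive extraction of profiles, adapted to the inter-critical scaling $g_{x_0,\lambda}f = \lambda^{-2/p}f(\lambda^{-1}(x-x_0))$, so that the relevant norm is $\dot{H}_x^{s_c}$ and the relevant linear spacetime norm is $L_{t,x}^{p(d+2)/2}$ (which is the Strichartz-admissible exponent at regularity $s_c$). The central quantitative input is an \emph{inverse Strichartz inequality}: if $\{f_n\}$ is bounded in $\dot{H}_x^{s_c}$ with $\liminf \|f_n\|_{\dot{H}_x^{s_c}} = A$ and $\limsup \|e^{it\Delta}f_n\|_{L_{t,x}^{p(d+2)/2}(\R\times\R^d)} = \eps$, then after passing to a subsequence there exist $g_n \in G$, $t_n \in \R$, and $\phi \in \dot{H}_x^{s_c}$ with $g_n^{-1}e^{it_n\Delta}f_n \rightharpoonup \phi$ weakly, $\|\phi\|_{\dot{H}_x^{s_c}} \gtrsim \eps (\eps/A)^{\beta}$ for some $\beta = \beta(d,p) > 0$, and the Pythagorean decoupling $\|f_n\|_{\dot{H}_x^{s_c}}^2 - \|g_n(e^{it_n\Delta})^{-1}\phi_n\|^2 - \|f_n - g_n e^{it_n\Delta}\phi\|^2 \to 0$, where I abuse notation for the obvious profile. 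This is exactly the content extracted in \cite{shao}; the proof combines a Littlewood--Paley/Bernstein refinement of the Strichartz estimate (an improved $L_{t,x}^{p(d+2)/2}$ bound in terms of a weaker, e.g. $L_t^\infty L_x^r$-type, norm of a frequency-localized piece) with a concentration-compactness argument that locates a frequency scale $\lambda_n$, spatial center $x_n$, and time $t_n$ at which a nontrivial bubble survives the weak limit.

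Granting the inverse inequality, I would set up the induction on $J$ exactly as in \cite{keraani} (or \cite{KV}, Chapter~4). Start with $w_n^0 := u_n$. Given the decomposition at level $J$, apply the inverse Strichartz inequality to $\{w_n^J\}$: if $A_J := \limsup_n \|e^{it\Delta}w_n^J\|_{L_{t,x}^{p(d+2)/2}} = 0$ we stop (and \eqref{asymptotic vanishing} holds); otherwise we extract a new profile $\phi^{J+1}$, parameters $g_n^{J+1}, t_n^{J+1}$, and set $w_n^{J+1} := w_n^J - g_n^{J+1}e^{it_n^{J+1}\Delta}\phi^{J+1}$. The Pythagorean decoupling of $\dot{H}_x^{s_c}$ norms at each step, together with boundedness of $\{u_n\}$, forces $\sum_j \|\nsc \phi^j\|_2^2 < \infty$, hence $\|\nsc \phi^J\|_2 \to 0$, hence (by the lower bound $\|\nsc\phi^{J+1}\|_2 \gtrsim A_J(A_J/\text{bd})^{\beta}$) $A_J \to 0$ as $J \to \infty$, which gives \eqref{asymptotic vanishing} after a diagonal argument. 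The decoupling \eqref{strong decoupling} is the telescoped sum of the one-step Pythagorean identities (plus an $o(1)$ error controlled by a standard cross-term estimate using \eqref{asymptotic orthogonality}), and the weak convergence \eqref{weak decoupling} is immediate from the construction ($w_n^J$ is built to be asymptotically weakly orthogonal to each already-extracted profile, and the later-extracted ones are handled by the orthogonality of parameters).

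The asymptotic orthogonality of parameters \eqref{asymptotic orthogonality} is proved by contradiction in the usual way: if for some $j \neq k$ the quantity in \eqref{asymptotic orthogonality} stays bounded along a subsequence, then $(g_n^k)^{-1}g_n^j e^{i(t_n^j - t_n^k \cdot)\Delta}$ converges (after passing to a further subsequence) to a fixed nonzero operator, which contradicts that $\phi^k$ was extracted as a \emph{weak} limit of $g_n^{k\,-1}e^{it_n^k\Delta}w_n^{k-1}$ with $w_n^{k-1}$ already orthogonal to the $j$-th profile; equivalently $(g_n^j)^{-1}e^{it_n^j\Delta}w_n^{k-1} \rightharpoonup 0$ would force $\phi^k = 0$. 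I expect the main obstacle — and the reason a reference to \cite{shao} rather than a self-contained proof is appropriate here — to be the inverse Strichartz inequality itself: unlike the cases $s_c = 0$ and $s_c = 1$, here one cannot appeal to a clean conservation law or a bilinear estimate at an integer regularity, and the refinement of Strichartz at fractional regularity $s_c$ requires the Littlewood--Paley machinery together with a careful tracking of the scaling $\lambda^{-2/p}$; once that refinement and the associated bubble-extraction are in hand, the rest of the argument is the standard abstract induction and poses no new difficulty. Since a complete proof in the generality $0 < s_c < 1$ is given in \cite{shao}, I would simply cite it and record the statement in the form needed for Theorem~\ref{reduction}.
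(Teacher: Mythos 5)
Your proposal takes essentially the same route as the paper: the paper does not prove this lemma but simply imports it from \cite{shao} (noting the earlier cases $s_c=0$ and $s_c=1$), which is exactly what you conclude, and your sketch of the underlying inverse-Strichartz inequality plus inductive bubble extraction is the standard argument behind that reference. No gaps to flag.
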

\begin{remark}\label{scaling parameters} In this linear profile decomposition, we may always choose the scaling parameters $\lambda_n^k$ so that they belong to $2^{\mathbb{Z}}$.
\end{remark}
\section{Local well-posedness}\label{local theory}

In this section, we develop a local theory for \eqref{nls}.  We begin by recording a standard local well-posedness result proven by Cazenave--Weissler \cite{cw}; see also \cite{cazenave book, KV, tao book}. We will also need to establish a stability result (appearing as Theorem~\ref{stability}), which will be essential in the reduction to almost periodic solutions in Section~\ref{reduction section}. For stability results in the mass- and energy-critical settings, see \cite{CKSTT:gwp, RV, TV:stability, TVZ}.

For the following local well-posedness result, one must assume that the initial data belongs to the inhomogeneous Sobolev space $H_x^{s_c}(\R^d).$ This assumption serves to simplify the proof (allowing for a contraction mapping argument in mass-critical spaces); we can remove it a posteriori by using the stability result we prove below. 
\begin{theorem}[Standard local well-posedness \cite{cw}]\label{standard lwp} Let $d\geq 1$, $0< s_c< 1$, and $u_0\in H_x^{s_c}(\R^d).$ Then there exists $\eta_0>0$ so that if $0<\eta\leq\eta_0$ and $I$ is an interval containing zero such that 
\begin{equation}\label{standard lwp smallness} \xnorm{\nsc e^{it\Delta}u_0}{p+2}{\frac{2d(p+2)}{2(d-2)+dp}}{I}\leq\eta,
\end{equation} then there exists a unique solution $u$ to \eqref{nls} that obeys the following bounds:
\begin{align*}
\xnorm{\nsc u}{p+2}{\frac{2d(p+2)}{2(d-2)+dp}}{I}&\leq 2\eta,
\\ \norm{\nsc u}_{S^0(I)}&\lesssim\norm{\nsc u_0}_{L_x^2(\R^d)}+\eta^{p+1},
\\ \norm{u}_{S^0(I)}&\lesssim\norm{u_0}_{L_x^2(\R^d)}.
\end{align*}
\end{theorem}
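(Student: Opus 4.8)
The plan is to prove Theorem~\ref{standard lwp} by a standard contraction mapping argument, carried out in a function space built around the scaling-critical Strichartz norm appearing in \eqref{standard lwp smallness}, together with an auxiliary mass-Strichartz norm that lets us close the estimates using the Duhamel operator. Write $q = p+2$ and $r = \tfrac{2d(p+2)}{2(d-2)+dp}$; one checks that $(q,r)$ is Schr\"odinger admissible and that $\scale{q_0}{r_0}$-type bookkeeping shows the nonlinearity $|u|^p u$ distributes $p$ factors in the scaling-critical Lebesgue-in-space norm $L_x^{dp(p+2)/4}$ and one factor carrying $\nsc$. Concretely, I would define, on the interval $I$, the complete metric space
$$
X := \Big\{ u : \xnorm{\nsc u}{q}{r}{I} \leq 2\eta,\ \norm{\nsc u}_{S^0(I)} \leq C_0\big(\norm{\nsc u_0}_{L_x^2}+\eta^{p+1}\big),\ \norm{u}_{S^0(I)}\leq C_0\norm{u_0}_{L_x^2}\Big\}
$$
with the metric given by a suitably weak norm (e.g.\ $\xnorm{\nsc(u-v)}{q}{r}{I}$ plus an $S^0$ piece, or just the $L_{t,x}$-type distance used in \cite{cw}), and the map $\Phi(u)(t) := e^{it\Delta}u_0 - i\int_0^t e^{i(t-s)\Delta}(|u|^p u)(s)\,ds$.

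The key steps, in order: (1) Record the linear estimates — Strichartz gives $\norm{\nsc e^{it\Delta}u_0}_{S^0(I)}\lesssim \norm{\nsc u_0}_{L_x^2}$ and $\norm{e^{it\Delta}u_0}_{S^0(I)}\lesssim\norm{u_0}_{L_x^2}$, while the smallness hypothesis \eqref{standard lwp smallness} controls precisely the norm $\xnorm{\nsc e^{it\Delta}u_0}{q}{r}{I}\leq\eta$. (2) Estimate the nonlinear (Duhamel) term: apply the Strichartz inequality with the dual admissible exponent, then the fractional chain rule (Lemma~\ref{fractional chain rule}) with $G(z)=|z|^p z$, $G'(z)\sim|z|^p$, to get $\norm{\nsc(|u|^p u)}_{N^0(I)}\lesssim \xnorm{u}{p q_1}{p r_1}{I}^{p}\,\xnorm{\nsc u}{q}{r}{I}$ for the right Lebesgue exponents, and then Sobolev embedding (plus interpolation within $S^0$) to bound $\xnorm{u}{pq_1}{pr_1}{I}$ by $\xnorm{\nsc u}{q}{r}{I}$; this yields $\norm{\nsc \Phi(u)}_{S^0(I)}\lesssim \norm{\nsc u_0}_{L_x^2} + \eta^{p+1}$ and $\xnorm{\nsc\Phi(u)}{q}{r}{I}\leq \eta + C\eta^{p+1}\leq 2\eta$ for $\eta$ small. (3) For the lower-order bound $\norm{u}_{S^0(I)}$, estimate $\norm{|u|^p u}_{N^0(I)}\lesssim\xnorm{u}{pq_1}{pr_1}{I}^{p}\xnorm{u}{\tilde q}{\tilde r}{I}$ where now $(\tilde q,\tilde r)$ is an admissible pair without derivatives, again controlled by $S^0$ and the small scaling-critical norm, giving $\norm{\Phi(u)}_{S^0(I)}\lesssim\norm{u_0}_{L_x^2}$ since the extra $p$ factors produce a small constant. (4) Show $\Phi:X\to X$ and $\Phi$ is a contraction in the chosen metric by the same estimates applied to differences (using that $|z|^p z$ is locally Lipschitz, since $p\geq 1$ in our range — or directly in the $0<s_c<1$ range of this theorem using the H\"older bound and the weak metric as in \cite{cw}); the Banach fixed point theorem then produces the unique $u$. (5) Verify $u$ is a solution in the sense of Definition~\ref{solution}: membership in $C_t\dot H_x^{s_c}\cap L_{t,x}^{p(d+2)/2}$ follows from the $S^0$-bounds and Sobolev embedding on compact subintervals, and the Duhamel formula holds by construction.

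The main obstacle — or rather the only genuinely delicate bookkeeping — is verifying the exponent arithmetic in step (2): one must exhibit Lebesgue pairs $(q_1,r_1)$ and a dual admissible pair so that $\tfrac1{q'} = \tfrac{p}{q_1}+\tfrac1{q}$ and similarly in $r$, with $(pq_1,pr_1)$ such that $\nsc u\in L_t^q L_x^r$ embeds (via Sobolev and the admissible-line interpolation of $S^0$) into $L_t^{pq_1}L_x^{pr_1}$, and all exponents lie in $(1,\infty)$; this is exactly where the restriction $0<s_c<1$ (equivalently $\tfrac4d < p < \tfrac4{d-2}$) is used, and where in $d=3$ one must occasionally check an endpoint is not violated. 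Everything else is a routine combination of Lemma~\ref{fractional chain rule}, the Strichartz inequality, Sobolev embedding, and smallness. One should also note that, as stated, the theorem is local in the sense that $I$ is any interval on which \eqref{standard lwp smallness} holds; the existence of such an $I$ containing $0$ for arbitrary $u_0\in H_x^{s_c}$ follows from the dominated convergence / continuity of $t\mapsto \nsc e^{it\Delta}u_0$ in $L_t^q L_x^r$, and for small data one may take $I=\R$.
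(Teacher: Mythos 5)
Your contraction-mapping argument is correct and is essentially the approach the paper relies on: the paper does not prove Theorem \ref{standard lwp} itself but cites Cazenave--Weissler \cite{cw}, noting precisely that the hypothesis $u_0\in H_x^{s_c}$ permits a fixed-point argument closed in mass-critical spaces, which is what your weak-metric contraction (with the derivative bounds propagated as a priori bounds on the set $X$) implements. Your exponent bookkeeping also checks out: Sobolev embedding sends $\nsc u\in L_t^{p+2}L_x^{\frac{2d(p+2)}{2(d-2)+dp}}$ into the scaling-critical space $L_t^{p+2}L_x^{\frac{dp(p+2)}{4}}$, and H\"older with the dual of this same admissible pair closes both the differentiated and undifferentiated nonlinear estimates.
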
 

\begin{remark} By Strichartz, we have $$\xnorm{\nsc e^{it\Delta}u_0}{p+2}{\frac{2d(p+2)}{2(d-2)+dp}}{I}\lesssim\norm{\nsc u_0}_{L_x^2(\R^d)},$$ so that \eqref{standard lwp smallness} holds with $I=\R$ for sufficiently small initial data. One can also guarantee that \eqref{standard lwp smallness} holds by taking $\vert I\vert$ sufficiently small (cf. monotone convergence).
\end{remark} 


We now turn to the question of stability for \eqref{nls}. We will prove a stability result for $(d,s_c)$ satisfying \eqref{constraints2}, in which case we always have $p\geq1$. As we will see, this assumption allows for a very simple stability theory. On the other hand, when $p<1$, developing a stability theory can become quite delicate. For a discussion in the energy-critical case, see \cite[Section 3.4]{KV} and the references cited therein. See also \cite{KV:supercritical} for a stability theory in the energy-supercritical regime, as well as \cite{DZ} for a stability theory in the inter-critical regime in high dimensions.

Following the arguments in \cite{KV}, we begin with the following
\begin{lemma}[Short-time perturbations]\label{short-time} Fix $(d,s_c)$ satisfying \eqref{constraints2}. Let $I$ be a compact interval and $\wwt{u}:I\times\R^d\to\C$ a solution to $$(i\partial_t+\Delta)\wwt{u}=\vert\wwt{u}\vert^p\wwt{u}+e$$ for some function $e$. Assume that \begin{equation}\label{stability hypoth}\norm{\wwt{u}}_{L_t^\infty\dot{H}_x^{s_c}(I\times\R^d)}\leq E.\end{equation} Let $t_0\in I$ and $u_0\in\dot{H}_x^{s_c}(\R^d)$. Then there exist $\eps_0,$ $\delta>0$ (depending on $E$) such that for all $0<\eps<\eps_0$, if
\begin{align}\label{delta}\xnorm{\nsc \wwt{u}}{\frac{p(d+2)}{2}}{\frac{2dp(d+2)}{d^2p+2dp-8}}{I}&\leq\delta,
\\ \label{initial data close}\norm{u_0-\wwt{u}(t_0)}_{\dot{H}_x^{s_c}(\R^d)}&\leq\eps,
\\ \label{error small}\norm{\nsc e}_{N^0(I)}&\leq \eps,\end{align}
then there exists $u:I\times\R^d\to\C$ solving $(i\partial_t+\Delta)u=\vert u\vert^p u$ with $u(t_0)=u_0$ satisfying
\begin{align}\label{strichartz norms close}\norm{\nsc(u-\wwt{u})}_{S^0(I)}&\lesssim\eps,
\\ \label{strichartz norms under control}\norm{\nsc u}_{S^0(I)}&\lesssim E,
\\ \label{nonlinearities close}\norm{\nsc(\vert u\vert^p u-\vert\wwt{u}\vert^p\wwt{u})}_{N^0(I)}&\lesssim\eps.\end{align} 
\end{lemma}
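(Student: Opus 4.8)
The plan is to run a standard contraction/bootstrap argument in Strichartz spaces, treating $u$ as a perturbation of $\wwt{u}$. First I would set $w := u - \wwt{u}$, which (formally) solves
$$(i\partial_t+\Delta)w = |w+\wwt{u}|^p(w+\wwt{u}) - |\wwt{u}|^p\wwt{u} - e, \qquad w(t_0) = u_0 - \wwt{u}(t_0).$$
Applying $\nsc$ and the Strichartz inequality, and writing $N(w) := |w+\wwt{u}|^p(w+\wwt{u}) - |\wwt{u}|^p\wwt{u}$, one gets
$$\norm{\nsc w}_{S^0(I)} \lesssim \norm{u_0-\wwt{u}(t_0)}_{\dot H_x^{s_c}} + \norm{\nsc N(w)}_{N^0(I)} + \norm{\nsc e}_{N^0(I)} \lesssim \eps + \norm{\nsc N(w)}_{N^0(I)}.$$
The whole game is to estimate $\norm{\nsc N(w)}_{N^0(I)}$ in terms of a small power of $\delta$ times $\norm{\nsc w}_{S^0(I)}$ plus terms that are $o(1)$ as $\eps\to 0$.

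The key harmonic-analysis input is Lemma~\ref{derivatives of differences} (derivatives of differences), applied with $v = w$ and $u = \wwt{u}$ in the notation of that lemma: it gives $\norm{\nsc N(w)}_r \lesssim \norm{\nsc \wwt u}_{r_1}\norm{w}_{r_2}^{p-1} + \norm{\nsc w}_{r_1}\norm{u}_{r_2}^{p-1}$ for an appropriate Hölder triple. One then picks the dual-admissible exponent $(q',r') = (\frac{2(d+2)}{d+4}', \ldots)$ — i.e. time-space exponents making the left side an $N^0$-norm — and chooses $r_2$ to be the scaling-critical Lebesgue exponent $L_{t,x}$-exponent $\frac{p(d+2)}{2}$ that is controlled by $\nsc$ through Sobolev embedding from the Strichartz pair appearing in \eqref{delta}. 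The factors $\norm{w}_{r_2}^{p-1}$ and $\norm{u}_{r_2}^{p-1}$ are then controlled, via Sobolev embedding, by $\norm{\nsc w}_{S^0(I)}^{p-1}$ and $\norm{\nsc \wwt u}_{L_t^{p(d+2)/2}L_x^{\cdots}(I)}^{p-1}\lesssim\delta^{p-1}$ respectively, while the $\norm{\nsc\cdot}_{r_1}$ factors are $S^0$-norms. This yields schematically
$$\norm{\nsc N(w)}_{N^0(I)} \lesssim \delta^{p-1}\norm{\nsc w}_{S^0(I)} + \norm{\nsc w}_{S^0(I)}^{p} + (\text{terms with }\norm{\nsc\wwt u}_{S^0(I)}).$$
To make the last parenthetical term useful one also records, from Strichartz applied to $\wwt u$ itself together with \eqref{stability hypoth} and \eqref{delta}–\eqref{error small}, that $\norm{\nsc\wwt u}_{S^0(I)}\lesssim E$ on $I$ after possibly subdividing — but since this is the \emph{short-time} lemma, the hypothesis \eqref{delta} bounds the relevant diagonal Strichartz norm by $\delta$ directly, so no subdivision is needed here. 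Feeding everything back, $X := \norm{\nsc w}_{S^0(I)}$ satisfies $X \lesssim \eps + \delta^{p-1}X + X^p$; choosing $\delta$ small (depending only on $E$, hence on the absolute constants, via $p\ge 1$) and then $\eps_0$ small, a standard continuity/bootstrap argument on $\sup_{J\subset I}\norm{\nsc w}_{S^0(J)}$ gives $X\lesssim\eps$, which is \eqref{strichartz norms close}; then \eqref{strichartz norms under control} follows from $\norm{\nsc u}_{S^0}\le\norm{\nsc w}_{S^0}+\norm{\nsc\wwt u}_{S^0}\lesssim\eps+E\lesssim E$, and \eqref{nonlinearities close} is exactly the bound on $\norm{\nsc N(w)}_{N^0(I)}$ we already proved, now with $X\lesssim\eps$ inserted.

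The main obstacle I anticipate is purely bookkeeping: verifying that the exponent triples line up. One must check that the Hölder relation $\frac1{r'} = \frac1{r_1} + \frac{p-1}{r_2}$ in Lemma~\ref{derivatives of differences}, combined with the time-exponent analogue, is compatible with (i) $(q',r')$ being dual to a Schrödinger-admissible pair so that $N^0(I)$ controls it, (ii) $(q_2,r_2)$ being the scaling-invariant exponent $\frac{p(d+2)}{2}$ in $L_{t,x}$ arising from $\nsc$ acting on the Strichartz pair in \eqref{delta} via Sobolev embedding, and (iii) $(q_1,r_1)$ being \emph{some} admissible pair (so that $\norm{\nsc\,\cdot}_{r_1}$ is an $S^0$-norm). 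The constraint \eqref{constraints2}, and in particular $p\ge 1$ (so that Lemma~\ref{derivatives of differences} applies and $G(z)=|z|^p$ is locally Lipschitz), together with $\tfrac12\le s_c<1$, is what guarantees all these exponents lie in $(1,\infty)$ and the admissibility identity $\tfrac2q+\tfrac dr=\tfrac d2$ can be met; the role of $p\ge1$ is precisely to allow the clean "derivatives of differences" estimate rather than a messier Hölder-continuity argument, exactly as the introduction flagged. Once the exponents are fixed, everything else is the routine contraction described above.
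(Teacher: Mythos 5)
Your a priori estimate scheme is essentially the paper's proof: set $w=u-\wwt{u}$, apply Strichartz to get $\norm{\nsc w}_{S^0}\lesssim\eps+\norm{\nsc N(w)}_{N^0}$, estimate the nonlinear difference via Lemma~\ref{derivatives of differences} together with Sobolev embedding and \eqref{delta}, close a continuity/bootstrap argument using $p\geq1$ and $\delta,\eps$ small, and then obtain \eqref{strichartz norms under control} from the triangle inequality plus one more application of Strichartz to $\wwt{u}$ (with \eqref{delta} supplying the smallness, no subdivision — exactly as in the paper). The precise exponent bookkeeping you defer is the same bookkeeping the paper carries out, and your schematic closing inequality $X\lesssim\eps+\delta^{p-1}X+X^p$ is a cosmetic variant of the paper's $A(t)\lesssim\delta[\eps+A(t)]^p+[\eps+A(t)][\delta^p+(\eps+A(t))^p]$.

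There is, however, one genuine gap: the lemma asserts the \emph{existence} of a solution $u$ on all of $I$ with $u(t_0)=u_0\in\dot H_x^{s_c}$, and your argument only proves bounds for a solution assumed to exist there. You cannot simply invoke the maximal-lifespan local theory for $\dot H_x^{s_c}$ data at this point, because in the paper's logical order Theorem~\ref{local theorem} is itself deduced from the stability theory built on this very lemma; the only well-posedness available here is Theorem~\ref{standard lwp}, which requires $u_0\in H_x^{s_c}$. You gesture at a ``contraction'' for $w$, which in principle would construct $u$ directly, but you never set it up, and doing so honestly is delicate: with fractional $\nsc$ and non-polynomial $p$ the difference estimates needed for a contraction in the $\nsc$-Strichartz metric are not the same as the estimates you quote (one typically has to contract in a weaker metric on a ball of the strong norm). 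The paper sidesteps this by first assuming $u_0\in L_x^2$ (so Theorem~\ref{standard lwp} furnishes the solution and all bounds become a priori estimates, propagated across $I$ by the blowup criterion), and then removing that assumption a posteriori by approximating $u_0$ in $\dot H_x^{s_c}$ by $H_x^{s_c}$ functions and applying the lemma itself to pass to the limit. Your write-up needs either this two-step device or a genuinely executed fixed-point construction for $w$; as it stands the existence claim is unsupported.
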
 
\begin{proof} We prove the lemma under the additional hypothesis $u_0\in L_x^2(\R^d)$; this allows us (by Theorem \ref{standard lwp}) to find a solution $u$, so that we are left to prove all of the estimates as a priori estimates. Once the lemma is proven, we can use approximation by $H_x^{s_c}(\R^d)$ functions (along with the lemma itself) to see that the lemma holds for $u_0\in\dot{H}_x^{s_c}(\R^d)$.

Define $w=u-\wwt{u}$, so that $(i\partial_t+\Delta)w=\vert u\vert^p u-\vert\wwt{u}\vert^p\wwt{u}-e$. Without loss of generality, assume $t_0=\inf I$, and define $$A(t)=\norm{\nsc(\vert u\vert^p u-\vert\wwt{u}\vert^p \wwt{u})}_{N^0([t_0,t))}.$$ We first note that by Duhamel, Strichartz, \eqref{initial data close}, and \eqref{error small}, we get
			\begin{align} 
			\norm{&\nsc w}_{S^0([t_0,t))}		\nonumber
			\\ &\lesssim \norm{\nsc w(t_0)}_{L_x^2(\R^d)} \nonumber
			+\norm{\nsc(\vert u\vert^p u-\vert\wwt{u}\vert^p\wwt{u})}_{N^0([t_0,t))}+\norm{\nsc e}_{N^0(I)}
			\\ &\lesssim \eps+A(t).			\label{short-time bootstrap}
			\end{align}
Using this fact, together with Lemma \ref{derivatives of differences}, \eqref{delta}, and Sobolev embedding, we can estimate (with all spacetime norms over $[t_0,t)\times\R^d$)
\begin{align*} A(t)&\lesssim\xonorm{\nsc(\vert\wwt{u}+w\vert^p(\wwt{u}+w)-\vert\wwt{u}\vert^p\wwt{u})}{\frac{p(d+2)}{2(p+1)}}{\frac{2dp(d+2)}{d^2p+6dp-8}}
\\ 
&\lesssim \xonorm{\nsc\wwt{u}}{\frac{p(d+2)}{2}}{\frac{2dp(d+2)}{d^2p+2dp-8}}\xonorms{w}{\frac{p(d+2)}{2}}^p
\\ & \ \ \ \ \ \ \ +\xonorm{\nsc w}{\frac{p(d+2)}{2}}{\frac{2dp(d+2)}{d^2p+2dp-8}}\xonorms{\wwt{u}+w}{\frac{p(d+2)}{2}}^p
\\ & \lesssim\delta[\eps+A(t)]^p+[\eps+A(t)][\delta^p+(\eps+A(t))^p].
\end{align*}
Thus, recalling $p\geq 1$ and choosing $\delta,\eps$ sufficiently small, we conclude $A(t)\lesssim\eps$ for all $t\in I$, which gives \eqref{nonlinearities close}. Combining \eqref{nonlinearities close} with \eqref{short-time bootstrap}, we also get \eqref{strichartz norms close}. Finally, we can prove \eqref{strichartz norms under control} as follows: by Strichartz, \eqref{strichartz norms close}, \eqref{stability hypoth}, \eqref{error small}, \eqref{delta}, the fractional chain rule, and Sobolev embedding, 
\begin{align*} \norm{\nsc u}_{S^0(I)}&\lesssim\norm{\nsc(u-\wwt{u})}_{S^0(I)}+\norm{\nsc\wwt{u}}_{S^0(I)}
\\ &\lesssim \eps+\norm{\nsc \wwt{u}(t_0)}_{L_x^2(\R^d)}+\norm{\nsc(\vert\wwt{u}\vert^p\wwt{u})}_{N^0(I)}+\norm{\nsc e}_{N^0(I)}
\\ &\lesssim \eps+E+\xnorm{\nsc\wwt{u}}{\frac{p(d+2)}{2}}{\frac{2dp(d+2)}{d^2p+2dp-8}}{I}\xnorms{\wt{u}}{\frac{p(d+2)}{2}}{I}^p
\\ &\lesssim E+\eps+\delta^{p+1}
\\ &\lesssim E
\end{align*} for $\eps$ and $\delta$ sufficiently small depending on $E$.
\end{proof}

With Lemma \ref{short-time} established, we now turn to 
\begin{theorem}[Stability]\label{stability} Fix $(d,s_c)$ satisfying \eqref{constraints2}. Let $I$ be a compact time interval and $\wwt{u}:I\times\R^d\to\C$ a solution to $$(i\partial_t+\Delta)\wwt{u}=\vert\wwt{u}\vert^p\wwt{u}+e$$ for some function $e$. Assume that 
\begin{align}\label{Stability hypoth 1}\norm{\wwt{u}}_{L_t^\infty\dot{H}_x^{s_c}(I\times\R^d)}&\leq E,
\\ \label{Stability hypoth 2}S_I(\wwt{u})&\leq L.\end{align} 
Let $t_0\in I$ and $u_0\in\dot{H}_x^{s_c}(\R^d)$. Then there exists $\eps_1=\eps_1(E,L)$ such that if 
\begin{align}\label{Initial data close}\norm{u_0-\wwt{u}(t_0)}_{\dot{H}_x^{s_c}(\R^d)}&\leq\eps,
\\ \label{Error small}\norm{\nsc e}_{N^0(I)}&\leq\eps\end{align}
for some $0<\eps<\eps_1$, then there exists a solution $u:I\times\R^d\to\C$ to $(i\partial_t+\Delta)u=\vert u\vert^p u$ with $u(t_0)=u_0$ satisfying 
\begin{align}\label{Strichartz norms close}\norm{\nsc(u-\wwt{u})}_{S^0(I)}&\leq C(E,L)\eps,
\\ \label{Strichartz norms ok}\norm{\nsc u}_{S^0(I)}&\leq C(E,L).\end{align}
\end{theorem}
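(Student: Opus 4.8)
The plan is to derive Theorem~\ref{stability} from Lemma~\ref{short-time} by a standard divide-and-iterate argument: partition the interval $I$ into finitely many subintervals on each of which the approximate solution $\wwt u$ has small scattering norm (so that the smallness hypothesis \eqref{delta} of the short-time lemma can be arranged), then apply Lemma~\ref{short-time} successively, controlling the growth of the error from one subinterval to the next.

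First I would fix the parameters $\delta,\eps_0$ supplied by Lemma~\ref{short-time} for the energy $E' := E + C\eps$ (a slightly enlarged energy, to be safe, though one may just take $E$). Using \eqref{Stability hypoth 2} and the fractional chain rule plus Sobolev embedding, the scattering bound $S_I(\wwt u)\le L$ upgrades to a bound on $\xnorm{\nsc\wwt u}{p(d+2)/2}{2dp(d+2)/(d^2p+2dp-8)}{I}$ in terms of $E$ and $L$ (this uses the hypothesis $p\ge 1$ and the admissibility of the relevant exponent pair, exactly as in the proof of Lemma~\ref{short-time}). Hence one can partition $I$ into $J = J(E,L)$ consecutive subintervals $I_k = [t_{k-1},t_k]$, $k=1,\dots,J$, with
$$\xnorm{\nsc\wwt u}{\frac{p(d+2)}{2}}{\frac{2dp(d+2)}{d^2p+2dp-8}}{I_k}\le\delta$$
on each $I_k$. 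We may assume $t_0 = \inf I$ (the general case follows by running the argument forward and backward from $t_0$).

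Next I would induct on $k$. The claim is that for each $k$ there are constants $C_k = C_k(E,L)$ such that, provided $\eps$ is small enough (depending on $E,L$), the solution $u$ extends to $I_1\cup\cdots\cup I_k$ with
$$\norm{\nsc(u-\wwt u)}_{S^0(I_k)}\le C_k\,\eps,\qquad \norm{\nsc u(t_k)-\nsc\wwt u(t_k)}_{L_x^2}\le C_k\,\eps.$$
The base case $k=1$ is immediate from Lemma~\ref{short-time}, since hypotheses \eqref{delta}, \eqref{initial data close}, \eqref{error small} hold on $I_1$ with constant $\eps$ (note \eqref{initial data close} holds at $t_0$ by \eqref{Initial data close}, and \eqref{error small} is inherited from \eqref{Error small} since $N^0(I_1)\subset N^0(I)$); the conclusion \eqref{strichartz norms close} together with Strichartz and the dual Strichartz bound on $\nsc(\vert u\vert^p u-\vert\wwt u\vert^p\wwt u)$ (estimate \eqref{nonlinearities close}) controls the difference at the endpoint $t_1$. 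For the inductive step, at the endpoint $t_{k}$ we have $\norm{u(t_{k})-\wwt u(t_{k})}_{\dot H_x^{s_c}}\le C_{k}\eps =: \eps_k$; as long as $\eps_k < \eps_0$, Lemma~\ref{short-time} applies on $I_{k+1}$ with this new error size $\eps_k$, giving $\norm{\nsc(u-\wwt u)}_{S^0(I_{k+1})}\lesssim \eps_k$ and hence $C_{k+1}\eps_k \le C_{k+1}C_k\eps$, i.e. $C_{k+1} = C(E,L)\cdot C_k$ for a fixed constant $C(E,L)$ coming from Lemma~\ref{short-time}; the error hypothesis \eqref{error small} on $I_{k+1}$ is again satisfied because $\eps\le\eps_k$. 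After $J$ steps we obtain $\norm{\nsc(u-\wwt u)}_{S^0(I)}\le \big(\sum_{k=1}^J C_k\big)\eps =: C(E,L)\eps$, which is \eqref{Strichartz norms close}. Choosing $\eps_1 = \eps_1(E,L)$ small enough that $C_k\eps_1 < \eps_0$ for all $k\le J$ makes the whole chain of applications legitimate.

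Finally, \eqref{Strichartz norms ok} follows from \eqref{Strichartz norms close} together with the triangle inequality $\norm{\nsc u}_{S^0(I)}\le\norm{\nsc(u-\wwt u)}_{S^0(I)}+\norm{\nsc\wwt u}_{S^0(I)}$, where the second term is bounded by $C(E,L)$ via Strichartz applied to the equation for $\wwt u$, the bound \eqref{Stability hypoth 1}, the error bound \eqref{Error small}, and the fractional chain rule estimate on $\nsc(\vert\wwt u\vert^p\wwt u)$ (this is exactly the computation in the last display of the proof of Lemma~\ref{short-time}, now summed over the subintervals $I_k$). The one point requiring genuine care — the main obstacle — is the bookkeeping of the constants: each application of Lemma~\ref{short-time} multiplies the error size by a fixed factor, so after $J = J(E,L)$ applications the final constant $C(E,L)$ is (essentially) exponential in $J$; one must therefore choose $\eps_1$ small enough in terms of this final constant for the hypothesis $\eps_k<\eps_0$ to persist through all $J$ steps, and verify that the number $J$ of subintervals depends only on $E$ and $L$ (which it does, by the Strichartz/chain-rule upgrade of \eqref{Stability hypoth 2}). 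Everything else is routine.
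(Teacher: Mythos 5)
Your proposal is correct and follows essentially the same route as the paper: first upgrade $S_I(\wwt{u})\leq L$ to $\norm{\nsc\wwt{u}}_{S^0(I)}\leq C(E,L)$ via Strichartz, the fractional chain rule, and a subdivision/absorption argument, then partition $I$ so that \eqref{delta} holds on each piece and iterate Lemma \ref{short-time}, choosing $\eps_1$ small in terms of the number $J(E,L)$ of subintervals. The only immaterial difference is bookkeeping: you propagate the $\dot{H}_x^{s_c}$ error multiplicatively from endpoint to endpoint (via the $L_t^\infty L_x^2$ component of $S^0$), whereas the paper re-estimates $\norm{u(t_j)-\wwt{u}(t_j)}_{\dot{H}_x^{s_c}}$ by Duhamel from $t_0$ using the accumulated nonlinearity bounds \eqref{inductive 3}; both yield the same exponential-in-$J$ constant and the same smallness requirement on $\eps_1$.
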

\begin{proof} Once again, we may assume $t_0=\inf I$. To begin, we let $\eta>0$ be a small parameter to be determined shortly. By \eqref{Stability hypoth 2}, we may subdivide $I$ into (finitely many, depending on $\eta$ and $L$) intervals $J_k=[t_k,t_{k+1})$ so that $$\xnorms{\wwt{u}}{\frac{p(d+2)}{2}}{J_k}\sim\eta$$ for each $k$. Then by Strichartz, \eqref{Stability hypoth 1}, \eqref{Error small}, and the fractional chain rule, we have
\begin{align*} \norm{\nsc\wwt{u}}_{S^0(J_k)}&\lesssim \norm{\nsc \wwt{u}(t_k)}_{L_x^2(\R^d)}+\norm{\nsc(\vert\wwt{u}\vert^p\wwt{u})}_{N^0(J_k)}+\norm{\nsc e}_{N^0(J_k)}
\\ &\lesssim E+\norm{\nsc\wwt{u}}_{S^0(I)}\xnorms{\wwt{u}}{\frac{p(d+2)}{2}}{J_k}^p+\eps
\\ &\lesssim E+\eps+\eta^p\norm{\nsc\wwt{u}}_{S^0(I)}.
\end{align*}
Thus for $\eps\leq E$ and $\eta$ sufficiently small, we find $$\norm{\nsc\wwt{u}}_{S^0(J_k)}\lesssim E.$$ Adding these bounds, we find 
			\begin{equation}\label{stabbb}\norm{\nsc \wwt{u}}_{S^0(I)}\leq C(E,L).\end{equation}

Now, we take $\delta>0$ as in Lemma \ref{short-time} and subdivide $I$ into finitely many, say $J_0=J_0(C(E,L),\delta)$ intervals $I_j=[t_j,t_{j+1})$ so that 
		$$\xnorm{\nsc\wwt{u}}{\frac{p(d+2)}{2}}{\frac{2dp(d+2)}{d^2p+2dp-8}}{I_j}\leq\delta$$ 
for each $j$. We now wish to proceed inductively. We may apply Lemma \ref{short-time} on each $I_j$, provided we can guarantee 
				\begin{equation}\label{stability hypothesis} 
				\norm{u(t_j)-\wwt{u}(t_j)}_{\dot{H}_x^{s_c}(\R^d)}\leq\eps
				\end{equation}
for some $0<\eps<\eps_0$ and each $j$ (where $\eps_0$ is as in Lemma \ref{short-time}). In the event that \eqref{stability hypothesis} holds for some $j$, applying Lemma \ref{short-time} on $I_j=[t_j,t_{j+1})$ gives
				\begin{align} 
				\norm{\nsc(u-\wwt{u})}_{S^0(I_j)}&\leq C(j)\eps, \label{inductive 1}
				\\ \norm{\nsc u}_{S^0(I_j)}&\leq C(j)E, \label{inductive 2}
				\\ \norm{\nsc(\vert u\vert^p u-\vert\wwt{u}\vert^p\wwt{u})}_{N^0(I_j)}
						&\leq C(j)\eps. \label{inductive 3}
				\end{align}
Now, we first note that \eqref{stability hypothesis} holds for $j=0$, provided we take $\eps_1<\eps_0$. Next, assuming that \eqref{stability hypothesis} holds for $0\leq k\leq j-1$, we can use Strichartz, \eqref{Initial data close}, \eqref{Error small}, and the inductive hypothesis \eqref{inductive 3} to estimate
\begin{align*} \norm{u&(t_j)-\wwt{u}(t_j)}_{\dot{H}^{s_c}_x(\R^d)}
\\ &\lesssim\norm{u(t_0)-\wwt{u}(t_0)}_{\dot{H}_x^{s_c}(\R^d)}\!+\norm{\nsc(\vert u\vert^pu-\vert\wwt{u}\vert^p\wwt{u})}_{N^0([t_0,t_j))}\! +\norm{\nsc e}_{N^0([t_0,t_j))}
\\ & \lesssim \eps+\sum_{k=0}^{j-1} C(k)\eps+\eps
\\ &< \eps_0,
\end{align*}
provided $\eps_1=\eps_1(\eps_0,J_0)$ is taken sufficiently small. Thus, by induction, we get \eqref{inductive 1} and \eqref{inductive 2} on each $I_j$. Adding these bounds over the $I_j$ yields \eqref{Strichartz norms close} and \eqref{Strichartz norms ok}. 
\end{proof} 
\begin{remark} As mentioned above, with this stability result in hand, we can see that Theorem \ref{standard lwp} holds without the assumption $u_0\in L_x^2(\R^d)$. Using this updated version of Theorem \ref{standard lwp} (along with the original proof of Theorem \ref{standard lwp}), one can then derive Theorem \ref{local theorem}. We omit the standard arguments; one can refer instead to \cite{cw, cazenave book}.
\end{remark} 

\section{Reduction to almost periodic solutions}\label{reduction section}
The goal of this section is to sketch a proof of Theorem \ref{reduction}. We will follow the argument presented in \cite[Section 3]{KV5}. By now, the general procedure is fairly standard; see, for example, \cite{kenig merle, KM, KM NLW, KV, TVZ} for other instances in different contexts. Thus, we will merely outline the main steps of the argument, providing full details only when significant new difficulties arise in our setting. 

We suppose that Theorem \ref{swamprat} fails for some $(d,s_c)$ satisfying \eqref{constraints2}. We then define the function $L:[0,\infty)\to[0,\infty]$ by
\begin{align*}L(E):=\sup\{S_I(u):u:I\!\times\!\R^d\to\C\text{ solving } \eqref{nls} \text{ with }\sup_{t\in I}\norm{u(t)}_{\dot{H}_x^{s_c}(\R^d)}^2\leq E\},
\end{align*} where $S_I(u)$ is defined as in \eqref{scattering size}. We note that $L$ is a non-decreasing function, and that Theorem \ref{local theorem} implies 
				\begin{equation}\label{L small-data}
				L(E)\lesssim E^{\frac{p(d+2)}{4}}\indent\text{for}\indent E<\eta_0,
				\end{equation} 
where $\eta_0$ is the small-data threshold. Thus, there exists a unique `critical' threshold $E_c\in(0,\infty]$ such that $L(E)<\infty$ for $E<E_c$ and $L(E)=\infty$ for $E>E_c$. The failure of Theorem \ref{swamprat} implies that $0<E_c<\infty.$

The key ingredient to proving Theorem \ref{reduction} is a Palais--Smale condition modulo the symmetries of the equation; indeed, once the following proposition is proven, deriving Theorem \ref{reduction} is standard (see \cite{KV5}).

\begin{proposition}[Palais--Smale condition modulo symmetries]\label{palais smale} Let $(d,s_c)$ satisfy \eqref{constraints2}. Let $u_n:I_n\times\R^d\to\C$ be a sequence of solutions to \eqref{nls} such that 
		$$\limsup_{n\to\infty}\sup_{t\in I_n}\norm{u_n(t)}_{\dot{H}_x^{s_c}(\R^d)}^2=E_c,$$ 
and suppose $t_n\in I_n$ are such that 
		\begin{equation}		\label{ps blowup}
		\lim_{n\to\infty} S_{[t_n,\sup I_n)}(u_n)=\lim_{n\to\infty} S_{(\inf I_n, t_n]}(u_n)=\infty.
		\end{equation}
Then the sequence $u_n(t_n)$ has a subsequence that converges in $\dot{H}_x^{s_c}(\R^d)$ modulo symmetries; that is, there exist $g_n\in G$ such that $g_n[u_n(t_n)]$ converges along a subsequence in $\dot{H}_x^{s_c}(\R^d)$, where $G$ is as in Definition \ref{symmetry group}. 
\end{proposition}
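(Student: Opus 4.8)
The plan is to run the standard concentration-compactness argument: apply the linear profile decomposition (Lemma~\ref{linear profile decomposition}) to the bounded sequence $u_n(t_n)$ in $\dot H_x^{s_c}$, and show that exactly one profile is present and that it carries all of the $\dot H_x^{s_c}$-norm. Concretely, after translating in time so that $t_n=0$ and applying $g_n^j$-symmetries, we write $u_n(0)=\sum_{j=1}^J g_n^j e^{it_n^j\Delta}\phi^j + w_n^J$. For each $j$, we associate a nonlinear profile $v^j$: if $t_n^j\equiv 0$ we solve \eqref{nls} with data $\phi^j$; if $t_n^j\to\pm\infty$ we let $v^j$ be the solution that scatters to $e^{it\Delta}\phi^j$ as $t\to\pm\infty$ (using the scattering part of Theorem~\ref{local theorem}). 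The blowup hypothesis \eqref{ps blowup} forces $\sum_j \|\nsc\phi^j\|_2^2 = E_c$ at least, and we want to rule out more than one nontrivial profile and rule out the scattering scenario for the surviving profile.

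The core of the argument is the usual dichotomy. First suppose that more than one profile is nontrivial, or that the single nontrivial profile has a nonzero-limit $t_n^j$ with $\|\nsc\phi^j\|_2^2 < E_c$; in every such case each individual nonlinear profile $v^j$ has $\|v^j(t)\|_{\dot H_x^{s_c}}^2$ strictly below $E_c$ on its (necessarily global, by Theorem~\ref{swamprat} applied below threshold) lifespan, hence has finite scattering size $S_\R(v^j)\le L(\|\nsc\phi^j\|_2^2+o(1))<\infty$. We then build an approximate solution $u_n^J := \sum_{j=1}^J T_{g_n^j} v^j(\cdot - t_n^j) + e^{it\Delta}w_n^J$ to \eqref{nls}, verify that it has uniformly bounded scattering size and $L_t^\infty\dot H_x^{s_c}$-norm (this uses the asymptotic orthogonality \eqref{asymptotic orthogonality} to decouple the profiles in all relevant spacetime norms, so that the $\ell^p$-type sums telescope) and small error in $N^0$, and then invoke the stability result Theorem~\ref{stability} to conclude $\limsup_n S_\R(u_n)<\infty$, contradicting \eqref{ps blowup}. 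Hence there is exactly one profile, $t_n^1\equiv 0$, $\phi:=\phi^1$ with $\|\nsc\phi\|_2^2=E_c$, and $w_n^J\to 0$ in $\dot H_x^{s_c}$ in the sense of \eqref{asymptotic vanishing}; strong decoupling \eqref{strong decoupling} then upgrades this to $\|\nsc w_n^1\|_2\to 0$, so $g_n^1[u_n(0)]\to\phi$ in $\dot H_x^{s_c}$ along the subsequence, which is the assertion.

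I expect the main obstacle to be precisely the step that the introduction flags: proving that the approximate solution $u_n^J$ is a genuine good approximate solution, i.e. controlling $\|\nsc(\,|u_n^J|^p u_n^J - \sum_j \cdots)\,\|_{N^0}$ and decoupling the profiles. Because $\nsc$ is nonlocal with $0<s_c<1$ and the nonlinearity $|u|^p u$ is not polynomial, one cannot just use pointwise bounds as in the mass- and energy-critical settings. The resolution is to work frequency-by-frequency: use the Littlewood--Paley square function (Lemma~\ref{square function estimates}) together with a reworked fractional chain rule and the maximal/vector-maximal inequalities, plus the paraproduct estimate Lemma~\ref{paraproduct}, to establish the needed decoupling Lemma~\ref{decouple lemma}. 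Two bookkeeping points also require care: handling the case $t_n^j\to\pm\infty$ by approximating the scattering solution $v^j$ by $e^{it\Delta}\phi^j$ near the relevant time-infinity and using the local theory/perturbation near the bulk, and verifying that the error terms coming from the inhomogeneous pieces vanish as $J\to\infty$ and then $n\to\infty$ using \eqref{asymptotic vanishing}. Once Lemma~\ref{decouple lemma} is in hand, the remaining arguments are routine and follow \cite{KV5}.
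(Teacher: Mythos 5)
Your overall skeleton (linear profile decomposition, nonlinear profiles, approximate solutions $u_n^J$, stability, and the identification of Lemma \ref{decouple lemma} as the main technical hurdle, to be resolved by square-function and maximal-function arguments) matches the paper. However, there is a genuine gap at the heart of your dichotomy. You assert that if more than one profile is nontrivial (or the single profile has $\norm{\nsc\phi^j}_2^2<E_c$), then ``each individual nonlinear profile $v^j$ has $\norm{v^j(t)}_{\dot H_x^{s_c}}^2$ strictly below $E_c$ on its lifespan, hence has finite scattering size $S_\R(v^j)\le L(\norm{\nsc\phi^j}_2^2+o(1))$.'' This does not follow: the $\dot H_x^{s_c}$-norm is not conserved, so a sub-threshold bound on the datum $\phi^j$ gives no a priori sub-threshold bound on $\sup_t\norm{v^j(t)}_{\dot H_x^{s_c}}$, and the inductive hypothesis $L(E)<\infty$ for $E<E_c$ is phrased precisely in terms of that sup-in-time norm. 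Your parenthetical appeal to ``Theorem \ref{swamprat} applied below threshold'' is circular, since Theorem \ref{swamprat} is exactly what is being proved by contradiction; the only available input is the definition of $E_c$ via $L$. This is the subtlety the paper flags explicitly: the $\dot H^{s_c}$-decoupling of the profiles must be shown to persist in time, which ``does not follow immediately, as the $\dot H_x^{s_c}$-norm is not a conserved quantity.''

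The paper avoids this by a two-step structure. First it shows there is at least one ``bad'' profile with infinite scattering size: one supposes towards a contradiction that every $v_n^j$ has finite scattering size (no per-profile norm control is needed for this hypothesis), builds $u_n^J$, invokes Lemma \ref{decouple lemma} and Theorem \ref{stability}, and contradicts \eqref{ps blowup}. Only afterwards does it rule out a second profile or a non-vanishing remainder, and for that it needs the additional fact that the $\dot H^{s_c}$-decoupling persists in time, obtained by adapting Lemma 3.3 of \cite{KV5}; with that persistence in hand, a second profile would force the bad profile's sup-in-time norm strictly below $E_c$, contradicting the criticality of $E_c$. If you reorganize your argument along these lines (proving, or citing and adapting, the time-persistence of the decoupling rather than assuming it), the rest of your outline---including the treatment of $t_n^j\to\pm\infty$ via stability and the use of \eqref{asymptotic vanishing} and \eqref{strong decoupling} at the end---goes through essentially as you describe.
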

We now sketch the proof of this proposition, following the argument as it appears in \cite{KV5}. As in that setting, the main ingredients will be a linear profile decomposition (Lemma \ref{linear profile decomposition} in our case) and a stability result (Theorem \ref{stability} in our case). However, as we will see, combining fractional derivatives with non-polynomial nonlinearities will present some significant new difficulties in our setting when it comes time to apply the stability result.
 
We begin by translating so that each $t_n=0$, and apply the linear profile decomposition Lemma \ref{linear profile decomposition} to write 
			\begin{equation}\label{lpd} 
			u_n(0)=\sum_{j=1}^J g_n^j e^{it_n^j\Delta}\phi^j +w_n^J
			\end{equation} 
along some subsequence. Refining the subsequence for each $j$ and diagonalizing, we may assume that for each $j$, we have $t_n^j\to t^j\in[-\infty,\infty].$ If $t^j\in(-\infty,\infty)$, then we replace $\phi^j$ by $e^{it^j\Delta}\phi^j$, so that we may take $t^j=0$. Moreover, we can absorb the error $e^{it_n^j\Delta}\phi^j-\phi^j$ into the error term $w_n^J$, and so we may take $t_n^j\equiv 0$. Thus, without loss of generality, either $t_n^j\equiv 0$ or $t_n^j\to\pm\infty$. 

Next, appealing to Theorem \ref{local theorem}, for each $j$ we define $v^j:I^j\times\R^d\to\C$ to be the maximal-lifespan solution to \eqref{nls} such that 
			$$\left\{\begin{array}{ll}
			v^j(0)=\phi^j & \text{if }t_n^j\equiv 0,
			\\ \\ v^j\text{ scatters to }\phi^j\text{ as }t\to\pm\infty & \text{if }t_n^j\to\pm\infty.
			\end{array}\right.$$
We now define the nonlinear profiles $v_n^j:I_n^j\times\R^d\to\C$ by 
				$$v_n^j(t)=g_n^j v^j\left((\lambda_n^j)^{-2}t+t_n^j\right),$$ 
where $I_n^j=\{t:(\lambda_n^j)^{-2}t+t_n^j\in I^j\}.$ Now, the $\dot{H}_x^{s_c}$ decoupling of the profiles $\phi^j$, \eqref{strong decoupling}, immediately tells us that the $v_n^j$ are global and scatter for $j$ sufficiently large, say $j\geq J_0$; indeed, for large enough $j$, we are in the small-data regime. We want to show that there exists some $1\leq j_0<J_0$ such that 
			\begin{equation}
			\limsup_{n\to\infty} S_{[0,\sup I_n^{j_0})}(v_n^{j_0})=\infty.		\label{one bad profile}
			\end{equation}

Once we obtain at least one such `bad' nonlinear profile, we can show that in fact, there can only be one profile. To see this, one needs to adapt the argument in \cite[Lemma 3.3]{KV5} to see that the $\dot{H}_x^{s_c}$ decoupling of the profiles persists in time (this does not follow immediately, as the $\dot{H}_x^{s_c}$-norm is not a conserved quantity for \eqref{nls}). Then, the `critical' nature of $E_c$ can be used to rule out the possibility of multiple profiles. 

Comparing with \eqref{lpd}, one sees that once we show that there is only one profile $\phi^{j_0}$, the proof of Proposition \ref{palais smale} is nearly complete; one needs only to rule out the cases $t_n^{j_0}\to\pm\infty$. This can be easily done by applying the stability theory; we omit the details and instead refer the reader to \cite{KV5}.

We turn now to proving that there is at least one bad profile. We suppose towards a contradiction that there are no bad nonlinear profiles. In this case, we can show 
				\begin{equation}								\label{okkkk}
				\sum_{j\geq 1}S_{[0,\infty)}(v_n^j)\lesssim_{E_c} 1
				\end{equation} 
for $n$ sufficiently large (to control the tail of the sum, for example, we recall that for $j\geq J_0$, we are in the small-data regime; thus we can use \eqref{strong decoupling} and \eqref{L small-data} to bound the tail by $E_c$ for $n$ sufficiently large). We would like to use \eqref{okkkk} and the stability result (Theorem~\ref{stability}) to deduce a bound on the scattering size of the $u_n$, thus contradicting \eqref{ps blowup}. 

To this end, we define the approximations 
			$$u_n^J(t)=\sum_{j=1}^J v_n^j(t)+e^{it\Delta}w_n^J.$$
By the construction of the $v_n^j$, it is easy to see that 
			\begin{equation}
			\limsup_{n\to\infty}\norm{u_n(0)-u_n^J(0)}_{\dot{H}_x^{s_c}(\R^d)}=0.
			\label{initial data close approximation}
			\end{equation}
We also claim that we have
			\begin{equation}
			\lim_{J\to\infty}\limsup_{n\to\infty} S_{[0,\infty)}(u_n^J)\lesssim_{E_c} 1. \label{ok version2}
			\end{equation}
To see why \eqref{ok version2} holds, first note that by \eqref{asymptotic vanishing} and \eqref{okkkk}, it suffices to show
			\begin{equation}
			\lim_{J\to\infty}\limsup_{n\to\infty}\bigg\vert S_{[0,\infty)}\bigg(\sum_{j=1}^J v_n^j\bigg)-\sum_{j=1}^J S_{[0,\infty)}(v_n^j)\bigg\vert=0.
			\label{ok version3}
			\end{equation}
To establish \eqref{ok version3}, we can first use the pointwise inequality
		$$\bigg\vert\ \bigg\vert \sum_{j=1}^J v_n^j\bigg\vert^{\frac{p(d+2)}{2}}-\sum_{j=1}^J\vert v_n^j\vert^{\frac{p(d+2)}{2}}\bigg\vert\lesssim_J \sum_{j\neq k}\vert v_n^j\vert^{\frac{p(d+2)}{2}-1}\vert v_n^k\vert$$
along with H\"older's inequality to see
		\begin{align}
		\text{LHS}\eqref{ok version3} &\lesssim_J \sum_{j\neq k}\xnorms{v_n^j}{\frac{p(d+2)}{2}}{[0,\infty)}^{\frac{p(d+2)}{2}-2}\xnorms{v_n^j v_n^k}{\frac{p(d+2)}{4}}{[0,\infty)}.
		\label{then gives}
		\end{align}
Then, following an argument of Keraani (cf. \cite[Lemma 2.7]{keraani}), given $j\neq k$, we can approximate $v^j$ and $v^k$ by compactly supported functions in $\R\times\R^d$ and use the asymptotic orthogonality of parameters \eqref{asymptotic orthogonality} to show
		\begin{equation}
		\limsup_{n\to\infty}\xnorms{v_n^j v_n^k}{\frac{p(d+2)}{4}}{[0,\infty)}=0.
		\label{yay keraani}
		\end{equation}
Thus, continuing from \eqref{then gives}, we get that \eqref{ok version3} (and therefore \eqref{ok version2}) holds.
 

With \eqref{initial data close approximation} and \eqref{ok version2} in place, we see that if we can show that the $u_n^J$ asymptotically solve \eqref{nls}, that is, 
			$$\lim_{J\to\infty}\limsup_{n\to\infty}
			\norm{\nsc\big[(i\partial_t+\Delta)u_n^J-\vert u_n^J\vert^pu_n^J\big]}_{N^0([0,\infty))}
			=0,$$ 
then we will be able to apply Theorem \ref{stability} to deduce bounds on the scattering size of the $u_n$. Writing  $F(z)=\vert z\vert^p z$, the proof of Proposition \ref{palais smale} therefore reduces to showing the following
\begin{lemma}[Decoupling of nonlinear profiles]\label{decouple lemma}
				\begin{equation}\label{decouple 1}
				\lim_{J\to\infty}\limsup_{n\to\infty} 
				\bigg\|\nsc\bigg( F\big(\sum_{j=1}^J v_n^j\big)-\sum_{j=1}^J F(v_n^j)\bigg)\bigg\|_{N^0([0,\infty))}=0,
				\end{equation}
				
			\begin{equation}\label{decouple 2}
			\lim_{J\to\infty}\limsup_{n\to\infty}
			\norm{\nsc\left( F(u_n^J-e^{it\Delta}w_n^J)-F(u_n^J)\right)}_{N^0([0,\infty))}=0.
					\end{equation}
\end{lemma}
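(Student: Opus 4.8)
The plan is to reduce both \eqref{decouple 1} and \eqref{decouple 2} to fractional-derivative estimates on differences of the nonlinearity $F(z)=|z|^pz$, and then to exploit the asymptotic orthogonality \eqref{asymptotic orthogonality} together with the scattering bound \eqref{okkkk} to kill all of the resulting cross terms. For \eqref{decouple 2}, the key observation is that $u_n^J - e^{it\Delta}w_n^J = \sum_{j=1}^J v_n^j$, so after writing $F(\sum v_n^j) - F(\sum v_n^j + e^{it\Delta}w_n^J)$ as a telescoping difference and invoking Lemma \ref{derivatives of differences} (which applies since $p\geq 1$ and $0<s_c<1$, so we can split $\tfrac12 = $ one derivative onto one factor and $p-1$ undifferentiated factors) together with an admissible choice of Strichartz exponents, the right-hand side is bounded by a sum of terms each carrying at least one factor $\|e^{it\Delta}w_n^J\|_{L^{p(d+2)/2}_{t,x}}$ in an appropriate norm. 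That factor vanishes in the iterated limit by \eqref{asymptotic vanishing}; the remaining factors are controlled by \eqref{okkkk}, \eqref{strong decoupling}, and the $S^0$-bounds on the $v_n^j$ coming from their construction via Theorem \ref{local theorem}. Hence \eqref{decouple 2} follows once \eqref{decouple 1} is known (and in fact the two are proved together).

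For \eqref{decouple 1}, I would first telescope: write $F(\sum_{j=1}^J v_n^j) - \sum_{j=1}^J F(v_n^j)$ as a sum over $j$ of $F(\sum_{k\le j}v_n^k) - F(\sum_{k<j}v_n^k) - F(v_n^j)$, so that each summand is a difference of differences of $F$, with $v_n^j$ playing the role of the ``perturbation.'' Since $J$ is fixed while we take $n\to\infty$ first, it suffices to show that for each fixed pair $j\neq k$ the mixed term $\|\nsc[\,\cdots]\|_{N^0}$ tends to $0$. Here is where the nonlocal nature of $\nsc$ is the genuine obstacle, as the introduction flags: for polynomial $F$ one would simply expand and use a product rule plus pointwise bounds, but for non-integer $p$ one cannot distribute $\nsc$ freely. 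The plan is to apply the fractional product rule (Lemma \ref{fractional product rule}) and the ``derivatives of differences'' estimate (Lemma \ref{derivatives of differences}) to peel $\nsc$ off of each mixed expression, landing on estimates of the schematic form $\|\nsc v_n^j\|_{L_t^q L_x^r}\|v_n^k\|_{L_t^{q'}L_x^{r'}}^{p-1}\|v_n^k\|_{L_t^{q''}L_x^{r''}}$ (and symmetric terms), where the exponents are chosen admissible and so that the $\nsc v_n^j$ factor is controlled by $\|\nsc v_n^j\|_{S^0}\lesssim_{E_c}1$. The crucial input is then that the remaining factors involve an honest product $v_n^j v_n^k$ (or a norm of $v_n^k$ localized near the support of $v_n^j$), whose spacetime norm tends to $0$ by \eqref{yay keraani}; more precisely, one approximates $v^j,v^k$ by compactly supported functions in $\R\times\R^d$, changes variables, and uses \eqref{asymptotic orthogonality} — exactly the Keraani argument already used to prove \eqref{ok version3}. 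A Hölder interpolation turns the mixed-norm estimate into something involving $\|v_n^j v_n^k\|_{L_{t,x}^{p(d+2)/4}}^\theta$ for some $\theta>0$, which vanishes.

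The main obstacle — and I expect the bulk of the work — is the bookkeeping of fractional derivatives in the mixed terms: one must verify that Lemma \ref{derivatives of differences}'s hypothesis $\tfrac1r = \tfrac1{r_1} + \tfrac{p-1}{r_2}$, together with the requirement that $(q_1,r_1)$ (the pair carrying $\nsc$) and the dual Strichartz pair on the $N^0$ side all be Schrödinger-admissible in the range $d\in\{3,4,5\}$, $\tfrac12\le s_c<1$, can be satisfied simultaneously with at least one factor of the form $\|v_n^j v_n^k\|$ isolated in a single $L_{t,x}$ norm so that \eqref{asymptotic orthogonality} can be brought to bear. This is precisely the delicate point where $p=1$ must be excluded (Remark \ref{goodbye p=1}): when $p-1=0$ there is no undifferentiated factor of $v_n^k$ to absorb, and the strategy of isolating a product $v_n^j v_n^k$ collapses. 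A secondary technical point is justifying the compact-support approximation of $v^j$ at the level of the $\nsc$-norm rather than pointwise, which uses density of nice functions in the relevant Strichartz spaces together with the stability/local theory bounds on $v^j$; I would handle this by a standard $\eps/3$ argument. Once all mixed terms are shown to vanish, summing the (finitely many, for fixed $J$) contributions and then letting $J\to\infty$ — using \eqref{okkkk} to control the number and size of the profiles uniformly — completes the proof of both \eqref{decouple 1} and \eqref{decouple 2}, and hence of Lemma \ref{decouple lemma}.
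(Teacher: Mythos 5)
There is a genuine gap, and it sits precisely at the point the paper identifies as its chief novelty. For \eqref{decouple 1}, your plan is to peel $\nsc$ off the mixed terms by black-box applications of the fractional product rule (Lemma \ref{fractional product rule}) and the derivatives-of-differences estimate (Lemma \ref{derivatives of differences}), and then isolate a factor $\norm{v_n^j v_n^k}_{L_{t,x}^{p(d+2)/4}}$ to which \eqref{yay keraani} applies. But those lemmas split the expression into \emph{products of separate norms}: e.g.\ applying them to $\nsc\big[(\vert f+g\vert^p-\vert f\vert^p)f\big]$ with $f=v_n^j$, $g=v_n^k$ produces terms of the schematic form $\norm{\nsc f}_{r_1}\norm{g}_{r_2}^{p-1}\norm{f}_{r_3}$ and $\norm{\nsc g}_{r_1}\norm{f+g}_{r_2}^{p-1}\norm{f}_{r_3}$, in which $v_n^j$ and $v_n^k$ never appear together inside a single integrand. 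Each factor is bounded (by the $S^0$ bounds), but nothing vanishes: the asymptotic orthogonality \eqref{asymptotic orthogonality} only gives smallness for a genuine product of two distinct profiles in one norm. You flag this as the "delicate bookkeeping" point, but it is not a matter of choosing exponents — no choice of admissible pairs in Lemmas \ref{fractional product rule} and \ref{derivatives of differences} preserves the pairing, which is exactly why the paper does \emph{not} use them here. Instead, the paper's proof opens up the proof of the fractional chain rule: it works at a single Littlewood--Paley frequency, writes $P_N$ as convolution against a mean-zero kernel, replaces the derivative by the difference operator $\delta_y$, and decomposes $\delta_y\big([\vert f+g\vert^p-\vert f\vert^p]f\big)$ into terms \eqref{expansion1}--\eqref{expansion3} each of which keeps $f$ paired against powers of $g$ inside one integrand while $\delta_y$ lands on a single factor; maximal and vector-maximal estimates plus the square function (Lemma \ref{square function estimates}) then reduce everything to norms like $\xonorms{\vert g\vert\,\vert f\vert^{p-1}}{\frac{d+2}{2}}$, to which the Keraani argument applies.

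The same structural problem breaks your treatment of \eqref{decouple 2}. You claim that after applying Lemma \ref{derivatives of differences} every resulting term carries an undifferentiated factor of $e^{it\Delta}w_n^J$ measured in $L_{t,x}^{p(d+2)/2}$, so that \eqref{asymptotic vanishing} finishes the job. That is false: the decomposition inevitably produces terms in which the derivative falls on $e^{it\Delta}w_n^J$ and no undifferentiated copy remains (schematically $\norm{\nsc e^{it\Delta}w_n^J}\cdot\norm{\vert g+h\vert^p}$), and for these you have only boundedness of $\nsc e^{it\Delta}w_n^J$ in Strichartz norms plus the weak convergence \eqref{weak decoupling} — no smallness. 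The paper devotes the bulk of its proof of \eqref{decouple 2} to exactly these terms: it approximates the accompanying profile factor by a compactly supported function, and on that compact set runs a local smoothing argument (Lemma \ref{local smoothing}, Bernstein, and the $A_2$ bound for $\M$ on $L^2(\vert x\vert^{-1-\eps}dx)$), splitting into high and low frequencies and optimizing to obtain a bound by $\norm{e^{it\Delta}w_n^J}_{L_{t,x}^{p(d+2)/2}}^{1/(2s_c+1)}\norm{w_n^J}_{\dot H_x^{s_c}}^{2s_c/(2s_c+1)}$, which vanishes by \eqref{asymptotic vanishing}. Without this (or some substitute trading derivatives for the non-derivative smallness), your argument cannot close; the telescoping and the Keraani approximation step you describe are fine, but they address the easy part of the lemma.
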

While many of the ideas needed to establish this lemma may be found in \cite{keraani}, we will see that new difficulties appear in our setting. Consider, for example, \eqref{decouple 1}. In the mass-critical setting, i.e. $s_c=0$, one has the pointwise estimate 
\begin{equation}\label{pw 1} \bigg\vert\ F\big(\sum_{j=1}^J v_n^j\big)-\sum_{j=1}^J F(v_n^j)\bigg\vert\lesssim_J \sum_{j\neq k}\vert v_n^j\vert\,\vert v_n^k\vert^p.
\end{equation} 
To see that the contribution of the terms on the the right-hand side of \eqref{pw 1} is acceptable, one can follow the argument of Keraani just described above; that is, one can use the asymptotic orthogonality of parameters to derive an estimate like \eqref{yay keraani}, which in turn gives \eqref{decouple 1}. 

In the energy-critical setting, i.e. $s_c=1$, one can instead use the pointwise estimate 
		$$\bigg\vert \nabla\bigg(F\big(\sum_{j=1}^J v_n^j\big)-\sum_{j=1}^J F(v_n^j)\bigg)\bigg\vert
		\lesssim_J \sum_{j\neq k}\vert\nabla v_n^j\vert\,\vert v_n^k\vert^{p}.$$ 
A similar argument can then be used to prove \eqref{decouple 1}; the key in both cases is to exhibit terms that all contain some $v_n^j$ paired against some $v_n^k$ for $j\neq k$. 

In the energy-supercritical case, the authors of \cite{KV:supercritical} were able to establish analogous pointwise estimates (in terms of the Hardy--Littlewood maximal function) for a square function of Strichartz that shares estimates with fractional differentiation operators (see \cite{strichartz square}). With the appropriate pointwise estimates in place, the usual arguments can then be applied; in this way, a potentially complicated analysis is handled quite efficiently. The approach of \cite{KV:supercritical}, however, does not work in our setting, as it relies fundamentally on the fact that $s_c>1$. 

See also \cite{KM}, which deals with the case $d=3$ and $s_c=\tfrac12$ (in which case $p=2$). In that setting, one also has to face the nonlocal nature of $\vert\nabla\vert^{\frac12}$; however, by using the polynomial nature of the nonlinearity, along with the well-developed theory of paraproducts (see \cite{CMpara, taylor}), the authors are able to place themselves back into a situation where the usual arguments apply. In this way, they are able to overcome the difficulty of fractional derivatives while still providing a very clean analysis.

In our case, we must deal simultaneously with a non-polynomial nonlinearity and a fractional number of derivatives; as we will see, this necessitates a fairly delicate and technical analysis. The main difficulty of our task stems from the fact the nonlocal operator $\vert\nabla\vert^{s_c}$ does not respect pointwise estimates in the spirit of \eqref{pw 1}. We will deal with this problem by opening up the proof of the fractional chain rule (Lemma \ref{fractional chain rule}) as given in \cite[$\S 2.4$]{taylor}; in particular, we will employ the Littlewood--Paley square function (specifically, Lemma \ref{square function estimates}), which allows us to work at the level of individual frequencies. By making use of maximal function and vector maximal function estimates, we can then find a way to adapt the standard arguments. 

\begin{proof}[Proof of \eqref{decouple 1}]  By induction, it will suffice to treat the case of two summands; to simplify notation, we write $f=v_n^j$ and $g=v_n^k$ for some $j\neq k$, and we are left to show 
\begin{align}\norm{\nsc\big(\vert f+g\vert^p(f+g)-\vert f\vert^p f-\vert g\vert^p g\big)}_{N^0([0,\infty))}\to 0\label{decoupling to zero}\end{align} as $n\to\infty$. 

As alluded to above, the key will be to perform a decomposition in such a way that all of the resulting terms we need to estimate have $f$ paired against $g$ inside of a single integrand; for such terms, we will be able to use the asymptotic orthogonality of parameters \eqref{asymptotic orthogonality} to our advantage. 

We first rewrite 
\begin{align*}\vert f+g&\vert^p(f+g)-\vert f\vert^pf-\vert g\vert^p g\\ &=\big(\vert f+g\vert^p-\vert f\vert^p\big)f+\big(\vert f+g\vert^p -\vert g\vert^p\big)g.\end{align*} 
By symmetry, it will suffice to treat the first term. We turn therefore to estimating
			$$\xonorms{\nsc\big[(\vert f+g\vert^p-\vert f\vert^p)f\big]}{\frac{2(d+2)}{d+4}}.$$
By Lemma \ref{square function estimates}, it will suffice to consider
				\begin{equation}\label{sq}
				\bxonorms{\left(\sum\big\vert N^{s_c} P_N\big[ (\vert f+g\vert^p-\vert f\vert^p)f\big]\big\vert^2\right)^{1/2} }{\frac{2(d+2)}{d+4}}.
				\end{equation}
Thus, we restrict our attention to a single frequency $N\in 2^{\mathbb{Z}}$. We let $\delta_y f(x):=f(x-y)-f(x)$, and let $\psih$ denote the convolution kernel of the Littlewood--Paley projection $P_1$. As $\psi(0)=0$, we have 
			$$\smallint\psih(y)\,dy=0,$$ 
so that exploiting cancellation, we can write
\begin{align} & P_N\big(\big[\vert f(x)+g(x)\vert^p-\vert f(x)\vert^p\big]f(x)\big) \nonumber
\\ &\quad =\smallint N^d \psih(Ny)\delta_y\big(\big[\vert f(x)+g(x)\vert^p-\vert f(x)\vert^p\big]f(x)\big)\,dy.\label{single freq}
\end{align}
We now rewrite
\begin{align}
\delta_y&\big(\big[\vert f(x)+g(x)\vert^p-\vert f(x)\vert^p\big]f(x)\big) \nonumber
\\ & =\delta_y f(x)\big[\vert f(x-y)+g(x-y)\vert^p-\vert f(x-y)\vert^p\big] \label{expansion1}
\\ &\ +\!f(x)\big[\vert f(x)+g(x-y)\vert^p-\vert f(x)+g(x)\vert^p\big] \label{expansion2}
\\ &\ +\!f(x)\big[\vert f(x-y)\!+\!g(x-y)\vert^p\!-\!\vert f(x-y)\vert^p\!+\!\vert f(x)\vert^p\!-\!\vert f(x)\!+\!g(x-y)\vert^p\big]. \label{expansion3}
\end{align}
We estimate each term individually. First, we have
\begin{align*} \vert\eqref{expansion1}\vert\lesssim \vert\delta_y f(x)\vert\,\vert g(x-y)\vert\left\{\vert f(x-y)\vert^{p-1}+\vert g(x-y)\vert^{p-1}\right\}.\end{align*}
Next, we see
\begin{align*}\vert\eqref{expansion2}\vert\lesssim\vert f(x)\vert\,\vert \delta_y g(x)\vert\left\{\vert f(x)\vert^{p-1}+\vert g(x)\vert^{p-1}+\vert g(x-y)\vert^{p-1}\right\}.\end{align*}
We now turn to \eqref{expansion3}. First, if $1<p\leq 2$, a simple argument using the fundamental theorem of calculus implies
		\begin{align*} 
				\vert\eqref{expansion3}\vert
				\lesssim \vert f(x)\vert\,\vert \delta_yf(x)\vert\,\vert g(x-y)\vert^{p-1}
		\end{align*}
(see Lemma \ref{stupid lemma} for details). For $p>2$, one instead finds
	\begin{align*}
			\vert\eqref{expansion3}\vert\lesssim
			\vert f(x)\vert\,\vert\delta_yf(x)\vert\,\vert g(x-y)\vert\left\{\vert f(x)\vert^{p-2}
			+\vert f(x-y)\vert^{p-2}+\vert g(x-y)\vert^{p-2}\right\}.
		\end{align*} 

\begin{remark}\label{goodbye p=1} Let us pause here to note that if $p=1$, the approach above breaks down. Notice that each term in the bounds for \eqref{expansion1}, \eqref{expansion2}, and \eqref{expansion3} has two essential properties: (i) it features $f$ paired against powers of $g$, and (ii) the derivative (in the form of $\delta_y$) lands on either $f$ or $g$. When $p=1$, the same approach does not yield a decomposition that is satisfactory in this sense; it is for this reason that we have excluded the case $(d,s_c)=(5,\tfrac12)$ from this paper.
\end{remark} 
To ease the exposition, we will restrict our attention here and below to the more difficult case $1<p\leq 2$; once we have dealt with this case, it should be clear how to proceed when $p>2$. 

Collecting terms, we continue from \eqref{single freq} to see 
\begin{align}\big\vert& P_N\big(\big[\vert f(x)+g(x)\vert^p-\vert f(x)\vert^p\big]f(x)\big)\big\vert \nonumber
\\ &\lesssim \int N^d\vert\psih(Ny)\vert\,\vert\delta_y f(x)\vert\,\vert g(x-y)\vert\left\{\vert f(x-y)\vert^{p-1}+\vert g(x-y)\vert^{p-1}\right\}\!\,dy \label{11}
\\ &\ + \int N^d\vert \psih(Ny)\vert\,\vert f(x)\vert\,\vert\delta_y g(x)\vert\left\{\vert f(x)\vert^{p-1}\!+\!\vert g(x)\vert^{p-1}\!+\!\vert g(x-y)\vert^{p-1}\right\}\,\!dy\label{13}
\\ &\ + \int N^d\vert\psih(Ny)\vert\,\vert f(x)\vert\,\vert \delta_y f(x)\vert\vert g(x-y)\vert^{p-1}\,dy.\label{12}
\end{align}

One can see that we are already faced with several terms to estimate; moreover, to estimate any single term will require further decomposition. However, in the end, the same set of tools will suffice to handle every term that appears. Thus, let us deal with only \eqref{11} in detail; once we have seen how to handle this term, it should be clear that the same techniques apply to handle \eqref{13} and \eqref{12}. 

Turning to \eqref{11}, we first write
\begin{align}\eqref{11}&=\smallint N^d\vert\psih(Ny)\vert\,\vert\delta_y f(x)\vert\,\vert g(x-y)\vert\,\vert f(x-y)\vert^{p-1}\,dy \label{11f}
\\ &\ \ +\smallint N^d\vert\psih(Ny)\vert\,\vert\delta_y f(x)\vert\,\vert g(x-y)\vert^p\,dy. \label{11g}
\end{align}
For both of these terms, we will need to make use of some auxiliary inequalities in the spirit of \cite[$\S$2.3]{taylor}, which we record in Lemma \ref{aux lemma}.

We turn to \eqref{11f}. If we first write
\begin{align} \vert \delta_y &f(x)\vert\lesssim \vert f_{>N}(x)\vert+\vert f_{>N}(x-y)\vert+\sum_{K\leq N}\vert\delta_y f_K(x)\vert,\label{delta breakdown 1}
\end{align}
then putting Lemma \ref{aux lemma} to use, we arrive at
\begin{align}
\eqref{11f}\lesssim&\ \vert f_{>N}(x)\vert\, \M(g\,\vert f\vert^{p-1})(x) \label{111}
\\ &+\M(f_{>N}\,g\,\vert f\vert^{p-1})(x) \label{112}
\\ &+\sum_{K\leq N}\tfrac{K}{N}\M(f_K)(x)\M(g\,\vert f\vert^{p-1})(x) \label{113}
\\ &+\sum_{K\leq N}\tfrac{K}{N}\M(\M(f_K)\,g\,\vert f\vert^{p-1})(x) \label{114}.
\end{align}
Similarly, we can decompose
\begin{align}
\eqref{11g}\lesssim&\ \vert f_{>N}(x)\vert\,\M(\vert g\vert^{p})(x) \label{115}
\\ &+\M(f_{>N}\vert g\vert^{p})(x) \label{116}
\\ &+\sum_{K\leq N}\tfrac{K}{N} \M(f_K)(x)\M(\vert g\vert^p)(x) \label{117}
\\ &+\sum_{K\leq N}\tfrac{K}{N} \M(\M(f_K)\vert g\vert^p)(x)\label{118}.
\end{align}

Let us now consider the contribution of \eqref{111} to the left-hand side of \eqref{decoupling to zero}. Comparing with \eqref{sq}, we see it will suffice to estimate
\begin{align*} \xonorms{\big(\sum_N \big\vert N^{s_c}f_{>N} \M(g\, \vert f\vert^{p-1})\big\vert^2\big)^{1/2}}{\frac{2(d+2)}{d+4}}.\end{align*} 
Using H\"older's inequality and maximal function estimates, we can control this term by \begin{align*}\xonorms{\big(\sum_N \big\vert N^{s_c}f_{>N}\big\vert^2\big)^{1/2}}{\frac{2(d+2)}{d}}\xonorms{\vert g\vert\,\vert f\vert^{p-1}}{\frac{d+2}{2}}.
\end{align*}
We now recall that $f=v_n^j$ and $g=v_n^k$ for some $j\neq k$. Then, the first term is controlled by $\norm{\nsc v_n^j}_{S^0}$ (cf. Lemma \ref{square function estimates}), which in turn is bounded (recall that by assumption, all of the $v_n^j$ have scattering size $\lesssim E_c$). The second term can be handled in the standard way; that is, this term vanishes in the limit due to the asymptotic orthogonality of parameters \eqref{asymptotic orthogonality} (cf. \cite[Lemma 2.7]{keraani}). 
Thus, we see that \eqref{111} is under control. A similar approach (this time using the vector maximal inequality) handles \eqref{112}.

To estimate the contribution of \eqref{113} to the left-hand side of \eqref{decoupling to zero}, we need to estimate 
\begin{align} \norm{\big(\sum_N \big\vert N^{s_c}\sum_{K\leq N} \tfrac{K}{N}\M(f_K)\M(g\,\vert f\vert^{p-1})\big\vert^2\big)^{1/2}}_{N^0([0,\infty))}.\label{113 again}\end{align}
For this term, we need to make use of the following basic inequality: for a nonnegative sequence $\{a_K\}_{K\in 2^{\mathbb{Z}}}$ and $0<s<1$, one has 
					\begin{equation}
					\label{a general inequality}
					\sum_{N\in 2^{\mathbb{Z}}}N^{2s}\big\vert\sum_{K\leq N}\tfrac{K}{N}a_K\big\vert^2
					\lesssim \sum_{K\in 2^{\mathbb{Z}}} K^{2s}\vert a_K\vert^2
					\end{equation}
(cf. \cite[Lemma 4.2]{taylor}). Using this inequality, along with H\"older, we can estimate
\begin{align*} \eqref{113 again}&\lesssim  \xonorms{\big(\sum_K \big\vert K^{s_c}\M(f_K)\big\vert^2\big)^{1/2} \M(g\,\vert f\vert^{p-1})}{\frac{2(d+2)}{d+4}}
\\ &\lesssim \xonorms{\big(\sum_K \vert K^{s_c} \M(f_K)\vert^2\big)^{1/2}}{\frac{2(d+2)}{d}}\xonorms{\vert g\vert\,\vert f\vert^{p-1}}{\frac{d+2}{2}}\to 0
\end{align*} as $n\to\infty$, exactly as before. Thus, \eqref{113} is under control; the same approach handles \eqref{114} (after an application of the vector maximal inequality).

Let us now turn to \eqref{115}. As before, we sum over $N\in 2^{\mathbb{Z}}$ and find that we need to estimate
\begin{equation}\label{115again}\bxonorms{\left(\sum \big\vert N^{s_c} f_{>N}\big\vert^2\right)^{1/2}\M(\vert g\vert^p)}{\frac{2(d+2)}{d+4}}.\end{equation} 
Recalling that  $f=v_n^j$ and $g=v_n^k$ for some $j\neq k$, we see that we are once again in a position to use the argument from \cite{keraani}. 
To begin, we may assume without loss of generality that both
					$$\Phi_1:=\left(\sum\vert N^{s_c}P_{>N}v^j\vert^2\right)^{1/2}\quad\text{and}\quad \Phi_2:=M(\vert v^k\vert^p)$$
belong to $C_c^\infty(\R\times\R^d)$; indeed, $C_c^\infty$-functions are dense in both $L_{t,x}^{\frac{2(d+2)}{d}}$ and $L_{t,x}^{\frac{d+2}{2}}.$ We now wish to use the asymptotic orthogonality of parameters, that is,
				\begin{equation}
				\frac{\lambda_n^j}{\lambda_n^k}+\frac{\lambda_n^k}{\lambda_n^j}
				+\frac{\vert x_n^j-x_n^k\vert^2}{\lambda_n^j\lambda_n^k}
				+\frac{\vert t_n^j(\lambda_n^j)^2-t_n^k(\lambda_n^k)^2\vert}{\lambda_n^j\lambda_n^k}
				\to\infty\quad \text{as}\ n\to\infty,\label{aop}
				\end{equation} 
 to show \eqref{115again}$\to 0$. 

Consider first the case $\frac{\lambda_n^j}{\lambda_n^k}\to c> 0$ (along a subsequence, say). If we unravel the definition of the nonlinear profiles and change variables to move the symmetries onto $\Phi_2$, we arrive at
		\begin{align*}
		&\eqref{115again}^{\frac{2(d+2)}{d+4}} 
		\\ &=\left(\tfrac{\lambda_n^j}{\lambda_n^k}\right)^{\frac{4(d+2)}{d+4}}\iint\bigg\vert \Phi_1(s,y)\Phi_2(\big(t_n^k+\big(\tfrac{\lambda_n^j}{\lambda_n^k}\big)^2(s-t_n^j),\big(\tfrac{\lambda_n^j}{\lambda_n^k}\big)y+\tfrac{x_n^j-x_n^k}{\lambda_n^k}\big)\bigg\vert^{\frac{2(d+2)}{d+4}}\,dy\,ds.
		\end{align*}
Then, recalling \eqref{aop}, we see that as $n\to\infty$, either the spatial or temporal argument of $\Phi_2$ must escape the support of $\Phi_1$. Thus, in this case, we get \eqref{115again}$\to 0$.

If instead we have $\frac{\lambda_n^j}{\lambda_n^k}\to 0$, then continuing from above, we can estimate
			\begin{align*}
			\eqref{115again}&\lesssim \big(\tfrac{\lambda_n^j}{\lambda_n^k}\big)^2\xonorms{\Phi_1}{\frac{2(d+2)}{d+4}}\xonorms{\Phi_2}{\infty}.
			\end{align*}
As $\Phi_1,\Phi_2\in C_c^\infty(\R\times\R^d)$, we see that \eqref{115again}$\to 0$ in this case, as well. 

Finally, we can treat the case $\frac{\lambda_n^j}{\lambda_n^k}\to\infty$ just like the previous case; the only difference is that we change variables to move the symmetries onto $\Phi_1$, instead of $\Phi_2$. Thus, we have that \eqref{115again}$\to 0$ in this third and final case. 

We have now shown that \eqref{115} is under control. The same ideas can be used to handle \eqref{116}, \eqref{117}, and \eqref{118}.

As mentioned above, this same set of ideas suffices to deal with all the remaining terms stemming from \eqref{decouple 1}.
\end{proof}
\begin{proof}[Proof of \eqref{decouple 2}] For this term, we will need to make use \eqref{asymptotic vanishing}. As we will see, the terms in which $e^{it\Delta}w_n^J$ appears without derivatives will be relatively easy to handle, as \eqref{asymptotic vanishing} will apply directly. On the other hand, the terms that only contain $\nsc e^{it\Delta}w_n^J$ will require a more careful analysis; in particular, we will need to carry out a local smoothing argument before we can make effective use of \eqref{asymptotic vanishing}.

Defining $g:=\sum_{j=1}^Jv_n^j$ and $h:=e^{it\Delta}w_n^J$, we are left to show
 \begin{equation} \lim_{J\to\infty}\limsup_{n\to\infty}\norm{\nsc\left(\vert g+h\vert^p(g+h)-\vert g\vert^p g\right)}_{N^0([0,\infty))}=0. \label{decouple to zero 2}
 \end{equation}
We write
 				\begin{align} 
				\vert g+h\vert^p(g+h)-\vert g\vert^p g=&\ \vert g+h\vert^ph \label{22}
				 \\ &+(\vert g+h\vert^p-\vert g\vert^p)g \label{21}
				 \end{align}
and first restrict our attention to \eqref{22}. We proceed as before, working at a single frequency and exploiting cancellation to write
 			\begin{align}
 		\vert P_N(\vert g+h\vert^ph)(x)\vert&=\big\vert\smallint N^d\psih(Ny)\delta_y\big[\vert g(x)+h(x)\vert^ph(x)\big]\,dy			\big\vert \nonumber
			 \\ &\leq \smallint N^d\vert\psih(Ny)\vert\,\vert g(x-y)+h(x-y)\vert^p\,\vert\delta_y h(x)\vert\,dy \label{221}
 			\\ &\quad + \smallint N^d\vert\psih(Ny)\vert\,\vert\delta_y\big[\vert g(x)+h(x)\vert^p\big]\,\vert h(x)\vert\,dy.			\label{222}
 			\end{align}

We will deal only with \eqref{221}, which is the more difficult term. Indeed, in all of the terms that stem from \eqref{222}, we will have a copy of $e^{it\Delta}w_n^J$ appearing without derivatives, so that \eqref{asymptotic vanishing} will suffice. (For completeness, we will later show how to handle such a term; cf. \eqref{one such term} below.)

Proceeding as in \eqref{delta breakdown 1}, we write
\begin{align}
\eqref{221}&\lesssim \smallint N^d\vert\psih(Ny)\vert\,\vert g(x-y)+h(x-y)\vert^p\,\vert h_{>N}(x)\vert\,dy \label{2211}
\\ &+\smallint N^d\vert\psih(Ny)\vert\,\vert g(x-y)+h(x-y)\vert^p\,\vert h_{>N}(x-y)\vert\,dy \label{2212}
\\ &+\sum_{K\leq N}\smallint N^d\vert\psih(Ny)\vert\,\vert g(x-y)+h(x-y)\vert^p\,\vert\delta_y h_K(x)\vert\,dy.\label{2213}
\end{align}

Let us now deal only with \eqref{2213}; in doing so, we will see all of the ideas necessary to handle \eqref{2211} and \eqref{2212}, as well. We first write
\begin{align}
\eqref{2213}&\lesssim\sum_{K\leq N}\smallint N^d\vert\psih(Ny)\vert\,\vert g(x-y)\vert^p\vert\delta_y h_K(x)\vert\,dy \label{22131}
\\ &+\sum_{K\leq N}\smallint N^d\vert\psih(Ny)\vert\,\vert h(x-y)\vert^p\vert\delta_y h_K(x)\vert\,dy.\label{22132}
\end{align}

We only consider \eqref{22131}, as the contribution of \eqref{22132} is easier to estimate (again, due to the presence of $e^{it\Delta}w_n^J$ without derivatives). Employing the inequalities of Lemma \ref{aux lemma}, we find
\begin{align}
\eqref{22131} \lesssim \sum_{K\leq N}\tfrac{K}{N}\M(\vert g\vert^p)(x)\M(h_K)(x) +\sum_{K\leq N}\tfrac{K}{N}\M(\vert g\vert^p \M(h_K))(x).
\nonumber
\end{align}

Let us now concern ourselves only with the first term above, as the second is similar. As before, to estimate the contribution of this term to \eqref{decouple to zero 2} (and thereby complete our treatment of \eqref{22}), we need to sum over $N\in 2^{\mathbb{Z}}$. Using \eqref{a general inequality} and recalling the definitions of $g$ and $h$, we write
\begin{align}\norm{\big(&\sum_N\big\vert N^{s_c}\sum_{K\leq N}\tfrac{K}{N}\M(\vert g\vert^p)\M(h_K)\big\vert^2\big)^{1/2}}_{N^0}\nonumber
\\ &\lesssim \norm{\big(\sum_N\big\vert N^{s_c} \M(h_N)\big\vert^2\big)^{1/2}\M(\vert g\vert^p)}_{L_{t,x}^{\frac{2(d+2)}{d+4}}} \nonumber
\\ &\lesssim \norm{\big(\sum_N\big\vert N^{s_c}\M(P_N e^{it\Delta}w_n^J)\vert^2\big)^{1/2}M\big(\big\vert\sum_{j=1}^J v_n^j\big\vert^p\big)}_{L_{t,x}^{\frac{2(d+2)}{d+4}}}\nonumber.
\end{align}
Thus, to complete our treatment of \eqref{22}, we are left to show
						\begin{equation}
						\label{down to this}
				\lim_{J\to\infty}\limsup_{n\to\infty}
				\norm{\big(\sum_N\big\vert N^{s_c}\M(P_N e^{it\Delta}w_n^J)\vert^2\big)^{1/2}
				M\big(\big\vert\sum_{j=1}^J v_n^j\big\vert^p\big)}_{L_{t,x}^{\frac{2(d+2)}{d+4}}}^{\frac{d+2}{2}}=0.
					\end{equation}

To begin, we let $\eta>0$; then using \eqref{okkkk}, we see that there exists some $J_1=J_1(\eta)$ so that
					\begin{equation}
					\sum_{j\geq J_1}\xonorms{v_n^j}{\frac{p(d+2)}{2}}^{\frac{p(d+2)}{2}}<\eta.
					\nonumber
					\end{equation}
Using H\"older's inequality, maximal function and vector maximal function estimates, and Lemma~\ref{square function estimates}, we can argue as we did to obtain \eqref{ok version3} to see
		\begin{align*}
		\limsup_{n\to\infty} \norm{\big(\sum_N&\big\vert N^{s_c}\M(P_N e^{it\Delta}w_n^J)\vert^2\big)^{1/2}M\big(\big\vert\sum_{j\geq J_1} v_n^j\big\vert^p\big)}_{L_{t,x}^{\frac{2(d+2)}{d+4}}}^{\frac{d+2}{2}}
		\\ &\lesssim \limsup_{n\to\infty}\norm{\nsc e^{it\Delta}w_n^J}_{L_{t,x}^{\frac{2(d+2)}{d}}}^{\frac{d+2}{2}}\sum_{j\geq J_1}\xonorms{v_n^j}{\frac{p(d+2)}{2}}^{\frac{p(d+2)}{2}}
		\\ &\lesssim \eta.
		\end{align*}
As $\eta>0$ was arbitrary, we see that to establish \eqref{down to this}, it will suffice to show 
				\begin{equation}\label{single j}
				\lim_{J\to\infty}\limsup_{n\to\infty} \norm{\big(\sum_N \big\vert N^{s_c} 
				\M(P_N e^{it\Delta}w_n^J)\big\vert^2\big)^{1/2} \M(\vert v_n^j\vert^p)}_{L_{t,x}^{\frac{2(d+2)}{d+4}}}
				=0
				\end{equation}
for $1\leq j< J_1$. 

Restricting our attention to a single $j$ and recalling the definition of $v_n^j$, we change variables and find we need to estimate
$$\norm{\big(\sum_N \big\vert(\lambda_n^j)^{\frac{2}{p}}N^{s_c}\M P_N\big[e^{i[(\lambda_n^j)^2(t-t_n^j)]\Delta}w_n^J(\lambda_n^jx+x_n^j)\big]\big\vert^2\big)^{1/2}\M(\vert v^j\vert^p)}_{L_{t,x}^{\frac{2(d+2)}{d+4}}}.$$

We will now carry out some reductions, inspired by the proof of \cite[Proposition~3.4]{keraani}: as $\M(\vert v^j\vert^p)$ shares bounds with $\vert v^j\vert^p$, and $v^j$ obeys good bounds (it has scattering size $\lesssim E_c$), we may replace $\M(\vert v^j\vert^p)$ with some function $\Phi$ in $C_c^\infty(\R\times\R^d)$. If we then use H\"older's inequality, we find it suffices to estimate the first term in $L_{t,x}^2(K)$, where $K$ is the (compact) support of this function $\Phi$. The next step will be to use a local smoothing estimate on this (fixed) set $K$. Now, the norms that will appear in these estimates will have critical scaling; that is, they will be invariant under the change of variables that eliminates the parameters $\lambda_n^j$, $x_n^j,$ and $t_n^j$.  Thus, without loss of generality, we will ignore them from the start. 

To establish \eqref{single j} and complete our treatment of \eqref{22}, we are therefore left to show
\begin{equation}\label{weird local smoothing} \lim_{J\to\infty}\limsup_{n\to\infty} \norm{\big(\sum_N \big\vert \M(N^{s_c}P_N e^{it\Delta}w_n^J)\big\vert^2\big)^{1/2}}_{L_{t,x}^2(K)}=0
\end{equation}
for a fixed compact set $K\subset \R\times\R^d$. 

To establish \eqref{weird local smoothing}, we will need to rely on the fact that we are working on a compact set, so that we can carry out a local smoothing argument. Indeed, the term appearing above is morally like $\nsc e^{it\Delta}w_n^J$, over which we do not have sufficient control (cf. \eqref{weak decoupling}). However, we do have good control over $e^{it\Delta}w_n^J$, in the form of \eqref{asymptotic vanishing}. Thus, to succeed, we need to find a way to estimate the term above using fewer than $s_c$ derivatives; this is exactly the role of local smoothing.

For the proof of \eqref{weird local smoothing}, we will use a standard local smoothing result for the free propagator (Lemma \ref{local smoothing}), along with a few results from \cite[Chapter V]{stein}. In particular, we need the following: if we choose $\eps>0$ so that $-d<-1-\eps$, then $\vert x\vert^{-1-\eps}$ is an $A_2$ weight, so that $\M$ is bounded on $L^2(\vert x\vert^{-1-\eps}\,dx)$.

\begin{proof}[Proof of \eqref{weird local smoothing}] We can write $K\subset[-T,T]\times\{\vert x\vert\leq R\}$ for some $T,R>0$. We fix some $N_0\in 2^{\mathbb{Z}}$ and break into low and high frequencies:
\begin{align*} \iint_K \sum_N\big\vert N^{s_c}\M(P_N e^{it\Delta}w_n^J)\big\vert^2\,dx\,dt\lesssim &\sum_{N\leq N_0}\iint_K\big\vert \M(N^{s_c}P_N e^{it\Delta}w_n^J)\big\vert^2\,dx\,dt
\\ &+ \sum_{N>N_0}\iint_K \big\vert \M(N^{s_c}P_N e^{it\Delta}w_n^J)\big\vert^2\,dx\,dt.
\end{align*}
For the low frequencies, we use H\"older and maximal function estimates to write
\begin{align*} \sum_{N\leq N_0}\iint_K\big\vert &\M(N^{s_c}P_N e^{it\Delta}w_n^J)\big\vert^2\,dx\,dt 
\\ &\lesssim \sum_{N\leq N_0} T^{\frac{p(d+2)-4}{p(d+2)}}R^{\frac{d(p(d+2)-4)}{p(d+2)}}\xonorms{\M(N^{s_c}P_N e^{it\Delta}w_n^J)}{\frac{p(d+2)}{2}}^2
\\ &\lesssim_K \sum_{N\leq N_0}N^{2s_c}\xonorms{e^{it\Delta}w_n^J}{\frac{p(d+2)}{2}}^2
\\ &\lesssim_K N_0^{2s_c}\xonorms{e^{it\Delta}w_n^J}{\frac{p(d+2)}{2}}^2.
\end{align*}
For the high frequencies, we choose $\eps>0$ so that $-d<-1-\eps$. Then, using Lemma \ref{local smoothing}, Bernstein, and the fact that $\vert x\vert^{-1-\eps}\in A_2$, we can estimate 
\begin{align*}
\sum_{N>N_0}\iint_K \big\vert &\M(N^{s_c}P_N e^{it\Delta}w_n^J)\big\vert^2\,dx\,dt
\\ &\lesssim R^{1+\eps}\sum_{N>N_0}\int_\R\int_{\R^d}\big\vert \M(N^{s_c}P_N e^{it\Delta}w_n^J)\vert^2\langle x\rangle^{-1-\eps}\,dx\,dt
\\ & \lesssim_K \sum_{N>N_0}N^{2s_c}\int_\R\int_{\R^d}\big\vert P_N e^{it\Delta}w_n^J\big\vert^2\langle x\rangle^{-1-\eps}\,dx\,dt
\\ &\lesssim_K \sum_{N>N_0}N^{2s_c}\norm{\vert\nabla\vert^{-\frac12}P_N w_n^J}_{L_x^2(\R^d)}^2
\\ &\lesssim_K \sum_{N>N_0}N^{-1}\norm{\nsc w_n^J}_{L_x^2(\R^d)}^2
\\ &\lesssim_K N_0^{-1}\norm{\nsc w_n^J}_{L_x^2(\R^d)}^2.  
\end{align*} 
Optimizing in the choice of $N_0$ now yields
$$\norm{\big(\sum_N \big\vert \M(N^{s_c}P_N e^{it\Delta}w_n^J)\big\vert^2\big)^{1/2}}_{L_{t,x}^2(K)}\lesssim_K \norm{e^{it\Delta}w_n^J}_{L_{t,x}^{\frac{p(d+2)}{2}}}^{\frac{1}{2s_c+1}}\norm{w_n^J}_{\dot{H}_x^{s_c}(\R^d)}^{\frac{2s_c}{2s_c+1}},$$ which, by \eqref{asymptotic vanishing}, gives \eqref{weird local smoothing}. 
\end{proof}

We have now dealt with \eqref{22}, and so we finally turn to \eqref{21}. As usual, we first restrict our attention to a single frequency $N$. We have dealt with a term of this form before (cf. \eqref{single freq}); proceeding in exactly the same way, we arrive at

\begin{align}\big\vert &P_N\big([\vert g(x)+h(x)\vert^p-\vert g(x)\vert^p]g(x)\big)\big\vert\nonumber
\\ &\lesssim\smallint N^d\vert\psih(Ny)\vert\,\vert\delta_y g(x)\vert\,\vert h(x-y)\vert\big\{\vert g(x-y)\vert^{p-1}+\vert h(x-y)\vert^{p-1}\big\}\,dy \label{211}
\\ &+\smallint N^d\vert\psih(Ny)\vert\,\vert g(x)\vert\,\vert\delta_y g(x)\vert\,\vert h(x-y)\vert^{p-1}\,dy \label{212}
\\ &+\smallint N^d\vert\psih(Ny)\vert\,\vert g(x)\vert\,\vert\delta_y h(x)\vert\,\big\{\vert g(x)\vert^{p-1}+\vert h(x)\vert^{p-1}+\vert h(x-y)\vert^{p-1}\big\}\,dy,\label{213}
\end{align}
at least in the case $p\leq 2$ (as above, we will only consider this case). 

Note that all of the terms above are similar to terms we have handled before. Thus, we proceed in the same way, decomposing terms exactly as before. Whenever a term includes a copy of $e^{it\Delta}w_n^J$ without derivatives, things will be relatively straightforward, as one can rely on \eqref{asymptotic vanishing} (see \eqref{one such term} below for details); for the one term stemming from \eqref{213} in which $e^{it\Delta}w_n^J$ only appears with derivatives, we have to go through the same local smoothing argument given above (cf. the proof of \eqref{weird local smoothing}).

Thus, to conclude the proof of \eqref{decouple 2}, we will see how to estimate the contribution of the term 
				\begin{equation}\label{one such term}
\smallint N^d\vert\psih(Ny)\vert\,\vert\delta_y g(x)\vert\,\vert h(x-y)\vert\,\vert g(x-y)\vert^{p-1}\,dy.
				\end{equation}
Estimating $\vert\delta_y g(x)\vert$ as before, we find we need to bound the terms
\begin{align}
\M(h\vert g&\vert^{p-1})g_{>N}+\M(h\vert g\vert^{p-1}g_{>N}) \nonumber
\\ &+\sum_{K\leq N}\tfrac{K}{N}\M(h\vert g\vert^{p-1})\M(g_K)+\sum_{K\leq N}\M(h\vert g\vert^{p-1}\M(g_K)).\nonumber
\end{align}

Let us now see how to handle the contribution of the first term only, as the other three are similar. We begin by summing over $N\in 2^{\mathbb{Z}}$ and recalling the definitions of $g$ and $h$; then, using H\"older, maximal function estimates, and Lemma \ref{square function estimates}, we can argue as we did to obtain \eqref{ok version3} to see
				\begin{align}
				\lim_{J\to\infty}&\limsup_{n\to\infty} \norm{\big(\sum_N \big\vert N^{s_c}g_{>N}\big\vert^2\big)^{1/2}\M(h\vert g\vert^{p-1})}_{L_{t,x}^{\frac{2(d+2)}{d+4}}}^{\frac{p(d+2)}{2(p-1)}} \nonumber
			\\ &\lesssim\lim_{J\to\infty}\limsup_{n\to\infty} \label{the last thing}
			\norm{\nsc\big(\sum_{j=1}^J v_n^j\big)}^{\frac{p(d+2)}{2(p-1)}}_{L_{t,x}^\frac{2(d+2)}{d}}
			\norm{e^{it\Delta}w_n^J}_{L_{t,x}^{\frac{p(d+2)}{2}}}^{\frac{p(d+2)}{2(p-1)}}
			\sum_{j=1}^J \norm{v_n^j}_{L_{t,x}^{\frac{p(d+2)}{2}}}^{\frac{p(d+2)}{2}}.
			\end{align}
We turn to estimating the first term above. We first write
	\begin{align}
	\lim_{J\to\infty}&\limsup_{n\to\infty}\norm{\nsc\big(\sum_{j=1}^J v_n^j\big)}_{L_{t,x}^{\frac{2(d+2)}{d}}}^2 \nonumber
	\\&\lesssim\lim_{J\to\infty}\limsup_{n\to\infty}\bigg(\sum_{j=1}^J \norm{\nsc v_n^j}_{L_{t,x}^{\frac{2(d+2)}{d}}}^2+\sum_{j\neq k}\norm{\nsc v_n^j\nsc v_n^k}_{L_{t,x}^{\frac{d+2}{d}}}\bigg). \label{continue from here}
	\end{align}
Arguing as we did to obtain \eqref{yay keraani}, we immediately get that
	\begin{equation}\label{this part is fine}
	\lim_{J\to\infty}\limsup_{n\to\infty} \sum_{j\neq k}\norm{\nsc v_n^j\nsc v_n^k}_{L_{t,x}^{\frac{d+2}{d}}}=0.
	\end{equation}
Next, we let $\eta>0$; then, using \eqref{strong decoupling}, we can find $J(\eta)>0$ so that
		$$\sum_{j> J(\eta)}\norm{\nsc \phi^j}_{L_x^2}^2 <\eta.$$ 
Taking $\eta$ sufficiently small and applying a standard bootstrap argument, we find
		\begin{equation}\label{this part two}
		\sum_{j> J(\eta)}\norm{\nsc v_n^j}_{L_{t,x}^{\frac{2(d+2)}{d}}}^2\lesssim\sum_{j> J(\eta)}\norm{\nsc \phi^j}_{L_x^2}^2\lesssim\eta.
		\end{equation}
On the other hand, the fact that each $v_n^j$ has scattering size $\lesssim E_c$ implies
		\begin{equation}\label{this part three}
		\sum_{j=1}^{J(\eta)}\norm{\nsc v_n^j}_{L_{t,x}^{\frac{2(d+2)}{d}}}^2\lesssim_{E_c} 1.
		\end{equation}
Combining \eqref{this part is fine}, \eqref{this part two}, and \eqref{this part three}, we can continue from \eqref{continue from here} to see
		$$\lim_{J\to\infty}\limsup_{n\to\infty}\norm{\nsc\big(\sum_{j=1}^J v_n^j\big)}_{L_{t,x}^{\frac{2(d+2)}{d}}}^2\lesssim_{E_c} 1.$$ 

Thus, continuing from \eqref{the last thing} and using \eqref{okkkk} and \eqref{asymptotic vanishing}, we find  
$$\lim_{J\to\infty}\limsup_{n\to\infty}
			\norm{\nsc\big(\sum_{j=1}^J v_n^j\big)}^{\frac{p(d+2)}{2(p-1)}}_{L_{t,x}^{\frac{2(d+2)}{d}}}
			\norm{e^{it\Delta}w_n^J}_{L_{t,x}^{\frac{p(d+2)}{2}}}^{\frac{p(d+2)}{2(p-1)}}\sum_{j=1}^J \norm{v_n^j}_{L_{t,x}^{\frac{p(d+2)}{2}}}^{\frac{p(d+2)}{2}}=0,$$ as needed. This completes the proof of \eqref{decouple 2}.
\end{proof} 
Having established \eqref{decouple 1} and \eqref{decouple 2}, we are now done with the proof of Lemma \ref{decouple lemma}, as well as the sketch of the proof of Proposition \ref{palais smale}.
\section{Long-time Strichartz estimates}
In this section, we prove a long-time Strichartz estimate. Such estimates were first developed by Dodson \cite{dodson:3} in the study of the mass-critical NLS, but have since appeared in the energy-critical setting (see \cite{revisit2, revisit1}). In this paper, we establish a long-time Strichartz estimate for the first time in the inter-critical setting, modeling our approach after \cite{dodson:3, revisit2, revisit1}. The long-time Strichartz estimate will be an important technical tool in Section \ref{frequency-cascade section}, in which we rule out rapid frequency-cascade solutions, as well as in Section \ref{flim section}, in which we establish a frequency-localized interaction Morawetz inequality.

We will prove long-time Strichartz estimates for $(d,s_c)$ satisfying \eqref{constraints2}. This guarantees $p>1$, which simplifies the proof. Actually, as we will point out below, the same ideas can be used to handle $(d,s_c)=(5,\tfrac12)$, in which case $p=1$. 
 
\begin{proposition}[Long-time Strichartz estimates]\label{lts lemma} Take $(d,s_c)$ satisfying \eqref{constraints2}. Let $u:[0,T_{max})\times\R^d\to\mathbb{C}$ be an almost periodic solution to \eqref{nls} with $N(t)\equiv N_k\geq 1$ on each characteristic subinterval $J_k\subset[0,T_{max})$. Then on any compact time interval $I\subset[0,T_{max})$, which is a union of contiguous characteristic subintervals $J_k$, and for any $N>0$, we have

\begin{equation}\label{lts} \xnorm{\nsc u_{\leq N}}{2}{\frac{2d}{d-2}}{I}\lesssim_u 1+N^{2s_c-\frac12}K^{\frac12},
\end{equation}

\noindent where $K:=\int_I N(t)^{3-4s_c}\, dt.$ Moreover, for any $\varepsilon>0$, there exists $N_0=N_0(\varepsilon)$ such that for all $N\leq N_0$,

\begin{equation}\label{lts small} \xnorm{\nsc u_{\leq N}}{2}{\frac{2d}{d-2}}{I}\lesssim_u \varepsilon(1+N^{2s_c-\frac12}K^{\frac12}).
\end{equation}

\noindent We also note that the implicit constants in \eqref{lts} and \eqref{lts small} are independent of $I$.
\end{proposition}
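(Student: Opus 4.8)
The plan is to run an induction on the (dyadic) frequency parameter $N$, deriving a recurrence for the quantity $A(N):=\norm{\nsc u_{\leq N}}_{S^0(I)}$. First I note that $A(N)<\infty$ for each fixed $N$: since $I$ is a compact subset of $[0,T_{max})$, the local theory of Section~\ref{local theory} together with Strichartz gives $\norm{\nsc u}_{S^0(I)}<\infty$. Applying the inhomogeneous Strichartz estimate to $u_{\leq N}=P_{\leq N}u$, which solves $(i\partial_t+\Delta)u_{\leq N}=P_{\leq N}(\vert u\vert^p u)$, reduces the proposition to two facts: $\norm{\nsc u_{\leq N}(t_0)}_{L_x^2}\lesssim_u 1$ (immediate from $u\in L_t^\infty\dot{H}_x^{s_c}$, which accounts for the ``$1$'' in \eqref{lts}), and the bound $\norm{\nsc P_{\leq N}(\vert u\vert^p u)}_{N^0(I)}\lesssim_u 1+N^{2s_c-\frac12}K^{\frac12}$. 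Throughout, I subdivide $I=\bigcup_k J_k$ into characteristic subintervals, on each of which $\vert J_k\vert\sim_u N_k^{-2}$ and, by almost periodicity together with Lemma~\ref{spacetime bounds}, $\norm{\nsc u}_{S^0(J_k)}\lesssim_u 1$.

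The heart of the matter is the decomposition of the nonlinear term. Writing $u=u_{\leq N}+u_{>N}$ and expanding $\vert u\vert^p u$, one separates $\nsc P_{\leq N}(\vert u\vert^p u)$ into three groups. Because $p$ need not be an integer and $\nsc$ is nonlocal, this separation must be performed frequency-by-frequency with the Littlewood--Paley square function (as in the proof of Lemma~\ref{decouple lemma}), using the fractional chain and product rules (Lemmas~\ref{fractional chain rule} and~\ref{fractional product rule}), the refinement Lemma~\ref{derivatives of differences}, and the paraproduct estimate Lemma~\ref{paraproduct} to distribute the $s_c$ derivatives. The first group consists of contributions built only from $u_{\leq N}$ and intermediate frequencies $u_M$ with $M\lesssim N$; after summing over $k$ and applying H\"older's inequality in the Strichartz spaces, these are controlled by $\sum_{M\lesssim N}(M/N)^{s_c}\theta_M A(M)$, where the coefficients $\theta_M$ are either summable in $\log(N/M)$ or, for $M$ comparable to $N$, can be made arbitrarily small (this is where a small auxiliary parameter, or the restriction to small $N$ via Remark~\ref{another consequence}, is used). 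The second group consists of terms carrying at least two factors of $u_{>N}$; here the bilinear Strichartz inequality Lemma~\ref{bilinear strichartz} (in the form noted in the remark following it, valid since $\norm{\nsc u}_{S^0(J_k)}\lesssim_u 1$), applied on each $J_k$ to a pair of frequency-localized factors --- one at frequencies $\gtrsim N$, the other at frequencies $\lesssim N_k$, the scale at which the solution concentrates --- gains a favorable power of $N$ relative to $N_k$ on each $J_k$; summing over $k$ via Cauchy--Schwarz and using $\vert J_k\vert\sim_u N_k^{-2}$ reproduces precisely $N^{2s_c-\frac12}\big(\int_I N(t)^{3-4s_c}\,dt\big)^{1/2}=N^{2s_c-\frac12}K^{\frac12}$. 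The third group, mixed low/high terms with exactly one high factor, is dispatched by the same two mechanisms.

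Assembling the pieces yields a recurrence of the schematic form $A(N)\lesssim_u 1+N^{2s_c-\frac12}K^{\frac12}+\sum_{M}\beta(M/N)A(M)$, in which the ``diagonal'' block $M\sim N$ carries an arbitrarily small constant and $\beta$ is summable; one then closes it by a standard iteration argument in the spirit of Dodson \cite{dodson:3} and the revisit papers \cite{revisit2,revisit1}, using that each $A(N)$ is finite and that $\sup_{t\in I}\norm{\nsc u_{\leq N}(t)}_{L_x^2}\to0$ as $N\to0$ (by Remark~\ref{another consequence}, since $N(t)\geq1$) to start the iteration from low frequencies. The refined bound \eqref{lts small} requires no new idea: rerunning the same estimates for $N\leq N_0$ and observing that every term now carries an extra factor of a low-frequency quantity bounded by $\norm{\nsc u_{\leq CN_0}}_{L_t^\infty L_x^2}\leq\varepsilon$ for $N_0$ small (again Remark~\ref{another consequence}) produces the claimed $\varepsilon$-gain. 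I expect the main obstacle to be the second paragraph: organizing the multilinear, fractional-derivative decomposition so that each term falls cleanly into one of the three groups, and then carrying out the sum over characteristic subintervals so that the bilinear gains assemble exactly into $K^{\frac12}$ while the self-referential block comes with a genuinely small (not merely $\lesssim_u1$) coefficient --- without this last point the recurrence cannot be closed.
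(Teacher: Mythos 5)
There is a genuine gap, and it lies exactly where you flagged the difficulty: the direction in which the recurrence references other frequencies, and hence the direction in which the induction can be closed. In $P_{\leq N}(\vert u\vert^p u)$ one cannot avoid the contribution of factors at frequencies far \emph{above} $N$ (two high frequencies interact down to output frequency $\leq N$); after the paraproduct estimate (Lemma \ref{paraproduct}) these terms are controlled by a tail of the form $\sum_{M>N/\eta_0}(N/M)^{\frac32 s_c}A(M)$, i.e.\ by $A$ at \emph{higher} frequencies, together with a small multiple of $A(N/\eta_0)$. Your recurrence instead references only $A(M)$ for $M\lesssim N$, and your bilinear-Strichartz group does not absorb these high--high interactions: Lemma \ref{bilinear strichartz} requires one genuinely low-frequency factor paired against one high-frequency factor, and in fact the useful pairing is low $\lesssim N/\eta_0$ against high $\gtrsim cN_k$ (your pairing ``high $\gtrsim N$, low $\lesssim N_k$'' is backwards in the relevant regime $N\leq N_0\leq 1\leq N_k$ and would produce a growing power of $N_k$ rather than the gain that sums to $N^{2s_c-\frac12}K^{\frac12}$). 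Because the recurrence unavoidably points upward in frequency, the induction must be run \emph{downward}: the base case is $N\geq\sup_{J_k\subset I}N_k$, where Lemma \ref{spacetime bounds} gives $A(N)\lesssim_u 1+(\int_I N(t)^2\,dt)^{1/2}\leq 1+N^{2s_c-\frac12}K^{\frac12}$ with constants independent of $I$, and one then passes from $N$ to $N/2$ using the recurrence, absorbing the high-frequency tail because $s_c<1$ makes the geometric factors summable.

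Your proposed closure ``from low frequencies'' cannot work as stated: the smallness of $\sup_{t}\Vert\nsc u_{\leq N}(t)\Vert_{L_x^2}$ as $N\to0$ controls only the data term, while the Duhamel term involves the full solution and is comparable to the quantity being estimated; and the finiteness of $A(N)$ coming from the local theory carries constants depending on $I$ (through $\int_I N(t)^2\,dt$), whereas the whole point of the proposition is that the constants are independent of $I$. Iterating from an $I$-dependent bound cannot remove that dependence, so the recurrence cannot be closed in that direction. A secondary point: for \eqref{lts small} it is not true that ``every term'' automatically carries a small low-frequency factor --- the $N^{2s_c-\frac12}K^{\frac12}$ terms produced on the characteristic subintervals must be re-derived so as to retain a factor $\sup_{J_k\subset I}\Vert\nsc u_{\leq N/\eta_0}\Vert_{L_t^\infty L_x^2(J_k)}^{\theta}$, after which one invokes the already-proven \eqref{lts} and the vanishing of this factor as $N\to0$ (Remark \ref{another consequence} with $\inf_t N(t)\geq1$); your sketch gestures at the right mechanism but skips the step that makes it legitimate.
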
 
\begin{proof} Fix a compact interval $I\subset[0,T_{max})$, which is a contiguous union of characteristic subintervals $J_k$; throughout the proof, all spacetime norms will be taken over $I\times\R^d$ unless stated otherwise. Let $\eta_0>0$ and $\eta>0$ be small parameters to be chosen later, and note that by Remark~\ref{another consequence}, we may find $c=c(\eta)$ so that 

\begin{equation}\label{eta} \xonorm{\nsc u_{\leq cN(t)}}{\infty}{2}\leq\eta.
\end{equation}

\noindent For $N>0$, we define 
			$$A(N):=\xnorm{\nsc u_{\leq N}}{2}{\frac{2d}{d-2}}{I}
			\quad\text{and}\quad 
			A_k(N):=\xnorm{\nsc u_{\leq N}}{2}{\frac{2d}{d-2}}{J_k}$$
for an individual characteristic subinterval $J_k$. We first note that by Lemma \ref{spacetime bounds}, \eqref{lts} holds whenever $N\geq \sup_{J_k\subset I} N_k$. Indeed, in this case, we have
\begin{align*}A(N) & \lesssim_u 1+\left(\int_I N(t)^2\, dt\right)^{\frac12}
\\ &\lesssim_u 1+\left(\int_I N(t)^{3-4s_c} N^{4s_c-1}\, dt\right)^{\frac12}
\\ &\lesssim_u 1+N^{2s_c-\frac12}K^{\frac12}.
\end{align*} 
We will establish \eqref{lts} for arbitrary $N>0$ by induction, beginning by establishing a recurrence relation for $A(N)$:
\begin{lemma}[Recurrence relation for $A(N)$]\label{recurrence relation for A(N)}
\begin{align}\label{the recurrence}
A(N)&\lesssim_u \ \inf_{t\in I}\norm{\nsc u_{\leq N}(t)}_{L_x^2(\R^d)}+B(\eta,\eta_0)N^{2s_c-\frac12}K^{\frac12}\nonumber
\\ &\quad + \eta^\nu A\big(\tfrac{N}{\eta_0}\big)+\sum_{M>N/\eta_0} \left(\tfrac{N}{M}\right)^{\frac32s_c}A(M)
\end{align}
uniformly in $N$, for some positive constants $B(\eta,\eta_0)$ and $\nu$. 
\end{lemma}
\begin{proof}[Proof of Lemma \ref{recurrence relation for A(N)}] We first apply Strichartz to see
\begin{equation}\label{lts strichartz} A(N)\lesssim \inf_{t\in I}\norm{\nsc u_{\leq N}(t)}_{L_x^{2}}+\xonorm{\nsc P_{\leq N}(\vert u\vert^p u)}{2}{\frac{2d}{d+2}}.
\end{equation} 
Our next step is to decompose the nonlinearity $\vert u\vert^pu$ and estimate the resulting pieces; the particular decomposition we choose depends on the ambient dimension.

\textbf{Case 1.} When $d=3$, we have $2\leq p<4$, and we decompose as follows:
\begin{align}
 \vert u\vert^p u = &\ (\vert u\vert^p+\vert u\vert^{p-2}\bar{u}u_{\leq N/\eta_0})u_{> N/\eta_0} \nonumber
\\ &+\vert u\vert^{p-2}\bar{u}(P_{> cN(t)}u_{\leq N/\eta_0})u_{\leq N/\eta_0} \label{decomposition d=3}
\\ &+\vert u\vert^{p-2}\bar{u}(P_{\leq cN(t)}u_{\leq N/\eta_0})u_{\leq N/\eta_0}.\nonumber
\end{align}

To estimate the contribution of the first term on the right-hand side of \eqref{decomposition d=3} to \eqref{lts strichartz}, we let 
$$G:=\vert u\vert^p+\vert u\vert^{p-2}\bar{u}u_{\leq N/\eta_0}$$
and use Bernstein, Lemma \ref{paraproduct}, and H\"older to estimate 
\begin{align} \xonorm{\nsc P_{\leq N}(Gu_{> N/\eta_0})}{2}{6/5} & \lesssim N^{\frac32s_c}\xonorm{\vert\nabla\vert^{-\frac12s_c}(Gu_{> N/\eta_0})}{2}{6/5} \nonumber
 \\ &\lesssim N^{\frac32s_c}\xonorm{\vert\nabla\vert^{\frac12s_c} G}{\infty}{\frac{12p}{11p-4}}\xonorm{\vert\nabla\vert^{-\frac12s_c}u_{>N/\eta_0}}{2}{6}\nonumber
 \\ \label{paraproduct 3 start} &\lesssim \xonorm{\vert\nabla\vert^{\frac12s_c} G}{\infty}{\frac{12p}{11p-4}}\displaystyle\sum_{M>N/\eta_0} \left(\tfrac{N}{M}\right)^{\frac32s_c} A(M).
\end{align}
To estimate the contribution of the first term above, we first use the fractional chain rule and Sobolev embedding to see
$$\xonorm{\vert\nabla\vert^{\frac12s_c}\vert u\vert^p}{\infty}{\frac{12p}{11p-4}}\lesssim \xonorm{u}{\infty}{\frac{3p}{2}}^{p-1}\xonorm{\vert\nabla\vert^{\frac12s_c}u}{\infty}{\frac{12p}{3p+4}}\lesssim \xonorm{\nsc u}{\infty}{2}^p \lesssim_u 1,$$ while by the fractional product rule, the fractional chain rule, and Sobolev embedding we get
\begin{align*} 
\xonorm{\vert\nabla\vert^{\frac12s_c}&(\vert u\vert^{p-2}\bar{u} u_{\leq N/\eta_0})}{\infty}{\frac{12p}{11p-4}}
\\ &\lesssim\ \xonorm{u}{\infty}{\frac{3p}{2}}\xonorm{\vert\nabla\vert^{\frac12s_c}(\vert u\vert^{p-2}\bar{u})}{\infty}{\frac{12p}{11p-12}}+\xonorm{u}{\infty}{\frac{3p}{2}}^{p-1}\xonorm{\vert\nabla\vert^{\frac12s_c}u}{\infty}{\frac{12p}{3p+4}}
\\ &\lesssim\ \xonorm{\nsc u}{\infty}{2}\xonorm{u}{\infty}{\frac{3p}{2}}^{p-2}\xonorm{\vert \nabla\vert^{\frac12s_c}u}{\infty}{\frac{12p}{3p+4}}+\xonorm{\nsc u}{\infty}{2}^p
\\ &\lesssim\ \xonorm{\nsc u}{\infty}{2}^p
\\ &\lesssim_u\ 1.
\end{align*}
Thus, continuing from \eqref{paraproduct 3 start}, we see 
\begin{align} \xonorm{\nsc P_{\leq N}&\big((\vert u\vert^p+\vert u\vert^{p-2}\bar{u}u_{\leq N/\eta_0})u_{>N/\eta_0}\big)}{2}{6/5}\nonumber
\\ &\lesssim_u\!\! \sum_{M>N/\eta_0}\!\! \left(\tfrac{N}{M}\right)^{\frac32s_c}\! A(M).\label{paraproduct 3}
\end{align} 

Next, we turn to estimating the contribution of the second term in \eqref{decomposition d=3} to \eqref{lts strichartz}. We begin by restricting our attention to an individual $J_k\times\R^d$. Note that we only need to consider the case $cN_k\leq N/\eta_0$; in this case, we can use Bernstein, H\"older, Sobolev embedding, Lemma \ref{spacetime bounds}, and the fact that $s_c\geq\tfrac12$ to estimate
	\begin{align}
	\xonorm{\nsc P_{\leq N}&(\vert u\vert^{p-2}\bar{u}(P_{>cN_k}u_{\leq N/\eta_0})u_{\leq N/\eta_0}}{2}{6/5} \nonumber
	\\ &\lesssim N^{s_c}\xonorm{\vert u\vert^{p-2}\bar{u}(P_{>cN_k}u_{\leq N/\eta_0})u_{\leq N/\eta_0}}{2}{6/5} \nonumber
	\\ &\lesssim N^{s_c}\xonorm{u}{\infty}{\frac{3p}{2}}^{p-1}\xonorm{P_{>cN_k}u_{\leq N/\eta_0}}{4}{3}
	\xonorm{u_{\leq N/\eta_0}}{4}{\frac{6p}{4-p}} \nonumber
	\\ &\lesssim_u N^{s_c}(cN_k)^{-s_c}\xonorm{\nsc u_{\leq N/\eta_0}}{4}{3}^2 \nonumber
	\\ &\lesssim_u B(\eta,\eta_0)\left(\tfrac{N}{N_k}\right)^{2s_c-\frac12} \label{bilinear pain}
	\end{align}
for some positive constant $B(\eta,\eta_0)$. Summing the estimates \eqref{bilinear pain} over the characteristic subintervals $J_k\subset I$ then gives
\begin{align}
\xonorm{\nsc P_{\leq N}&(\vert u\vert^{p-2} \bar{u}(P_{> cN(t)}u_{\leq N/\eta_0})u_{\leq N/\eta_0})}{2}{6/5}\nonumber
\\ &\lesssim_u B(\eta,\eta_0)N^{2s_c-\frac12}K^{\frac12}.\label{bilinear 3}
\end{align} 
\noindent Before proceeding to the next term in \eqref{decomposition d=3}, we note that in obtaining estimate \eqref{bilinear pain}, we could have held onto the term $\xonorm{\nsc u_{\leq N/\eta_0}}{4}{3},$ which (by interpolation) we can estimate by 
	\begin{align*} 
	\xonorm{\nsc u_{\leq N/\eta_0}}{4}{3}&\lesssim\xonorm{\nsc u_{\leq N/\eta_0}}{\infty}{2}^{\frac12}\xonorm{\nsc u_{\leq N/\eta_0}}{2}{6}^{\frac12}
	\\ &\lesssim_u \xonorm{\nsc u_{\leq N/\eta_0}}{\infty}{2}^{\frac12}.
	\end{align*}
In this case, summing the estimates yields
\begin{align}\xonorm{\nsc P_{\leq N}&(\vert u\vert^{p-2} \bar{u}(P_{> cN(t)}u_{\leq N/\eta_0})u_{\leq N/\eta_0})}{2}{6/5} \nonumber
\\ &\label{bilinear 3 small}\lesssim_u \sup_{J_k\subset I} \xnorm{\nsc u_{\leq N/\eta_0}}{\infty}{2}{J_k}^{\frac12} B(\eta,\eta_0)N^{2s_c-\frac12}K^{\frac12}.
\end{align} 
This variant of \eqref{bilinear 3} will be important when we eventually need to exhibit smallness in \eqref{lts small}. 

To estimate the contribution of the final term in \eqref{decomposition d=3} to \eqref{lts strichartz}, we begin with an application of the fractional product rule and H\"older to see

\begin{align}\xonorm{\nsc &P_{\leq N}(\vert u\vert^{p-2}\bar{u}(P_{\leq cN(t)}u_{\leq N/\eta_0})u_{\leq N/\eta_0})}{2}{6/5}\nonumber
\\ \lesssim&\label{tricky1} \ \xonorm{\nsc (\vert u\vert^{p-2}\bar{u})}{\infty}{\frac{6p}{7p-8}}\xonorm{P_{\leq cN(t)}u_{\leq N/\eta_0}}{4}{\frac{6p}{4-p}}\xonorm{u_{\leq N/\eta_0}}{4}{\frac{6p}{4-p}}
\\ &\label{tricky3}+\xonorm{u}{\infty}{\frac{3p}{2}}^{p-1}\xonorm{\nsc P_{\leq cN(t)}u_{\leq N/\eta_0}}{4}{3}\xonorm{u_{\leq N/\eta_0}}{4}{\frac{6p}{4-p}} 
\\ &\label{tricky2}+\xonorm{u}{\infty}{\frac{3p}{2}}^{p-1}\xonorm{P_{\leq cN(t)}u_{\leq N/\eta_0}}{\infty}{\frac{3p}{2}}\xonorm{\nsc u_{\leq N/\eta_0}}{2}{6}.
\end{align}
We first note that by the fractional chain rule and Sobolev embedding, we get
$$\xonorm{\nsc(\vert u\vert^{p-2}\bar{u})}{\infty}{\frac{6p}{7p-8}}\lesssim \xonorm{u}{\infty}{\frac{3p}{2}}^{p-2}\xonorm{\nsc u}{\infty}{2}\lesssim_u 1.$$ 
Using Sobolev embedding, interpolation, and \eqref{eta}, we also see
\begin{align*}
\xonorm{P_{\leq cN(t)}u_{\leq N/\eta_0}}{4}{\frac{6p}{4-p}} &\lesssim \xonorm{\nsc P_{\leq cN(t)}u_{\leq N/\eta_0}}{4}{3}
\\ &\lesssim \xonorm{\nsc P_{\leq cN(t)}u_{\leq N/\eta_0}}{\infty}{2}^{\frac12}\xonorm{\nsc P_{\leq cN(t)}u_{\leq N/\eta_0}}{2}{6}^{\frac12}
\\ &\lesssim \eta^{\frac12} A\big(\tfrac{N}{\eta_0}\big)^{\frac12}.
\end{align*}
Estimating similarly gives $$\xonorm{u_{\leq N/\eta_0}}{4}{\frac{6p}{4-p}}\lesssim_u A\big(\tfrac{N}{\eta_0}\big)^{\frac12}.$$ 
Plugging these last three estimates into \eqref{tricky1}, \eqref{tricky3}, and \eqref{tricky2} and employing a few more instances of Sobolev embedding and \eqref{eta} finally gives 
\begin{equation}\label{tricky 3}\xonorm{\nsc P_{\leq N}(\vert u\vert^{p-2}\bar{u}(P_{\leq cN(t)}u_{\leq N/\eta_0})u_{\leq N/\eta_0})}{2}{6/5}\lesssim_u \eta^{\frac12}A\big(\tfrac{N}{\eta_0}\big).
\end{equation}
Collecting the estimates \eqref{paraproduct 3}, \eqref{bilinear 3}, and \eqref{tricky 3}, we see that in the case $d=3$, the estimate \eqref{lts strichartz} becomes
		\begin{align}
		A(N)\lesssim_u  & \inf_{t\in I}\norm{\nsc u_{\leq N}(t)}_{L_x^2(\R^d)}+B(\eta,\eta_0)N^{2s_c-\frac12}K^{\frac12}\nonumber
		\\ \label {lts recurrence 3} &\quad+\eta^{\frac12}A\big(\tfrac{N}{\eta_0}\big)
		+\sum_{M>N/\eta_0}\left(\tfrac{N}{M}\right)^{\frac32s_c}A(M).
		\end{align}
Comparing \eqref{lts recurrence 3} to \eqref{the recurrence}, we see that Lemma \ref{recurrence relation for A(N)} holds for $d=3$.

\textbf{Case 2.} In this case, we have $d\in\{4,5\}$ and $\tfrac{4}{d-1}\leq p<\tfrac{4}{d-2}$, with both inequalities strict for $d=5$. In particular, we have $1<p<2$. 

Again, we wish to decompose the nonlinearity and continue from \eqref{lts strichartz}. This time, we decompose as follows:
\begin{align}
\vert u\vert^p u & =  \vert u\vert^p u_{>N/\eta_0} \nonumber
\\ & \quad+\vert u_{>cN(t)}\vert^p P_{\leq cN(t)} u_{\leq N/\eta_0} \label{decomposition d=d} 
\\ & \quad+ \vert u_{>cN(t)}\vert^p P_{>cN(t)} u_{\leq N/\eta_0} \nonumber
\\ & \quad+ (\vert u\vert^p-\vert u_{>cN(t)}\vert^p)u_{\leq N/\eta_0}.\nonumber
\end{align}

We estimate the contribution of the first term on the right-hand side of \eqref{decomposition d=d} to \eqref{lts strichartz} similarly to the case $d=3$; in particular, by Bernstein, H\"older, and Lemma~\ref{paraproduct}, we have
\begin{align}
\xonorm{\nsc & P_{\leq N}(\vert u\vert^p u_{>N/\eta_0})}{2}{\frac{2d}{d+2}}\nonumber 
\\ &\lesssim N^{\frac32s_c}\xonorm{\vert\nabla\vert^{-\frac12s_c}(\vert u\vert^p u_{>N/\eta_0})}{2}{\frac{2d}{d+2}} \nonumber
\\ &\lesssim N^{\frac32s_c}\xonorm{\vert\nabla\vert^{\frac12s_c} \vert u\vert^p}{\infty}{\frac{4dp}{p(d+8)-4}}\xonorm{\vert\nabla\vert^{-\frac12s_c} u_{>N/\eta_0} }{2}{\frac{2d}{d-2}}
\nonumber
\\ &\lesssim  \xonorm{\vert\nabla\vert^{\frac12s_c} \vert u\vert^p}{\infty}{\frac{4dp}{p(d+8)-4}} \sum_{M>N/\eta_0} \left(\tfrac{N}{M}\right)^{\frac32s_c}A(M). \label{paraproduct d start}
\end{align} 
As we can use the fractional chain rule and Sobolev embedding to estimate
$$ \xonorm{\vert\nabla\vert^{\frac12s_c} \vert u\vert^p}{\infty}{\frac{4dp}{p(d+8)-4}}\lesssim \xonorm{u}{\infty}{\frac{dp}{2}}^{p-1}\xonorm{\vert\nabla\vert^{\frac12s_c} u}{\infty}{\frac{4dp}{dp+4}}\lesssim \xonorm{\nsc u}{\infty}{2}^p\lesssim_u 1,$$ we can continue from \eqref{paraproduct d start} to get 
\begin{equation}\label{paraproduct d} \xonorm{\nsc P_{\leq N}(\vert u\vert^p u_{>N/\eta_0})}{2}{\frac{2d}{d+2}}\lesssim_u \sum_{M>N/\eta_0} \left(\tfrac{N}{M}\right)^{\frac32s_c}A(M).
\end{equation}

Next, we turn to estimating the second term in \eqref{decomposition d=d} to \eqref{lts strichartz}. Restricting our attention to an individual characteristic subinterval $J_k$, we first apply Bernstein, H\"older, and the fractional product rule to see
\begin{align}\xonorm{\nsc &P_{\leq N}(\vert u_{>cN_k}\vert^p P_{\leq cN_k}u_{\leq N/\eta_0})}{2}{\frac{2d}{d+2}}\nonumber
\\ &\lesssim N^{s_c-\frac14}\xonorm{\vert\nabla\vert^{\frac14}(\vert u_{>cN_k}\vert^p P_{\leq cN_k}u_{\leq N/\eta_0})}{2}{\frac{2d}{d+2}} \nonumber
\\ &\lesssim N^{s_c-\frac14}\xonorm{\vert\nabla\vert^{\frac14}\vert u_{>cN_k}\vert^p}{4}{\frac{2dp}{p(d+3)-4}}\xonorm{P_{\leq cN_k}u_{\leq N/\eta_0}}{4}{\frac{2dp}{4-p}} \label{young 1}
\\ &\ \ \ +\!N^{s_c-\frac14}\xonorm{u_{>cN_k}}{4p}{\frac{4dp^2}{p(2d+5)-8}}^p\xonorm{\vert\nabla\vert^{\frac14}P_{\leq cN_k}u_{\leq N/\eta_0}}{4}{\frac{4dp}{8-p}}.\label{young 2}
\end{align}
Using H\"older, the fractional chain rule, Sobolev embedding, Bernstein, interpolation, \eqref{eta}, and Young's inequality, we can estimate
\begin{align*}
\eqref{young 1} &\lesssim N^{s_c-\frac14}\xonorm{u_{>cN_k}}{\infty}{\frac{dp}{2}}^{p-1}\xonorm{\vert\nabla\vert^{\frac14}u_{>cN_k}}{4}{\frac{2d}{d-1}}\xonorm{\nsc P_{\leq cN_k}u_{\leq N/\eta_0}}{4}{\frac{2d}{d-1}}
\\ &\lesssim_u N^{s_c-\frac14}(cN_k)^{\frac14-s_c}\xonorm{\nsc u_{>cN_k}}{4}{\frac{2d}{d-1}}
\\ &\quad\times\xonorm{\nsc P_{\leq cN_k}u_{\leq N/\eta_0}}{\infty}{2}^{\frac12}\xonorm{\nsc u_{\leq N/\eta_0}}{2}{\frac{2d}{d-2}}^{\frac12}
\\ &\lesssim_u B(\eta)\left(\tfrac{N}{N_k}\right)^{s_c-\frac14}\eta^{\frac12}A_k\big(\tfrac{N}{\eta_0}\big)^{\frac12}
\\ &\lesssim_u B(\eta)\left(\tfrac{N}{N_k}\right)^{2s_c-\frac12}+\eta A_k\big(\tfrac{N}{\eta_0}\big),
\end{align*} 
for some positive constant $B(\eta)$. Using Lemma \ref{spacetime bounds} as well, we can estimate similarly 
\begin{align*}
\eqref{young 2}&\lesssim N^{s_c-\frac14}(cN_k)^{\frac14-s_c}\xonorm{\vert\nabla\vert^{(s_c-\frac14)/p}u_{>cN_k}}{4p}{\frac{4dp^2}{p(2d+5)-8}}^p
\\ &\quad \times\xonorm{\nsc P_{\leq cN_k}u_{\leq N/\eta_0}}{4}{\frac{2d}{d-1}}
\\ &\lesssim B(\eta)\left(\tfrac{N}{N_k}\right)^{s_c-\frac14}\xonorm{\nsc u_{>cN_k}}{4p}{\frac{2dp}{dp-1}}^p
\\ &\quad \times\xonorm{\nsc P_{\leq cN_k} u_{\leq N/\eta_0}}{\infty}{2}^{\frac12}\xonorm{\nsc u_{\leq N/\eta_0}}{2}{\frac{2d}{d-2}}^{\frac12}
\\ &\lesssim_u B(\eta)\left(\tfrac{N}{N_k}\right)^{2s_c-\frac12}+\eta A_k\big(\tfrac{N}{\eta_0}\big).
\end{align*}
Collecting the estimates for \eqref{young 1} and \eqref{young 2} and summing over the intervals $J_k\subset I$, we arrive at 
\begin{align}
\xonorm{\nsc P_{\leq N}&(\vert u_{>cN(t)}\vert^p  P_{\leq cN(t)}u_{\leq N/\eta_0})}{2}{\frac{2d}{d+2}}\nonumber
\\ &\lesssim_uB(\eta)N^{2s_c-\frac12}K^{\frac12}+\eta A\big(\tfrac{N}{\eta_0}\big).\label{bilinear1 d=d}
\end{align}
Before proceeding, we note that for both \eqref{young 1} and \eqref{young 2}, we could have instead estimated
\begin{align*} \xnorm{\vert\nabla\vert^{s_c}&P_{\leq cN_k} u_{\leq N/\eta_0}}{\infty}{2}{J_k}^{\frac12}
\\ & \lesssim \xnorm{\nsc u_{\leq cN_k}}{\infty}{2}{J_k}^{\frac14}\xnorm{\nsc u_{\leq N/\eta_0}}{\infty}{2}{J_k}^{\frac14}
\\ & \lesssim \eta^{\frac14} \xnorm{\nsc u_{\leq N/\eta_0}}{\infty}{2}{J_k}^{\frac14}.
\end{align*} 
If we had done this, upon summing we could have ended up with the alternate estimate 
\begin{align}\xonorm{\vert\nabla&\vert^{s_c} P_{\leq N}(\vert u_{>cN(t)}\vert^p P_{\leq cN(t)}u_{\leq N/\eta_0})}{2}{\frac{2d}{d+2}}\nonumber
\\ &
\lesssim \sup_{J_k\subset I}\xnorm{\nsc u_{\leq N/\eta_0}}{\infty}{2}{J_k}^{\frac12} B(\eta)N^{2s_c-\frac12}K^{\frac12}+\eta^{\frac12} A\big(\tfrac{N}{\eta_0}\big).\label{bilinear1 d=d smallness}
\end{align}
This variant of \eqref{bilinear1 d=d} will be important when we need to exhibit smallness in \eqref{lts small}. 

To estimate the contribution of the third term in \eqref{decomposition d=d} to \eqref{lts strichartz}, we first define the following:
$$\left\{\begin{array}{ll} \theta:=\tfrac{dp-4-p}{4-p}\in[0,1), & \sigma:=\tfrac{p^2(d^2+2d-2)-4p(4d+1)+48}{4p(dp-8)}\in(0,s_c),
\\ \\ r_1:=\tfrac{4dp(dp-8)}{p^2(d^2-2d-2)+p(28-8d)-16}, & r_2:=\tfrac{4dp(dp-8)}{p^2(d^2+2d-2)-4p(2d+1)-16}.
\end{array}\right.$$ 
With this choice of parameters, we have $$\left\{\begin{array}{ll} s_c+\theta(\tfrac{d-1}{2}-s_c)=2s_c-\tfrac12, \\ \\  -\theta(s_c+\tfrac12)-2\sigma(1-\theta)=-(2s_c-\tfrac12)
\end{array}\right.$$ 
and (by Sobolev embedding)
$$\begin{array}{ll}
\dot{H}^{s_c,\frac{2d}{d-2}}\hookrightarrow \dot{H}^{\sigma,r_1}, & \dot{H}^{s_c,2}\hookrightarrow\dot{H}^{\sigma,r_2}.
\end{array}$$

\noindent Then restricting our attention to an individual $J_k$, we can use Bernstein, H\"older, the bilinear Strichartz estimate (Lemma \ref{bilinear strichartz}), and Sobolev embedding to estimate
\begin{align}\xonorm{\nsc &P_{\leq N}(\vert u_{>cN_k}\vert^pP_{>cN_k}u_{\leq N/\eta_0})}{2}{\frac{2d}{d+2}}\nonumber
\\ &\lesssim N^{s_c}\xonorm{u_{>cN_k}}{\infty}{\frac{dp}{2}}^{p-1}\xonorms{u_{> cN_k}P_{>cN_k}u_{\leq N/\eta_0}}{2}^\theta\nonumber
\\ &\quad \times\xonorm{u_{>cN_k}}{2}{r_1}^{1-\theta}\xonorm{P_{>cN_k}u_{\leq N/\eta_0}}{\infty}{r_2}^{1-\theta} \nonumber
\\ &\lesssim_u N^{s_c}\left(\tfrac{N}{\eta_0}\right)^{\theta(\frac{d-1}{2}-s_c)}(cN_k)^{-\theta(s_c+\frac12)}\nonumber
\\ & \quad \times\xonorm{u_{>cN_k}}{2}{r_1}^{1-\theta}\xonorm{P_{> cN_k}u_{\leq N/\eta_0}}{\infty}{r_2}^{1-\theta} \nonumber
\\ &\lesssim_u B(\eta_0)N^{2s_c-\frac12}(cN_k)^{-\theta(s_c+\frac12)-2\sigma(1-\theta)}\nonumber
\\ & \quad \times\xonorm{\vert\nabla\vert^{\sigma}u_{>cN_k}}{2}{r_1}^{1-\theta}\xonorm{\vert\nabla\vert^{\sigma}P_{>cN_k}u_{\leq N/\eta_0}}{\infty}{r_2}^{1-\theta} \nonumber
\\ &\lesssim_u B(\eta,\eta_0)\left(\tfrac{N}{N_k}\right)^{2s_c-\frac12}\xonorm{\nsc u_{>cN_k}}{2}{\frac{2d}{d-2}}^{1-\theta}\xonorm{\nsc u_{\leq N/\eta_0}}{\infty}{2}^{1-\theta} \nonumber
\\ &\lesssim_u B(\eta,\eta_0)\left(\tfrac{N}{N_k}\right)^{2s_c-\frac12}\label{bilinear d=d start}
\end{align}
for some positive constant $B(\eta,\eta_0)$. If we sum the estimates \eqref{bilinear d=d start} over the intervals $J_k\subset I$, we arrive at
\begin{equation}\label{bilinear d=d}
\xonorm{\nsc P_{\leq N}(\vert u_{>cN(t)}\vert^pP_{>cN(t)}u_{\leq N/\eta_0})}{2}{\frac{2d}{d+2}}\lesssim_u B(\eta,\eta_0)N^{2s_c-\frac12}K^{\frac12}.
\end{equation}

Before moving on to the fourth (and final) term in \eqref{decomposition d=d}, we note that if we had held on to the term $\xonorm{\nsc u_{\leq N/\eta_0}}{\infty}{2}^{1-\theta}$ when deriving \eqref{bilinear d=d start}, then upon summing we would get
\begin{align}\xonorm{\nsc &P_{\leq N}(\vert u_{>cN(t)}\vert^pP_{>cN(t)}u_{\leq N/\eta_0})}{2}{\frac{2d}{d+2}}
\nonumber
\\ &\lesssim_u \sup_{J_k\subset I}\xnorm{\nsc u_{\leq N/\eta_0}}{\infty}{2}{J_k}^{1-\theta} B(\eta,\eta_0)N^{2s_c-\frac12}K^{\frac12}.\label{bilinear d=d smallness}
\end{align}
This variant of \eqref{bilinear d=d} will be important when we eventually need to exhibit smallness in \eqref{lts small}.

We now turn to the final term in \eqref{decomposition d=d}, beginning with an application of the fractional product rule and H\"older:
\begin{align}\xonorm{\nsc& P_{\leq N}\left((\vert u\vert^p -\vert u_{>cN(t)}\vert^p)u_{\leq N/\eta_0}\right)}{2}{\frac{2d}{d+2}}\nonumber
\\ &\lesssim \xonorm{\nsc(\vert u\vert^p-\vert u_{>cN(t)}\vert^p)}{\infty}{\frac{2dp}{p(d+4)-4}}\xonorm{u_{\leq N/\eta_0}}{2}{\frac{dp}{2-p}} \label{differences 1}
\\ &\quad +\xonorm{\vert u\vert^p -\vert u_{>cN(t)}\vert^p}{\infty}{\frac{d}{2}}\xonorm{\nsc u_{\leq N/\eta_0}}{2}{\frac{2d}{d-2}}\label{differences 2}.
\end{align}
By Lemma \ref{derivatives of differences}, Sobolev embedding, and \eqref{eta}, we first estimate
\begin{align*} 
\eqref{differences 1}&\lesssim\xonorm{\nsc u_{>cN(t)}}{\infty}{2}\xonorm{u_{\leq cN(t)}}{\infty}{\frac{dp}{2}}^{p-1}A\big(\tfrac{N}{\eta_0}\big)
\\ &\quad +\xonorm{\nsc u_{\leq cN(t)}}{\infty}{2}\xonorm{u}{\infty}{\frac{dp}{2}}^{p-1}A\big(\tfrac{N}{\eta_0}\big)
\\ &\lesssim_u (\eta^{p-1}+\eta)A\big(\tfrac{N}{\eta_0}\big).
\end{align*} 
On the other hand, by Sobolev embedding, H\"older, and \eqref{eta}, we get
\begin{align*} 
\eqref{differences 2}&\lesssim \big(\xonorm{u}{\infty}{\frac{dp}{2}}^{p-1}+\xonorm{u_{>cN(t)}}{\infty}{\frac{dp}{2}}^{p-1}\big)\xonorm{u_{\leq cN(t)}}{\infty}{\frac{dp}{2}} A\big(\tfrac{N}{\eta_0}\big)\lesssim_u \eta A\big(\tfrac{N}{\eta_0}\big).
\end{align*}
Thus we can estimate the contribution of the final term in \eqref{decomposition d=d} by
\begin{equation}\label{differences term}
\xonorm{\nsc P_{\leq N}\left((\vert u\vert^p -\vert u_{>cN(t)}\vert^p)u_{\leq N/\eta_0}\right)}{2}{\frac{2d}{d+2}}\lesssim_u \eta^{p-1}A\big(\tfrac{N}{\eta_0}\big).
\end{equation}
Collecting the estimates \eqref{paraproduct d}, \eqref{bilinear1 d=d}, \eqref{bilinear d=d}, and \eqref{differences term}, we see that in Case 2, the estimate \eqref{lts strichartz} becomes
\begin{align}
A(N)&\lesssim_u \inf_{t\in I}\norm{\nsc u_{\leq N}(t)}_{L_x^2(\R^d)}+B(\eta,\eta_0)N^{2s_c-\frac12}K^{\frac12}\nonumber
\\ & \quad +\eta^{\min\{\frac12,p-1\}}A\big(\tfrac{N}{\eta_0}\big)+\sum_{M>N/\eta_0} \left(\tfrac{N}{M}\right)^{\frac32s_c}A(M). \label{lts recurrence d}
\end{align}
Comparing \eqref{lts recurrence d} to \eqref{the recurrence}, we see that Lemma \ref{recurrence relation for A(N)} holds for $d\in\{4,5\}$. \end{proof}
\begin{remark} We have omitted the case $(d,s_c)=(5,\tfrac12)$, in which $p=1$; this scenario is not handled under Case 2 due to the use of Lemma \ref{derivatives of differences}. However, by using the alternate decomposition
$$\vert u\vert u=\vert u\vert u_{>N/\eta_0}+(\vert u\vert-\vert u_{\leq cN(t)}\vert)u_{\leq N/\eta_0}+\vert u_{\leq cN(t)}\vert u_{\leq N/\eta_0},$$ 
one can use the same ideas as above to establish the recurrence relation in this case. 
\end{remark}
With the recurrence relation \eqref{the recurrence} in hand, we can now use induction to complete the proof of Proposition \ref{lts lemma}. First, recall that \eqref{lts} holds for $N\geq \sup_{J_k\subset I}N_k.$; i.e. we have 
\begin{equation}\label{lts C}
A(N)\leq C(u)\left[1+N^{2s_c-1/2}K^{1/2}\right]
\end{equation} for $N\geq \sup_{J_k\subset I}N_k.$ Of course, this inequality remains true if we replace $C(u)$ by any larger constant. 

We now suppose \eqref{lts C} holds at frequency $N$ and use the recurrence relation \eqref{the recurrence} to show it holds at frequency $N/2$. Let us first rewrite \eqref{the recurrence} as
\begin{align}
A(N)& \leq \wt{C}(u)\big[1+B(\eta,\eta_0)N^{2s_c-\frac12}K^{\frac12} +\eta^{\nu}A(\tfrac{N}{\eta_0})+\!\!\!\!\!\!\sum_{M>N/\eta_0}\!\!\!\left(\tfrac{N}{M}\right)^{\frac32s_c}A(M)\big].\label{the recurrence C}
\end{align}
To simplify notation, we will let $\alpha:=2s_c-\frac12$ and write $B(\eta,\eta_0)=B$; then, if we take $\eta_0<\tfrac12$ and use the inductive hypothesis, \eqref{the recurrence C} becomes
\begin{align}
A(\tfrac{N}{2})&\leq \wt{C}(u)\big[1+B(\tfrac{N}{2})^{\alpha}K^{\frac12}+\eta^\nu C(u)(1+\eta_0^{-\alpha}(\tfrac{N}{2})^\alpha K^{\frac12})\nonumber
\\ & \quad+C(u)\!\!\!\!\!\!\sum_{M>N/2\eta_0}\!\!\!\left(\tfrac{N}{2M}\right)^{\frac32s_c}(1+M^\alpha K^{\frac12})\big] \nonumber
\\ &\leq \wt{C}(u)\big[1+B(\tfrac{N}{2})^\alpha K^{\frac12}+\eta^\nu C(u)(1+\eta_0^{-\alpha}(\tfrac{N}{2})^\alpha K^{\frac12})\nonumber
\\ & \quad+C(u)\eta_0^{\frac32s_c}+C(u)\eta_0^{\frac12(1-s_c)}(\tfrac{N}{2})^\alpha K^{\frac12}\big] \nonumber
\\ &= \wt{C}(u)\left[1+B(\tfrac{N}{2})^\alpha K^{\frac 12}\right]+C(u)\big[(\eta^\nu+\eta_0^{\frac32s_c})\wt{C}(u)\nonumber
\\ &\quad+\big(\eta_0^{-\alpha}\eta^\nu+\eta_0^{\frac12(1-s_c)}\big)\wt{C}(u)(\tfrac{N}{2})^\alpha K^{\frac12}\big].\label{induction term}
\end{align}
Notice that we had convergence of the sum above precisely because $s_c<1$. If we choose $\eta_0$ possibly even smaller depending on $\wt{C}(u)$, and $\eta$ sufficiently small depending on $\wt{C}(u)$ and $\eta_0$, we can guarantee
\begin{align*} \eqref{induction term}\leq \wt{C}(u)\left[1+B(\eta,\eta_0)(\tfrac{N}{2})^\alpha K^{\frac 12}\right]+\tfrac12 C(u)\left[1+(\tfrac{N}{2})^\alpha K^{\frac12}\right].\end{align*}
If we now choose $C(u)$ possibly larger so that $C(u)\geq 2(1+B(\eta,\eta_0))\wt{C}(u)$, then this inequality implies that \eqref{lts C} holds at $N/2$, as was needed to show. This completes the proof of \eqref{lts}. 

It remains to establish \eqref{lts small}. To begin, fix $\varepsilon>0$. To exhibit the smallness in \eqref{lts small}, we need to revisit the proof of the recurrence relation for $A(N)$, paying closer attention to the terms that gave rise to the expression $N^{2s_c-\frac12}K^{\frac12}.$ More precisely, if we use \eqref{bilinear 3 small} instead of \eqref{bilinear 3}; \eqref{bilinear1 d=d smallness} instead of \eqref{bilinear1 d=d}; and \eqref{bilinear d=d smallness} instead of \eqref{bilinear d=d}; 
then continuing from \eqref{lts strichartz}, the recurrence relation for $A(N)$ takes the form
\begin{align}
A(N)\lesssim_u &\ f(N)+f(N)N^{2s_c-\frac12}K^{\frac12}+\eta^\nu A(\tfrac{N}{\eta_0})+\sum_{M>N/\eta_0}(\tfrac{N}{M})^{\frac32s_c}A(M),\label{small recurrence}
\end{align}
where $f(N)$ has the form 
\begin{align} 
f(N)=\  &\xnorm{\nsc u_{\leq N}}{\infty}{2}{I}\nonumber
\\ &+B(\eta,\eta_0)\sum_{i=1}^4\sup_{J_k\subset I} \xnorm{\nsc u_{\leq N/\eta_0}}{\infty}{2}{J_k}^{\theta_i}\label{vanishing}
\end{align} for some $\theta_i\in(0,1]$.  Here the particular values of the $\theta_i$ are not important; we will only need the fact that each $\theta_i>0$. Combining the updated recurrence relation \eqref{small recurrence} with the newly proven estimate \eqref{lts} and once again simplifying notation via $\alpha=2s_c-\frac12$, we see
\begin{align} \nonumber A(N)&\lesssim_u f(N)+f(N)N^{\alpha}K^{\frac12}+\eta^\nu(1+\eta_0^{-\alpha}N^{\alpha}K^{\frac12})+\eta_0^{\frac32s_c}(1+\eta_0^{-\alpha}N^\alpha K^{\frac12})
\\ \label{almost there} &\lesssim_u f(N)+\eta^\nu+\eta_0^{\frac32s_c}+\left[f(N)+\eta^\nu\eta_0^{-\alpha}+\eta_0^{\frac12(1-s_c)}\right]N^\alpha K^{\frac12}.
\end{align} 

To complete the argument, we will need the fact that for fixed $\eta,\eta_0>0$, we have 
\begin{equation}\label{vanishingg} \lim_{N\to 0} f(N)=0,
\end{equation} 
which is a consequence of almost periodicity and the fact that $$\inf_{t\in[0,T_{max})}N(t)\geq 1.$$ 

Then, continuing from \eqref{almost there}, we choose $\eta_0$ small enough that $\eta_0^{\frac32s_c}+\eta_0^{\frac12(1-s_c)}<\varepsilon$, and choose $\eta$ sufficiently small depending on $\eta_0$ so that $\eta^\nu+\eta_0^{-\alpha}\eta^\nu<\varepsilon$. Finally, using \eqref{vanishingg}, we choose $N_0=N_0(\varepsilon)$ so that $f(N)< \varepsilon$ for $N\leq N_0$. With this choice of parameters, \eqref{almost there} becomes $$A(N)\lesssim_u \varepsilon(1+N^{2s_c-\frac12}K^{\frac12})$$ for $N\leq N_0,$ which completes the proof of \eqref{lts small}. \end{proof}
\section{The rapid frequency-cascade scenario}\label{frequency-cascade section}
In this section, we preclude the existence of almost periodic solutions as in Theorem \ref{special scenarios} for which $\int_0^{T_{max}}N(t)^{3-4s_c}\,dt<\infty.$ We show that their existence is inconsistent with the conservation of mass. The main tool we will use is the long-time Strichartz estimate established in the previous section; as such, we will prove the following result for $(d,s_c)$ satisfying \eqref{constraints2}. 
\begin{theorem}[No rapid frequency-cascades]\label{frequency cascade} Let $(d,s_c)$ satisfy \eqref{constraints2}. Then there are no almost periodic solutions $u:[0,T_{max})\times\R^d\to\mathbb{C}$ to \eqref{nls} with $N(t)\equiv N_k\geq 1$ on each characteristic subinterval $J_k\subset[0,T_{max})$ such that 
\begin{equation}\xnorms{u}{\frac{p(d+2)}{2}}{[0,T_{max})}=\infty\label{u blows up}
\end{equation}
and
\begin{equation}\label{K is finite} \int_0^{T_{max}} N(t)^{3-4s_c}\, dt<\infty.
\end{equation} 
\end{theorem}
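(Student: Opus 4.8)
\emph{Overall strategy.} I would argue by contradiction: assuming a solution $u$ as in the statement exists, I will show it has finite mass, and then zero mass, which is incompatible with \eqref{u blows up} (a solution of zero mass vanishes identically and so has vanishing $L_{t,x}^{p(d+2)/2}$-norm). This follows the treatment of the rapid frequency-cascade scenario for the mass-critical equation in \cite{dodson:3} and the energy-critical equation in \cite{revisit2, revisit1}, with the long-time Strichartz estimate (Proposition \ref{lts lemma}) as the key input. A preliminary observation is that \eqref{K is finite}, together with $N(t)\equiv N_k\geq 1$, forces $N(t_n)\to\infty$ along some sequence $t_n\to T_{max}$: if $3-4s_c\geq 0$ then $N(t)^{3-4s_c}\geq 1$, so \eqref{K is finite} forces $T_{max}<\infty$ and hence $N(t)\to\infty$ by Corollary \ref{N at blowup}; if $3-4s_c<0$, then either $T_{max}<\infty$ (again $N(t)\to\infty$) or $T_{max}=\infty$, in which case integrability of $N(t)^{3-4s_c}$ over $[0,\infty)$ together with $N(t)\geq 1$ forces $\limsup_{t\to\infty}N(t)=\infty$.

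\emph{Step 1: finite mass.} Write $F(u)=|u|^pu$. For dyadic $N$ and $t\in[0,T_{max})$, the no-waste Duhamel formula (Proposition \ref{no waste}) gives $P_{\leq N}u(t)=i\lim_{T\to T_{max}}\int_t^T e^{i(t-s)\Delta}P_{\leq N}F(u)(s)\,ds$ weakly in $\dot H_x^{s_c}$. Combined with the Strichartz inequality and the weak lower semicontinuity of the $L_x^2$-norm, this yields
$$\mathcal A(N):=\sup_{t\in[0,T_{max})}\|P_{\leq N}u(t)\|_{L_x^2}\lesssim \|P_{\leq N}F(u)\|_{L_t^2L_x^{2d/(d+2)}([0,T_{max})\times\R^d)}.$$
I would estimate the right-hand side by running, mutatis mutandis, the decomposition of the nonlinearity from the proof of Proposition \ref{lts lemma} (Case 1 for $d=3$, Case 2 for $d\in\{4,5\}$) in the absence of the outer operator $\nsc$: the high-frequency piece $|u|^pu_{>N/\eta_0}$ is handled with the paraproduct estimate (Lemma \ref{paraproduct}) — its contribution being controlled, via the bound \eqref{lts} for the quantity $A(\cdot)$ appearing in that proof and the finiteness of $K$, by an expression of the form $\sum_{M>N/\eta_0}(\tfrac NM)^{\frac32 s_c}A(M)$ — while the remaining pieces are handled with the bilinear Strichartz estimate (Lemma \ref{bilinear strichartz}) and the fractional product and chain rules, producing contributions of the form $B(\eta,\eta_0)N^{2s_c-1/2}K^{1/2}$ and self-referential terms $\eta^{\nu}\mathcal A(N/\eta_0)$. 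This gives a recurrence for $\mathcal A(N)$ of the same shape as \eqref{the recurrence} (with the bookkeeping of a priori finiteness done on compact unions of characteristic subintervals, as in the proof of Proposition \ref{lts lemma}, and bounds uniform in the interval); iterating it — using $K<\infty$ to sum the resulting series — shows first that $\mathcal A(N)<\infty$ for all $N$, hence, together with the trivial bound $\|P_{>1}u(t)\|_{L_x^2}\lesssim\|u\|_{L_t^\infty\dot H_x^{s_c}}$, that $u\in L_t^\infty L_x^2$. Feeding the smallness estimate \eqref{lts small} (and the analogous smallness variants of the bilinear terms, produced exactly as in the derivation of \eqref{lts small}) back into the recurrence then upgrades this to $\lim_{N\to 0}\ \sup_{t\in[0,T_{max})}\|P_{\leq N}u(t)\|_{L_x^2}=0$.

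\emph{Step 2: zero mass and the contradiction.} Since $u\in L_t^\infty(L_x^2\cap\dot H_x^{s_c})$, conservation of mass holds: $\|u(t)\|_{L_x^2}^2\equiv M_0$. Fix $\varepsilon>0$ and, using Step 1, choose $N_0=N_0(\varepsilon)$ with $\sup_t\|P_{\leq N_0}u(t)\|_{L_x^2}\leq\varepsilon$; set $\eta:=N_0^{2s_c}\varepsilon^2$ and let $c(\eta)>0$ be as in Remark \ref{another consequence}. For $n$ large enough that $c(\eta)N(t_n)\geq N_0$, using $\varphi\equiv 1$ on $\{|\xi|\leq N_0\}$, Bernstein on the intermediate annulus, and Remark \ref{another consequence},
$$\int_{|\xi|\leq c(\eta)N(t_n)}|\wh{u}(t_n,\xi)|^2\,d\xi\;\leq\;\|P_{\leq N_0}u(t_n)\|_{L_x^2}^2+N_0^{-2s_c}\!\int_{|\xi|\leq c(\eta)N(t_n)}\!\!|\xi|^{2s_c}|\wh{u}(t_n,\xi)|^2\,d\xi\;\leq\;\varepsilon^2+N_0^{-2s_c}\eta=2\varepsilon^2,$$
so that (by Plancherel and mass conservation) $\int_{|\xi|>c(\eta)N(t_n)}|\wh{u}(t_n,\xi)|^2\,d\xi\geq M_0-2\varepsilon^2$. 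On the other hand,
$$\|u\|_{L_t^\infty\dot H_x^{s_c}}^2\geq\|u(t_n)\|_{\dot H_x^{s_c}}^2\geq (c(\eta)N(t_n))^{2s_c}\int_{|\xi|>c(\eta)N(t_n)}|\wh{u}(t_n,\xi)|^2\,d\xi\geq (c(\eta)N(t_n))^{2s_c}(M_0-2\varepsilon^2).$$
Letting $n\to\infty$ and using $N(t_n)\to\infty$ forces $M_0\leq 2\varepsilon^2$; as $\varepsilon>0$ was arbitrary, $M_0=0$, whence $\|u\|_{L_{t,x}^{p(d+2)/2}([0,T_{max})\times\R^d)}=0$, contradicting \eqref{u blows up}.

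\emph{Main obstacle.} The technical heart is Step 1: reproving the nonlinearity estimates of Proposition \ref{lts lemma} in the absence of the smoothing operator $\nsc$. Several paraproduct and fractional-calculus manipulations there exploited the extra $s_c$ derivatives to balance Lebesgue and Sobolev exponents, and carrying them out at the level of the bare $L_x^2$-mass requires choosing the intermediate exponents with care, particularly when $d=3$ (where $s_c\geq\tfrac12$ makes certain Sobolev endpoints borderline, much as in the proof of Lemma \ref{paraproduct}). Everything else — the no-waste Duhamel step, the mass-conservation input, and the $\dot H_x^{s_c}$ almost-periodicity argument of Step 2 — is a routine adaptation of the mass- and energy-critical treatments.
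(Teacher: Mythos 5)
Your overall architecture (finite mass, then zero mass, then contradiction with mass conservation) is the same as the paper's, and your Step 2 is a correct variant of the paper's endgame: the paper controls the low-frequency part of the mass by interpolating against a negative-regularity bound $u\in L_t^\infty\dot H_x^{-\varepsilon}$, whereas you would use $\sup_t\norm{P_{\leq N}u(t)}_{L_x^2}\to 0$; either input finishes the argument. The genuine gap is Step 1. You cannot simply rerun the Proposition \ref{lts lemma} machinery ``without the outer $\nsc$'': the quantity $\mathcal{A}(N)=\sup_t\norm{P_{\leq N}u(t)}_{L_x^2}$ is not a priori finite for \emph{any} $N$ (an $\dot H_x^{s_c}$ function may have a non-square-integrable low-frequency tail), so your recurrence has no anchor --- unlike the paper's $A(N)$, whose induction starts at $N\geq\sup_k N_k$ where Lemma \ref{spacetime bounds} supplies finiteness precisely because everything lives at critical regularity. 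Moreover, the pieces of the decomposition in which the low-frequency factor carries no derivative (e.g.\ $(\vert u\vert^p-\vert u_{>cN(t)}\vert^p)u_{\leq N/\eta_0}$, or $\vert u\vert^p u_{\leq N}$ in the analogue of Lemma \ref{freq lemma 1}) force mass-level norms such as $\norm{u_{\leq N/\eta_0}}_{L_t^2L_x^{2d/(d-2)}}$ into the estimate, and these \emph{cannot} be bounded by the derivative-level output of Proposition \ref{lts lemma}: Bernstein gives $\sum_{M\leq N}M^{-s_c}\norm{\nsc u_M}_{L_t^2L_x^{2d/(d-2)}}$, and with only $A(M)\lesssim 1+M^{2s_c-\frac12}K^{\frac12}$ (or even the smallness \eqref{lts small}) the sum $\sum_{M\leq N}M^{-s_c}$ diverges. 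So your recurrence either reintroduces unknown, possibly infinite mass-level quantities (circularity), or tacitly assumes that $\norm{\nsc u_{\leq M}}$ decays in $M$ faster than $M^{s_c}$ --- which is exactly the negative-regularity statement that has to be proved. This is why the paper proceeds differently: it first upgrades \eqref{lts} to the global-in-lifespan bound \eqref{good estimate for endpoint}, then to $\norm{\nsc u_{\leq N}}_{L_t^\infty L_x^2}\lesssim_u N^{2s_c-\frac12}$ (Lemma \ref{freq lemma 1}), and only then deduces $u\in L_t^\infty\dot H_x^{-\varepsilon}$ (Lemma \ref{negative regularity}), working throughout with quantities that are finite a priori. (Your claim that the recurrence has ``the same shape'' as \eqref{the recurrence} also cannot be literally right, since $\mathcal{A}(N)$ and $N^{2s_c-\frac12}K^{\frac12}$ scale differently; but that is minor compared with the above.)

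A second, related omission is the borderline case $s_c=\tfrac12$, which is allowed by \eqref{constraints2}. Even granting a closed recurrence, the natural gain per piece in a mass-level estimate is $N^{s_c-\frac12}$, which vanishes at $s_c=\tfrac12$, so neither the finiteness summation nor your claimed smallness $\lim_{N\to0}\mathcal{A}(N)=0$ follows from the generic estimates there. The paper has to work genuinely harder at this endpoint: Lemma \ref{negative regularity} runs a second full iteration at regularity $\tfrac25$ (estimate \eqref{better estimate for infty}, using part (b) of Lemma \ref{paraproduct}) before any statement of the kind you need is available when $s_c=\tfrac12$. Describing the remaining work as a ``routine adaptation'' therefore understates the problem; as written, Step 1 fails both on a priori finiteness and at $s_c=\tfrac12$.
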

\begin{proof} We argue by contradiction. Suppose $u$ were such a solution; then by Corollary \ref{N at blowup}, we have $$\lim_{t\to T_{max}} N(t)=\infty,$$ whether $T_{max}$ is finite or infinite (recall $s_c>\tfrac14$). Thus by Remark~\ref{another consequence}, we see
\begin{equation}\label{inf to zero}\lim_{t\to T_{max}} \norm{\nsc u_{\leq N}(t)}_{L_x^2(\R^d)}=0\indent \text{for any }N>0.
\end{equation} 
\noindent Now, we let $I_n$ be a nested sequence of compact subintervals of $[0,T_{max})$, each of which is a contiguous union of characteristics intervals $J_k$. On each $I_n$, we will now apply Proposition \ref{lts lemma}; specifically, for fixed $\eta,\eta_0>0$, we use the recurrence relation \eqref{the recurrence}, the estimate \eqref{lts}, and the hypothesis \eqref{K is finite} to see
\begin{align*}
A_n(N)&:= \xnorm{\nsc u_{\leq N}}{2}{\frac{2d}{d-2}}{I_n}
\\ &\lesssim_u \inf_{t\in I_n} \norm{\nsc u_{\leq N}(t)}_{L_x^2(\R^d)} +B(\eta,\eta_0)N^{2s_c-\frac12}\left(\int_{I_n} N(t)^{3-4s_c}\, dt\right)^{\frac12}
\\ &\quad +\sum_{M>N/\eta_0}\left(\tfrac{N}{M}\right)^{\frac32 s_c}A_n(M)
\\ & \lesssim_u \inf_{t\in I_n} \norm{\nsc u_{\leq N}(t)}_{L_x^2(\R^d)} +B(\eta,\eta_0)N^{2s_c-\frac12}\bigg(\int_0^{T_{max}} N(t)^{3-4s_c}\, dt\bigg)^{\frac12}
\\ &\quad +\sum_{M>N/\eta_0}\left(\tfrac{N}{M}\right)^{\frac32 s_c}A_n(M)
\\ & \lesssim_u \inf_{t\in I_n} \norm{\nsc u_{\leq N}(t)}_{L_x^2(\R^d)} +B(\eta,\eta_0)N^{2s_c-\frac12}+\sum_{M>N/\eta_0}\left(\tfrac{N}{M}\right)^{\frac32 s_c}A_n(M).
\end{align*}
Arguing as we did to obtain \eqref{lts}, we conclude
$$A_n(N)\lesssim_u \inf_{t\in I_n} \norm{\nsc u_{\leq N}(t)}_{L_x^2(\R^d)} +N^{2s_c-\frac12}.$$ 
Letting $n\to\infty$ and using \eqref{inf to zero} then gives
\begin{equation}\label{good estimate for endpoint}
\xnorm{\nsc u_{\leq N}}{2}{\frac{2d}{d-2}}{[0,T_{max})}\lesssim_u N^{2s_c-\frac12}\indent\text{for all }N>0.
\end{equation}

We now claim that \eqref{good estimate for endpoint} implies
\begin{lemma}\label{freq lemma 1}
\begin{equation}\label{good estimate for infty}
\xnorm{\nsc u_{\leq N}}{\infty}{2}{[0,T_{max})}\lesssim_u N^{2s_c-\frac12}\indent\text{for all }N>0.
\end{equation}
\end{lemma}
\begin{proof}[Proof of Lemma \ref{freq lemma 1}] Fix $N>0$; we first use Proposition \ref{no waste} and Strichartz to estimate
\begin{equation}\label{after no waste}
\xnorm{\nsc u_{\leq N}}{\infty}{2}{[0,T_{max})}\lesssim \xnorm{\nsc P_{\leq N}(\vert u\vert^p u)}{2}{\frac{2d}{d+2}}{[0,T_{max})}. 
\end{equation}
To proceed, we decompose the nonlinearity and estimate the individual pieces; as before, the particular decomposition we use depends on the ambient dimension. In the estimates that follow, spacetime norms will be taken over $[0,T_{max})\times\R^d$.

\textbf{Case 1.} When $d=3$, we decompose
$$\vert u\vert^p u=\vert u\vert^{p-2}\bar{u}u_{\leq N}^2+(\vert u\vert^{p-2}\bar{u}u_{>N}+2\vert u\vert^{p-2}\bar{u}u_{\leq N})u_{>N}.$$

We can use H\"older, the fractional product rule, fractional chain rule, Sobolev embedding, interpolation, and \eqref{good estimate for endpoint} to estimate the contribution of the first piece as follows:

\begin{align*}\xonorm{\nsc& P_{\leq N}(\vert u\vert^{p-2}\bar{u}u_{\leq N}^2)}{2}{6/5}
\\ &\lesssim\xonorm{\nsc(\vert u\vert^{p-2}\bar{u})}{\infty}{\frac{6p}{7p-8}}\xonorm{u_{\leq N}}{4}{\frac{6p}{4-p}}^2
\\ &\quad+\xonorm{u}{\infty}{\frac{3p}{2}}^{p-1}\xonorm{\nsc (u_{\leq N}^2)}{2}{\frac{6p}{4+p}}
\\ &\lesssim \xonorm{u}{\infty}{\frac{3p}{2}}^{p-2}\xonorm{\nsc u}{\infty}{2}\xonorm{\nsc u_{\leq N}}{4}{3}^2
\\ &\quad +\xonorm{\nsc u}{\infty}{2}^{p-1}\xonorm{u_{\leq N}}{\infty}{\frac{3p}{2}}\xonorm{\nsc u_{\leq N}}{2}{6}
\\ &\lesssim_u \xonorm{\nsc u}{\infty}{2}^{p-1}\xonorm{\nsc u_{\leq N}}{\infty}{2}\xonorm{\nsc u_{\leq N}}{2}{6}+N^{2s_c-\frac12}
\\ &\lesssim_u N^{2s_c-\frac12}. 
\end{align*}
To estimate the contribution of the second piece, we denote $$G=\vert u\vert^{p-2}\bar{u}u_{>N}+2\vert u\vert^{p-2}\bar{u}u_{\leq N}$$ and use Bernstein, H\"older, Lemma \ref{paraproduct}, and \eqref{good estimate for endpoint} to see
\begin{align}
\xonorm{\nsc P_{\leq N}&(G u_{>N})}{2}{6/5} \nonumber
\\ & \lesssim N^{\frac32 s_c}\xonorm{\vert\nabla\vert^{-\frac12s_c}(G u_{>N})}{2}{6/5} \nonumber
\\ &\lesssim N^{\frac32 s_c}\xonorm{\vert\nabla\vert^{\frac12 s_c}G}{\infty}{\frac{12p}{11p-4}}\xonorm{\vert\nabla\vert^{-\frac12 s_c}u_{>N}}{2}{6} \nonumber
\\ &\lesssim \xonorm{\vert\nabla\vert^{\frac12 s_c}G}{\infty}{\frac{12p}{11p-4}}\sum_{M>N}\left(\tfrac{N}{M}\right)^{\frac32s_c}\xonorm{\nsc u_M}{2}{6}\nonumber 
\\ &\lesssim_u \xonorm{\vert\nabla\vert^{\frac12 s_c}G}{\infty}{\frac{12p}{11p-4}} N^{2s_c-\frac12}.\label{freq d3 para piece}
\end{align}
A few applications of the fractional product rule, fractional chain rule, and Sobolev embedding give 
$$\xonorm{\vert\nabla\vert^{\frac12s_c}G}{\infty}{\frac{12p}{11p-4}}\lesssim \xonorm{\nsc u}{\infty}{2}^p\lesssim_u 1,$$ 
so that continuing from \eqref{freq d3 para piece}, we get 
$$\xonorm{\nsc P_{\leq N}\big((\vert u\vert^{p-2}\bar{u}u_{>N}+2\vert u\vert^{p-2}\bar{u}u_{\leq N})u_{>N}\big)}{2}{6/5}\lesssim_u N^{2s_c-1/2}.$$ 
Thus we see that the claim holds in this first case.

\textbf{Case 2.} When $d\in\{4,5\}$, we decompose $$\vert u\vert^p u =\vert u\vert^p u_{\leq N}+\vert u\vert^p u_{>N}.$$ 

We employ H\"older, the fractional product rule, the fractional chain rule, Sobolev embedding, and \eqref{good estimate for endpoint} to estimate the contribution of the first piece as follows:
\begin{align*} \xonorm{\vert\nabla&\vert^{s_c} P_{\leq N}(\vert u\vert^p u_{\leq N})}{2}{\frac{2d}{d+2}} 
\\ &\lesssim\xonorm{\nsc \vert u\vert^p}{\infty}{\frac{2dp}{p(d+4)-4}}\xonorm{u_{\leq N}}{2}{\frac{dp}{2-p}}+\xonorm{u}{\infty}{\frac{dp}{2}}^p\xonorm{\nsc u_{\leq N}}{2}{\frac{2d}{d-2}}
\\ &\lesssim_u\xonorm{u}{\infty}{\frac{dp}{2}}^{p-1}\xonorm{\nsc u}{\infty}{2}\xonorm{\nsc u_{\leq N}}{2}{\frac{2d}{d-2}}+N^{2s_c-\frac12}
\\ &\lesssim_u N^{2s_c-\frac12}.
\end{align*}

For the second piece, we use H\"older, Bernstein, Lemma \ref{paraproduct}, the fractional chain rule, and Sobolev embedding to see
\begin{align*} \xonorm{\nsc &P_{\leq N}(\vert u\vert^p u_{>N})}{2}{\frac{2d}{d+2}}
\\ &\lesssim N^{\frac32s_c}\xonorm{\vert\nabla\vert^{-\frac12 s_c}(\vert u\vert^p u_{>N})}{2}{\frac{2d}{d+2}}
\\ &\lesssim N^{\frac32 s_c}\xonorm{\vert\nabla\vert^{\frac12 s_c}\vert u\vert^p }{\infty}{\frac{4dp}{p(d+8)-4}}\xonorm{\vert \nabla\vert^{-\frac12 s_c}u_{>N}}{2}{\frac{2d}{d-2}}
\\ &\lesssim \xonorm{u}{\infty}{\frac{dp}{2}}^{p-1}\xonorm{\vert\nabla\vert^{\frac12s_c}u}{\infty}{\frac{4dp}{dp+4}}\sum_{M>N}\left(\tfrac{N}{M}\right)^{\frac32s_c}\xonorm{\nsc u_{M}}{2}{\frac{2d}{d-2}}
\\ &\lesssim_u \xonorm{\nsc u}{\infty}{2}^p N^{2s_c-\frac12}
\\ &\lesssim_u N^{2s_c-\frac12}.
\end{align*}
Thus we see that the claim holds in this second case, completing the proof of Lemma~\ref{freq lemma 1}. \end{proof}
We now wish to use \eqref{good estimate for infty} to prove 
\begin{lemma}\label{negative regularity}
$$u\in L_t^\infty \dot{H}_x^{-\varepsilon}([0,T_{max})\times\R^d)\indent\text{for some }\varepsilon>0.$$
\end{lemma}
\begin{proof}[Proof of Lemma \ref{negative regularity}] For $s_c>\tfrac12,$ this is easy; indeed, choosing $\varepsilon>0$ such that $s_c-\tfrac12-\varepsilon>0$, we can use Bernstein and \eqref{good estimate for infty} to see 
\begin{align} \xonorm{\vert\nabla\vert^{-\varepsilon}u}{\infty}{2}&\lesssim \sum_{N\leq 1} N^{-s_c-\varepsilon}\xonorm{\nsc u_N}{\infty}{2}+\sum_{N>1} N^{-s_c-\varepsilon}\xonorm{\nsc u_N}{\infty}{2}\nonumber
\\ &\lesssim_u \sum_{N\leq 1}N^{-s_c-\varepsilon}N^{2s_c-\frac12}+1 \nonumber
\\ &\lesssim_u 1.\nonumber  
\end{align}

When $s_c=\tfrac12$ (that is, $p=\tfrac{4}{d-1}$), we need to work a bit harder. To begin, we note that by Bernstein and \eqref{good estimate for infty}, we have
		\begin{align}
		\xonorm{\ntw{\frac25}u}{\infty}{2}&\lesssim\sum_{N\leq 1}N^{-\frac{1}{10}}\xonorm{\ntw{\frac12}u_N}{\infty}{2}
		+\sum_{N>1}N^{-\frac{1}{10}}\xonorm{\ntw{\frac12}u_N}{\infty}{2} \nonumber
		\\ &\lesssim_u \sum_{N\leq 1}N^{\frac25}+1 \nonumber
		\\ &\lesssim_u1. \label{additional input}
		\end{align}
We wish to show that in fact, we have the more quantitative statement
\begin{equation}\label{better estimate for infty} \xnorm{\vert\nabla\vert^{\frac25} u_{\leq N}}{\infty}{2}{[0,T_{max})}\lesssim_u N^{\frac12}\indent\text{for all }N>0. 
\end{equation} 

Once we have established \eqref{better estimate for infty}, we can complete the proof of Lemma \ref{negative regularity} as follows: choosing $0<\eps<\tfrac{1}{10}$, we use Bernstein, \eqref{additional input}, and \eqref{better estimate for infty} to estimate
	\begin{align*}
	\xonorm{\ntw{-\eps}u}{\infty}{2}&\lesssim\sum_{N\leq 1}N^{-\frac25-\eps}\xonorm{\ntw{\frac25}u_N}{\infty}{2}
	+\sum_{N>1}N^{-\frac25-\eps}\xonorm{\ntw{\frac25}u_N}{\infty}{2}
	\\ &\lesssim_u\sum_{N\leq 1}N^{\frac{1}{10}-\eps}+1
	\\ &\lesssim_u 1.
	\end{align*}

Thus, to complete the proof of Lemma \ref{negative regularity}, it remains to establish \eqref{better estimate for infty}. We begin by fixing $N>0$. The proof of \eqref{better estimate for infty} will be a second iteration of the arguments that gave \eqref{good estimate for infty}, this time using \eqref{additional input} as additional input. 

We first use \eqref{additional input} (and the uniqueness of weak limits) to see that the no-waste Duhamel formula (Proposition \ref{no waste}) also holds in the weak $\dot{H}_x^{\frac25}$ topology; thus, using Strichartz as well, we can estimate 
$$\xnorm{\vert\nabla\vert^{\frac25} u_{\leq N} }{\infty}{2}{[0,T_{max})}\lesssim \xnorm{\vert\nabla\vert^{\frac25} P_{\leq N}(\vert u\vert^{\frac{4}{d-1}} u)}{2}{\frac{2d}{d+2}}{[0,T_{max})}.$$
Once again, we decompose the nonlinearity, and again the decomposition depends on the ambient dimension. The estimates that follow will be very similar in spirit to the estimates that gave \eqref{good estimate for infty}; all estimates will be taken over $[0,T_{max}).$ 

\textbf{Case 1.} When $d=3$, we decompose
$$\vert u\vert^2 u=\bar{u}u_{\leq N}^2+(\bar{u}u_{>N}+2\bar{u}u_{\leq N})u_{>N}.$$

We estimate the first piece as follows: by H\"older, the fractional product rule, the fractional chain rule, Sobolev embedding, interpolation, \eqref{good estimate for endpoint} 
and \eqref{additional input}, 

\begin{align*} \xonorm{&\ntw{\frac25} P_{\leq N}(\bar{u} u_{\leq N}^2)}{2}{6/5}
 \\ & \lesssim\xonorm{\ntw{\frac25}{u}}{\infty}{2}\xonorm{u_{\leq N}}{4}{6}^2 +\xonorm{u}{\infty}{30/11}\xonorm{\ntw{\frac25} (u_{\leq N})^2}{2}{15/7}
\\ &\lesssim \xonorm{\ntw{\frac25}u}{\infty}{2}\xonorm{\ntw{\frac12} u_{\leq N}}{4}{3}^2 +\xonorm{\ntw{\frac25}u}{\infty}{2}\xonorm{u_{\leq N}}{\infty}{3}\xonorm{\ntw{\frac25}u_{\leq N}}{2}{15/2}
\\ &\lesssim\xonorm{\ntw{\frac25}u}{\infty}{2}\xonorm{\ntw{\frac12}u}{\infty}{2}\xonorm{\ntw{\frac12}u_{\leq N}}{2}{6}
\\ &\quad +\xonorm{\ntw{\frac25}u}{\infty}{2}\xonorm{\ntw{\frac12}u}{\infty}{2}\xonorm{\ntw{\frac12}u_{\leq N}}{2}{6}
\\ &\lesssim_u N^{\frac12}.
\end{align*}

For the second piece, we first let $G:=\bar{u}u_{>N}+2\bar{u}u_{\leq N},$ and use Bernstein, H\"older, Lemma \ref{paraproduct}, Sobolev embedding, and \eqref{good estimate for endpoint} to see
\begin{align}
\xonorm{\ntw{\frac25}P_{\leq N}(Gu_{>N})}{2}{6/5}&\lesssim N^{\frac45}\xonorm{\ntw{-\frac25}(Gu_{>N})}{2}{6/5}\nonumber
\\ &\lesssim N^{\frac45}\xonorm{\ntw{\frac25}G}{\infty}{6/5}\xonorm{\ntw{-\frac25}u_{>N}}{2}{15/2}\nonumber
\\ &\lesssim \xonorm{\ntw{\frac25}G}{\infty}{6/5}\sum_{M>N} \left(\tfrac{N}{M}\right)^{\frac45}\xonorm{\ntw{\frac25}u_M}{2}{15/2} \nonumber
\\ &\lesssim \xonorm{\ntw{\frac25}G}{\infty}{6/5}\sum_{M>N}\left(\tfrac{N}{M}\right)^{\frac45}\xonorm{\ntw{\frac12}u_M}{2}{6} \nonumber
\\ &\lesssim \xonorm{\ntw{\frac25}G}{\infty}{6/5}N^{\frac12}.\label{2/5}
\end{align}
A few applications of the fractional product rule, Sobolev embedding, and \eqref{additional input} give 
\begin{align*}\xonorm{\ntw{\frac25}G}{\infty}{6/5}&\lesssim \xonorm{u}{\infty}{3}\xonorm{\ntw{\frac25}u}{\infty}{2}
\\ &\lesssim_u \xonorm{\ntw{\frac12}u}{\infty}{2}\xonorm{\ntw{\frac25}u}{\infty}{2}
\\ &\lesssim_u 1,
\end{align*} 
so that \eqref{2/5} becomes $$\xonorm{\ntw{\frac25}P_{\leq N}(Gu_{>N})}{2}{6/5}\lesssim_u N^{\frac12}.$$ We see that \eqref{better estimate for infty} holds in this first case.  

\textbf{Case 2.} When $d\in\{4,5\}$, we decompose
$$ \vert u\vert^{\frac{4}{d-1}}u=\vert u\vert^{\frac{4}{d-1}}u_{\leq N}+\vert u\vert^{\frac{4}{d-1}}u_{>N}.$$

We first note that interpolating between $u\in L_t^\infty \dot{H}_x^{\frac25}$ and $u\in L_t^\infty\dot{H}_x^{\frac12},$ we have \begin{align}\label{interpolated freq} u\in L_t^\infty\dot{H}_x^{\frac{21-d}{40}}.
\end{align}
Thus, to estimate the contribution of the first piece, we can use H\"older, the fractional product rule, the fractional chain rule, Sobolev embedding, \eqref{good estimate for endpoint}, \eqref{good estimate for infty}, and \eqref{interpolated freq} to see 

\begin{align*}\xonorm{&\ntw{\frac25}(\vert u\vert^{\frac{4}{d-1}}u_{\leq N})}{2}{\frac{2d}{d+2}}
\\ &\lesssim \xonorm{\ntw{\frac25}\vert u\vert^{\frac{4}{d-1}}}{\infty}{\frac{2d}{5}}\xonorm{u_{\leq N}}{2}{\frac{2d}{d-3}} +\xonorm{u}{\infty}{\frac{40d}{21(d-1)}}^{\frac{4}{d-1}}\xonorm{\ntw{\frac25}u_{\leq N}}{2}{\frac{10d}{5d-11}}
\\ &\lesssim \xonorm{u}{\infty}{\frac{2d}{d-1}}^{\frac{5-d}{d-1}}\!\xonorm{\ntw{\frac25}u}{\infty}{2}\xonorm{\ntw{\frac12}u_{\leq N}}{2}{\frac{2d}{d-2}}\! +\!\xonorm{\ntw{\frac{21-d}{40}}u}{\infty}{2}^{\frac{4}{d-1}}\xonorm{\ntw{\frac12}u}{2}{\frac{2d}{d-2}}
\\ &\lesssim_u \xonorm{\ntw{\frac12}u}{\infty}{2}^{\frac{5-d}{d-1}}\xonorm{\ntw{\frac25}u}{\infty}{2} N^{\frac12}+N^{\frac12}
\\ &\lesssim_u N^{\frac12}.
\end{align*}

For the second piece, we use Bernstein, H\"older, Lemma \ref{paraproduct}, the fractional chain rule, Sobolev embedding, \eqref{good estimate for endpoint}, \eqref{good estimate for infty}, and \eqref{additional input} to see
\begin{align*}
\xonorm{\ntw{\frac25}&P_{\leq N}(\vert u\vert^{\frac{4}{d-1}}u)}{2}{\frac{2d}{d+2}}
\\ &\lesssim N^{\frac45}\xonorm{\ntw{-\frac25}(\vert u\vert^{\frac{4}{d-1}}u_{\geq N})}{2}{\frac{2d}{d+2}}
\\ &\lesssim N^{\frac45}\xonorm{\ntw{\frac25}\vert u\vert^{\frac{4}{d-1}}}{\infty}{\frac{2d}{5}}\xonorm{\ntw{-\frac25}u_{>N}}{2}{\frac{10d}{5d-11}}
\\ &\lesssim \xonorm{u}{\infty}{\frac{2d}{d-1}}^{\frac{5-d}{d-1}}\xonorm{\ntw{\frac25}u}{\infty}{2}\sum_{M>N}\left(\tfrac{N}{M}\right)^{\frac45}\xonorm{\ntw{\frac25}u_M}{2}{\frac{10d}{5d-11}}
\\ &\lesssim_u \sum_{M>N}\left(\tfrac{N}{M}\right)^{\frac45}\xonorm{\ntw{\frac12}u_M}{2}{\frac{2d}{d-2}} 
\\ &\lesssim_u N^{\frac12}.
\end{align*}
Thus \eqref{better estimate for infty} holds in this second case. This completes the proof of Lemma \ref{negative regularity}.\end{proof}

With Lemma \ref{negative regularity} at hand, we are ready to complete the proof of Theorem \ref{frequency cascade}. Fix $t\in[0,T_{max})$ and $\eta>0$. By Remark~\ref{another consequence}, we may find $c(\eta)>0$ so that $$\int_{\vert\xi\vert\leq c(\eta)N(t)}\vert\xi\vert^{2s_c}\vert \wh{u}(t,\xi)\vert^2\, d\xi\leq\eta.$$ Interpolating with $u\in L_t^\infty\dot{H}_x^{-\varepsilon},$ we get 
$$\int_{\vert \xi\vert\leq c(\eta)N(t)}\vert \wh{u}(t,\xi)\vert^2\ d\xi\lesssim_u \eta^{\frac{\varepsilon}{s_c+\varepsilon}}.$$
On the other hand, we have
$$\int_{\vert\xi\vert\geq c(\eta)N(t)}\vert \wh{u}(\xi,t)\vert^2\, d\xi\leq (c(\eta)N(t))^{-2s_c}\int\vert\xi\vert^{2s_c}\vert \wh{u}(t,\xi)\vert^2\, d\xi\lesssim_u (c(\eta)N(t))^{-2s_c}.$$

Adding these last estimates and using Plancherel, we conclude that for all $t\in[0,T_{max}),$ we have
$$0\leq{M}(u(t)):=\int\vert u(t,x)\vert^2\, dx\lesssim_u \eta^{\frac{\varepsilon}{s_c+\varepsilon}}+(c(\eta)N(t))^{-2s_c}.$$ Thus, recalling $\lim_{t\to T_{max}}N(t)=\infty$, we can conclude that for all $\eta>0$, we may find $t_0\in [0,T_{max})$ so that for all $t\in(t_0,T_{max}),$ we have  ${M}(u(t))\leq\eta.$ But by conservation of mass, ${M}(u(t))\equiv{M}(u_0),$ and so we find that ${M}(u_0)\leq\eta$ for all $\eta>0$. Of course, this gives $u\equiv 0$, which contradicts that $u$ blows up (cf. \eqref{u blows up}).
\end{proof}
\begin{remark} We have omitted the case $(d,s_c)=(5,\tfrac12)$ from Theorem~\ref{frequency cascade} only because we omitted this case from the long-time Strichartz estimate, Proposition~\ref{lts lemma}. Of course, as remarked in the proof of Proposition~\ref{lts lemma}, the long-time Strichartz estimates continue to hold when $(d,s_c)=(5,\tfrac12)$; thus we see that Theorem~\ref{frequency cascade} holds in this case as well.
\end{remark}


\section{The frequency-localized interaction Morawetz inequality}\label{flim section}
In this section, we prove spacetime bounds for the high-frequency portions of almost periodic solutions to \eqref{nls}; these bounds can be used to preclude the existence of quasi-solitons (Section \ref{Quasi}). As we will see, establishing these bounds will lead to the most non-trivial restrictions on the set of $(d,s_c)$ to which our main theorem (Theorem \ref{swamprat}) applies; see the proof below for a more detailed discussion. The main result of this section is the following
\begin{proposition}[Frequency-localized interaction Morawetz inquality]\label{flim} Let $(d,s_c)$ satisfy \eqref{constraints}. Suppose $u:[0,T_{max})\times\R^d\to\C$ is an almost periodic solution to \eqref{nls} such that $N(t)\equiv N_k\geq 1$ on each characteristic subinterval $J_k\subset[0,T_{max})$, and let $I\subset[0,T_{max})$ be a compact time interval, which is a union of contiguous subintervals $J_k$. Then for any $\eta>0$, there exists $N_0=N_0(\eta)$ such that for any $N\leq N_0$, we have
\begin{equation}\label{flim D}
-\int_I\iint_{\R^d\times\R^d}\vert u_{\geq N}(t,y)\vert^2\Delta(\tfrac{1}{\vert\cdot\vert})(x-y)\vert u_{\geq N}(t,x)\vert^2\, dx\, dy \, dt\lesssim_u\eta(N^{1-4s_c}+K),\end{equation}
where $K:=\int_I N(t)^{3-4s_c}\,dt.$ Furthermore, $N_0$ and the implicit constants above do not depend on $I$. 
\end{proposition}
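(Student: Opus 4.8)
The strategy is to start from the standard interaction Morawetz inequality \eqref{int mor} applied not to the solution $u$ itself (whose right-hand side need not be finite when $s_c \geq \tfrac12$), but to the high-frequency truncation $v := u_{\geq N}$. Since $v$ is \emph{not} a solution to \eqref{nls}, it solves a forced equation $(i\partial_t + \Delta)v = P_{\geq N}(|u|^p u) =: \N$, and carrying the truncation through the Morawetz virial computation produces, in addition to the genuinely good term on the left of \eqref{flim D}, a collection of error terms built out of commutators of the Morawetz weight with $\N$. So the proof has three phases: (i) record the virial/Morawetz identity for the forced equation and isolate the error terms; (ii) decompose $\N = P_{\geq N}(|u|^p u)$ and estimate each error contribution; (iii) absorb or bound everything by $\eta(N^{1-4s_c} + K)$, using the long-time Strichartz estimate (Proposition \ref{lts lemma}) together with the smallness version \eqref{lts small} to extract the factor $\eta$ for $N \leq N_0(\eta)$.

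\textbf{Key steps.} First I would set up the interaction Morawetz functional
$$
\M(t) := \iint_{\R^d \times \R^d} |v(t,y)|^2\, (x-y)\cdot \frac{\text{Im}(\bar v \nabla v)(t,x)}{|x-y|}\, dx\, dy
$$
(plus the symmetric term), whose time derivative, for a genuine solution, reproduces \eqref{int mor}; for the forced equation one picks up extra terms of the schematic form $\iint |v(y)|^2\, \{\text{kernel}\}\, \text{Re}(\bar{\N} v)(x)$ and $\iint \text{Im}(\bar v v)(y)\,\{\text{kernel}\}\,\text{Im}(\bar{\N}\nabla v - \bar v \nabla \N)(x)$. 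Since $\sup_t |\M(t)| \lesssim \|v\|_{L^\infty_t L^2_x}^3 \|\nabla v\|_{L^\infty_t L^2_x} \lesssim_u \|\nsc v\|_{L^\infty_t L^2_x}^2 \cdot (\text{stuff})$, and Bernstein gives $\|v\|_{L^\infty_t L^2_x} = \|u_{\geq N}\|_{L^\infty_t L^2_x} \lesssim N^{-s_c}$ while $\|\nabla v\|_{L^\infty_t L^2_x} \lesssim N^{1-s_c}$, the boundary term is $O_u(N^{1-4s_c})$, which is acceptable. Next I would write $P_{\geq N}(|u|^p u) = |u|^p u - P_{< N}(|u|^p u)$ and further split $|u|^p u$ using the Littlewood--Paley trichotomy into a "high-frequency-dominated" piece $P_{\geq N/\eta_0}$ contributions, a "bilinear" piece involving $u_{\geq cN(t)}$ paired against low frequencies, and a "fully low" piece — exactly the decompositions used in the proof of Proposition \ref{lts lemma}. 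For each resulting error term I would apply Hölder in spacetime, the fractional product/chain rules (Lemmas \ref{fractional product rule}, \ref{fractional chain rule}) and the bilinear Strichartz estimate (Lemma \ref{bilinear strichartz}), and then invoke Proposition \ref{lts lemma}: the point is that every $\|\nsc u_{\leq M}\|_{L^2_t L^{2d/(d-2)}_x}$ factor produced is controlled by $1 + M^{2s_c - 1/2}K^{1/2}$, and squaring/combining these in the Morawetz estimate produces precisely terms of size $N^{1-4s_c} + K$ (up to $\eta_0$-powers). To get the factor $\eta$ rather than a fixed constant, I would choose $N \leq N_0$ small and use the improved bound \eqref{lts small}, which gives an extra $\eps$ on the low-frequency Strichartz norms; the non-critically-scaling error terms are handled by first choosing $\eta_0$ small and then $N$ small.

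\textbf{Main obstacle.} The hard part is the error term that, when $d = 3$ and $s_c > \tfrac34$, cannot be controlled without also truncating the Morawetz weight spatially; this is exactly the obstruction flagged in the introduction (and in Remark \ref{exclusion}) and is the reason the hypothesis \eqref{constraints} rather than \eqref{constraints2} is imposed in this proposition. Concretely, among the commutator error terms there is one roughly of the form
$$
\int_I \iint |v(y)|^2\, \frac{1}{|x-y|}\, |\N(x)|\, |v(x)|\, dx\, dy\, dt
$$
where $\N$ contains genuinely low-frequency output of the nonlinearity; estimating the $1/|x-y|$ kernel via Hardy's inequality costs a derivative, and in $d = 3$ with large $s_c$ the available regularity budget (Proposition \ref{lts lemma} gives control essentially at level $s_c$, and the bilinear estimate gains only $\tfrac12 + s_c$ on one factor and loses $\tfrac{d-1}{2} - s_c$ on the other, which degrades as $s_c \to \tfrac{d-1}{2} = 1$) is insufficient unless $s_c \leq \tfrac34$. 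I would therefore carry out the whole argument under \eqref{constraints}, noting at the relevant step that the restriction $s_c \leq \tfrac34$ (when $d=3$) is precisely what makes that one term summable, and I would flag — as the paper does in Remark \ref{exclusion} — that a spatial cutoff would generate further error terms requiring $\dot H^1_x$-control, which is unavailable here. Everything else is a (lengthy but routine) bookkeeping exercise in applying Strichartz, Bernstein, Hölder, the fractional calculus lemmas, and Proposition \ref{lts lemma}.
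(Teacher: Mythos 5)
Your overall skeleton matches the paper's: apply the interaction Morawetz identity with $\varphi=u_{\geq N}$ and forcing $\N=P_{\geq N}(\vert u\vert^pu)$, then control the resulting error terms using the long-time Strichartz estimate. But there is a genuine gap in where the factor $\eta$ comes from. You propose to extract all smallness from \eqref{lts small}, i.e.\ from low-frequency Strichartz norms, and you declare the boundary term ``$O_u(N^{1-4s_c})$, which is acceptable.'' The proposition demands $\eta(N^{1-4s_c}+K)$, and several contributions --- the boundary term, the entire mass-bracket contribution (which enters with the prefactor $\norm{\nsc u_{\geq N}}_{L_t^\infty L_x^2}\norm{\vert\nabla\vert^{1-s_c}u_{\geq N}}_{L_t^\infty L_x^2}$), and momentum-bracket pieces built purely from $u_{\geq N}$ --- contain no low-frequency factor at all, so \eqref{lts small} cannot make them small. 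The paper's smallness mechanism is different: since $\inf_t N(t)\geq 1$, almost periodicity (Remark~\ref{another consequence}) gives, for $N\leq N_0(\eta)$, the bounds \eqref{uhi small} and \eqref{uhi small2}, namely $\norm{u_{\geq N}}_{L_t^\infty L_x^2}\lesssim_u\eta^{10}N^{-s_c}$ and $\norm{\vert\nabla\vert^{1-s_c}u_{\geq N}}_{L_t^\infty L_x^2}\lesssim_u\eta^{10}N^{1-2s_c}$, and it is these that supply the $\eta$ for exactly the terms your plan leaves with an $O_u(1)$ constant. Without this ingredient you obtain at best $C_uN^{1-4s_c}+\eta K$ together with mass-bracket terms carrying no smallness, not the stated estimate.

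The second gap is structural: you treat the whole forcing as perturbative, to be estimated term-by-term via H\"older, Strichartz and bilinear Strichartz after the decomposition from Proposition~\ref{lts lemma}. The leading part of the momentum bracket is not perturbative and cannot be bounded by $\eta(N^{1-4s_c}+K)$; one must exploit the algebraic identities $\pb{\vert u\vert^pu}{u}=-\tfrac{p}{p+2}\nabla(\vert u\vert^{p+2})$ and $\mb{\vert u_{\geq N}\vert^pu_{\geq N}}{u_{\geq N}}=0$, writing $\{P_{\geq N}(\vert u\vert^pu),u_{\geq N}\}_{\p}$ as $\pb{\vert u\vert^pu}{u}-\pb{\vert u_{\leq N}\vert^pu_{\leq N}}{u_{\leq N}}$ minus genuine error brackets, and integrating the total gradient by parts so that it produces the positive quantity $\iint\vert u_{\geq N}(y)\vert^2\vert u_{\geq N}(x)\vert^{p+2}\vert x-y\vert^{-1}$; only the remainders are then estimated. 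Moreover one of those remainders, the all-high term \eqref{error 7}, is not small and must be absorbed into that positive term via a bootstrap, which requires separately establishing the a priori finiteness \eqref{finiteness of bootstrap terms}; nothing in your outline produces this step. Finally, the term forcing the restriction to \eqref{constraints} is not the one you describe (a $\vert x-y\vert^{-1}$ kernel against low-frequency output of the nonlinearity): it is the mass-bracket term \eqref{error 3}, controlled by $\xonorms{\vert u_{\geq N}\vert^{p+1}u_{\leq N}}{1}$, which for $d=3$, $p>\tfrac83$ carries too many copies of $u_{\geq N}$ --- though your remark that a spatial truncation of the weight would demand $\dot H_x^1$-control is consistent with Remark~\ref{exclusion}.
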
 
\noindent Before we begin the proof of Proposition \ref{flim}, we recall a general form of the interaction Morawetz inequality, introduced originally in \cite{CKSTT} (for more discussion, see also \cite{KV} and the references cited therein). We will essentially follow the presentation in \cite[Section 5]{Monica:thesis art}.

For a fixed function $a:\R^d\to\R$ and $\varphi$ solving $(i\partial_t+\Delta)\varphi=\mathcal{N}$, we define the interaction Morawetz action by 
$$M(t)=2\,\Im\iint_{\R^d\times\R^d}\vert\varphi(t,y)\vert^2 a_k(x-y)(\varphi_k\bar{\varphi})(t,x)\, dx\, dy,$$ 
where subscripts denote spatial derivatives and repeated indices are summed. If we define the mass bracket $$\{f,g\}_m:=\Im(f\bar{g})$$ and the momentum bracket $$\{f,g\}_{\p}:=\Re(f\nabla \bar{g}-g\nabla\bar{f}),$$ then one can show
\begin{align} \partial_t M(t)=&-\iint_{\R^d\times\R^d}\vert\varphi(t,y)\vert^2a_{jjkk}(x-y)\vert\varphi(t,x)\vert^2\, dx\, dy\nonumber
\\ &+\iint_{\R^d\times\R^d}\vert\varphi(t,y)\vert^24a_{jk}(x-y)\Re(\bar{\varphi}_j\varphi_k)(t,x)\, dx\, dy\label{mor2}
\\ &-\iint_{\R^d\times\R^d}\indent 2\, \Im(\bar{\varphi}\varphi_k)(t,y)a_{jk}(x-y)2\, \Im(\bar{\varphi}\varphi_j)(t,x)\, dx\, dy\label{mor3}
\\ &+\iint_{\R^d\times\R^d}2\{\N,\varphi\}_m(t,y)a_j(x-y)\ 2\, \Im(\bar{\varphi}\varphi_j)(t,x)\, dx\, dy\nonumber
\\ &+\iint_{\R^d\times\R^d}\vert\varphi(t,y)\vert^2\ 2\,\nabla a(x-y)\cdot\{N,\varphi\}_{\p}(t,x)\, dx\, dy.\nonumber
\end{align}
To prove Proposition \ref{flim}, we will use $a(x)=\vert x\vert$. Note that in this case, we have
$$\left\{\begin{array}{ll} a_j(x)=\tfrac{x_j}{\vert x\vert},
\\ \\ a_{jk}(x)=\tfrac{\delta_{jk}}{\vert x\vert}-\tfrac{x_jx_k}{\vert x\vert^3},
\\ \\ \Delta a(x)=\tfrac{d-1}{\vert x\vert},
\\ \\ \Delta\Delta a(x)=-(d-1)\Delta(\tfrac{1}{\vert x\vert}).
\end{array}\right.$$

For this choice of $a$, one can also show $\eqref{mor2}+\eqref{mor3}\geq 0$ (for details, see for example \cite[Lemma 5.4]{Monica:thesis art}). Thus, integrating $\partial_t M$ over $I$, we arrive at the following

\begin{lemma}[Interaction Morawetz inequality]\label{interaction morawetz}
\begin{align*} &-\int_I\iint_{\R^d\times\R^d} \vert \varphi(t,y)\vert^2\Delta(\tfrac{1}{\vert \cdot\vert})(x-y)\vert \varphi(t,x)\vert^2\, dx\, dy\, dt 
\\ &+\int_I\iint_{\R^d\times\R^d}\vert \varphi(t,y)\vert^2\tfrac{x-y}{\vert x-y\vert}\cdot\left\{\N,\varphi\right\}_{\p}(t,x)\, dx\, dy\, dt 
\\ &\ \ \ \lesssim \sup_{t\in I}\iint_{\R^d\times\R^d}\vert \varphi(t,y)\vert^2\tfrac{x-y}{\vert x-y\vert}\cdot\nabla \varphi(t,x)\bar{\varphi}(t,x)\, dx\, dy 
\\ &\ \ \ \ +\bigg\vert\int_I\iint_{\R^d\times\R^d}\left\{\N,\varphi\right\}_m(t,y)\tfrac{x-y}{\vert x-y\vert}\cdot\nabla \varphi(t,x)\bar{\varphi}(t,x)\, dx\, dy\, dt\ \bigg\vert.
\end{align*}
\end{lemma}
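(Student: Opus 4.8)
\textbf{Proof proposal for Lemma \ref{interaction morawetz}.}

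The plan is to integrate the identity for $\partial_t M(t)$ over the interval $I$, discard the manifestly nonnegative terms, and rearrange the remaining ones. First I would fix the function $a(x)=\vert x\vert$ and record the consequences already listed in the excerpt: $a_j(x)=x_j/\vert x\vert$, $a_{jk}(x)=\delta_{jk}/\vert x\vert - x_jx_k/\vert x\vert^3$, and $\Delta\Delta a(x)=-(d-1)\Delta(1/\vert\cdot\vert)(x)$. Substituting $\Delta\Delta a$ into the first term on the right-hand side of the formula for $\partial_t M(t)$ produces, after dividing through by the harmless positive constant $(d-1)$ (which gets absorbed into $\lesssim$), exactly the expression $-\iint \vert\varphi(t,y)\vert^2 \Delta(1/\vert\cdot\vert)(x-y)\vert\varphi(t,x)\vert^2\,dx\,dy$ that we want on the left-hand side. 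The fourth term in $\partial_t M(t)$, involving $\{\N,\varphi\}_m$, and the fifth term, involving $\{\N,\varphi\}_{\p}$, are the two nonlinear contributions; after dividing by $(d-1)$ they match (up to absolute constants absorbed by $\lesssim$) the momentum-bracket term we keep on the left and the mass-bracket term we bound on the right.

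The key step is the positivity of $\eqref{mor2}+\eqref{mor3}$ for the choice $a(x)=\vert x\vert$. I would cite this directly — it is stated in the excerpt and proved, for instance, in \cite[Lemma 5.4]{Monica:thesis art} — rather than reprove it; morally it is the statement that the Hessian $a_{jk}$ is a nonnegative matrix (it is $\vert x\vert^{-1}$ times the projection onto the orthogonal complement of $x$), so that $\iint \vert\varphi(t,y)\vert^2 a_{jk}(x-y)[\,4\Re(\bar\varphi_j\varphi_k)(t,x) - 4\Im(\bar\varphi\varphi_k)(t,y)\Im(\bar\varphi\varphi_j)(t,x)/\vert\varphi(t,y)\vert^2\,]$ is a nonnegative quadratic form in the gradient data at $x$, after completing the square. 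Granting this, integrating $\partial_t M$ over $I=[t_-,t_+]$ gives
$$M(t_+)-M(t_-) = -\text{(good term)} - \text{(positive term)} + \text{(momentum nonlinear term)} + \text{(mass nonlinear term)},$$
and moving the good term and the momentum nonlinear term to the left, dropping the positive term, and estimating $\vert M(t_+)-M(t_-)\vert \le 2\sup_{t\in I}\vert M(t)\vert$ yields precisely the claimed inequality, once one unravels $M(t) = 2\,\Im\iint \vert\varphi(t,y)\vert^2 a_k(x-y)(\varphi_k\bar\varphi)(t,x)\,dx\,dy = 2\,\Im\iint \vert\varphi(t,y)\vert^2 \tfrac{x-y}{\vert x-y\vert}\cdot(\nabla\varphi\,\bar\varphi)(t,x)\,dx\,dy$ and takes absolute values; note $\vert\Im z\vert \le \vert z\vert$ absorbs the factor $2$ and the imaginary part into the displayed supremum term on the right. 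The mass-bracket nonlinear term from $\partial_t M$ similarly contributes the term with $\{\N,\varphi\}_m$ on the right, placed inside absolute values and bounded by the triangle inequality after integrating over $I$.

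The main obstacle is making sure the derivation of $\partial_t M(t)$ and the positivity of $\eqref{mor2}+\eqref{mor3}$ are invoked at the right level of rigor — both are genuinely computational (integration by parts in $x$ and $y$, careful bookkeeping of the index contractions, and the Cauchy–Schwarz/completing-the-square argument for positivity) but entirely standard, so I would state them as given and cite \cite{CKSTT}, \cite{KV}, and \cite[Section 5]{Monica:thesis art}. A secondary point requiring a word of care is that the identity for $\partial_t M(t)$ and all the integrations by parts are justified for Schwartz solutions and then extended to our solution class by a standard approximation/density argument; I would remark on this in one sentence rather than carry it out, since $\varphi$ in the application will be $u_{\geq N}$, which (being frequency-localized with $u\in L_t^\infty\dot H_x^{s_c}$) is smooth and has enough decay for all manipulations to be legitimate.
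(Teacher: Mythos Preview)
Your proposal is correct and follows exactly the paper's approach: the paper's own proof is the single sentence ``integrating $\partial_t M$ over $I$, we arrive at the following'' after citing \cite[Lemma 5.4]{Monica:thesis art} for $\eqref{mor2}+\eqref{mor3}\geq 0$, which is precisely your plan. One cosmetic slip worth fixing: in your displayed rearrangement you write $M(t_+)-M(t_-) = -(\text{good}) - (\text{positive}) + \ldots$, but the signs in the $\partial_t M$ identity actually give $+(\text{good})+(\text{positive})+(\text{mass})+(\text{momentum})$; your subsequent verbal description (``move the good term and the momentum nonlinear term to the left, drop the positive term'') is the correct manipulation, and with that correction the argument goes through exactly as you say.
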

To prove Proposition \ref{flim}, we will apply this estimate with $\varphi=u_{\geq N}$, with $N$ chosen small enough to capture `most' of the solution. To make this idea more precise, we first need to record the following corollary of Proposition \ref{lts lemma}. 
 
\begin{corollary}[Low and high frequencies control]\label{lts corollary} Let $(d,s_c)$ satisfy \eqref{constraints2}, and let $u:[0,T_{max})\times\R^d\to\C$ be an almost periodic solution to \eqref{nls} with $N(t)\equiv N_k\geq 1$ on each characteristic subinterval $J_k\subset[0,T_{max}).$ Then on any compact time interval $I\subset[0,T_{max}),$ which is a union of continuous subintervals $J_k$, and for any frequency $N>0$, we have
\begin{equation}\label{uhi control} \xnorm{u_{\geq N}}{q}{r}{I}\lesssim_u N^{-s_c}(1+N^{4s_c-1}K)^{\frac{1}{q}}\end{equation}
for all $\tfrac{2}{q}+\tfrac{d}{r}=\tfrac{d}{2}$ with $q>4-\tfrac{2p}{dp-4},$ where $K:=\int_I N(t)^{3-4s_c}\, dt.$ 

Moreover, for any $\eta>0$, there exists $N_0=N_0(\eta)$ such that for all $N\leq N_0$, we have
\begin{equation}\label{ulo control} \xnorm{\nsc u_{\leq N}}{q}{r}{I}\lesssim_u \eta(1+N^{4s_c-1}K)^{\frac{1}{q}}\end{equation}
 for all $\tfrac{2}{q}+\tfrac{d}{r}=\tfrac{d}{2}$ with $q\geq 2$.

Furthermore, $N_0$ and the implicit constants in \eqref{uhi control} and \eqref{ulo control} do not depend on $I$. 
\end{corollary}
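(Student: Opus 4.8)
The plan is to derive both bounds from the long-time Strichartz estimate of Proposition~\ref{lts lemma}, together with interpolation along the admissible line: the low-frequency bound \eqref{ulo control} is a short interpolation argument, while the high-frequency bound \eqref{uhi control} is the main work.

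\emph{The low frequencies.} Since $\inf_{t\in[0,T_{max})}N(t)\geq1$, Remark~\ref{another consequence} furnishes, for each $\eta>0$, a threshold $N_0=N_0(\eta)>0$ (independent of $I$) such that $u_{\leq N}$ is frequency-supported where $|\xi|\lesssim N\leq c(\eta)N(t)$ for all $t$, whence $\xnorm{\nsc u_{\leq N}}{\infty}{2}{I}\lesssim\eta$ whenever $N\leq N_0$. Shrinking $N_0$ if necessary, \eqref{lts small} gives in addition $\xnorm{\nsc u_{\leq N}}{2}{\frac{2d}{d-2}}{I}\lesssim_u\eta(1+N^{4s_c-1}K)^{1/2}$. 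The pairs $(\infty,2)$ and $(2,\tfrac{2d}{d-2})$ are both admissible, and every admissible pair $(q,r)$ with $q\geq2$ lies on the interpolation segment between them with exponent $\theta=\tfrac2q$; interpolating the two displayed bounds and noting that each carries a factor $\eta$ yields $\xnorm{\nsc u_{\leq N}}{q}{r}{I}\lesssim_u\eta^{1-2/q}\big[\eta(1+N^{4s_c-1}K)^{1/2}\big]^{2/q}=\eta(1+N^{4s_c-1}K)^{1/q}$, which is \eqref{ulo control}. Uniformity of $N_0$ and of the implicit constant in $I$ is inherited from Proposition~\ref{lts lemma} and Remark~\ref{another consequence}.

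\emph{The high frequencies.} The endpoint $q=\infty$ is immediate: by the Bernstein estimate for $P_{\geq N}$ and $u\in L_t^\infty\dot H_x^{s_c}$ one has $\xnorm{u_{\geq N}}{\infty}{2}{I}\lesssim N^{-s_c}\xnorm{\nsc u_{\geq N}}{\infty}{2}{I}\lesssim_u N^{-s_c}$. Interpolating this with a single estimate of the form $\xnorm{u_{\geq N}}{q_0}{r_0}{I}\lesssim_u N^{-s_c}(1+N^{4s_c-1}K)^{1/q_0}$ at an admissible pair $(q_0,r_0)$ recovers \eqref{uhi control} for all $q>q_0$, so it suffices to establish the latter for $q_0$ arbitrarily close to $4-\tfrac{2p}{dp-4}$ from above (then let $q_0\downarrow4-\tfrac{2p}{dp-4}$). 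To prove the $(q_0,r_0)$-estimate I would apply a Strichartz estimate to $\nsc u_{\geq N}$ on $I$ and decompose $|u|^pu$ exactly as in the proof of Proposition~\ref{lts lemma}—splitting off $u_{>N/\eta_0}$, and treating the pieces built from $u_{>cN(t)}$ and from $u_{\leq cN(t)}$ separately—estimating the resulting terms with Bernstein, the paraproduct estimate (Lemma~\ref{paraproduct}), the bilinear Strichartz inequality (Lemma~\ref{bilinear strichartz}), and the already-proven \eqref{lts} to control the low-frequency factors. Equivalently, using that $\xnorm{\,\cdot\,}{q_0}{r_0}{I}^{q_0}$ splits as $\sum_{J_k\subset I}\xnorm{\,\cdot\,}{q_0}{r_0}{J_k}^{q_0}$ together with Lemma~\ref{square function estimates}, one can work interval by interval: on each $J_k$ the portion of $u_{\geq N}$ at frequencies $\lesssim N_k$ is controlled by Bernstein and the single-interval instance of \eqref{lts} (equivalently $\norm{\nsc u}_{S^0(J_k)}\lesssim_u1$, cf.\ the remark following Lemma~\ref{bilinear strichartz}), while the portion at frequencies far above $N_k$ is controlled by a quantitative frequency-decay estimate obtained from Lemma~\ref{bilinear strichartz} and almost periodicity; summing over $k$ and over dyadic frequencies then reproduces $N^{-s_c}(1+N^{4s_c-1}K)^{1/q_0}$, with the geometric series converging precisely because $q_0>4-\tfrac{2p}{dp-4}$.

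The hard part—and the source of the restriction $q>4-\tfrac{2p}{dp-4}$, that is, $q>4-\tfrac1{s_c}$—is this last step: almost periodicity alone yields only qualitative smallness of the high frequencies, so one must combine it with the bilinear Strichartz inequality (and its half-derivative gain) to extract a genuine decay rate, and then arrange the exponent bookkeeping so that the gain over crude estimates comes out as $N^{4s_c-1}$ rather than the weaker $\int_I N(t)^2\,dt$. Once these estimates are assembled, the independence of $N_0$ and of the constants on $I$ follows from the corresponding uniformity in Proposition~\ref{lts lemma}, completing the proof.
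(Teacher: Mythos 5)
Your treatment of the low frequencies is correct and is the paper's argument: the smallness of $\xnorm{\nsc u_{\leq N}}{\infty}{2}{I}$ coming from almost periodicity together with $\inf_{t}N(t)\geq 1$, interpolated against the $q=2$ endpoint of the long-time Strichartz estimate, gives \eqref{ulo control} with constants independent of $I$ (your use of \eqref{lts small} at the $q=2$ endpoint is indeed what produces the clean factor $\eta$ uniformly down to $q=2$).

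The high-frequency half, however, has a genuine gap. Everything in your plan rests on the base-case bound $\xnorm{u_{\geq N}}{q_0}{r_0}{I}\lesssim_u N^{-s_c}(1+N^{4s_c-1}K)^{1/q_0}$ for $q_0$ just above $4-\tfrac{2p}{dp-4}$, and you never actually establish it: ``apply Strichartz and decompose $\vert u\vert^pu$ as in Proposition~\ref{lts lemma}'' amounts to re-running the entire nonlinear machinery (paraproduct, bilinear Strichartz, induction on frequency) for $u_{\geq N}$ in a non-endpoint norm, and nothing in the sketch shows how the data term, the specific factor $(1+N^{4s_c-1}K)^{1/q_0}$, or the threshold $q_0>4-\tfrac{2p}{dp-4}$ would emerge from such an argument; likewise the interval-by-interval alternative invokes a ``quantitative frequency-decay estimate'' from Lemma~\ref{bilinear strichartz} and almost periodicity that is never formulated, and it is unclear how summing over the $J_k$ would produce $K=\int_I N(t)^{3-4s_c}\,dt$ rather than the larger quantity $\int_I N(t)^{2}\,dt$. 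The point you are missing is that \eqref{uhi control} is a soft consequence of the already-proven \eqref{lts}, with no new nonlinear estimates: fix $\alpha>s_c-\tfrac12$ and sum Bernstein over dyadic $M\geq N$ to get $\xnorm{\ntw{-\alpha}u_{\geq N}}{2}{\frac{2d}{d-2}}{I}\lesssim\sum_{M\geq N}M^{-\alpha-s_c}\xnorm{\nsc u_M}{2}{\frac{2d}{d-2}}{I}\lesssim_u\sum_{M\geq N}M^{-\alpha-s_c}\big(1+M^{2s_c-\frac12}K^{\frac12}\big)\lesssim_u N^{-\alpha-s_c}\big(1+N^{4s_c-1}K\big)^{\frac12}$, the sum converging precisely because $\alpha>s_c-\tfrac12$; then for an admissible pair $(q,r)$ with $q>2$ choose $\alpha=\tfrac{(q-2)(dp-4)}{4p}$, which satisfies $\alpha>s_c-\tfrac12$ exactly when $q>4-\tfrac{2p}{dp-4}$, and interpolate this bound against $\xnorm{\nsc u_{\geq N}}{\infty}{2}{I}\lesssim_u1$; the derivative weights cancel and \eqref{uhi control} follows. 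Thus the restriction on $q$ comes from the convergence of this dyadic sum in the interpolation bookkeeping, not from any nonlinear analysis, and the heavy machinery you propose is both unnecessary and, as written, not a proof.
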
 
\begin{proof}[Proof of Corollary \ref{lts corollary}] We first show \eqref{uhi control}. For fixed $\alpha>s_c-\tfrac12$, we can use Bernstein and \eqref{lts} to see
\begin{align} \xnorm{\vert\nabla\vert^{-\alpha}u_{\geq N}}{2}{\frac{2d}{d-2}}{I}&\lesssim\sum_{M\geq N} M^{-\alpha-s_c}\xnorm{\nsc u_M}{2}{\frac{2d}{d-2}}{I} \nonumber
\\ &\lesssim_u \sum_{M\geq N}M^{-\alpha-s_c}(1+M^{2s_c-\frac12}K^{\frac12})\nonumber
\\ &\lesssim_u N^{-\alpha-s_c}(1+N^{4s_c-1}K)^{\frac12}.\label{corollary neg}
\end{align}
Now, take $(q,r)$ with $2< q\leq\infty$ and $\tfrac{2}{q}+\tfrac{d}{r}=\tfrac{d}{2}$, and define $\alpha=\tfrac{(q-2)(dp-4)}{4p}.$ Notice that $\alpha>s_c-\tfrac12$ exactly when $q>4-\tfrac{2p}{dp-4}.$ Thus, in this case, we get by interpolation and \eqref{corollary neg} that
\begin{align*} \xnorm{u_{\geq N}}{q}{r}{I}&\lesssim\xnorm{\vert\nabla\vert^{-\alpha}u_{\geq N}}{2}{\frac{2d}{d-2}}{I}^{\frac{2}{q}}\xnorm{\nsc u_{\geq N}}{\infty}{2}{I}^{1-\frac{2}{q}}
\\ &\lesssim_u \left[ N^{-\frac{qs_c}{2}}(1+N^{4s_c-1}K)^{\frac12}\right]^{\frac{2}{q}},
\end{align*} which gives \eqref{uhi control}. As for \eqref{ulo control}, we first note that since $\inf_{t\in I}N(t)\geq 1$, for any $\eta>0$ we may find $N_0(\eta)$ so that $$\xnorm{\nsc u_{\leq N}}{\infty}{2}{I}\leq\eta$$ for all $N\leq N_0$ (cf. Remark~\ref{another consequence}). The estimate \eqref{ulo control} then follows by interpolating with \eqref{lts}. 
\end{proof}
We are now ready for the
\begin{proof}[Proof of Proposition \ref{flim}] Take $I\subset[0,T_{max})$, a compact time interval, which is a contiguous union of subintervals $J_k$, and let $K:=\int_I N(t)^{3-4s_c}\, dt$. Throughout the proof, all spacetime norms will be taken over $I\times\R^d$.

Fix $\eta>0$, and choose $N_0=N_0(\eta)$ small enough that \eqref{ulo control} holds; recall that \eqref{uhi control} holds without any restriction on $N$. Next, we claim that for $N_0$ possibly even smaller, we can guarantee that for $N\leq N_0$, we have
\begin{equation}\label{uhi small} \xonorm{u_{\geq N}}{\infty}{2}\lesssim_u \eta^{10} N^{-s_c}
\end{equation}
and 
\begin{equation}\label{uhi small2} \xonorm{\vert\nabla\vert^{1-s_c}u_{\geq N}}{\infty}{2}\lesssim_u \eta^{10}N^{1-2s_c}.
\end{equation}
Indeed, by Remark~\ref{another consequence} and the fact that $\inf_{t\in I} N(t)\geq 1$, we may find $c(\eta)>0$ so that $$\xonorm{\nsc u_{\leq c(\eta)}}{\infty}{2}\leq\eta^{10};$$ combining this inequality with Bernstein, we get 
 \begin{align*} N^{s_c}\xonorm{u_{\geq N}}{\infty}{2}&\lesssim N^{s_c}\xonorm{u_{N\leq\cdot\leq c(\eta)}}{\infty}{2}+N^{s_c}\xonorm{u_{>c(\eta)}}{\infty}{2}
\\ &\lesssim \xonorm{\nsc u_{\leq c(\eta)}}{\infty}{2}+\tfrac{N^{s_c}}{c(\eta)^{s_c}}\xonorm{\nsc u_{>c(\eta)}}{\infty}{2}
\\ &\lesssim_u \eta^{10}+N^{s_c}.
\end{align*}
Thus, taking $N$ sufficiently small, we recover \eqref{uhi small}. A similar argument yields \eqref{uhi small2}.

Next, we record the following inequality that will be useful below:
\begin{equation}\label{hardy} \sup_{y\in\R^d} \bigg\vert\int_{\R^d} \tfrac{x-y}{\vert x-y\vert}\cdot\nabla\varphi(x)\bar{\varphi}(x)\ dx\bigg\vert\lesssim \norm{\vert\nabla\vert^{s}\varphi}_2\norm{\vert\nabla\vert^{1-s}\varphi}_2
\end{equation} for $0\leq s\leq 1$. Indeed, for fixed $y\in\R^d$, we can first write
\begin{align*} \bigg\vert\int_{\R^d} \tfrac{x-y}{\vert x-y\vert}\cdot\nabla\varphi(x)\bar{\varphi}(x)\ dx\bigg\vert&\lesssim \norm{\vert\nabla\vert^s\tfrac{x-y}{\vert x-y\vert}\varphi}_2\norm{\vert\nabla\vert^{-s}\nabla\varphi}_2
\\ &\sim \norm{\vert\nabla\vert^s\tfrac{x-y}{\vert x-y\vert}\varphi}_2\norm{\vert \nabla\vert^{1-s}\varphi}_2.
\end{align*}
Thus, to prove \eqref{hardy}, we need to see that the operator $\vert\nabla\vert^s\tfrac{x-y}{\vert x-y\vert}\vert\nabla\vert^{-s}$ is bounded on $L_x^2$ (uniformly in $y$). When $s=0$, this is clear. When $s=1$, this follows from the chain rule, Hardy's inequality, and the boundedness of Riesz transforms. The general case then follows from complex interpolation.

We now wish to apply the interaction Morawetz inequality (Lemma \ref{interaction morawetz}) with $\varphi=u_{\geq N}$ and $\N=P_{\geq N}(\vert u\vert^p u)$, with $N\leq N_0$. Together with \eqref{uhi small}, \eqref{uhi small2},  \eqref{hardy}, Bernstein, and the fact that $u\in L_t^\infty \dot{H}_x^{s_c}(I\times\R^d)$, an application of Lemma \ref{interaction morawetz} gives
\begin{align}
-&\iiint \vert u_{\geq N}(t,y)\vert^2\Delta(\tfrac{1}{\vert\cdot\vert})(x-y)\vert u_{\geq N}(t,x)\vert^2\, dx\, dy \, dt \nonumber
\\ &+\quad \!\!\!\!\!\!\iiint \vert u_{\geq N}(t,y)\vert^2\tfrac{x-y}{\vert x-y\vert}\cdot\{P_{\geq N}(\vert u\vert^p u),u_{\geq N}\}_{\p}(t,x)\, dx\, dy \, dt \nonumber
\\ &\lesssim\xonorm{u_{\geq N}}{\infty}{2}^2\xonorm{\vert\nabla\vert^{1/2}u_{\geq N}}{\infty}{2}^2\nonumber
\\ &\quad +\xonorms{\{P_{\geq N}(\vert u\vert^p u),u_{\geq N}\}_m}{1}\xonorm{\nsc u_{\geq N}}{\infty}{2}\xonorm{\vert\nabla\vert^{1-s_c}u_{\geq N}}{\infty}{2} \nonumber
\\ &\lesssim_u \eta^{20}N^{1-4s_c}+\eta^{10}N^{1-2s_c}\xonorms{\{P_{\geq N}(\vert u\vert^p u),u_{\geq N}\}_m}{1}. \label{interaction morawetz with uhi}
\end{align}
Thus, to prove Proposition \ref{flim}, we need to get sufficient control over the mass and momentum bracket terms appearing above. 

To begin, we consider the contribution of the momentum bracket term. We can write
\begin{align*}
\{P_{\geq N}&(\vert u\vert^p u), u_{\geq N}\}_{\p}
\\ &=\pb{\vert u\vert^p u}{u}-\pb{\vert u_{\leq N}\vert^p u_{\leq N}}{u_{\leq N}}
\\ &\quad\quad -\pb{\vert u\vert^p u-\vert u_{\leq N}\vert^p u_{\leq N}}{u_{\leq N}}-\pb{P_{\leq N}(\vert u\vert^p u)}{u_{\geq N}}
\\ & =-\tfrac{p}{p+2}\nabla(\vert u\vert^{p+2}-\vert u_{\leq N}\vert^{p+2})-\pb{\vert u\vert^p u-\vert u_{\leq N}\vert^p u_{\leq N}}{u_{\leq N}}
\\ &\quad\quad -\pb{P_{\leq N}(\vert u\vert^p u)}{u_{\geq N}}
\\ &=:I+II+III.
\end{align*}

After an integration by parts, we see that term $I$ contributes to the left-hand side of \eqref{interaction morawetz with uhi} a multiple of
\begin{align*}&\iiint \frac{\vert u_{\geq N}(t,y)\vert^2\vert u_{\geq N}(t,x)\vert^{p+2}}{\vert x-y\vert}\, dx\, dy\, dt
\\ &-\iiint \frac{\vert u_{\geq N}(t,y)\vert^2(\vert u\vert^{p+2}-\vert u_{\geq N}\vert^{p+2}-\vert u_{\leq N}\vert^{p+2})(t,x)}{\vert x-y\vert}\, dx\, dy \, dt.\end{align*} 

For term $II$, we use $\pb{f}{g}=\nabla\text{\O}(fg)+\text{\O}(f\nabla g);$ when the derivative hits the product, we integrate by parts, while for the second term we simply bring absolute values inside the integral. In this way, we find that term $II$ contributes to the right-hand side of \eqref{interaction morawetz with uhi} a multiple of
\begin{align*}&\iiint\frac{\vert u_{\geq N}(t,y)\vert^2\vert(\vert u\vert^p u-\vert u_{\leq N}\vert^p u_{\leq N})u_{\leq N}(t,x)\vert}{\vert x-y\vert}\, dx\, dy\, dt
\\ &+\iiint\vert u_{\geq N}(t,y)\vert^2\vert(\vert u\vert^p u-\vert u_{\leq N}\vert^p u_{\leq N})(t,x)\vert\ \vert\nabla u_{\leq N}(t,x)\vert\, dx\, dy \, dt.
\end{align*}

Finally, for term $III$, we integrate by parts when the derivative falls on $u_{\geq N}$; in this way, we see that term $III$ contributes to the right-hand side of \eqref{interaction morawetz with uhi} a multiple of
\begin{align*} &\iiint\frac{\vert u_{\geq N}(t,y)\vert^2\vert u_{\geq N}(t,x)\vert\, \vert P_{\leq N}(\vert u\vert^p u)(t,x)\vert}{\vert x-y\vert}\, dx\, dy\, dt
\\ &+\iiint \vert u_{\geq N}(t,y)\vert^2\vert u_{\geq N}(t,x)\vert\, \vert\nabla P_{\leq N}(\vert u\vert^p u)(t,x)\vert\, dx\, dy\, dt.
\end{align*}

We next consider the mass bracket term in \eqref{interaction morawetz with uhi}. Exploiting the fact that 
$$\mb{\vert u_{\geq N}\vert^p u_{\geq N}}{u_{\geq N}}=0,$$
we can write
\begin{align*} \mb{P_{\geq N}(\vert u\vert^p u)}{u_{\geq N}}
 &=\mb{P_{\geq N}(\vert u\vert^p u)-\vert u_{\geq N}\vert^p u_{\geq N}}{u_{\geq N}}
\\ &=\mb{P_{\geq N}(\vert u\vert^pu-\vert u_{\geq N}\vert^p u_{\geq N}-\vert u_{\leq N}\vert^p u_{\leq N})}{u_{\geq N}}
\\ &\ +\mb{P_{\geq N}(\vert u_{\leq N}\vert^p u_{\leq N})}{u_{\geq N}}-\mb{P_{\leq N}(\vert u_{\geq N}\vert^p u_{\geq N})}{u_{\geq N}}.
\end{align*}

We will now collect the contributions of the mass and momentum bracket terms and insert them back into \eqref{interaction morawetz with uhi}. We will also make use of the pointwise inequalities 
\begin{align}&\big\vert \vert f+g\vert^p(f+g)-\vert f\vert^p f\big\vert\lesssim \vert g\vert^{p+1}+\vert g\vert\ \vert f\vert^{p},\nonumber
\\ &\big\vert \vert f+g\vert^{p+2}-\vert f\vert^{p+2}-\vert g\vert^{p+2}\big\vert\lesssim \vert f\vert\ \vert g\vert^{p+1}+\vert f\vert^{p+1}\vert g\vert.\nonumber
\end{align}
In this way, \eqref{interaction morawetz with uhi} becomes
\begin{align} &-\iiint \vert u_{\geq N}(t,y)\vert^2\Delta(\tfrac{1}{\vert\cdot\vert})(x-y)\vert u_{\geq N}(t,x)\vert^2\, dx\, dy\, dt 
\\ &\quad +\iiint\frac{\vert u_{\geq N}(t,y)\vert^2\vert u_{\geq N}(t,x)\vert^{p+2}}{\vert x-y\vert}\, dx\, dy\, dt \nonumber
\\ \nonumber
\\ &\lesssim_u \eta^{20}N^{1-4s_c} \label{error 1}
\\ & +\eta^{10}N^{1-2s_c}\xonorms{\vert u_{\leq N}\vert^p u_{\geq N}^2}{1} \label{error 2}
\\  & +\eta^{10}N^{1-2s_c}\xonorms{\vert u_{\geq N}\vert^{p+1}u_{\leq N}}{1} \label{error 3}
\\  & +\eta^{10}N^{1-2s_c}\xonorms{P_{\geq N}(\vert u_{\leq N}\vert^pu_{\leq N})u_{\geq N}}{1} \label{error 4}
\\  & +\eta^{10}N^{1-2s_c}\xonorms{P_{\leq N}(\vert u_{\geq N}\vert^pu_{\geq N})u_{\geq N}}{1} \label{error 5}
\\ &+\iiint \frac{\vert u_{\geq N}(t,y)\vert^2\vert u_{\geq N}(t,x)\vert\,\vert u_{\leq N}(t,x)\vert^{p+1}}{\vert x-y\vert}\,dx\,dy\,dt\label{error 6}
\\ &+\iiint\frac{\vert u_{\geq N}(t,y)\vert^2\vert u_{\geq N}(t,x)\vert^{p+1}\vert u_{\leq N}(t,x)\vert}{\vert x-y\vert}\,dx\,dy\,dt\label{error 7}
\\ &+\iiint\frac{\vert u_{\geq N}(t,y)\vert^2\vert P_{\leq N}(\vert u\vert^p u)(t,x)\vert\,\vert u_{\geq N}(t,x)\vert}{\vert x-y\vert}\,dx\,dy\,dt \label{error 8}
\\ &+\iiint\vert u_{\geq N}(t,y)\vert^2\vert u_{\geq N}(t,x)\vert\,\vert u_{\leq N}(t,x)\vert^p\vert\nabla u_{\leq N}(t,x)\vert\,dx\,dy\,dt \label{error 9}
\\ &+\iiint\vert u_{\geq N}(t,y)\vert^2\vert u_{\geq N}(t,x)\vert^{p+1}\vert\nabla u_{\leq N}(t,x)\vert\,dx\,dy\,dt\label{error 10}
\\ &+\iiint\vert u_{\geq N}(t,y)\vert^2\vert u_{\geq N}(t,x)\vert\,\vert\nabla P_{\leq N}(\vert u\vert^p u)(t,x)\vert\,dx\,dy\,dt.\label{error 11}
\end{align}

To complete the proof of Proposition \ref{flim}, we need to show that the error terms \eqref{error 1} through \eqref{error 11} are acceptable, in the sense that they can be controlled by $\eta(N^{1-4s_c}+K).$ Clearly, \eqref{error 1} is acceptable. 

Next, we consider \eqref{error 2}. Using H\"older, Sobolev embedding, \eqref{uhi control}, and \eqref{ulo control}, we get
\begin{align*}
\xonorms{\vert u_{\leq N}\vert^p u_{\geq N}^2}{1}&\lesssim \xonorm{u_{\leq N}}{2p}{dp}^p\xonorm{u_{\geq N}}{4}{\frac{2d}{d-1}}^2
\\ &\lesssim \xonorm{\nsc u_{\leq N}}{2p}{\frac{2dp}{dp-2}}^p\xonorm{u_{\geq N}}{4}{\frac{2d}{d-1}}^2
\\ &\lesssim_u \eta^p N^{-2s_c}(1+N^{4s_c-1} K), 
\end{align*}
which renders \eqref{error 2} acceptable.

We now turn to \eqref{error 3}. For this term, we can again use H\"older, Sobolev embedding, \eqref{uhi control}, and \eqref{ulo control} to see
\begin{align*} 
\xonorms{\vert u_{\geq N}\vert^{p+1}u_{\leq N}}{1}&\lesssim \xonorm{u_{\geq N}}{3}{\frac{6d}{3d-4}}^2\xonorm{u_{\geq N}}{\infty}{\frac{dp}{2}}^{p-1}\xonorm{u_{\leq N}}{3}{\frac{3dp}{6-2p}}
\\ &\lesssim \xonorm{u_{\geq N}}{3}{\frac{6d}{3d-4}}^2\xonorm{\nsc u}{\infty}{2}^{p-1}\xonorm{\nsc u_{\leq N}}{3}{\frac{6d}{3d-4}}
\\ &\lesssim_u \eta N^{-2s_c}(1+N^{4s_c-1}K).
\end{align*}
Thus this term is acceptable as well. Before proceeding, however, we note that it is this term that has forced us to exclude the cases $(d,s_c)\in\{3\}\times(\tfrac34,1)$ from this paper; we postpone further discussion until Remark~\ref{exclusion} below. 

We next turn to \eqref{error 4}; using H\"older, Bernstein, the fractional chain rule, Sobolev embedding, \eqref{uhi control}, and \eqref{ulo control}, we see
\begin{align*}
\xonorms{P_{\geq N}&(\vert u_{\leq N}\vert^p u_{\leq N})u_{\geq N}}{1}
\\ &\lesssim N^{-s_c} \xonorm{u_{\geq N}}{4}{\frac{2d}{d-1}}\xonorm{\nsc(\vert u_{\leq N}\vert^p u_{\leq N})}{\frac{4}{3}}{\frac{2d}{d+1}}
\\ &\lesssim N^{-s_c}\xonorm{u_{\geq N}}{4}{\frac{2d}{d-1}}\xonorm{u_{\leq N}}{4p}{\frac{2dp}{3}}^p\xonorm{\nsc u_{\leq N}}{2}{\frac{2d}{d-2}}
\\ &\lesssim N^{-s_c}\xonorm{u_{\geq N}}{4}{\frac{2d}{d-1}}\xonorm{\nsc u_{\leq N}}{4p}{\frac{2dp}{dp-1}}^p\xonorm{\nsc u_{\leq N}}{2}{\frac{2d}{d-2}}
\\ &\lesssim_u \eta^{p+1}N^{-2s_c}(1+N^{4s_c-1}K),
\end{align*} so that \eqref{error 4} is also acceptable.

For the final term originating from the mass bracket, \eqref{error 5}, we use H\"older, Bernstein, Sobolev embedding, \eqref{uhi control}, and \eqref{uhi small} to see
\begin{align*}
\xonorms{P_{\leq N}(\vert u_{\geq N}\vert^p u_{\geq N})u_{\geq N}}{1}&\lesssim \xonorm{u_{\geq N}}{3}{\frac{6d}{3d-4}}\xonorm{P_{\leq N}(\vert u_{\geq N}\vert^p u_{\geq N})}{\frac{3}{2}}{\frac{6d}{3d+4}}
\\ &\lesssim  N^{s_c}\xonorm{u_{\geq N}}{3}{\frac{6d}{3d-4}}\xonorm{\vert u_{\geq N}\vert^p u_{\geq N}}{\frac{3}{2}}{\frac{3dp}{3dp+2p-6}}
\\ &\lesssim  N^{s_c}\xonorm{u_{\geq N}}{3}{\frac{6d}{3d-4}} \xonorm{u_{\geq N}}{3}{\frac{6d}{3d-4}}^2\xonorm{u_{\geq N}}{\infty}{\frac{dp}{2}}^{p-1}
\\ &\lesssim N^{s_c} \xonorm{u_{\geq N}}{3}{\frac{6d}{3d-4}}^3\xonorm{\nsc u}{\infty}{2}^{p-1}
\\ &\lesssim_u N^{-2s_c}(1+N^{4s_c-1}K),
\end{align*}
which shows that \eqref{error 5} is acceptable. 

We now turn to the terms originating from the momentum bracket. First, consider \eqref{error 6}. By H\"older, Hardy--Littlewood--Sobolev, Sobolev embedding, Bernstein, \eqref{uhi control}, \eqref{ulo control}, and \eqref{uhi small}, we can estimate
\begin{align*} \eqref{error 6} &\lesssim \xonorm{\tfrac{1}{\vert x\vert}\ast\vert u_{\geq N}\vert^2}{3}{3d}\xonorm{u_{\geq N}}{3}{\frac{6d}{3d-4}}
\\ &\quad \times\xonorm{u_{\leq N}}{6}{\frac{3dp}{6-p}}\xonorm{u_{\leq N}}{6}{\frac{6d}{3d-8}}\xonorm{u}{\infty}{\frac{dp}{2}}^{p-1}
\\ &\lesssim \xonorm{\vert u_{\geq N}\vert^2}{3}{\frac{3d}{3d-2}}\xonorm{u_{\geq N}}{3}{\frac{6d}{3d-4}}
\\ &\quad \times\xonorm{\nsc u_{\leq N}}{6}{\frac{6d}{3d-2}}\xonorm{\nabla u_{\leq N}}{6}{\frac{6d}{3d-2}}\xonorm{\nsc u}{\infty}{2}^{p-1}
\\ &\lesssim_u N^{1-s_c}\xonorm{u_{\geq N}}{\infty}{2}\xonorm{u_{\geq N}}{3}{\frac{6d}{3d-4}}^2\xonorm{\nsc u_{\leq N}}{6}{\frac{6d}{3d-2}}^2
\\ &\lesssim_u \eta^{12} N^{1-4s_c}(1+N^{4s_c-1}K),
\end{align*} so that \eqref{error 6} is acceptable.

For \eqref{error 7}, we consider two cases. If $\vert u_{\leq N}\vert\leq 10^{-100}\vert u_{\geq N}\vert,$ then we can absorb this term into the left-hand side of the inequality, provided we can show
\begin{equation}\label{finiteness of bootstrap terms} \iiint\frac{\vert u_{\geq N}(t,y)\vert^2\vert u_{\geq N}(t,x)\vert^{p+2}}{\vert x-y\vert}\, dx\, dy\, dt<\infty.\end{equation}
On the other hand, if $\vert u_{\geq N}\vert\leq 10^{100}\vert u_{\leq N}\vert$, then we are back in the situation of \eqref{error 6}, which we have already handled. Thus, to render \eqref{error 7} acceptable, it remains to prove \eqref{finiteness of bootstrap terms}. To this end, we define 
$$\theta=\tfrac{4dp-16-3p}{2(dp-4)}\in(0,p+2),$$
and use H\"older, Hardy--Littlewood--Sobolev, Sobolev embedding, Lemma \ref{spacetime bounds}, and interpolation to estimate
\begin{align*} \text{LHS}\eqref{finiteness of bootstrap terms}&\lesssim \xonorm{\tfrac{1}{\vert x\vert}\ast\vert u_{\geq N}\vert^2}{4}{4d}\xonorm{u_{\geq N}}{\infty}{2}^\theta\xonorm{u_{\geq N}}{\frac{2p(2dp-5)}{3(dp-4)}}{\frac{dp(2dp-5)}{dp+2}}^{p+2-\theta}
\\ &\lesssim \xonorm{\vert u_{\geq N}\vert^2}{4}{\frac{4d}{4d-3}}\xonorm{u_{\geq N}}{\infty}{2}^{\theta}\xonorm{\nsc u_{\geq N}}{\frac{2p(2dp-5)}{3(dp-4)}}{\frac{2dp(2dp-5)}{2(dp)^2-11dp+24}}^{p+2-\theta}
\\ &\lesssim_u \xonorm{u_{\geq N}}{4}{\frac{4d}{2d-3}}\xonorm{u_{\geq N}}{\infty}{2}^{1+\theta}\left(1+\smallint_I N(t)^2\, dt\right)^{\frac{(p+2-\theta)(3(dp-4))}{2p(2dp-5)}}
\\ &\lesssim_u \xonorm{\vert\nabla\vert^{1/4}u_{\geq N}}{4}{\frac{2d}{d-1}} N^{-s_c(1+\theta)}\left(1+\smallint_I N(t)^2\, dt\right)^{\frac{(p+2-\theta)(3(dp-4))}{2p(2dp-5)}}
\\ &\lesssim_u N^{1/4-s_c(2+\theta)}\xonorm{\nsc u_{\geq N}}{4}{\frac{2d}{d-1}}\left(1+\smallint_I N(t)^2\, dt\right)^{\frac{(p+2-\theta)(3(dp-4))}{2p(2dp-5)}}
\\ &\lesssim_u N^{1-4s_c}\left(1+\smallint_I N(t)^2\,dt\right)^{\frac{1}{4}+\frac{(p+2-\theta)(3(dp-4))}{2p(2dp-5)}}
\\ &\lesssim_u N^{1-4s_c}\left(1+\smallint_I N(t)^2\,dt\right),
\end{align*} 
which gives \eqref{finiteness of bootstrap terms}, and thereby shows that \eqref{error 7} is acceptable.

Next, we turn to \eqref{error 8}. Denoting $$G=\vert u\vert^p u -\vert u_{\leq N}\vert^p u_{\leq N}-\vert u_{\geq N}\vert^p u_{\geq N},$$ we begin by writing
\begin{align}&\eqref{error 8}\nonumber
\\ &\quad= \iiint \frac{\vert u_{\geq N}(t,y)\vert^2\vert P_{\leq N}(\vert u_{\leq N}\vert^p u_{\leq N})(t,x)\vert\,\vert u_{\geq N}(t,x)\vert}{\vert x-y\vert}\,dx\,dy\,dt \label{8a}
\\ &\quad\quad+\iiint \frac{\vert u_{\geq N}(t,y)\vert^2\vert P_{\leq N}(\vert u_{\geq N}\vert^p u_{\geq N})(t,x)\vert\,\vert u_{\geq N}(t,x)\vert}{\vert x-y\vert} \,dx\,dy\,dt\label{8b}
\\ &\quad\quad+\iiint \frac{\vert u_{\geq N}(t,\!y)\vert^2\vert P_{\leq N}G(t,x)\vert\,\vert u_{\geq N}(t,x)\vert}{\vert x-y\vert}\,dx\,dy\,dt.\label{8c}
\end{align}
For \eqref{8a}, we can write
\begin{align*} \eqref{8a}&\lesssim \xonorm{\tfrac{1}{\vert x\vert}\ast \vert u_{\geq N}\vert^2}{3}{3d}\xonorm{u_{\geq N}}{3}{\frac{6d}{3d-4}}\xonorm{u_{\leq N}}{6}{\frac{3dp}{6-p}}\xonorm{u_{\leq N}}{6}{\frac{6d}{3d-8}}\xonorm{u}{\infty}{\frac{dp}{2}}^{p-1}
\\ &\lesssim_u \eta^{12} N^{1-4s_c}(1+N^{4s_c-1}K)
\end{align*} by the same arguments that dealt with \eqref{error 6}. 

For \eqref{8b}, we can use H\"older, Hardy--Littlewood--Sobolev, Bernstein, Sobolev embedding, \eqref{uhi control}, \eqref{ulo control}, and \eqref{uhi small} to estimate
\begin{align*}
\eqref{8b}&\lesssim \xonorm{ u_{\geq N}}{6}{\frac{6d}{3d-2}}^2\xonorm{\tfrac{1}{\vert x\vert}\ast (P_{\leq N}(\vert u_{\geq N}\vert^p u_{\geq N})u_{\geq N})}{\frac{3}{2}}{\frac{3d}{2}}
\\ &\lesssim  \xonorm{ u_{\geq N}}{6}{\frac{6d}{3d-2}}^2\xonorm{P_{\leq N}(\vert u_{\geq N}\vert^p u_{\geq N})u_{\geq N}}{\frac{3}{2}}{\frac{3d}{3d-1}}
\\ &\lesssim  \xonorm{ u_{\geq N}}{6}{\frac{6d}{3d-2}}^2\xonorm{u_{\geq N}}{\infty}{2}\xonorm{P_{\leq N}(\vert u_{\geq N}\vert^p u_{\geq N})}{\frac{3}{2}}{\frac{6d}{3d-2}}
\\ &\lesssim \xonorm{ u_{\geq N}}{6}{\frac{6d}{3d-2}}^2\xonorm{u_{\geq N}}{\infty}{2}N^{1+s_c}\xonorm{P_{\leq N}(\vert u_{\geq N}\vert^p u_{\geq N})}{\frac32}{\frac{3dp}{3dp+2p-6}}
\\ &\lesssim  \xonorm{ u_{\geq N}}{6}{\frac{6d}{3d-2}}^2\xonorm{u_{\geq N}}{\infty}{2}N^{1+s_c}\xonorm{u_{\geq N}}{3}{\frac{6d}{3d-4}}^2\xonorm{u}{\infty}{\frac{dp}{2}}^{p-1}
\\ &\lesssim_u \eta^{10}N^{1-4s_c}(1+N^{4s_c-1}K),
\end{align*} which renders \eqref{8b} acceptable. 

For \eqref{8c}, we first note $$\vert u\vert^p u-\vert u_{\leq N}\vert^p u_{\leq N}-\vert u_{\geq N}\vert^p u_{\geq N}=\text{\O}(u_{\geq N}u_{\leq N}\vert u\vert^{p-1}),$$ so that using H\"older, Hardy--Littlewood--Sobolev, Bernstein, Sobolev embedding, \eqref{uhi control}, \eqref{ulo control}, and \eqref{uhi small}, we get 
\begin{align*}
\eqref{8c}&\lesssim \xonorm{\tfrac{1}{\vert x\vert}\ast \vert u_{\geq N}\vert^2}{3}{3d}\xonorm{P_{\leq N}(\text{\O}(u_{\geq N}u_{\leq N}\vert u\vert^{p-1}))}{3}{\frac{6d}{3d+2}}\xonorm{u_{\geq N}}{3}{\frac{6d}{3d-4}}
\\ &\lesssim N\xonorm{\vert u_{\geq N}\vert^2}{3}{\frac{3d}{3d-2}}\xonorm{\text{\O}(u_{\geq N}u_{\leq N}\vert u\vert^{p-1})}{3}{\frac{6d}{3d+8}}\xonorm{u_{\geq N}}{3}{\frac{6d}{3d-4}}
\\ &\lesssim N\xonorm{u_{\geq N}}{3}{\frac{6d}{3d-4}}^2\xonorm{u_{\geq N}}{\infty}{2}^2\xonorm{u_{\leq N}}{3}{\frac{3dp}{6-2p}}\xonorm{u}{\infty}{\frac{dp}{2}}^{p-1}
\\ &\lesssim N\xonorm{u_{\geq N}}{3}{\frac{6d}{3d-4}}^2\xonorm{u_{\geq N}}{\infty}{2}^2\xonorm{\nsc u_{\leq N}}{3}{\frac{6d}{3d-4}}\xonorm{\nsc u}{\infty}{2}^{p-1}
\\ &\lesssim_u \eta^{21}N^{1-4s_c}(1+N^{4s_c-1}K).
\end{align*} 
Thus \eqref{8c}, and so \eqref{error 8}, is acceptable.

We now turn to \eqref{error 9}. By H\"older, Sobolev embedding, Bernstein, \eqref{ulo control}, and \eqref{uhi small}, we estimate
\begin{align*}
\eqref{error 9}&\lesssim \xonorm{u_{\geq N}}{\infty}{2}^3\xonorm{u_{\leq N}}{2p}{dp}^p\xonorm{\nabla u_{\leq N}}{2}{\frac{2d}{d-2}}
\\ &\lesssim  N^{1-s_c}\xonorm{u_{\geq N}}{\infty}{2}^3\xonorm{\nsc u_{\leq N}}{2p}{\frac{2dp}{dp-2}}^p\xonorm{\nsc u_{\leq N}}{2}{\frac{2d}{d-2}}
\\ &\lesssim_u \eta^{p+31}N^{1-4s_c}(1+N^{4s_c-1}K),
\end{align*}
so that \eqref{error 9} is acceptable. 

For \eqref{error 10}, we use H\"older, Sobolev embedding, Bernstein, \eqref{uhi control}, \eqref{ulo control}, and \eqref{uhi small} to get
\begin{align*}
\eqref{error 10}&\lesssim \xonorm{u_{\geq N}}{\infty}{2}^2\xonorm{u_{\geq N}}{\infty}{\frac{dp}{2}}^{p-1}\xonorm{u_{\geq N}}{3}{\frac{6d}{3d-4}}^2\xonorm{\nabla u_{\leq N}}{3}{\frac{3dp}{6-2p}}
\\ &\lesssim_u N\xonorm{u_{\geq N}}{\infty}{2}^2\xonorm{u_{\geq N}}{3}{\frac{6d}{3d-4}}^2\xonorm{\nsc u_{\leq N}}{3}{\frac{6d}{3d-4}}
\\ &\lesssim_u \eta^{21}N^{1-4s_c}(1+N^{4s_c-1}K),
\end{align*}
which renders \eqref{error 10} acceptable.

Finally, we consider \eqref{error 11}. We begin by writing
\begin{align}
\eqref{error 11}\lesssim &\,\xonorm{u_{\geq N}}{\infty}{2}^2\xonorms{u_{\geq N}\nabla P_{\leq N}(\vert u_{\leq N}\vert^p u_{\leq N})}1 \label{11a}
\\ +& \xonorm{u_{\geq N}}{\infty}{2}^2\xonorms{u_{\geq N}\nabla P_{\leq N}(\vert u_{\geq N}\vert^p u_{\geq N})}{1} \label{11b}
\\ +& \xonorm{u_{\geq N}}{\infty}{2}^2\xonorms{u_{\geq N}\nabla P_{\leq N}(\vert u\vert^p u-\vert u_{\leq N}\vert^p u_{\leq N}-\vert u_{\geq N}\vert^p u_{\geq N})}{1}\label{11c}.
\end{align}
To begin, we use H\"older, the chain rule, and the arguments that gave \eqref{error 9} to see
\begin{align*}
\eqref{11a}&\lesssim \xonorm{u_{\geq N}}{\infty}{2}^3\xonorm{u_{\leq N}}{2p}{dp}^p\xonorm{\nabla u_{\leq N}}{2}{\frac{2d}{d-2}}
\\ &\lesssim_u \eta^{p+31}N^{1-4s_c}(1+N^{4s_c-1}K),
\end{align*}
so that \eqref{11a} is acceptable.

For \eqref{11b}, we argue essentially as we did for \eqref{error 5}. That is, we use H\"older, Bernstein, Sobolev embedding, \eqref{uhi control}, and \eqref{uhi small} to estimate
\begin{align*}
\eqref{11b}&\lesssim \xonorm{u_{\geq N}}{\infty}{2}^2\xonorm{u_{\geq N}}{3}{\frac{6d}{3d-4}}\xonorm{\nabla P_{\leq N}(\vert u_{\geq N}\vert^pu_{\geq N})}{\frac{3}{2}}{\frac{6d}{3d+4}}
\\ &\lesssim N^{1+s_c}\xonorm{u_{\geq N}}{\infty}{2}^2\xonorm{u_{\geq N}}{3}{\frac{6d}{3d-4}}\xonorm{\vert u_{\geq N}\vert^p u_{\geq N}}{\frac{3}{2}}{\frac{3dp}{3dp+2p-6}}
\\ &\lesssim N^{1+s_c}\xonorm{u_{\geq N}}{\infty}{2}^2\xonorm{u_{\geq N}}{3}{\frac{6d}{3d-4}}^3\xonorm{\nsc u}{\infty}{2}^{p-1}
\\ &\lesssim_u \eta^{20} N^{1-4s_c}(1+N^{4s_c-1}K),
\end{align*}
which gives that \eqref{11b} is acceptable.

For \eqref{11c}, we argue similarly to the case of \eqref{8c}. In particular, we use H\"older, Bernstein, Sobolev embedding, \eqref{uhi control}, \eqref{ulo control}, and \eqref{uhi small} to see
\begin{align*}
\eqref{11c}&\lesssim N\xonorm{u_{\geq N}}{\infty}{2}^2\xonorm{u_{\geq N}}{3}{\frac{6d}{3d-4}}\xonorm{\text{\O}(u_{\leq N}u_{\geq N}\vert u\vert^{p-1})}{\frac{3}{2}}{\frac{6d}{3d+4}}
\\ &\lesssim N\xonorm{u_{\geq N}}{\infty}{2}^2\xonorm{u_{\geq N}}{3}{\frac{6d}{3d-4}}^2\xonorm{u_{\leq N}}{3}{\frac{3dp}{6-2p}}\xonorm{u}{\infty}{\frac{dp}{2}}^{p-1}
\\ &\lesssim N\xonorm{u_{\geq N}}{\infty}{2}^2\xonorm{u_{\geq N}}{3}{\frac{6d}{3d-4}}^2\xonorm{\nsc u_{\leq N}}{3}{\frac{6d}{3d-4}}\xonorm{\nsc u}{\infty}{2}^{p-1}
\\ &\lesssim_u \eta^{21} N^{1-4s_c}(1+N^{4s_c-1}K),
\end{align*}
which gives that \eqref{11c}. Collecting the estimates for \eqref{11a}, \eqref{11b}, and \eqref{11c}, we see that \eqref{error 11} is acceptable. This completes the proof of Proposition \ref{flim}.
\end{proof}
\begin{remark}\label{exclusion} Let us discuss why \eqref{error 3} has forced us to exclude the cases $(d,s_c)\in\{3\}\times(\tfrac34,1)$ from this paper. As one can see in the proof above, in the cases we consider, this term is fairly harmless. However, once $s_c>\tfrac34$ in dimension $d=3$ (which corresponds to $p>\tfrac{8}{3}$), this term becomes a problem; put simply, we end up with too many copies of $u_{\geq N}$ to deal with.

This problem has already been encountered in the energy-critical setting ($s_c=1$) in dimension $d=3$; in this case, one can overcome the hurdle by applying a spatial truncation to the weight $a$. One can refer to \cite{CKSTT:gwp} for the original argument, wherein spatial truncation is applied at various levels and subsequently averaged. The authors of \cite{revisit2} revisit the result of \cite{CKSTT:gwp} in the context of minimal counterexamples; at this point in the argument, they choose to work with a more carefully designed spatial truncation, which removes the need for any subsequent averaging argument.

This discussion begs the question: why doesn't spatial truncation work in our setting? To answer this, we need to understand how spatial truncations affect the argument that leads to Proposition \ref{flim}. What we find is that spatial truncations ruin the convexity properties of $a$ that made some of the terms in the proof of Lemma \ref{interaction morawetz} positive; thus, to establish Proposition \ref{flim} with a further spatial truncation, we have to control additional error terms. It turns out that one of these additional error terms requires uniform control over $\norm{u}_{L_x^{p+2}}$, while another requires uniform control over $\norm{\nabla u}_{L_x^2}$ (see \cite[Lemma 6.5 and Lemma 6.6]{revisit2}). In the energy-critical case, one can use the conservation of energy to push the argument through, while in our cases, we cannot proceed without some significant new input. We have therefore abandoned the cases $(d,s_c)\in\{3\}\times(\tfrac34,1)$ in this paper. 

For a further discussion of these issues, refer to \cite{revisit2}, especially Remark~6.9 therein.
\end{remark}
\section{The quasi-soliton scenario}\label{Quasi}
In this section we preclude the existence of almost periodic solutions as in Theorem \ref{special scenarios} for which $\int_0^{T_{max}}N(t)^{3-4s_c}\,dt=\infty.$ We will show that their existence is inconsistent with the frequency-localized interaction Morawetz inequality (Proposition~\ref{flim}). 

Before we begin, we note that
$$-\Delta(\tfrac{1}{\vert x\vert})=\left\{\begin{array}{ll}4\pi\delta & d=3 \\ \tfrac{d-3}{\vert x\vert^3} & d\geq 4.\end{array}\right.$$
Thus, if $d=3$, the conclusion of Proposition \ref{flim} reads
\begin{equation}\label{flim 3D} \int_I\int_{\R^3}\vert u_{\geq N}(t,x)\vert^4\, dx\, dt\lesssim_u\eta(N^{1-4s_c}+K)
\end{equation}
for $N\leq N_0(\eta)$, while if $d\in\{4,5\}$, the conclusion of Proposition \ref{flim} reads
\begin{equation}\label{flim 45D} \int_I\iint_{\R^d\times\R^d}\frac{\vert u_{\geq N}(t,x)\vert^2\vert u_{\geq N}(t,y)\vert^2}{\vert x-y\vert^3}\, dx\, dy\, dt\lesssim_u\eta(N^{1-4s_c}+K)
\end{equation}
for $N\leq N_0(\eta)$.

We now turn to
\begin{theorem}[No quasi-solitons]\label{no quasi-solitons} Let $(d,s_c)$ satisfy \eqref{constraints}. Then there are no almost periodic solutions $u:[0,T_{max})\times\R^d\to\C$ to \eqref{nls} with $N(t)\equiv N_k\geq 1$ on each characteristic subinterval $J_k\subset[0,T_{max})$ that satisfy both $$\xnorms{u}{\frac{p(d+2)}{2}}{[0,T_{max})}=\infty$$ and 
\begin{equation}\label{quasi-soliton} \int_0^{T_{max}}N(t)^{3-4s_c}\, dt=\infty.\end{equation}
\end{theorem}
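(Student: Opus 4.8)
The plan is to argue by contradiction, following the standard template for precluding quasi-solitons via a (frequency-localized) interaction Morawetz inequality. Suppose such a solution $u$ exists. By Theorem~\ref{special scenarios} (and the standing reductions) we may take $N(t)\equiv N_k\geq 1$ on characteristic subintervals, so that with $K_I:=\int_I N(t)^{3-4s_c}\,dt$ we have $K_I\to\infty$ as $I\uparrow[0,T_{max})$. The key quantitative input is a \emph{lower bound}: I will show that for every compact $I\subset[0,T_{max})$ that is a union of characteristic subintervals, and for $N$ a sufficiently small fixed multiple of $\inf_{t}N(t)$ (or simply any fixed $N\leq N_0$), the left-hand side of \eqref{flim D} is bounded \emph{below} by $c(u)\,K_I$ for some $c(u)>0$. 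Comparing with the upper bound from Proposition~\ref{flim}, namely $\mathrm{LHS}\lesssim_u \eta(N^{1-4s_c}+K_I)$ with $\eta$ as small as we like, forces $c(u)K_I\lesssim_u \eta(N^{1-4s_c}+K_I)$; choosing $\eta$ small (depending only on $u$ and the fixed $N$) and then taking $K_I\to\infty$ yields a contradiction.

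\textbf{The lower bound.} This is the technical heart of the argument and the step I expect to be the main obstacle. The idea is that an almost periodic solution genuinely concentrates at frequency $\sim N(t)$ and spatial scale $\sim N(t)^{-1}$ around $x(t)$, so on each characteristic subinterval $J_k$ the quantity $\iint |u_{\geq N}(t,y)|^2\Delta(\tfrac{1}{|\cdot|})(x-y)|u_{\geq N}(t,x)|^2\,dx\,dy$ should be comparable to $-N_k$ times a power of the mass caught above frequency $N$, integrated in time against $|J_k|\sim N_k^{-2}$, and summing over $k$ should reproduce $\int_I N(t)^{3-4s_c}\,dt$ up to the correct exponent bookkeeping. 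Concretely, in dimension $d=3$ one has $-\Delta(\tfrac{1}{|x|})=4\pi\delta$, so \eqref{flim D} reads $\int_I\int_{\R^3}|u_{\geq N}(t,x)|^4\,dx\,dt\lesssim_u\eta(N^{1-4s_c}+K_I)$, and the job is to bound $\int_{\R^3}|u_{\geq N}(t,x)|^4\,dx$ from below by $c(u)N(t)^{3-4s_c}$ for each $t$. This follows from the $\dot H^{s_c}$-lower bound of almost periodicity (there is a fixed amount of $\dot H^{s_c}$-norm at frequencies $\sim N(t)$, cf.\ Remark~\ref{another consequence}), Bernstein (to pass from $\dot H^{s_c}$ at scale $N(t)$ to $L_x^4$), and the fact that $N\leq N_0$ is small compared to $N(t)\geq 1$ so that $u_{\geq N}$ retains the bulk of the solution; one also needs $N(t)^{-s_c}$-type factors to cancel correctly, which is exactly why the exponent $3-4s_c$ appears. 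In dimensions $d=4,5$ one uses instead $-\Delta(\tfrac{1}{|x|})=\tfrac{d-3}{|x|^3}$ and a spatially-localized version of the same concentration estimate: restricting both $x$ and $y$ to the ball $|x-x(t)|,|y-x(t)|\lesssim N(t)^{-1}$ where $u_{\geq N}$ carries a fixed fraction of its mass, the kernel $\tfrac{1}{|x-y|^3}$ is $\gtrsim N(t)^3$ there, and one obtains the lower bound $\gtrsim_u N(t)^3\cdot(\text{mass above }N\text{ near }x(t))^2\gtrsim_u N(t)^{3-4s_c}$, again integrating in $t$.

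\textbf{Assembling the contradiction.} Once the lower bound $\mathrm{LHS}\,\eqref{flim D}\gtrsim_u K_I$ is in hand for all sufficiently large compact unions of characteristic subintervals $I$ and for a fixed admissible $N$, the proof is immediate: fix $\eta=\eta(u)>0$ small enough that the implicit lower-bound constant beats the upper-bound constant times $\eta$; choose $N\leq N_0(\eta)$ as in Proposition~\ref{flim}; then $c(u)K_I\lesssim_u\eta N^{1-4s_c}+\eta K_I$, i.e. $(c(u)-C(u)\eta)K_I\lesssim_u \eta N^{1-4s_c}$, a fixed finite bound. Since \eqref{quasi-soliton} and Lemma~\ref{local constancy} (or just the definition of $K$ and the subdivision into characteristic subintervals) give $K_I=\int_I N(t)^{3-4s_c}\,dt\to\infty$ as $I$ exhausts $[0,T_{max})$, this is a contradiction. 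Hence no such quasi-soliton exists, which together with Theorem~\ref{frequency cascade} and Theorem~\ref{special scenarios} completes the proof of Theorem~\ref{swamprat}. The one point requiring care throughout is that all constants — the lower-bound constant $c(u)$, the smallness threshold $N_0$, and the implicit constants in Proposition~\ref{flim} — are independent of $I$, which is guaranteed by the statements of Proposition~\ref{flim} and Corollary~\ref{lts corollary}.
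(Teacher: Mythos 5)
Your overall architecture is the same as the paper's: play the frequency-localized interaction Morawetz upper bound of Proposition~\ref{flim}, $\lesssim_u\eta(N^{1-4s_c}+K)$, against a lower bound $\gtrsim_u K$ coming from the concentration of the almost periodic solution, then choose $\eta$ small and let $I$ exhaust $[0,T_{max})$; the final assembly in your last paragraph is correct. The gap is in the lower bound, which you rightly call the heart of the matter but do not actually establish. What is needed — and what the paper isolates as a separate lemma — is the uniform localized mass bound: there exist $C(u)>0$ and $N_0>0$ such that
$$\inf_{t\in[0,T_{max})}N(t)^{2s_c}\int_{\vert x-x(t)\vert\leq C(u)/N(t)}\vert u_{\geq N}(t,x)\vert^2\,dx\gtrsim_u 1\qquad\text{for all }N\leq N_0.$$
This does not follow directly from Definition~\ref{almost periodic definition} or Remark~\ref{another consequence}, which control the $\dot H^{s_c}_x$- and (via Remark~\ref{arzela ascoli}) $L^{dp/2}_x$-norms rather than the $L^2_x$-norm. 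One has to upgrade the $L^{dp/2}_x$ spatial concentration to an $L^2_x$ statement for $u_{\geq N}$: first discard $u_{\leq N}$ using the smallness of $\norm{u_{\leq N}}_{L_t^\infty L_x^{dp/2}}$ (available since $N\leq N_0$ while $\inf N(t)\geq 1$), then discard $u_{>cN(t)}$ using Remark~\ref{another consequence}, and finally apply Bernstein and H\"older to $u_{\leq cN(t)}$ on the ball, where $\norm{u_{\leq cN(t)}}_{L_x^\infty}^{\frac{dp}{2}-2}\lesssim_u N(t)^{2s_c}$ converts the $L^{dp/2}_x$ concentration into the $L^2_x$ one with exactly the factor $N(t)^{-2s_c}$. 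Your proposal asserts the needed ``mass above $N$ near $x(t)$'' bound but supplies none of these steps, and all the constants must be checked to be independent of $t$ and $I$.

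Second, your $d=3$ mechanism as written would fail: you cannot pass from the $\dot H^{s_c}_x$ lower bound at frequencies $\sim N(t)$ to a lower bound on $\norm{u_{\geq N}(t)}_{L_x^4}$ by Bernstein. Bernstein moves from lower to higher Lebesgue exponents only as an upper bound, and there is no general lower bound of an $L^4_x$-norm by an $L^2_x$- or $\dot H^{s_c}_x$-norm without spatial localization. The correct route in $d=3$ is the same one you sketch for $d\in\{4,5\}$: by H\"older on the ball $\{\vert x-x(t)\vert\leq C(u)/N(t)\}$,
$$\bigg(\int_{\vert x-x(t)\vert\leq C(u)/N(t)}\vert u_{\geq N}(t,x)\vert^2\,dx\bigg)^2\lesssim_u N(t)^{-3}\int_{\R^3}\vert u_{\geq N}(t,x)\vert^4\,dx,$$
after which the localized mass bound above gives $\int_{\R^3}\vert u_{\geq N}(t,x)\vert^4\,dx\gtrsim_u N(t)^{3-4s_c}$ pointwise in $t$. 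With that lemma in place, the rest of your argument goes through exactly as in the paper.
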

\begin{proof} Suppose towards a contradiction that such a solution $u$ exists. We will need the following 
\begin{lemma}\label{almost periodic stuff} There exist $N_0>0$ and $C(u)>0$ so that
				\begin{equation}\label{quasi-soliton lower boundd} 
				\inf_{t\in[0,T_{max})}N(t)^{2s_c}\int_{\vert x-x(t)\vert\leq \frac{C(u)}{N(t)}}\vert u_{\geq N}(t,x)\vert^2\, dx
				\gtrsim_u 1\indent\text{for all }N\leq N_0.
				\end{equation}
\end{lemma}
We provide the proof of Lemma \ref{almost periodic stuff} below; let us first take it for granted and use it to complete the proof of Theorem \ref{no quasi-solitons}. We let $I$ be a compact time interval, which is a union of contiguous subintervals $J_k$, and let $\eta>0$ be a small parameter. We take $C(u)$ and $N_0$ as in Lemma \ref{almost periodic stuff}; then, choosing $N_0$ possibly smaller, we can guarantee by Proposition~\ref{flim} that \eqref{flim 3D} or \eqref{flim 45D} holds (depending on the dimension) for all $N\leq N_0$. 

We now consider two cases:

\textbf{Case 1.} When $d=3$, we first note by H\"older's inequality that 
\begin{align*}\big(\smallint_{\vert x-x(t)\vert\leq \frac{C(u)}{N(t)}}&\vert u_{\geq N}(t,x)\vert^2\, dx\big)^2\lesssim_u N(t)^{-3}\smallint_{\vert x-x(t)\vert\leq \frac{C(u)}{N(t)}}\vert u_{\geq N}(t,x)\vert^4\, dx.
\end{align*}
Using this inequality, followed by \eqref{quasi-soliton lower boundd}, we find 
\begin{align} \int_I \int_{\vert x-x(t)\vert\leq \frac{C(u)}{N(t)}}&\vert u_{\geq N}(t,x)\vert^4\, dx\, dt \nonumber
\\ & \gtrsim_u \int_I \left(\int_{\vert x-x(t)\vert\leq \frac{C(u)}{N(t)}}\vert u_{\geq N}(t,x)\vert^2\, dx\right)^2 N(t)^3\, dt \nonumber
\\ &\gtrsim_u \int_I N(t)^{3-4s_c}\, dt \nonumber
\end{align} for all $N\leq N_0$. 
Thus, appealing to \eqref{flim 3D}, we find 
\begin{align} \eta\left[N^{1-4s_c}+\int_I N(t)^{3-4s_c}\right] & \gtrsim_u \int_I\int_{\R^3} \vert u_{\geq N}(t,x)\vert^4\, dx\, dt \nonumber
\\ &\gtrsim_u \int_I \int_{\vert x-x(t)\vert\leq \frac{C(u)}{N(t)}}\vert u_{\geq N}(t,x)\vert^4\, dx\, dt \nonumber
\\ &\gtrsim_u \int_I N(t)^{3-4s_c}\, dt \label{3d quasi}
\end{align}
for all $N\leq N_0$.

\textbf{Case 2.} If $d\in\{4,5\}$, we once again use \eqref{quasi-soliton lower boundd}, but use \eqref{flim 45D} instead of \eqref{flim 3D}. We find
\begin{align} \eta\bigg[N^{1-4s_c}&+\int_I N(t)^{3-4s_c}\,dt\bigg] \nonumber
\\ & \gtrsim_u \int_I\int_{\R^d}\int_{\R^d}\frac{\vert u_{\geq N}(t,x)\vert^2\vert u_{\geq N}(t,y)\vert^2}{\vert x-y\vert^3}\, dx\, dy\, dt \nonumber
\\ & \gtrsim_u \int_I\iint_{\vert x-y\vert\leq \frac{2C(u)}{N(t)}} \big[\tfrac{N(t)}{2C(u)}\big]^3\vert u_{\geq N}(t,x)\vert^2\vert u_{\geq N}(t,y)\vert^2\, dx\, dy\, dt \nonumber
\\ &\gtrsim_u \int_I \big[\tfrac{N(t)}{C(u)}\big]^3\left(\int_{\vert x-x(t)\vert\leq \frac{C(u)}{N(t)}}\vert u_{\geq N}(t,x)\vert^2\, dx\right)^2\,dt \nonumber
\\ &\gtrsim_u \int_I N(t)^{3-4s_c}\, dt \label{45d quasi}
\end{align}
for all $N\leq N_0$. Thus, continuing from \eqref{3d quasi} or \eqref{45d quasi}, we see that in either case, for $\eta$ sufficiently small depending on $u$, we get $$\int_I N(t)^{3-4s_c}\, dt\lesssim_u N^{1-4s_c}\indent\text{for all } I\subset[0,T_{max})\text{ and all }N\leq N_0.$$ 
Recalling \eqref{quasi-soliton}, we now reach a contradiction by taking $I$ sufficiently large inside $[0,T_{max}).$ 
\end{proof}
Finally, we turn to the 
\begin{proof}[Proof of Lemma \ref{almost periodic stuff}] Let $\eta_0>0$ be a small parameter to be determined later. As $\inf_{t\in[0,T_{max})}N(t)\geq 1$, for $N_0=N_0(\eta_0)$ sufficiently small, we can guarantee 
				\begin{equation}
				\nonumber
				\xnorm{u_{\leq N}}{\infty}{\frac{dp}{2}}{[0,T_{max})}\leq\eta_0  \indent\text{for }N\leq N_0.
				\end{equation}
Then, given \emph{any} $C(u)>0$ and $N\leq N_0$, we can use H\"older's inequality and Sobolev embedding to estimate
			\begin{align*}
			\bigg\vert\int_{\vert x-x(t)\vert\leq\frac{C(u)}{N(t)}} \vert u_{>N}(t,x)\vert^2-\vert u(t,x)\vert^2\,dx\bigg\vert
			 &\lesssim_u N(t)^{-2s_c}\xonorm{u_{\leq N}}{\infty}{\frac{dp}{2}}\xonorm{u}{\infty}{\frac{dp}{2}}
			\\ &\lesssim_u \eta_0 N(t)^{-2s_c}
			\end{align*}
for all $t\in[0,T_{max}).$ Thus, if we can show that for $C(u)$ sufficiently large, we have
	\begin{equation}\label{the real deal}
	\inf_{t\in[0,T_{max})} N(t)^{2s_c}\int_{\vert x-x(t)\vert\leq\frac{C(u)}{N(t)}} \vert u(t,x)\vert^2\,dx\gtrsim_u 1,
	\end{equation}
we will have \eqref{quasi-soliton lower boundd} by choosing $\eta_0=\eta_0(u)$ sufficiently small.

We now turn to \eqref{the real deal}. We first choose $c=c(\eta_0)$ sufficiently large that
			\begin{equation}
			\label{so high so small}
			\xnorm{\nsc u_{>cN(t)}}{\infty}{2}{[0,T_{max})}<\eta_0.
			\end{equation}
We then notice that by H\"older, Bernstein, Sobolev embedding, and \eqref{so high so small}, we have
	\begin{align}
	\bigg\vert\int_{\vert x-x(t)\vert\leq\frac{C(u)}{N(t)}}\vert &u(t,x)\vert^2-\vert u_{\leq cN(t)}(t,x)\vert^2\,dx\bigg\vert \nonumber
	\\&\lesssim N(t)^{-s_c}\norm{u_{>cN(t)}(t)}_{L_x^2(\R^d)}\norm{u(t)}_{L_x^{\frac{dp}{2}}(\R^d)}\nonumber
	\\ &\lesssim_u \eta_0 N(t)^{-2s_c}\label{excessive4}
	\end{align}
for all $t\in[0,T_{max})$. Thus, if we can show that for $C(u)$ sufficiently large, we have
	\begin{equation}\label{the real real deal}
	\inf_{t\in[0,T_{max})} N(t)^{2s_c}\int_{\vert x-x(t)\vert\leq\frac{C(u)}{N(t)}} \vert u_{\leq cN(t)}(t,x)\vert^{2}\,dx\gtrsim_u 1,
	\end{equation}
then \eqref{the real deal} will follow by taking $\eta_0=\eta_0(u)$ sufficiently small. 

Let us therefore turn to establishing \eqref{the real real deal}. We begin by choosing $C(u)$ sufficiently large that
		\begin{equation}\nonumber
		\inf_{t\in[0,T_{max})}\int_{\vert x-x(t)\vert\leq\frac{C(u)}{N(t)}}\vert u(t,x)\vert^{\frac{dp}{2}}\,dx\gtrsim_u 1
		\end{equation}
(cf. Remark~\ref{arzela ascoli}). Then, with $c=c(\eta_0)$ as above, we see by H\"older's inequality, Sobolev embedding, and \eqref{so high so small} that
	\begin{align*}
	\bigg\vert\int_{\vert x-x(t)\vert\leq\frac{C(u)}{N(t)}} \vert u(t,x)&\vert^{\frac{dp}{2}}-\vert u_{\leq cN(t)}(t,x)\vert^{\frac{dp}{2}}\,dx\bigg\vert 
	\\ & \lesssim \xonorm{u_{>cN(t)}}{\infty}{\frac{dp}{2}}\xonorm{u}{\infty}{\frac{dp}{2}}^{\frac{dp}{2}-1}
	\\ &\lesssim_u \eta_0
	\end{align*}
for all $t\in[0,T_{max}).$ Thus for $\eta_0=\eta_0(u)$ sufficiently small, we have
					\begin{equation}
					\inf_{t\in[0,T_{max})}\int_{\vert x-x(t)\vert\leq\frac{C(u)}{N(t)}}\vert u_{\leq cN(t)}(t,x)\vert^{\frac{dp}{2}}\,dx
					\gtrsim_u 1.			\label{excessive1}
					\end{equation}
Finally, by H\"older and Bernstein, we estimate
	\begin{align}
	\int_{\vert x-x(t)\vert\leq\frac{C(u)}{N(t)}}\vert &u_{\leq cN(t)}(t,x)\vert^{\frac{dp}{2}}\,dx\nonumber
	\\&\lesssim\norm{u_{\leq cN(t)}(t)}_{L_x^\infty(\R^d)}^{\frac{dp}{2}-2} \int_{\vert x-x(t)\vert\leq\frac{C(u)}{N(t)}}\vert u_{\leq cN(t)}(t,x)\vert^2\,dx\nonumber
	\\&\lesssim_u N(t)^{2s_c}\norm{u(t)}_{L_x^{\frac{dp}{2}}(\R^d)}^{\frac{dp}{2}-2}\int_{\vert x-x(t)\vert\leq\frac{C(u)}{N(t)}}\vert u_{\leq cN(t)}(t,x)\vert^2\,dx.\nonumber
	\\ &\lesssim_u N(t)^{2s_c}\int_{\vert x-x(t)\vert\leq\frac{C(u)}{N(t)}}\vert u_{\leq cN(t)}(t,x)\vert^2\,dx
		\label{excessive2}
	\end{align}
for all $t\in [0,T_{max})$. Combining \eqref{excessive1} and \eqref{excessive2} now yields \eqref{the real real deal}, which completes the proof of Lemma \ref{almost periodic stuff}.
\end{proof}
\appendix
\section{Some basic estimates}
We collect here some basic estimates that are useful in Section \ref{reduction section}. We begin with
\begin{lemma}\label{stupid lemma} Let $1<p\leq 2$. Then 
		\begin{align}
		\big\vert\,\vert a+c\vert^p-\vert a\vert^p-\vert b+c\vert^p+\vert b\vert^p\,\big\vert\lesssim \vert a-b\vert\,\vert c\vert^{p-1}
		\label{stupid stupid stupid}
		\end{align}
	for all $a,b,c\in\mathbb{C}$. 
	\end{lemma}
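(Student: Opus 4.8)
The plan is to prove the pointwise inequality \eqref{stupid stupid stupid} by recognizing the left-hand side as the difference of a fixed function evaluated at two points, and then applying the mean value theorem after first establishing the H\"older-type regularity of that function. Concretely, fix $c\in\mathbb{C}$ and define $H_c(z):=\vert z+c\vert^p-\vert z\vert^p$ for $z\in\mathbb{C}$; then the left-hand side of \eqref{stupid stupid stupid} is exactly $\vert H_c(a)-H_c(b)\vert$. So it suffices to show that $H_c$ is Lipschitz with constant $\lesssim\vert c\vert^{p-1}$, i.e. $\vert\nabla H_c(z)\vert\lesssim\vert c\vert^{p-1}$ uniformly in $z$, and then integrate along the segment from $b$ to $a$.

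First I would compute $\nabla_z(\vert z\vert^p)=p\vert z\vert^{p-2}z$ (treating $\mathbb{C}\cong\mathbb{R}^2$), so that $\nabla H_c(z)=p\vert z+c\vert^{p-2}(z+c)-p\vert z\vert^{p-2}z=p\big(G(z+c)-G(z)\big)$ where $G(w):=\vert w\vert^{p-2}w$ is the (vector-valued) function whose modulus is $\vert w\vert^{p-1}$. The key observation is that $G$ is H\"older continuous of order $p-1\in(0,1]$: one has $\vert G(w_1)-G(w_2)\vert\lesssim\vert w_1-w_2\vert^{p-1}$ for all $w_1,w_2$. Applying this with $w_1=z+c$, $w_2=z$ gives $\vert\nabla H_c(z)\vert\lesssim\vert c\vert^{p-1}$, and then \eqref{stupid stupid stupid} follows from the fundamental theorem of calculus along the path $t\mapsto b+t(a-b)$:
\[
\vert H_c(a)-H_c(b)\vert\le\int_0^1\vert\nabla H_c(b+t(a-b))\vert\,\vert a-b\vert\,dt\lesssim\vert a-b\vert\,\vert c\vert^{p-1}.
\]

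The main obstacle is the H\"older bound $\vert G(w_1)-G(w_2)\vert\lesssim\vert w_1-w_2\vert^{p-1}$ for $G(w)=\vert w\vert^{p-2}w$ with $1<p\le 2$; this is a standard but slightly delicate real-variable fact. I would prove it by the usual case split: if $\vert w_1-w_2\vert\ge\tfrac12\max(\vert w_1\vert,\vert w_2\vert)$ then crudely $\vert G(w_1)-G(w_2)\vert\le\vert w_1\vert^{p-1}+\vert w_2\vert^{p-1}\lesssim\vert w_1-w_2\vert^{p-1}$; if instead $\vert w_1-w_2\vert<\tfrac12\max(\vert w_1\vert,\vert w_2\vert)$, then $w_1,w_2$ and the whole segment between them have comparable moduli bounded below by $\sim\max(\vert w_i\vert)=:R>0$, and on that region $G$ is $C^1$ with $\vert\nabla G(w)\vert\lesssim\vert w\vert^{p-2}\lesssim R^{p-2}$, so the mean value theorem gives $\vert G(w_1)-G(w_2)\vert\lesssim R^{p-2}\vert w_1-w_2\vert\le R^{p-2}(R/2)^{2-p}\vert w_1-w_2\vert^{p-1}\lesssim\vert w_1-w_2\vert^{p-1}$, using $p-2\le 0$ and $\vert w_1-w_2\vert\le R/2$. (When $p=2$ the inequality is trivial since $G$ is linear, so one may assume $p<2$.) Once this lemma is in hand, the rest is the two-line argument above, and I would present the proof in roughly that order: reduce to estimating $\vert H_c(a)-H_c(b)\vert$, bound $\nabla H_c$ via the H\"older continuity of $w\mapsto\vert w\vert^{p-2}w$, then integrate.
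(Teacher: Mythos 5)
Your proposal is correct and follows essentially the same route as the paper: both reduce, via the fundamental theorem of calculus along the segment from $b$ to $a$, to the uniform bound $\big\vert\,\vert z+c\vert^{p-2}(z+c)-\vert z\vert^{p-2}z\,\big\vert\lesssim\vert c\vert^{p-1}$. The only (cosmetic) difference is in how this last estimate is proved — the paper rescales $z=c\,\zeta$ and splits into $\vert z\vert\lesssim1$ versus $\vert z\vert\gg1$, while you prove the equivalent H\"older continuity of $w\mapsto\vert w\vert^{p-2}w$ by a distance-versus-modulus case split; both arguments are valid.
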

\begin{proof} Defining $G(z):=\vert z+c\vert^p-\vert z\vert^p$, the fundamental theorem of calculus gives
			\begin{align*}
			 \text{LHS}\eqref{stupid stupid stupid}=\bigg\vert(a-b)\int_0^1 G_z(b+\theta(a-b))\,d\theta 
			 				+\overline{(a-b)}\int_0^1G_{\bar{z}}(b+\theta(a-b))\,d\theta\bigg\vert.
			\end{align*}
Thus, to establish \eqref{stupid stupid stupid}, it will suffice to establish
			\begin{align}
			\vert G_{z}(z)\vert+\vert G_{\bar{z}}(z)\vert \lesssim\vert c\vert^{p-1}\quad\text{uniformly for }z\in\C. \nonumber
			\end{align}
That is, we need to show
			\begin{equation}		\nonumber
			\big\vert\, \vert z+c\vert^{p-2}(z+c)-\vert z\vert^{p-2}z\,\big\vert\lesssim \vert c\vert^{p-1}
			\end{equation}
uniformly in $z$. If $c=0$, this inequality is obvious. Otherwise, setting $z=c\,\zeta$ reduces the problem to showing
			\begin{equation}		\label{c thing}
			\big\vert\, \vert z+1\vert^{p-2}(z+1)-\vert z\vert^{p-2}z\,\big\vert\lesssim 1
			\end{equation}
uniformly in $z$. For $\vert z\vert\lesssim 1$, we immediately get \eqref{c thing} from the triangle inequality. For $\vert z\vert\gg1$, we can use the fundamental theorem of calculus and the fact that $p\leq2$ to see
			$$\big\vert\,\vert z+1\vert^{p-2}(z+1)-\vert z\vert^{p-2}z\,\big\vert\lesssim \vert z\vert^{p-2}\lesssim 1.$$
Thus, we see that \eqref{c thing} holds, which completes the proof of Lemma \ref{stupid lemma}. 
\end{proof}
Next, we record a few inequalities in the spirit of \cite[$\S$2.3]{taylor}.
\begin{lemma}\label{aux lemma}
Let $\M$ denote the Hardy--Littlewood maximal function, and let $\psih$ denote the convolution kernel of the Littlewood--Paley projection $P_1$. For a fixed function $f,$ $y\in\R^d,$ and $N\in 2^{\mathbb{Z}}$, we have
\begin{align}
\smallint_{\R^d} N^d\vert\psih(Ny)\vert\,\vert f(x-y)\vert\,dy&\lesssim \M(f)(x), \label{aux 1}
\\  \vert \delta_y f_N(x)\vert&\lesssim N\vert y\vert\left\{\M(f_N)(x)+\M(f_N)(x-y)\right\},  \label{aux 2}
\\ \smallint_{\R^d} N^d\vert y\vert\,\vert\psih(Ny)\vert\,\,dy&\lesssim\tfrac{1}{N}.  \label{aux 3}
\end{align} 
\end{lemma}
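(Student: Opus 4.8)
All three estimates are standard consequences of the fact that $\psih$ is a Schwartz function with rapidly decaying tails, so the strategy is to reduce each to a known maximal-function bound or to an elementary integral estimate. For \eqref{aux 1}, I would first note that since $P_1$ has Fourier multiplier $\varphi(\xi)-\varphi(2\xi)$, which is a smooth compactly supported function, its kernel $\psih$ is Schwartz; in particular $|\psih(z)|\lesssim \jap{z}^{-d-1}$. Then $N^d|\psih(Ny)|$ is (up to a constant) a legitimate radially-decreasing $L^1$ approximate identity, so convolution against it is dominated pointwise by $C\,\M(f)$; this is exactly \cite[Chapter III, Theorem 2]{stein} (or \cite[$\S2.1$]{taylor}). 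So \eqref{aux 1} follows by writing the convolution kernel as a sum over dyadic annuli $\{|y|\sim 2^j/N\}$, on each of which $N^d|\psih(Ny)|\lesssim 2^{-j(d+1)} (2^j/N)^{-d}$, and bounding the average of $|f|$ over $\{|x-y|\lesssim 2^j/N\}$ by $\M(f)(x)$, then summing the geometric series in $j$.

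For \eqref{aux 2}, the key point is that $f_N = P_N f$ has Fourier support in $\{|\xi|\sim N\}$, so $\nabla f_N$ obeys the Bernstein-type pointwise bound in terms of the maximal function: writing $f_N = \wt{P}_N f_N$ where $\wt P_N$ is a fattened Littlewood--Paley projector, its kernel $N^d\wt\psih(N\cdot)$ has $N^{d+1}|\nabla\wt\psih(N\cdot)|$ again a decaying approximate identity, whence $|\nabla f_N(x)|\lesssim N\,\M(f_N)(x)$ (again \cite[$\S2.1$]{taylor}). Then by the fundamental theorem of calculus $|\delta_y f_N(x)| = |f_N(x-y)-f_N(x)| \le |y|\sup_{0\le\theta\le 1}|\nabla f_N(x-\theta y)|$; but one must be careful, since the supremum over $\theta$ is not directly controlled. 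Instead I would split into the two regimes $N|y|\le 1$ and $N|y|>1$: in the first, use the mean value theorem together with the fact that $\M(f_N)(x-\theta y)\lesssim \M(f_N)(x)+\M(f_N)(x-y)$ whenever $0\le\theta\le1$ and $N|y|\lesssim1$ (since then $x-\theta y$ lies in a ball of comparable radius around either endpoint, and one can instead bound $|\nabla f_N(x-\theta y)|$ by a slightly enlarged maximal function evaluated at $x$ or $x-y$ — concretely, absorb the translation by $\theta y$ into the kernel); in the second regime, simply use $|\delta_y f_N(x)|\le |f_N(x-y)|+|f_N(x)|\le \M(f_N)(x-y)+\M(f_N)(x)\le N|y|\{\M(f_N)(x)+\M(f_N)(x-y)\}$ since $N|y|>1$. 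This disposes of \eqref{aux 2}.

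Finally, \eqref{aux 3} is a pure scaling computation: substituting $z=Ny$, the left-hand side equals $N^{-1}\int_{\R^d}|z|\,|\psih(z)|\,dz$, and $\int |z||\psih(z)|\,dz<\infty$ because $\psih$ is Schwartz. Hence \eqref{aux 3} holds with implicit constant $\int|z||\psih(z)|\,dz$.

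\textbf{Main obstacle.} The only genuinely delicate point is the $N|y|\le 1$ case of \eqref{aux 2}: one cannot naively pull $\M(f_N)$ out of the mean value theorem at the intermediate point $x-\theta y$. The clean way around this is to avoid evaluating the maximal function at a moving point altogether — instead write $\delta_y f_N(x) = \int N^d[\wt\psih(N(x-y-z)) - \wt\psih(N(x-z))]\,f_N(z)\,dz$, bound the bracket by $N|y|\cdot N^d\Psi(N\,\mathrm{dist}(z,\{x,x-y\}))$ for a fixed Schwartz $\Psi$ (via the FTC applied to $\wt\psih$, using that $\wt\psih$ and all its derivatives are Schwartz), and then apply \eqref{aux 1} at both $x$ and $x-y$. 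Everything else is routine dyadic summation and rescaling.
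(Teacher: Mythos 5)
Your proposal is correct and follows essentially the same route as the paper: domination of the convolution by a radially decreasing Schwartz majorant for \eqref{aux 1}, the fattened projector together with a kernel-level difference bound (split into the regimes $N\vert y\vert\leq 1$ and $N\vert y\vert>1$) for \eqref{aux 2}, and the scaling substitution for \eqref{aux 3}. The fix you describe for the intermediate-point issue in \eqref{aux 2} is exactly the paper's argument, so no gap remains.
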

\begin{proof} We begin with \eqref{aux 1}. Note first that $$\eta:=N^d\vert\psih(Ny)\vert$$ is a spherically symmetric, decreasing function of radius; thus, we can write $$\eta(y)=\int_0^\infty\chi_{B(0,r)}(y)(-\eta'(r))\,dr,$$
where $\eta':=\tfrac{\partial\eta}{\partial r}$. 
We can then use the definition of the Hardy--Littlewood maximal function and integrate by parts to estimate
		\begin{align*}
		\text{LHS}\eqref{aux 1} &\lesssim\int_0^\infty\left( \int_{\vert y\vert\leq r}\vert f(x-y)\vert\,dy\right)(-\eta'(r))\,dr
		\\ &\lesssim \left(\int_0^\infty \eta(r)r^{d-1}\,dr\right)\M(f)(x)
		\\ &\lesssim_{\psi} \M(f)(x).
		\end{align*}

For \eqref{aux 2}, we begin by defining $\psi_0(\xi)=\psi(2\xi)+\psi(\xi)+\psi(\xi/2)$, the `fattened' Littlewood--Paley multiplier. Then we can write 
		\begin{equation} \label{aux aux}
		\vert\delta_y f_N(x)\vert=\big\vert\smallint [N^d\psith(N(z-y))-N^d\psith(Nz)]f_N(x-z)\,dz\big\vert.
		\end{equation}
If $N\vert y\vert\geq 1$, we can use the triangle inequality and argue as above to see that 
				$$\vert\delta_y f_N(x)\vert\leq \M(f_N)(x-y)+\M(f_N)(x),$$
giving \eqref{aux 2} in this case. If instead $N\vert y\vert\leq 1$, we can use the fact that $\psith$ is Schwartz to estimate 
			$$\vert \psith(N(z-y))-\psith(Nz)\vert\lesssim N\vert y\vert(1+N\vert z\vert)^{-100d}.$$ 
Then continuing from \eqref{aux aux} and once again arguing as for \eqref{aux 1}, we find
			$$\vert\delta_y f_N(x)\vert\lesssim N\vert y\vert M(f_N)(x),$$
which gives \eqref{aux 2} in this case.

Finally, we note that since $\psih$ is Schwartz, we have
		$$\int_{\R^d} N^d\vert Ny\vert\,\vert\psih(Ny)\vert\,dy\lesssim 1,$$
which immediately gives \eqref{aux 3}.
\end{proof}


\end{document}